\newtheorem{theorem}{Theorem}[section]
\newtheorem{conjecture}[theorem]{Conjecture}
\newtheorem{corollary}[theorem]{Corollary}
\newtheorem{lemma}[theorem]{Lemma}
\theoremstyle{definition}
\newtheorem{definition}[theorem]{Definition}
\theoremstyle{remark}
\newtheorem{remark}[theorem]{Remark}
\numberwithin{equation}{section}
\newcommand{\g}{\geqslant}
\newcommand{\RR}{\mathbb{R}}
\newcommand{\ZZ}{\mathbb{Z}}
\newcommand{\CC}{\mathbb{C}}
\newcommand{\NN}{\mathbb{N}}
\newcommand{\p}{\partial}
\newcommand{\les}{\leqslant}
\newcommand{\lesa}{\lesssim}
\newcommand{\mc}[1]{\mathcal{#1}}
\newcommand{\mf}[1]{\mathfrak{#1}}
\newcommand{\lr}[1]{ \langle #1 \rangle}
\newcommand{\ind}{\mathbbold{1}}
\DeclareSymbolFont{bbold}{U}{bbold}{m}{n}
\DeclareSymbolFontAlphabet{\mathbbold}{bbold}
\DeclareMathOperator*{\supp}{supp}
\begin{document}

\title{Minimal non-scattering solutions for the Zakharov System}%

\author[T.~Candy]{Timothy Candy}
\address[T.~Candy]{Department of Mathematics and Statistics, University of Otago, PO Box 56, Dunedin 9054, New Zealand}
\email{tcandy@maths.otago.ac.nz}

\begin{abstract}
We consider the Zakharov system in the energy critical dimension $d=4$ with energy below the ground state. It is known that below the ground state solutions exist globally in time, and scatter in the radial case. Scattering below the ground state in the non-radial case is an open question. We show that if scattering fails, then there exists a minimal energy non-scattering solution below the ground state. Moreover the orbit of this solution is precompact modulo translations. The proof follows by a concentration compactness argument, together with a refined small data theory for energy dispersed solutions.
\end{abstract}

\keywords{Zakharov system, global well-posedness, concentration compactness, scattering}
\subjclass[2010]{Primary: 35Q55. Secondary: 42B37, 35L70, 35P25}


\maketitle


\section{Introduction}

We are interested in understanding the global dynamics of solutions to the Zakharov system
    \begin{equation}\label{eqn:Zak intro}
        \begin{split}
            i\p_t u + \Delta u &= v u, \\
            \alpha^{-2} \p_t^2 v - \Delta v &= \Delta |u|^2,
        \end{split}
    \end{equation}
in four spatial dimensions, where $(u, V)(t,x) :\RR\times \RR^4 \to \CC\times \CC$ and $\alpha>0$ is a fixed real constant. The equation \eqref{eqn:Zak} was introduced by Zakharov \cite{Zakharov1972} as a model for Langmuir turbulence in plasma, where $u$ denotes the slow oscillation of the magnetic field, $v$ is the ion density, and $\alpha>0$ is the ion sound speed. We refer the reader to \cite{Sulem2007, Texier2007, Zakharov1972} for further background on the derivation of the Zakharov system and its physical origin. The Zakharov system has received significant attention in the mathematical community, see for instance \cite{Bourgain1996, Bejenaru2010,Bejenaru2015,Bejenaru2008,Colliander2008c,Ginibre2006,Guo2021,Kato2017,Kenig1995,Merle1996} and the references therein.\\

It is more convenient for our purposes to set $\alpha=1$ and consider the first order formulation of the Zakharov system
    \begin{equation}\label{eqn:Zak}
        \begin{split}
            i\p_t u + \Delta u &= \Re(V) u, \\
            i\p_t V + |\nabla| V &= -|\nabla| |u|^2, \\
               (V, u)(0) &= (f, g) \in H^s \times H^\ell,
        \end{split}
    \end{equation}
which can be obtained  from \eqref{eqn:Zak intro} by letting $V = v - i |\nabla|^{-1}  \p_t v $ (and so $v = \Re(V)$). Sufficient regular solutions $(u, V)$ to \eqref{eqn:Zak} conserve both the energy
    \begin{equation}\label{eqn:zak energy}
         \mc{E}_Z\big( u(t), V(t)\big) = \int_{\RR^4} \frac{1}{2} |\nabla u(t,x)|^2 + \frac{1}{4} |V(t,x)|^2 + \frac{1}{2} \Re(V)(t,x) |u(t,x)|^2 dx,
    \end{equation}
and the (Schr\"odinger) mass
    \begin{equation}\label{eqn:Zak mass}
         \mc{M}\big(u(t)\big) = \int_{\RR^4} \frac{1}{2} |u(t,x)|^2 dx.
    \end{equation}
Although the system \eqref{eqn:Zak} lacks a natural scaling invariance, it should be thought of as energy critical in four spatial dimensions. For instance, the energy regularity $H^1\times L^2$ lies on the boundary of the admissible regularities for local well-posedness (in fact we have some form of ill-posedness if the wave regularity is lowered to $H^s$ for $s<0$, see for instance \cite{Grube2022} are the references therein). Furthermore, the potential energy in \eqref{eqn:zak energy} is just barely controlled by the kinetic energy in four dimensions,  namely the sharp Sobolev embedding $\dot{H}^1(\RR^4) \subset L^4(\RR^4)$ implies that
    $$ \int_{\RR^4} \Re(V) |u|^2 dx \les \| V \|_{L^2_x} \| u \|_{L^4_x}^2 \lesa \|V \|_{L^2_x} \| u \|_{\dot{H}^1}^2. $$

The Zakharov system \eqref{eqn:Zak} is closely connected to the nonlinear Schr\"{o}dinger equation (NLS). For instance, in the subsonic limit as the ion sound speed $\alpha \to \infty$, we have $v=-|u|^2$ and so formally the Zakharov system \eqref{eqn:Zak intro} reduces to the focussing cubic NLS
    \begin{equation}\label{eqn:NLS}
            i\p_t u + \Delta u = - |u|^2 u.
    \end{equation}
This convergence can be made rigorous, see for instance \cite{Schochet1986, Masmoudi2008} and the references therein. As the cubic NLS is energy critical in four dimensions, this gives another reason for considering the Zakharov system to be energy critical in four dimensions. The connection between the Zakharov system and cubic NLS also appears on the level of the energy, for instance we can write
        $$ \mc{E}_Z(f, g) = \mc{E}_{NLS}(f) + \frac{1}{4} \big\| g + |f|^2 \big\|_{L^2}^2  $$
where
        $$ \mc{E}_{NLS}(f) = \int_{\RR^4} \frac{1}{2} |\nabla f |^2  - \frac{1}{4} |f|^4 dx$$
is the conserved energy (or Hamiltonian) for the focussing cubic NLS on $\RR\times \RR^4$. Furthermore, a key role in the global dynamics of both the cubic NLS \eqref{eqn:NLS} and the Zakharov system \eqref{eqn:Zak} is played by the famous Aubin-Talenti function \cite{Aubin1976,Talenti1976}
        $$ Q(x) = \frac{8}{8 + |x|^2} $$
which serves as a ground state solution for both the NLS and Zakharov equations. In particular the stationary solutions $u(t) = Q$ to \eqref{eqn:NLS} and $(u, v)(t) = (Q, -Q^2)$ to \eqref{eqn:Zak intro} (and their rescalings) give  global solutions which do not disperse for large times. The existence of non-dispersing solutions is a typical feature of focussing equations, and immediately rules out the possibility of large data global existence and scattering.

The ground state solution $Q$ in fact gives a sharp threshold for linear scattering. More precisely, for the NLS \eqref{eqn:NLS} it is known that any solution with energy (strictly) below the ground state $Q$ is global and scatters to a free Schr\"odinger wave as $t\to \pm \infty$ \cite{Kenig2006, Dodson2019}. This is sharp as $u(t) = Q$ gives a (global, stationary) solution to \eqref{eqn:NLS} which clearly does not scatter to linear solution to the Schr\"odinger equation. The corresponding result for the Zakharov equation \eqref{eqn:Zak intro} in full generality is open. However recent work has shown that \emph{radial} solutions to \eqref{eqn:Zak intro} with energy below the ground state $(Q, -Q^2)$ are global and scatter \cite{Guo2021}. On the other hand, in the non-radial case, it has recently been shown \cite{Candy2021} that all solutions with data below the ground state are global in time. As we require this result later, we recall the precise statement from \cite{Candy2021}.

\begin{theorem}[GWP below ground state \cite{Candy2021}]\label{thm:gwp below ground state}
Let $(f, g)\in H^1\times L^2$ with
        $$ 4\mc{E}_Z(f,g) < \| Q \|_{\dot{H}^1}^2, \qquad \| g \|_{L^2}\les \| Q \|_{\dot{H}^1}. $$
Then there exists a unique global solution $(u,V)\in C(\RR, H^1\times L^2)$  to \eqref{eqn:Zak} with $u\in L^2_{t, loc} W^{\frac{1}{2}, 4}_x$ and data $(u,V)(0) = (f,g)$. Moreover the energy $\mc{E}_Z(u,V)$ is conserved and we have the global bounds
        $$ \max\big\{ \| u \|_{L^\infty_t \dot{H}^1_x}, \|V\|_{L^\infty_t L^2_x} \big\} \les 4 \mc{E}_Z(f,g). $$
\end{theorem}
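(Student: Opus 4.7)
The plan is to combine an a priori energy-trapping bound with a critical local well-posedness theory and then extend to global times. First, using the decomposition
$$\mc{E}_Z(u,V) = \mc{E}_{NLS}(u) + \tfrac{1}{4}\|V + |u|^2\|_{L^2}^2$$
recalled in the preceding discussion, one immediately has $\mc{E}_Z(u,V) \g \mc{E}_{NLS}(u)$. Combining this with the sharp Sobolev inequality $\|f\|_{L^4}^2 \les \|Q\|_{\dot{H}^1}^{-1}\|f\|_{\dot{H}^1}^2$ in $\RR^4$ (whose extremizer is the Aubin--Talenti function $Q$ satisfying $\|Q\|_{L^4}^4 = \|Q\|_{\dot{H}^1}^2$, so $4\mc{E}_{NLS}(Q) = \|Q\|_{\dot{H}^1}^2$) yields
$$\mc{E}_{NLS}(u) \g \tfrac{1}{2}\|u\|_{\dot{H}^1}^2\Bigl(1 - \tfrac{1}{2}\|u\|_{\dot{H}^1}^2\|Q\|_{\dot{H}^1}^{-2}\Bigr),$$
whose right-hand side, viewed as a function of $r = \|u\|_{\dot{H}^1}$, attains its unique maximum $\tfrac{1}{4}\|Q\|_{\dot{H}^1}^2$ precisely at $r = \|Q\|_{\dot{H}^1}$. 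The hypothesis $4\mc{E}_Z(f,g) < \|Q\|_{\dot{H}^1}^2$ together with the bound on $\|g\|_{L^2}$ places the datum in the sub-$Q$ region $\{\|f\|_{\dot{H}^1} < \|Q\|_{\dot{H}^1}\}$, and a continuity/bootstrap argument keeps the solution there for as long as it exists. From this one reads off the uniform-in-time bound $\|u(t)\|_{\dot{H}^1}^2 \lesa \mc{E}_Z(f,g)$, and the decomposition combined with the triangle inequality and sharp Sobolev yields the analogous bound for $\|V(t)\|_{L^2}^2$.

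Second, one needs a critical local well-posedness theory in $H^1 \times L^2$ with auxiliary Strichartz-type control $u \in L^2_{t,\mathrm{loc}}W^{1/2,4}_x$. Following the Bejenaru--Herr--Holmer--Tataru framework, this can be carried out by a contraction mapping argument in $U^p$-$V^p$ adapted function spaces tailored to the Schr\"odinger evolution for $u$ and the half-wave evolution for $V$. The key nonlinear ingredients are trilinear estimates on $Vu$ and $|\nabla||u|^2$ that exploit the transversality between the Schr\"odinger paraboloid and the linear wave cone in $\RR^{1+4}$, together with modulation gains sufficient to absorb the derivative in the wave source term. The output is a local existence interval whose length depends on the \emph{profile} of the data (as is typical at critical scaling), together with a continuation criterion: the solution persists as long as $\|u\|_{L^2_t W^{1/2,4}_x}$ stays finite.

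To extend this local solution globally, one combines the trapping bound with a no-concentration argument: finite-time blow-up of the critical Strichartz norm would force $u$ to concentrate bubbles of $\dot{H}^1$-mass comparable to $\|Q\|_{\dot{H}^1}$, which is ruled out by the strict inequality $\|u(t)\|_{\dot{H}^1} < \|Q\|_{\dot{H}^1}$ from the first step. Conservation of the energy (obtained by approximation from smooth data) then closes the loop and delivers the stated uniform bounds.

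The main obstacle is the critical local well-posedness. The derivative loss in the wave source $|\nabla||u|^2$ and the scaling-critical nature of the energy space force the use of the full multilinear and resonance/modulation structure of the Schr\"odinger--wave coupling. Once these estimates are in hand, the trapping step is essentially algebraic and the global extension is a routine no-concentration argument.
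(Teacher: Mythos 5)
The statement is quoted from \cite{Candy2021} and the present paper supplies no proof of it, so the comparison is between your sketch and the strategy that the introduction and Section~\ref{sec:notation} attribute to the cited work. Your outline captures the right skeleton (energy trapping plus local well-posedness plus a continuation argument) but the two most delicate steps are either incomplete or replaced by a mechanism that does not work here.

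First, the energy trapping. From $\mc{E}_Z = \mc{E}_{NLS}(f) + \tfrac14\|g + |f|^2\|_{L^2}^2$ alone you only learn $\mc{E}_{NLS}(f) \les \mc{E}_Z(f,g) < \tfrac14\|Q\|_{\dot H^1}^2$. This places $\|f\|_{\dot H^1}$ in the union of \emph{two} disjoint arcs, one below and one above $\|Q\|_{\dot H^1}$, and the $\mc{E}_{NLS}$--picture by itself cannot tell you which. The hypothesis $\|g\|_{L^2}\les \|Q\|_{\dot H^1}$ is what breaks the tie, but you never actually use it: it does not enter the displayed inequality $\mc{E}_{NLS}(u)\g \tfrac12\|u\|_{\dot H^1}^2(1-\tfrac12\|u\|_{\dot H^1}^2\|Q\|_{\dot H^1}^{-2})$. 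The correct route is the coupled lower bound
\[
4\mc{E}_Z(f,g)\ \g\ \|g\|_{L^2}^2 + 2\|Q\|_{\dot H^1}^{-1}\|f\|_{\dot H^1}^2\big(\|Q\|_{\dot H^1}-\|g\|_{L^2}\big),
\]
recorded in the paper as \eqref{eqn:energy lower bound} and used in Lemma~\ref{lem:ground state}: once $\|g\|_{L^2}\les \|Q\|_{\dot H^1}$ this is coercive in $\|f\|_{\dot H^1}$ and simultaneously returns the claimed bound on $\|V\|_{L^\infty_t L^2}$. Your ``triangle inequality plus sharp Sobolev'' remark for $V$ does not close, because $\||u|^2\|_{L^2}=\|u\|_{L^4}^2$ only gives $\|V\|_{L^2}\lesa (\mc{E}_Z)^{1/2}+\|u\|_{\dot H^1}^2$, not $\|V\|_{L^2}^2\les 4\mc{E}_Z$.

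Second, and more seriously, the global continuation step. You invoke a ``no-concentration'' argument: blow-up of the critical norm would force bubbles of $\dot H^1$-mass at the $\|Q\|_{\dot H^1}$ level, ruled out by trapping. This is the Kenig--Merle mechanism borrowed from energy-critical NLS, and it does not transfer. The Zakharov system has no scaling symmetry, so there is no ``bubble at ground-state concentration scale''; indeed the profile decomposition that \emph{is} available here (Theorem~\ref{thm:profile decomp}) carries no dilation parameters at all. More importantly, the genuine obstruction is not concentration of $u$ but the \emph{large, non-perturbative wave component}: $V$ is only controlled in $L^2$, the coupling $Vu$ is linear in $V$, and you cannot iterate a local theory whose existence time depends on the profile of $V(0)$. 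The proof in \cite{Candy2021}, as this paper explicitly explains in its discussion of Theorem~\ref{thm:small data Hs} (point (i)) and records as Theorem~\ref{thm:schro energy}, hinges on a \emph{uniform Strichartz estimate for the Schr\"odinger flow with a free wave potential} valid whenever $\|g\|_{L^2}<\|Q\|_{\dot H^1}$. It is this uniformity — a bound depending only on $\|g\|_{L^2}$, not on the profile of $g$ — that makes the continuation argument close with energy trapping alone, with no concentration-compactness input. Your sketch omits this tool entirely, and without it the step from local to global does not go through.

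Finally, a smaller point: the $H^1\times L^2$ local theory is not a straightforward $U^p$--$V^p$ contraction. As the paper notes, it is in some respects harder than the scaling-critical corner $H^{1/2}\times L^2$ because the wave regularity $\ell=0$ is endpoint, and the original proof in \cite{Bejenaru2015} went through a compactness argument; the direct iteration was only achieved later in \cite{Candy2021a}. So even that layer of your outline needs more care.
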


\begin{remark}
The variational characterisation of the ground state $Q$ implies that we have
    $$ \mc{E}_Z(Q, -Q^2) = \mc{E}_{NLS}(Q) = \frac{1}{4} \| Q \|_{\dot{H}^1}^2, \qquad \|Q^2 \|_{L^2} = \|Q\|_{\dot{H}^1} $$
and thus we could alternatively state the assumed bound on the data in Theorem \ref{thm:gwp below ground state} as
     $$ \mc{E}_Z(f,g) < \mc{E}_Z(Q, -Q^2), \qquad \| g \|_{L^2}\les \| Q^2 \|_{L^2}. $$
It is also possible to replace the condition $\| g\|_{L^2} \les \|Q^2\|_{L^2}$ with $\min\{\|f\|_{\dot{H}^1}, \|g\|_{L^2}\}\les \|Q\|_{\dot{H}^1}$ (see for instance Lemma \ref{rem:ground state cond} below). This is not surprising in view of the corresponding assumption in the NLS case \cite{Kenig2006, Dodson2019}.
\end{remark}

The asymptotic behaviour of the global solutions given by Theorem \ref{thm:gwp below ground state} is currently open, although in case of radial solutions it is known that solutions scatter to free solutions to the Zakharov equation \cite{Guo2021}. However, a number of scattering criteria have been obtained. For instance all solutions which are dispersive, in the sense that a Strichartz type norm is finite over the time of evolution, can be shown to scatter \cite{Candy2021a}.

\begin{theorem}[Scattering criterion \cite{Candy2021a}]\label{thm:scat}
Let $(u, V)\in C(\RR; H^\frac{1}{2}\times L^2)$ be a global solution to \eqref{eqn:Zak} with
		$$ \|u \|_{L^2_t W^{\frac{1}{2}, 4}_x(\RR\times \RR^4)} < \infty \qquad \text{and} \qquad (u, V)(0)\in H^1\times L^2. $$
Then in fact $(u, V)\in C(\RR; H^1\times L^2)$ and moreover there exists $(f_{\pm}, g_{\pm})\in H^1\times L^2$ such that
		$$ \lim_{t\to \pm \infty} \big\| \big( e^{-it\Delta} u, e^{-it|\nabla|} V\big)(t) - (f_{\pm}, g_{\pm})\big\|_{H^1\times L^2} = 0. $$
\end{theorem}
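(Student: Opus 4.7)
The plan is to prove both persistence of $H^1\times L^2$ regularity and scattering together, by a continuity-plus-Cauchy-in-Duhamel argument driven by the assumed finiteness of $\|u\|_{L^2_t W^{\frac{1}{2}, 4}_x(\RR\times \RR^4)}$. First, I would partition $\RR$ into finitely many intervals $I_1,\dots,I_K$ on each of which $\|u\|_{L^2_t W^{\frac{1}{2}, 4}_x(I_j\times \RR^4)} \les \epsilon$, where $\epsilon$ is a small threshold to be fixed later. The number of intervals $K$ is then controlled by the global norm and $\epsilon^{-1}$.

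The core technical input is a bilinear estimate for the Zakharov nonlinearities in the solution/nonlinearity spaces $S^s, N^s$ adapted to the Schr\"odinger and wave flows from the local theory of \cite{Candy2021a}, of the schematic shape
\begin{equation*}
    \|\Re(V)u\|_{N^1(I)} \lesa \|V\|_{S^0(I)} \|u\|_{S^1(I)} F\big(\|u\|_{L^2_t W^{\frac{1}{2}, 4}_x(I)}\big),
\end{equation*}
with an analogous statement for $|\nabla||u|^2$ in $N^0$, where $F$ is small when its argument is small. Given this, on each $I_j$ one closes a standard bootstrap for $\|u\|_{S^1(I_j)} + \|V\|_{S^0(I_j)}$ starting from the $H^1\times L^2$ norm of $(u,V)(t_j)$, provided $\epsilon$ is chosen small enough. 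Iterating across the $K$ intervals yields a global bound
\begin{equation*}
    \|u\|_{S^1(\RR)} + \|V\|_{S^0(\RR)} \lesa C\big(K, \|f\|_{H^1}, \|g\|_{L^2}\big),
\end{equation*}
so in particular $(u,V)\in C(\RR; H^1\times L^2)$.

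For the scattering claim, the same bilinear bounds applied on tail intervals $[T,\infty)$ imply that the Duhamel integrals $\int_T^\infty e^{-is\Delta}[\Re(V)u](s)\,ds$ and $\int_T^\infty e^{-is|\nabla|}[|\nabla||u|^2](s)\,ds$ are Cauchy in $H^1$ and $L^2$ respectively as $T\to\infty$, which yields $(f_+, g_+)\in H^1\times L^2$ with the required asymptotics; the argument at $-\infty$ is identical. The main obstacle is establishing the dispersion-weighted bilinear estimate above with an explicit smallness factor controlled by the $L^2_t W^{\frac{1}{2}, 4}_x$ norm at the critical $H^1\times L^2$ regularity: this will require a careful analysis of the Schr\"odinger-wave resonance set within the $U^p/V^p$-adapted function space framework of \cite{Candy2021a}, in order to gain smallness from the energy-dispersion hypothesis without losing derivatives. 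Once this estimate is in place, the remainder of the argument is the standard scattering-via-global-dispersion template.
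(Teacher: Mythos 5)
The paper does not prove Theorem~\ref{thm:scat}; it is cited as a black box from \cite{Candy2021a}. So there is no ``paper's proof'' to compare against directly. That said, your proposal follows the standard ``partition into small-dispersion intervals plus Duhamel--Cauchy tail'' template, which is indeed the correct high-level shape of the argument in \cite{Candy2021a}.

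However, there is a genuine gap in the plan as written, and it is precisely at the point you flag as the ``main obstacle.'' You posit a bilinear estimate of the form $\|\Re(V)u\|_{N^1(I)} \lesa \|V\|_{W^0(I)}\,\|u\|_{S^1(I)}\,F(\|u\|_{L^2_tW^{1/2,4}_x(I)})$ and propose to close a ``standard bootstrap for $\|u\|_{S^1}+\|V\|_{W^0}$.'' But this estimate does not hold in the simple iteration spaces: the paper's own Theorem~\ref{thm:bi prev} is stated only for $0\le s<1$, and the introduction emphasizes that the energy space $H^1\times L^2$ is \emph{not} reachable this way --- the wave regularity $L^2$ is critical, there is no slack, and even local well-posedness in $H^1\times L^2$ needed an indirect compactness argument in \cite{Bejenaru2015} and ``more complicated iteration spaces'' in \cite{Candy2021a} (see the remark in Section~\ref{sec:notation} where the paper explicitly refrains from using those more general spaces). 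So ``standard bootstrap in $S^1\times W^0$'' is exactly what fails at $s=1$: the high--high Schr\"odinger interactions producing low wave output have a logarithmic loss when summing frequencies. The actual route in \cite{Candy2021a} is to either (a) work in the refined $H^1$-adapted Besov/atomic spaces designed to absorb this endpoint loss, or (b) do what the current paper's Theorem~\ref{thm:persistence} does --- bootstrap at $\frac12\le s<1$, where the bilinear estimates are clean, obtain $u\in S^s$ from $\|u\|_D<\infty$ --- and then upgrade to $H^1$ by a separate persistence-of-regularity theorem that handles the full range (cf.\ \cite[Theorem~8.1]{Candy2021a}, which is explicitly invoked in the remark following Theorem~\ref{thm:persistence}). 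Unless you make that upgrade step explicit, the argument as stated cannot be closed at the energy regularity.
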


It is worth pointing out that the sharp threshold regularity threshold for local well-posedness for the Zakharov equation in 4 dimensions is $(f, g) \in H^\frac{1}{2}\times L^2$, see for instance \cite{Bejenaru2015, Candy2021a, Kato2017}. In particular the energy regularity $H^1\times L^2$ is $\frac{1}{2}$ a derivative (at least for the Schr\"odinger component of the evolution) above the endpoint space $H^\frac{1}{2}\times L^2$. This should help to explain the regularity assumption $u\in L^2_t W^{\frac{1}{2}, 4}_x$ in Theorem \ref{thm:scat}. On the other hand, the wave regularity cannot be lowered, and in fact it is the lack of room in the wave regularity which is the key difficulty in the above theorems. In particular, even proving local well-posedness in the energy space $H^1\times L^2$ is slightly subtle (and in some sense more challenging than the endpoint $H^\frac{1}{2}\times L^2$) and in \cite{Bejenaru2015} was obtained by an additional indirect compactness argument. An alternative more direct argument was later found in \cite{Candy2021a}.\\

In view of the above results, together with the known linear scattering in the radial case and corresponding know results in the case of energy critical NLS \cite{Kenig2006, Dodson2019}, it is generally expected that the following conjecture holds.

\begin{conjecture}[Scattering below ground state]\label{conj:scattering}
Let $(f,g)\in H^1\times L^2$ with
		$$ 4\mc{E}_Z(f,g)<  \|Q\|_{\dot{H}^1}^2, \qquad  \| g \|_{L^2} \les  \| Q \|_{\dot{H}^1} .$$
Then there exists a unique global solution $(u, V) \in C(\RR, H^1\times L^2)$ to \eqref{eqn:Zak} with data $(u,V)(0)=(f,g)$ such that
			$$ \|  u \|_{L^2_t W^{\frac{1}{2}, 4}_x(\RR^{1+4})} < \infty. $$
\end{conjecture}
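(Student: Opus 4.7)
The plan is to complete a Kenig--Merle concentration-compactness and rigidity scheme. Theorem~\ref{thm:gwp below ground state} provides a global solution for every sub-threshold datum (meaning $4\mc{E}_Z(f,g)<\|Q\|_{\dot{H}^1}^2$ and $\|g\|_{L^2}\les\|Q\|_{\dot{H}^1}$), and Theorem~\ref{thm:scat} reduces scattering to finiteness of the Schr\"odinger norm $\|u\|_{L^2_tW^{\frac{1}{2},4}_x}$. Define
\[
\mc{E}_c \;=\; \inf\Big\{\mc{E}_Z(f,g)\,:\ (f,g)\text{ sub-threshold and }\|u\|_{L^2_tW^{\frac{1}{2},4}_x(\RR^{1+4})}=+\infty\Big\},
\]
and argue by contradiction, assuming $\mc{E}_c<\tfrac{1}{4}\|Q\|_{\dot{H}^1}^2$. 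The goal is first to produce a minimal non-scattering solution saturating $\mc{E}_c$ with precompact orbit, and then to rule it out.

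The concentration-compactness half is the more tractable part. I would first develop a refined small-data theory showing that any sub-threshold solution whose Schr\"odinger datum is sufficiently energy dispersed (for instance $\sup_N \|P_N f\|_{L^4}$ small, or a Besov-type smallness condition) scatters with quantitative Strichartz bounds depending only on the sub-threshold constant; this extends Theorem~\ref{thm:scat} and must be proved using the resonance / transversality structure already exploited in \cite{Bejenaru2015, Candy2021a} to handle the low wave regularity. Given this, take a minimising sequence $(f_n,g_n)$ with $\mc{E}_Z(f_n,g_n)\downarrow \mc{E}_c$ and apply linear profile decompositions to the Schr\"odinger evolution in $\dot{H}^1$ and to the wave evolution in $L^2$ simultaneously, keeping only space-time translation parameters (there is no scaling profile, since the system is not scale invariant). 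Using Pythagorean orthogonality of kinetic energy and wave $L^2$ mass together with induction on energy and the refined small-data theory, one shows that at most one profile is both non-negligible and non-scattering; it saturates $\mc{E}_c$ and, after passing to a perturbed nonlinear profile, yields a minimal non-scattering solution $(u_c,V_c)$ with orbit precompact in $H^1\times L^2$ modulo spatial translations $x(t)$.

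The rigidity half must then exclude the existence of $(u_c,V_c)$, and this is where I expect the main obstacle to lie. The natural tool is a truncated virial / Morawetz identity centred at $x(t)$: sub-threshold coercivity combined with the variational characterisation of $Q$ should yield a strictly positive lower bound on the virial integrand, and precompactness modulo translations should allow a growing cutoff to absorb the boundary terms, producing unbounded growth of the virial in contradiction to the uniform $H^1\times L^2$ bound. The difficulties are threefold: the wave component lives only at the critical regularity $L^2$, so the wave contribution to the Morawetz flux is borderline and must be tamed via the null/resonance structure; the absence of scaling symmetry removes one of the usual rigidity tools and forces one to track $x(t)$ carefully, typically by using conservation of momentum together with Galilean-type identities to show $|x(t)|=o(t)$; and the strict sub-threshold hypothesis must be used sharply to rule out $(u_c,V_c)$ being a spatial translate of the ground state $(Q,-Q^2)$ itself. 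Overcoming all three simultaneously, rather than any part of the concentration-compactness reduction, is the substantive barrier to the conjecture.
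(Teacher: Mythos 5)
The statement you are being asked to prove is labelled as a \emph{conjecture} in the paper, and the paper does not prove it; its main contribution (Theorem~\ref{thm:min coun}) is precisely the concentration-compactness half of the Kenig--Merle scheme you describe, namely that failure of scattering produces a minimal almost-periodic solution with orbit precompact modulo translations. Your sketch of that half tracks the paper closely: a refined small-data theory built around a dispersion/energy-dispersion smallness condition (the paper's $Y$-norm $\sup_\lambda\lambda^{-4}\|u_\lambda\|_{L^\infty_{t,x}}$ plays the role of your $\sup_N\|P_Nf\|_{L^4}$), a translation-only profile decomposition with no scaling parameter, Pythagorean decoupling, and a Palais--Smale argument. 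So the first half of your proposal is sound and essentially reproduces Theorem~\ref{thm:min coun}.

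The second half is where the proposal fails to be a proof, and you partly say so yourself. You do not carry out the virial/Morawetz rigidity argument: you describe it, name three difficulties, and conclude that ``overcoming all three simultaneously \dots is the substantive barrier.'' That is an accurate diagnosis, but a plan with an acknowledged open gap is not a proof, and the gap here is exactly what makes the statement a conjecture rather than a theorem. More specifically, the method you tentatively invoke to control $x(t)$ --- ``conservation of momentum together with Galilean-type identities to show $|x(t)|=o(t)$'' --- is precisely the tool the paper singles out as \emph{unavailable}: the Zakharov system is not invariant under Galilean boosts (nor Lorentz boosts), so one cannot reduce to zero momentum and the usual argument for $|x(t)|=o(t)$ does not go through. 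Until that obstruction is overcome, the rigidity step, and hence the conjecture, remains open.
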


If Conjecture \ref{conj:scattering} holds, then Theorem \ref{thm:scat} immediately implies that any solution with energy below the ground state scatters to a linear solution as $t\to \pm\infty$. Moreover, for global solutions to \eqref{eqn:Zak intro}, it is not so difficult to prove that scattering is equivalent to boundedness of the dispersive norm $L^2_t W^{\frac{1}{2}, 4}_x$. In particular Conjecture \ref{conj:scattering} could also be stated in terms of a scattering condition as $t\to \pm \infty$. However it is technically more convenient to work with a dispersive condition such as $\| u \|_{L^2_t W^{\frac{1}{2}, 4}_x }<\infty$. \\

Our goal in the current article is to make progress towards the resolution of Conjecture \ref{conj:scattering}. Unfortunately, we are unable to resolve this question, essentially due to the lack of a suitable way to control the translation symmetry. In the case of the NLS or nonlinear wave equations, translations are typically controlled by using Galilei invariance/Lorentz invariance to reduce to the case of zero momentum. However the Zakharov equation is not invariant under either transformation, and thus removing translations seems to be a difficult obstruction. On other hand, in the positive direction, we can show that if Conjecture \ref{conj:scattering} fails, then there must be a \emph{minimal} counterexample, namely a minimal energy  non-scattering solution. Moreover this minimal counterexample must be \emph{almost periodic} in the sense that it satisfies a strong compactness property modulo spatial translations. It is a well-known phenomenon in dispersive PDE that the failure of scattering implies the existence of a minimal non-scattering solutions with strong compactness properties. In fact in \cite{Kenig2006} a general road map was developed to prove results of the form in Conjecture \ref{conj:scattering} via a concentration-compactness argument (to show the existence of a minimal energy non-scattering solution) together with a rigidity argument. Thus our results give the first step of this argument, but we are unable to provide the `rigidity' step. The existence of almost periodic solutions (or minimal energy/mass non-scattering solutions) was first obtained in \cite{Kenig2006, Keraani2006} in the context of the NLS. These arguments are closely related to original induction on energy approach \cite{Bourgain1999}  (see for instance \cite{Colliander2008b, Tao2008c, Killip2013} and the references therein) which produced \emph{approximate} almost periodic solutions.

To state out result precisely, we require additional notation.  Given $E>0$, we define the set
       \begin{equation}\label{eqn:defn Omega}
       \begin{split}
            \Omega(E) = \Big\{ (u,V)\in C(\RR;H^1\times L^2) \,\, \Big|\,\, (u, V) &\text{ solves \eqref{eqn:Zak} with }  u\in L^2_{t,loc} W^{\frac{1}{2},4}_x \\
                        &\text{ and } \mc{E}_Z(u, V)\les E, \,\,\|V\|_{L^\infty_t L^2_x} \les \|Q\|_{\dot{H}^1}\Big\}.
       \end{split}
       \end{equation}
Thus $\Omega(E)\subset C(\RR;H^1\times L^2)$ is the set of all $H^1\times L^2$ solutions to the Zakharov equation with energy below a threshold $E$, and wave mass uniformly below the ground state $Q$. Note that if $(f,g)\in H^1\times L^2$ with $\mc{E}_Z(f,g)\les E < \frac{1}{4}\|Q\|_{\dot{H}^1}^2$ and $\|g \|_{L^2} \les \|Q\|_{\dot{H}^1}$, then by Theorem \ref{thm:gwp below ground state} there exists a (unique) global solution $(u,V)\in \Omega(E)$. In particular, $\Omega(E)$ is non-empty. The condition $u\in L^2_{t,loc} W^{\frac{1}{2}, 4}$ is needed to ensure uniqueness, and is harmless in practise as all solutions constructed in this paper always satisfy this condition. There are a number of different ways to characterise the set $\Omega(E)$ when $E< \frac{1}{4} \|Q\|_{\dot{H}^1}^2$, see for instance Lemma \ref{lem:ground state} and Remark \ref{rem:ground state cond}.

It is tempting to use the set $\Omega(E)$ as the basis for a concentration compactness argument. However, due to the lack of a good well-posedness theory in the homogeneous energy space $\dot{H}^1\times L^2$, we impose an additional constraint on the mass of our solutions to ensure that the limiting critical element remains in $H^1\times L^2$ (instead of just $\dot{H}^1\times L^2$). More precisely, given $E, M>0$ we define
\begin{equation}\label{eqn:L(M, E) defn}
L(M, E) = \sup \Big\{ \| u\|_{L^2_t W^{\frac{1}{2}, 4}_x(\RR^{1+4})} \,\,\Big|\,\, (u, V) \in \Omega(E) \text{ and } \mc{M}(u)\les M\Big\}.
\end{equation}
Note that, in view of Theorem \ref{thm:scat}, scattering with mass/energy $(M, E)$ holds precisely when $L(M, E)<\infty$.

\begin{definition}[Mass/energy threshold]\label{defn:mass/energy}
We say a pair $(E_c, M_c) \in (0, \infty)\times (0, \infty)$ is an  \emph{mass/energy threshold} for the Zakharov equation if $L(M_c, E_c)=\infty$ and for any $\epsilon>0$ we have
            $$ L(M_c, E_c-\epsilon) + L(M_c-\epsilon, E_c) <\infty. $$
\end{definition}

In other words $(M, E)$ is a mass/energy threshold if: (i) the dispersive norm $\|\cdot\|_{L^2_t W^{\frac{1}{2}, 4}_x}$ blows up along some sequence of solutions with mass/energy at most $(M, E)$, and (ii) any solution with either less energy or less mass scatters. The existence of a mass/energy threshold with $4E_c<\|Q\|_{\dot{H}^1}^2$ is inconsistent with Conjecture \ref{conj:scattering}. In fact the failure of Conjecture \ref{conj:scattering} implies the existence of a mass/energy threshold, see Lemma \ref{lem:existence of threshold} below. More importantly, the failure of scattering must be witnessed by the existence of a minimal or critical element in the energy space. The following is our main result.

\begin{theorem}[Reduction to almost periodic solutions]\label{thm:min coun}
Suppose that Conjecture \ref{conj:scattering} fails. Then there exists a mass/energy threshold $(M_c, E_c)$ with $E_c<\frac{1}{4} \|Q\|_{\dot{H}^1}^2$ and a global solution $(\psi, \phi)\in C(\RR, H^1\times L^2)$ to \eqref{eqn:Zak} with $\psi \in L^2_{t,loc} W^{\frac{1}{2}, 4}_x$ such that
		$$ \mc{E}_Z(\psi, \phi)=E_c, \qquad \mc{M}(\psi)=M_c, \qquad \| \phi \|_{L^\infty_t L^2_x} \les \|Q\|_{\dot{H}^1},$$
and

        $$ \| \psi \|_{L^2_t W^{\frac{1}{2}, 4}_x((-\infty, 0]\times \RR^4)} = \| \psi \|_{L^2_t W^{\frac{1}{2}, 4}_x([0, \infty)\times\RR^4)} = \infty. $$
Moreover, there exists $x(t):\RR \to \RR^4$ such that the orbit $\{ (\psi, \phi)(t, x + x(t)) \mid t\in \RR\}$ is precompact in $H^1\times L^2$.
\end{theorem}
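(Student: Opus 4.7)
The plan is to adapt the Kenig-Merle concentration-compactness road map to the Zakharov system, exploiting the fact that spatial (and time) translations are essentially the only non-compact symmetry compatible with \eqref{eqn:Zak}. First I would establish the existence of a mass/energy threshold $(M_c, E_c)$ with $E_c < \tfrac{1}{4}\|Q\|_{\dot{H}^1}^2$. The small data theory for \eqref{eqn:Zak} implies that $L(M,E)<\infty$ for sufficiently small $(M,E)$, while the assumed failure of Conjecture \ref{conj:scattering} forces $L(M,E)=\infty$ for some $(M,E)$ in the admissible range. A standard minimization argument using monotonicity of $L$ in both arguments, together with local well-posedness and short-time stability for \eqref{eqn:Zak}, produces $(M_c, E_c)$.

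With the threshold in hand, I would select a sequence $(u_n, V_n)\in \Omega(E_c)$ with $\mc{E}_Z(u_n, V_n)\to E_c$, $\mc{M}(u_n)\to M_c$, and $\|u_n\|_{L^2_t W^{1/2,4}_x}\to \infty$, and after time translation arrange for the $L^2_t W^{1/2,4}_x$ norm to blow up on both half lines. The core technical step is a linear profile decomposition for the data $(u_n(0), V_n(0))\in H^1\times L^2$ that is simultaneously adapted to the Schr\"odinger flow $e^{it\Delta}$ and the half-wave flow $e^{-it|\nabla|}$. Since \eqref{eqn:Zak} admits no Galilean or Lorentz symmetry, the only non-compact parameters appearing are spatial translations $x_n^j\in \RR^4$ and time translations $t_n^j\in \RR$, with any intrinsic Schr\"odinger scales controlled by the mass bound. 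The desired output is an almost orthogonal decomposition into profiles, together with a Pythagorean decomposition of both mass and energy, and a remainder whose linear evolution has arbitrarily small $L^2_t W^{1/2,4}_x$ norm.

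Next I would associate to each linear profile a nonlinear Zakharov solution. Suppose two or more profiles carry non-trivial mass and energy: by the Pythagorean identities, each such profile has mass and energy strictly below $(M_c, E_c)$, and hence by the definition of the threshold each nonlinear profile is global with finite dispersive norm, allowing Theorem \ref{thm:scat} to apply. A nonlinear perturbation/stability lemma for \eqref{eqn:Zak}, using the smallness of the remainder and the asymptotic orthogonality of the parameters, then combines these profiles to give a uniform bound on $\|u_n\|_{L^2_t W^{1/2,4}_x}$, contradicting the choice of sequence. Hence only a single non-trivial profile survives, and after time shift this profile is the sought critical element $(\psi, \phi)\in C(\RR; H^1\times L^2)$ satisfying $\mc{E}_Z(\psi,\phi)=E_c$, $\mc{M}(\psi)=M_c$, and non-scattering on both halves of $\RR$. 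The precompactness of the orbit modulo translations follows by applying the same profile decomposition to any sequence $(\psi,\phi)(t_m,\cdot)$ with $t_m\in \RR$: the minimality argument again forces a single profile, so a subsequence of the translates $(\psi,\phi)(t_m,\cdot+x_m)$ converges in $H^1\times L^2$, and choosing $x(t)$ to track these translations yields the desired compactness.

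The main obstacle, as hinted at in the introduction, is the construction and exploitation of the profile decomposition in the full inhomogeneous energy space $H^1\times L^2$, since the Schr\"odinger and half-wave flows exhibit different scaling behaviours and the energy regularity lies $\tfrac{1}{2}$ derivative above the scaling-critical space $H^{1/2}\times L^2$ for the Schr\"odinger component while being exactly critical for the wave component. The asymmetric regularities make it delicate to control interactions between low-frequency wave profiles and high-frequency Schr\"odinger profiles in the nonlinear stability step, and the absence of extra wave regularity means the standard energy-space perturbation theory must be sharpened. This is where the refined small-data theory for energy-dispersed solutions announced in the abstract should play the decisive role, allowing one to treat summation errors that are small in an energy-dispersion sense rather than in the full dispersive norm.
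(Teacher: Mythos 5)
Your outline follows the Kenig--Merle road map that the paper also uses: existence of a mass/energy threshold, extraction of a minimizing sequence, a linear profile decomposition with spatial/temporal translations only, nonlinear profiles via the threshold definition plus wave operators, a perturbation step to get a contradiction if more than one profile carries mass/energy, and precompactness of the orbit of the surviving profile. At that level of description the plan is correct.

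The genuine gap is in the profile decomposition step. You assert that the remainder should have "arbitrarily small $L^2_t W^{\frac12,4}_x$ norm," and you then build the rest of the argument (summation of nonlinear profiles, perturbation theory) on that smallness. But no such profile decomposition is available: there is no known profile decomposition adapted to the endpoint Strichartz norm $L^2_t L^4_x$, and the paper explicitly flags this as the main obstruction, stating that "obtaining such a profile decomposition seems extremely difficult." Since the small data theory for \eqref{eqn:Zak} in \cite{Candy2021, Candy2021a} \emph{does} require smallness in $L^2_t W^{\frac12,4}_x$, your proposal as written has a circular dependency: the perturbation step you need depends on a decomposition that does not exist. The paper's actual resolution is not a sharpening of the energy-space perturbation theory in $L^2_t W^{\frac12,4}_x$ but a genuine change of dispersive norm: a weak $L^\infty_{t,x}$-based norm $Y$, roughly scaling like $\dot{H}^{-2}$ at high frequencies, for which the remainder in the Bahouri--G\'erard decomposition \emph{does} vanish (Theorem \ref{thm:profile decomp}(ii)), together with refined bilinear restriction estimates (Theorem \ref{thm:bi est improved}) showing that smallness of a controlling quantity $\rho$ can be deduced from smallness in $Y$. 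You gesture toward "energy dispersion" in your final paragraph, but your proposal never reconciles this with the $L^2_t W^{\frac12,4}_x$ smallness claimed earlier, and the key technical engine---that the nonlinear interaction gains a positive power of $\|\cdot\|_Y$ against $\|\cdot\|_{\underline{S}^s\times\underline{W}^0}$, which is what makes the perturbation step close---is absent.

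A secondary imprecision: you justify the absence of scaling parameters in the profile decomposition by saying "any intrinsic Schr\"odinger scales controlled by the mass bound." That is not quite the mechanism. Rather, because the Schr\"odinger regularity $H^1$ is subcritical while the $Y$ norm is extremely weak, any piece of data whose frequency scale escapes to $0$ or $\infty$ has vanishing $Y$ norm and so can be swept into the error term; this is why only translation/time-shift parameters survive. The mass bound alone does not rule out a scaling family. Also, your precompactness argument at the end ("choosing $x(t)$ to track these translations") is not yet a construction; the paper defines $x(t)$ concretely as a median of the $H^1\times L^2$ density and derives a contradiction if the orbit were not precompact, which makes the argument non-circular.
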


The conclusion of Theorem \ref{thm:min coun} implies that the orbit $\{ (\psi, \phi)(t) | t\in \RR\}$ is compact modulo translations, such solutions are often called \emph{almost periodic modulo translations}, see the discussion in \cite{Killip2013}. These solutions clearly do not scatter to free solutions as $t\to \pm \infty$, and should be thought as essentially `soliton like' as they retain their profile for large times. Theorem \ref{thm:min coun} is a stepping stone towards a resolution of Conjecture \ref{conj:scattering} as it reduces the proof of scattering to ruling out the existence of almost periodic solutions below the ground state. It is important to observe that the compactness is only modulo translations, in particular there is no scaling/frequency symmetries required. This is perhaps not surprising in view of the fact that the Zakharov equation is not invariant under rescalings. In some sense the lack of frequency scaling is a consequence of the fact that the energy space is subcritical in the Schr\"odinger regularity (but still critical in the wave regularity!),  but it can also be thought of as saying that the various asymptotic limits of \eqref{eqn:Zak} are not an obstruction to scattering.

The proof of Theorem \ref{thm:min coun} proceeds via the usual concentration compactness argument essentially as in \cite{Keraani2006, Kenig2006}, see also \cite{Killip2013} for an outline of this approach. Roughly the idea is that if Conjecture \ref{conj:scattering} failed, then there exists a mass/energy threshold with $(M_c, E_c)$ with $4E_c < \|Q\|_{\dot{H}^1}^2$. In particular, there exists a sequence of $H^1\times L^2$ solutions $(u_n, V_n)$ to \eqref{eqn:Zak intro} with mass and energy converging to $(M_c, E_c)$ and $\| u_n\|_{L^2_t W^{\frac{1}{2}, 4}_x} \to \infty$. Theorem \ref{thm:min coun} then follows by extracting a converging subsequence via a concentration compactness argument. It is here that we meet the first major difficulty. Typically extracting some compactness from the sequence $(u_n, V_n)$ relies on a profile decomposition in the spirit of \cite{Bahouri1999} with an error going to zero in some dispersive norm. The issue is that there is no profile decomposition known in the endpoint Strichartz space $L^2_t L^4_x$. Moreover, obtaining such a profile decomposition seems extremely difficult. Consequently the small data theory in \cite{Candy2021, Candy2021a} which required smallness in $L^2_t L^4_x$ is not sufficient to conclude Theorem \ref{thm:min coun}.

An alternative to trying to obtain a profile decomposition in the endpoint Strichartz space is to extend the known small data theory to rely on a more tractable dispersive norm. This is the approach we take here. In particular, we obtain a refined small data theory with smallness measured in an ``energy dispersion'' type norm. More precisely, define the norm
        $$ \| u \|_{Y(I)} = \sup_{\lambda \in 2^\NN} \lambda^{-4} \|u_{\lambda}\|_{L^\infty_{t,x}(I\times \RR^4)} $$
where $u_\lambda$ is the standard (inhomogeneous) Littlewood-Paley projection to frequencies $|\xi|+1 \approx \lambda$. The norm $\| u \|_{Y}$ roughly measures how dispersed a solution is. In particular, if $\| u \|_Y$ is small compared to the energy, then $u$ cannot be concentrated either spatially or in frequency. Moreover, as free Schr\"odinger waves disperse, we have $\| e^{it\Delta} f \|_{Y([T, \infty))} \to 0$ as $T\to \infty$. The $Y$ norm is much weaker than $H^\frac{1}{2}$, and at least at large frequencies, roughly scales like $\dot{H}^{-2}$. In fact a short computation using Bernstein's inequality and the endpoint Strichartz estimate gives
       \begin{equation}\label{eqn:Y bounded by sobolev intro}
        \| u \|_{Y(I)} \lesa \min\Big\{ \| u\|_{L^\infty_t L^2_x(I\times \RR^4)}, \| u \|_{L^\infty_t \dot{H}^1_x(I\times \RR^4)}\Big\} \qquad \text{and} \qquad \|e^{it\Delta} f \|_{Y(I)} \lesa \| e^{it\Delta} f \|_{L^2_t W^{\frac{1}{2}, 4}_x(I\times \RR^4)}.
       \end{equation}
This bound can clearly be improved (see \eqref{eqn:Y norm bound by sobolev} below) but suffices for our purposes. In particular, smallness in either mass or $\dot{H}^1$ implies that the energy dispersion norm $Y$ is also small. This ability to transfer smallness in homogeneous Sobolev spaces is crucial, as we eventually need to conclude that if a solution is close to a dispersive solution in energy, then it is also close in the norm $\|\cdot\|_Y$. Note that this is not true for the inhomogeneous dispersive norm $L^2_t W^{\frac{1}{2}, 4}_x$. We can now state a precise version of the small data theory that we require.

\begin{theorem}[Refined small data theory in $H^s\times L^2$]\label{thm:small data Hs}
Let $A>0$, $0<B<\|Q\|_{\dot{H}^1}$, and $\frac{1}{2}<s<2$. There exists $\epsilon=\epsilon(A,B,s)>0$ and $C=C(A, B, s)>0$  such that for any interval  $0\in I\subset \RR$, and any $(f, g) \in H^s\times L^2$ satisfying
    \begin{equation}\label{eqn:thm small data Hs:data cond}
       \|f \|_{H^s} \les A, \qquad \|e^{it\Delta} f\|_{Y(I)}\les \epsilon, \qquad \| g \|_{L^2_x} \les B,
    \end{equation}
there exists a (unique) solution $(u, V)\in C(I, H^s\times L^2)$ to \eqref{eqn:Zak} with data $(u, V)(0) = (f,g)$ and
        $$ \| u \|_{L^2_t W^{\frac{1}{2}, 4}_x(I\times \RR^4)} \les C \| f\|_{H^\frac{1}{2}}, \qquad \| V\|_{L^\infty_t L^2_x(I)} \les \tfrac{1}{2} \big( B + \|Q \|_{\dot{H}^1}\big). $$
\end{theorem}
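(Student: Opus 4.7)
The plan is to prove Theorem \ref{thm:small data Hs} by a bootstrap/contraction argument in the $U^2$-type resolution spaces for the Zakharov system developed in \cite{Bejenaru2015, Candy2021a}. Write $\mb{S}^s$ for the Schr\"odinger resolution space (controlling $L^\infty_t H^s_x \cap L^2_t W^{\frac{1}{2},4}_x$ together with the relevant $V^2$-atomic structure) and $\mb{W}$ for the wave resolution space (controlling $L^\infty_t L^2_x$), with nonlinear dual spaces $\mb{N}^s$ and $\mb{N}$ respectively. For any $H^s\times L^2$ solution of \eqref{eqn:Zak} on an interval $0\in I'\subset I$ the standard linear theory yields
\begin{equation*}
\|u\|_{\mb{S}^s(I')} \lesa \|f\|_{H^s} + \|\Re(V)u\|_{\mb{N}^s(I')}, \qquad \|V\|_{\mb{W}(I')} \lesa \|g\|_{L^2} + \bigl\| |\nabla| |u|^2 \bigr\|_{\mb{N}(I')},
\end{equation*}
and the objective is to exploit smallness of $\|e^{it\Delta}f\|_{Y(I)}$ (rather than of a dispersive Strichartz norm of the data) inside the nonlinear terms.

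The technical heart of the argument is a refined pair of trilinear estimates of the shape
\begin{equation*}
\|\Re(V)u\|_{\mb{N}^s(I')} \lesa \|V\|_{\mb{W}(I')}\|u\|_{Y(I')}^{\theta}\|u\|_{\mb{S}^s(I')}^{1-\theta}, \qquad \bigl\||\nabla| |u|^2 \bigr\|_{\mb{N}(I')} \lesa \|u\|_{Y(I')}^{\theta}\|u\|_{\mb{S}^s(I')}^{2-\theta},
\end{equation*}
for some $\theta=\theta(s)>0$. I would obtain these by Littlewood-Paley decomposing each factor, invoking the known bilinear $L^2_{t,x}$ Schr\"odinger-Schr\"odinger and Schr\"odinger-wave estimates to treat the resonant and transverse frequency interactions, and then placing one frequency-localised factor into the $L^\infty_{t,x}$ component of $Y$ via Bernstein with the $\lambda^{-4}$ weight built into the definition of $Y$. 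A gain $\theta>0$ is available because, in the range $\frac{1}{2}<s<2$, the endpoint-Strichartz-to-$Y$ bound \eqref{eqn:Y bounded by sobolev intro} leaves a small amount of derivative room that can be split off as a Bernstein loss on one factor and absorbed by the $Y$-norm.

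With these estimates in hand, the iteration is closed by the bootstrap
\begin{equation*}
\|u\|_{\mb{S}^s(I')} \les 2C_0 A, \qquad \|V\|_{\mb{W}(I')} \les \tfrac{1}{2}\bigl(B+\|Q\|_{\dot{H}^1}\bigr), \qquad \|u\|_{Y(I')} \les 2\epsilon,
\end{equation*}
on a maximal sub-interval $I' \subset I$ containing $0$. The refined trilinear estimates improve each upper bound by a factor $C(A,B)\epsilon^{\theta}$: for instance $\|V\|_{\mb{W}(I')} \les B + C(A)\epsilon^{\theta}A^{2-\theta}$, which is bounded by $\tfrac{1}{2}(B+\|Q\|_{\dot{H}^1})$ once $\epsilon = \epsilon(A,B,s)$ is taken small enough relative to the gap $\|Q\|_{\dot{H}^1}-B>0$. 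The $Y$-bound on the Duhamel term is itself controlled by $\|u\|_{\mb{S}^s}$ via \eqref{eqn:Y bounded by sobolev intro} combined with the trilinear estimates, so it also gains the factor $\epsilon^\theta$, closing the hypothesis $\|u\|_Y\les 2\epsilon$. Continuity of $I'\mapsto \|u\|_{\mb{S}^s(I')}$ then extends the bootstrap to all of $I$, and the advertised bound $\|u\|_{L^2_t W^{\frac{1}{2},4}_x}\les C\|f\|_{H^{\frac{1}{2}}}$ follows by rerunning essentially the same estimate with the $H^s$-norm on the Schr\"odinger component replaced by $H^{\frac{1}{2}}$ and observing that all nonlinear contributions carry an $\epsilon^\theta$ factor.

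The main obstacle is the proof of the refined trilinear estimates with a genuine positive gain in the $Y$-norm, in particular for the high-high-to-low frequency interactions in the wave nonlinearity $|\nabla||u|^2$, where the tight wave regularity $L^2_x$ leaves essentially no derivative room to spare. One cannot simply interpolate a bilinear $L^2_{t,x}$ bound against a crude $L^\infty_{t,x}$ bound on one factor: the $L^\infty_{t,x}$ bound must appear with precisely the $\lambda^{-4}$ Bernstein weight that defines $Y$, and the matching of the remaining dimensional exponents forces a careful case analysis based on the relative sizes of the three input frequencies. It is at this step that the particular form of $Y$, built from frequency-localised $L^\infty_{t,x}$ bounds scaling like $\dot{H}^{-2}$ at high frequency, is essential rather than cosmetic, and substituting any less carefully tuned weak norm would destroy the closure of the bootstrap.
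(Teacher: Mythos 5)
There is a genuine gap in the bootstrap, and it is precisely the one that the paper flags as obstruction (iii) in the introduction and builds a detour around. The hypothesis $\|u\|_{Y(I')}\les 2\epsilon$ cannot be closed by your estimates. The Duhamel contribution to $\|u\|_Y$ is controlled, by your own trilinear estimates together with \eqref{eqn:Y bounded by sobolev intro}, by a quantity of the form $C(A,B)\,\epsilon^{\theta}$ with $0<\theta<1$, and the paper emphasises that in the fully resonant high--high interactions this $\theta$ can in fact be taken very small. Since $\theta<1$, one has $\epsilon^{\theta}\gg\epsilon$ as $\epsilon\to 0$, so $\|u_0\|_Y+\|\mc{I}_0[\cdots]\|_Y$ can never be brought back below $2\epsilon$: requiring $C(A,B)\,\epsilon^{\theta}\les\epsilon$ forces $C(A,B)\les\epsilon^{1-\theta}$, which is impossible for $\epsilon$ small. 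Rescaling the threshold or trying to bootstrap some intermediate $\eta$ in place of $2\epsilon$ runs into the same homogeneity mismatch. The paper avoids this by \emph{not} iterating on $\|u\|_Y$ at all: it introduces the controlling quantity $\rho_I$ in \eqref{eqn:rho defn}, whose very definition lets the linear-in-data Duhamel terms $\mc{I}_0[\Re(V)u_0']$ and $\mc{J}_0[|\nabla|\Re(\overline{w}u_0')]$ be bounded by $\rho_I(u_0')\cdot\|V\|_{\underline{W}^0}$, i.e.\ the small parameter enters \emph{linearly} in each step of the contraction in Theorem~\ref{thm:stab}. The improved bilinear estimate (Theorem~\ref{thm:bi est improved}), which is the source of the sublinear exponent $\theta$, is invoked only \emph{once}, at the data level, to pass from $\|e^{it\Delta}f\|_Y\les\epsilon$ to $\rho_I(e^{it\Delta}f)\lesa\epsilon^{\theta}A^{1-\theta}$ (Corollary~\ref{cor:rho bounded by Y}, then Theorem~\ref{thm:stab II}, then Theorem~\ref{thm:small data Hs}).

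A second, related problem is that the trilinear estimate you posit, $\|\Re(V)u\|_{\mb{N}^s}\lesa\|V\|_{\mb{W}}\|u\|_Y^{\theta}\|u\|_{\mb{S}^s}^{1-\theta}$ with the \emph{same} $s$ on both sides, does not hold with a genuine gain in the regime relevant here. The paper's Theorem~\ref{thm:freq loc bi schro} yields a $Y$-gain with exponent proportional to $\sigma$, where $9\sigma=\ell+s'-s$: with the wave datum only in $L^2$ (so $\ell=0$) one must take $s'>s$, i.e.\ the input Schr\"odinger factor must carry strictly more derivatives than the output. This is exactly the ``loss of Schr\"odinger regularity'' discussed after Theorem~\ref{thm:bi est improved}, and it is why the paper estimates the Duhamel terms into $S^{1/2}$ while feeding in $\underline{S}^s$ with $s>1/2$, and then recovers higher regularity separately by the $Y$-free persistence result Theorem~\ref{thm:persistence}. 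Your proposal acknowledges the wave nonlinearity as the hard case but does not confront the fact that the Schr\"odinger nonlinearity already imposes this derivative gap, which undoes a ``homogeneous-in-$s$'' closure.

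Finally, the role of the uniform Strichartz estimate with a free wave potential, Theorem~\ref{thm:schro energy}, is not discussed in your sketch. Because the wave component is large ($\|g\|_{L^2}\les B$ can be close to $\|Q\|_{\dot H^1}$), the paper absorbs the free wave $\Re(e^{it|\nabla|}g)$ into the left side of the Schr\"odinger equation via $\mc{U}_{\Re(\phi_0)}$ and $\mc{I}_{\Re(\phi_0)}$, and uses Theorem~\ref{thm:schro energy} to obtain constants depending only on $B$ and not on the profile of $g$. This is a subsidiary point compared to the two above, but it is another structural ingredient the direct contraction would need in some form.
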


Strictly speaking, we also require a generalisation of Theorem \ref{thm:small data Hs} that implies dispersive solutions are stable, see Theorem \ref{thm:stab II} for a precise statement. In view of the bound \eqref{eqn:Y bounded by sobolev intro}, Theorem \ref{thm:small data Hs} is more general than the small data theory developed in \cite{Candy2021a} which required smallness in the endpoint Strichartz space $L^2_t W^{\frac{1}{2}, 4}_x$. It is also worth noting that showing that smallness in $L^2$ suffices for scattering is straightforward via the convexity type bound
        $$ \| f\|_{H^\frac{1}{2}} \lesa \|f \|_{L^2}^\theta \| f \|_{H^s}^{1-\theta}. $$
On the other hand, proving that smallness in $\dot{H}^1$ suffices is significantly more challenging, as the low frequencies can be badly behaved. In particular there does not seem to be an easy direct argument to conclude that smallness in energy suffices for scattering, and instead it seems necessary to argue via the norm $\| \cdot \|_Y$ (or some similar alternative). \\

The proof of Theorem \ref{thm:small data Hs} is somewhat involved, and there are essentially three main obstructions to overcome: (i) Large wave data, (ii) Gaining a power of the $Y$ norm, (iii) Propagating smallness over large time intervals. We detail each of these obstructions below.

\begin{enumerate}
  \item (\textit{Large wave data.}) The wave data $g\in L^2$ in Theorem \ref{thm:small data Hs} is not small, and consequently we need to absorb the free wave into the left hand side of the equation. This issue was resolved in \cite{Candy2021} by proving a uniform Strichartz estimate for the Schr\"odinger equation with a free wave potential, and we exploit this estimate here.  As a simple toy model, consider the equation
                    $$ i\p_t u + \Delta u - \Re( e^{it|\nabla|} g ) u = F.$$
      Since $g$ is not small, it cannot be regarded as a perturbation over long time intervals. There are two approaches to resolving this issue. One is to exploit the dispersive effect of the free wave, and essentially observe that $e^{it|\nabla|}g$ converges to zero in $L^2 + L^\infty$, see for instance \cite{Bejenaru2015, Candy2021a}. Although this argument works for arbitrary wave data in $L^2$, it leads to estimates which are \emph{not uniform} in $g$. The lack of a uniform estimate is a major issue for the large data theory, as below the ground state we can control $\|V(t)\|_{L^2}$, but not its profile. In particular the key step in the global theory \cite{Candy2021} was the proof of a \emph{uniform} Strichartz estimate with a free wave potential, see also \cite{Guo2021} for the radial case. The uniform Strichartz estimate from \cite{Candy2021} also plays a key role in the proof of Theorem \ref{thm:small data Hs}, see Theorem \ref{thm:schro energy} below for a precise statement.\\

  \item (\textit{Gaining power of $Y$ norm.}) This step is the heart of the proof of Theorem \ref{thm:small data Hs}, and shows that the nonlinear terms in \eqref{eqn:Zak intro} are perturbative provided that $\|u\|_{Y(I)} \ll 1$. The small data theory for \eqref{eqn:Zak intro} essentially relies on a fixed point argument in a Banach space based on the endpoint Strichartz norm $L^2_t L^4_x$. Roughly speaking (ignoring derivatives for simplicity), after discarding the non-resonant terms and applying the endpoint Strichartz estimate \cite{Keel1998}, the argument reduces to a simple application of H\"older's inequality via the estimate
        \begin{equation}\label{eqn:intro Strichartz argument}
             \| V u \|_{L^2_t L^{\frac{4}{3}}_x} \lesa \|V \|_{L^\infty_t L^2_x} \| u \|_{L^2_t L^4_x}.
        \end{equation}
      Our goal is to then improve \eqref{eqn:intro Strichartz argument} by adding some power of the $Y$ norm to the righthand side, i.e. essentially replacing $\|u\|_{L^2_t L^4_x}$ with $\| u\|_{L^\infty_{t,x}}^\theta \| u \|_{L^2_t L^4_x}^{1-\theta}$. The issue is that in the fully resonant case (so both inputs and outputs have Fourier support close to their relevant characteristic surfaces) it seems very hard to improve on the Strichartz bound \eqref{eqn:intro Strichartz argument} as there is no room in the exponents. In particular, it does not seem possible to extract a factor $\|u\|_{L^\infty_{t,x}}$. Note that even placing $V\in L^q_t L^r_x$ for some wave admissable pair $(q,r)$ does not help, since the \emph{only} admissable wave pair for which we have
                $$ \| V u \|_{L^2_t L^\frac{4}{3}_x} \lesa \| V \|_{L^q_t L^r_x} \| u \|_{L^a_t L^b_x}$$
        with $(a,b)$ Schr\"odinger admissable, is $(q,r)=(\infty, 2)$ and $(a,b) = (2, 4)$. In the radial case, there is additional room in the Strichartz exponents, and thus the above issue can be resolved by placing $u\in L^2_t L^{4-\delta}_x$ for some $\delta>0$. In the non-radial case, the argument is more involved, and to create some room in the above chain of estimates, we instead require the inhomogeneous bilinear restriction estimates from \cite{Candy2021}. These estimates essentially extend the bilinear restriction estimates for the paraboloid obtained by Tao \cite{Tao2003a} to \emph{inhomogeneous} Schr\"odinger waves. More precisely, \cite{Candy2021} proved that for any $r>\frac{5}{3}$ and $\mu \in \ZZ$ we have the bound
           \begin{equation}\label{eqn:bilinear restriction intro}
                \| \dot{P}_\mu(\phi \psi) \|_{L^1_t L^r_x} \lesa \mu^{2-\frac{4}{r}} \Big( \| \phi(0)\|_{L^2} + \| (i\p_t + \Delta) \phi\|_{L^2_t L^{\frac{4}{3}}_x}\Big)\Big( \| \psi(0)\|_{L^2} + \| (i\p_t + \Delta) \psi\|_{L^2_t L^{\frac{4}{3}}_x}\Big)
           \end{equation}
     where $\dot{P}_\mu$ denotes the homogeneous Littlewood-Paley projection to frequencies $|\xi|\approx \mu \in 2^\ZZ$. Note that the case $r=2$ follows from H\"older's inequality and the endpoint Strichartz estimate. The point of the bilinear restriction type estimate \eqref{eqn:bilinear restriction intro} is that it allows $r<2$, and adapting the argument from \cite{Candy2021}, this additional room essentially allows us to improve \eqref{eqn:intro Strichartz argument} to something of the form
            $$\Big\| \int_0^t e^{i(t-s)\Delta} \big(\Re(V) u\big)(s) ds \Big\|_{L^2_t L^4_x} \lesa\|V \|_{W}  \|u\|_{Y}^\theta \| u \|_{S}^{1-\theta}$$
     where $S$ is our iteration norm for the Schr\"odinger component of the evolution, and $W$ is the corresponding norm for the wave evolution, see Section \ref{sec:bil est} for the details. \\

     \item (\textit{Propagating smallness over large time intervals.}) The bilinear estimates obtained in (ii) essentially allow use to conclude that the nonlinearity in \eqref{eqn:Zak intro} are perturbative provided we have smallness in the energy dispersed norm $\| \cdot \|_{Y}$. In particular, after replacing the wave component of the evolution by the corresponding Duhamel formula, (ii) allows us to schematically prove a bound of the form
                $$ \| u \|_S \lesa \text{data} + \| u \|_S^{3-\theta} \|u\|_Y^\theta. $$
        However there is a technical issue in that the power $\theta>0$ can be very small (and much smaller than $1$), and thus even if the data is small in $\|\cdot \|_Y$, the estimates in (ii) are not sufficient to directly conclude that the solution will remain small in $Y$. In particular, iterating this smallness over a potentially large number of intervals becomes technically challenging. Potentially this obstruction could be addressed by proving sharper estimates with $\|u\|_Y$ on the left hand side (or working harder in the iteration stage). However, here we take a simple and more flexible approach to this issue, by first obtaining a small data theory based on a tractable controlling quantity, and then second, proving that this controlling quantity can be bounded by the energy dispersed norm $\|\cdot \|_Y$.

        To illustrate the general idea, suppose we were trying to construct a fixed point to the (cubic) nonlinear problem
                    \begin{equation}\label{eqn:cubic intro}
                         u = u_0 + \mc{I}[u^3]
                    \end{equation}
        where as usual $u_0$ is the free evolution of the data and $\mc{I}$ denotes the corresponding Duhamel term. Suppose we  have a Banach space $X$ satisfying the nonlinear estimate
                    \begin{equation}\label{eqn:gen nonlinear bounds intro}
                             \| \mc{I}[u v w]\|_X \lesa \|u\|_X \|v \|_X \| w \|_X.
                    \end{equation}
        The bound \eqref{eqn:gen nonlinear bounds intro} together with a standard fixed point argument implies that if $u_0 \in X$ is small, then \eqref{eqn:cubic intro} has a solution $u\in X$. Our key observation is that the smallness condition on $u_0$ can be weakened significantly by the following simple observation. Namely, define the \emph{controlling quantity}
                    $$ \rho(u) = \sup_{\|\phi\|_X\les 1} \| \mc{I}[\phi^2 u ]\|_X $$
        which roughly measures the nonlinear interaction of $u$ with a generic element $\phi \in X$. Then for any $A>0$ there exists $\epsilon>0$ such that if the (free evolution of the) data satisfies
                    \begin{equation}\label{eqn:data assump illus intro}
                        \| u_0\|_X \les A, \qquad \rho(u_0) \les \epsilon
                    \end{equation}
        then a fixed point $u\in X$ exists, and moreover we have $\rho(u)\lesa_A \epsilon$ (see Subsection \ref{subsec:controlling quantity} below). Clearly in view of the bound \eqref{eqn:gen nonlinear bounds intro} if $u_0$ is small in $X$, then \eqref{eqn:data assump illus intro} holds. On the other hand, if say we could improve \eqref{eqn:gen nonlinear bounds intro} to
                    \begin{equation}\label{eqn:non bound with gain intro} \| \mc{I}[u^3]\|_X \lesa \|u\|_D^\theta \| u \|_X^{3-\theta}\end{equation}
        for some $\theta>0$ and some dispersive norm $\|\cdot \|_D$, then assuming $\|u_0\|_D$ is small the definition of $\rho$ together with \eqref{eqn:non bound with gain intro} would imply that \eqref{eqn:data assump illus intro} holds. Of course, assuming the bound \eqref{eqn:non bound with gain intro} holds, it is also possible to argue directly (i.e. without proceed via $\rho$) that we have a solution $u\in X$ to \eqref{eqn:cubic intro}. However arguing via the controlling quantity $\rho$  has the advantage that: (a) smallness $\rho(u)\lesa_A \epsilon$ is preserved, (b) it is flexible as it completely separates the issue of the improved nonlinear estimate \eqref{eqn:non bound with gain intro} from the existence theory, (c) we only need to prove smallness of the free evolution $\rho(u_0)$. To illustrate (a), note that if $\theta<1$ in \eqref{eqn:non bound with gain intro}, then arguing via $Y$ it is not possible to conclude that $\|u\|_Y \lesa_A \epsilon$. The advantage of (b) is that if at a later point a more convenient dispersive norm $D'$ arises (or alternatively some more involved condition on the data, for instance concentration along light rays as in \cite{Tao2009b}), it is not necessary to repeat the small data theory. It would simply suffice to prove that smallness in $D'$ implies the controlling quantity $\rho(u_0)$ is small (for instance via a version of the improved nonlinear estimate \eqref{eqn:non bound with gain intro}). Finally, (c) is a somewhat minor point, and simply makes the observation that as we only need to prove $\rho(u_0)$ is small, we may replace $u^3$ with $u^2 u_0$ on the lefthand side of \eqref{eqn:non bound with gain intro} which in some settings may be helpful. In the application to the Zakharov equation, the necessary controlling quantity $\rho$ is slightly more involved (as we are dealing with a system), but the underlying idea remains the same. Thus we first provide a small data theory (relying on the estimates from \cite{Candy2021, Candy2021a}) with smallness measured in some controlling quantity $\rho$, and then later prove refined bilinear estimates which allows us to conclude that smallness in $Y$ implies smallness of the controlling quantity.
\end{enumerate}

\subsection{Outline of Paper}

Section \ref{sec:notation} fixes the notation used throughout this paper, defines the key function spaces that are used to control solutions to the Zkharov equation, and recalls a number of key estimates from the papers \cite{Candy2021, Candy2021a}. In Section \ref{sec:small data}, we turn to the small data theory for the Zakharov equation, assuming that a certain controlling norm is small. In particular, we give a persistence of regularity result, and prove a key stability property for the Zakharov equation. The results in Section \ref{sec:small data} only rely on the estimates from \cite{Candy2021, Candy2021a}. The improved bilinear estimates giving a gain in the energy dispersed norm are then proved in Section \ref{sec:bil est}. In Section \ref{sec:small data II} we apply the improved bilinear estimates together with the small data theory from Section \ref{sec:small data} to give the proof of Theorem \ref{thm:small data Hs}, as well as give a stability theory based on the energy dispersed norm $\|\cdot\|_Y$.

The final four sections contain the concentration compactness and variational arguments needed to prove Theorem \ref{thm:min coun}. More precisely, in Section \ref{sec:prof decomp} we state a version of the profile decomposition of Bahouri-G\'erard \cite{Bahouri1999} while in Section \ref{sec:ground state constraint} we recall the variational properties of the Zakharov equation from \cite{Guo2021}. The key step in the proof of existence of minimal mass/energy blow up solutions is given in Section \ref{sec:P-S} where we prove a fundamental Palais-Smale type condition which gives a crucial compactness property of bounded sequences of solutions to the Zakharov equation. Finally in Section \ref{sec:almost per} we prove the existence of a mass/energy threshold (under the assumption that Conjecture \ref{conj:scattering} fails), and conclude the proof of Theorem \ref{thm:min coun}.

\section{Notation}\label{sec:notation}

\subsection{Fourier Multipliers}

We let $\chi\in C^\infty_0(\RR)$ denote a smooth function satisfying the standard Littlewood-Paley conditions
		$$\supp \chi \subset \{\tfrac{1}{2}\les r \les 2\}, \qquad 0\les \chi \les 1, \qquad \sum_{\lambda \in 2^\ZZ} \chi(\lambda^{-1} r) = 1 \,\,\text{ for all $r>0$}. $$
The corresponding standard inhomogeneous Littlewood-Paley projections are then given by
		$$ P_1 = \sum_{\substack{\mu \in 2^\ZZ \\ \mu \les 1}} \chi(\mu^{-1} |\nabla|), \qquad P_\lambda = \chi(\lambda^{-1} |\nabla|)  \,\, \text{ for $\lambda\in 2^\NN$, $\lambda>1$.}$$
We also require the homogeneous version. To this end, we take $\dot{P}_\lambda$ with $\lambda \in 2^\ZZ$ to be the homogeneous Littlewood-Paley multipliers. We often use the short hand $u_\lambda = P_\lambda u$, this is reserved exclusively for the inhomogeneous Littlewood-Paley projection, thus $\lambda \in 2^\NN$.

Given $\mu\in 2^\ZZ$, we let
		$$C_{\les \mu} = \sum_{\substack{\nu \in 2^\ZZ \\ \nu \les \mu}} \chi\big( \nu^{-1} |i\p_t + \Delta|\big)$$
be a smooth Fourier multiplier with support in the set $\{ |\tau + |\xi|^2| \les 2\mu\}$. We also require a temporal version of the Littlewood-Paley decomposition. To this end, we let
		$$ P^{(t)}_{\les \mu} = \sum_{\substack{\nu \in 2^\ZZ \\ \nu \les \mu}} \chi\big( \nu^{-1} |\p_t|\big).$$
Thus $P^{(t)}_{\les \mu}$ restricts the temporal Fourier support to the set $|\tau|\les 2\mu$.

\subsection{Function Spaces}\label{subsec:function spaces}
We fix the spatial dimension to be four, and often use the short hand $L^q_t L^r_x = L^q_t L^r_x(\RR^{1+4})$. The arguments to follow unfortunately involve a large number of norms. Similar to \cite{Candy2021} (except for the dispersive norm $Y$), these norms can be seen as special cases of the norms appearing in \cite{Candy2021a}. These come in two flavours, the Schr\"odinger variants $S^s$, $\underline{S}^s$, and $N^s$, and the wave variants $W^\ell$, $\underline{W}^\ell$, and $R^\ell$. The solution $(u, V)$ is controlled in $S^s\times W^\ell$, with the underlined versions of the spaces giving slightly stronger control. The nonlinear terms are then estimated in $N^s\times R^\ell$.

In more detail, for $\lambda \in 2^\NN$ and $s\in \RR$, we control the frequency localised Schr\"odinger component of the evolution using the norm
	$$ \| u \|_{S_\lambda^s} = \lambda^s \| u \|_{L^\infty_t L^2_x(\RR^{1+4})} + \lambda^s \| u \|_{L^2_t L^4_x(\RR^{1+4})} + \lambda^{s-1} \| (i\p_t + \Delta) u \|_{L^2_{t,x}(\RR^{1+4})}  $$
and the stronger norm
	$$ \| u \|_{\underline{S}^s_\lambda} = \lambda^s \| u \|_{L^\infty_t L^2_x(\RR^{1+4})} +  \| (i\p_t + \Delta) u \|_{N^s_\lambda} $$
where
    $$ \| F \|_{N^s_\lambda} = \lambda^s \| C_{\les 2^{-16} \lambda^2} F \|_{L^2_t L^\frac{4}{3}_x(\RR^{1+4})} + \lambda^{s-1} \| F \|_{L^2_{t,x}(\RR^{1+4})}. $$
Similarly, to estimate the frequency localised wave evolution, given $\lambda \in 2^\NN$ and $\ell \in \RR$ we use the norm
		$$ \| v \|_{W^{\ell}_\lambda} = \lambda^\ell \| v \|_{L^\infty_t L^2_x(\RR^{1+4})} + \lambda^{\ell - 1} \| (i\p_t + |\nabla|) v \|_{L^2_{t,x}(\RR^{1+4})} $$
together with the stronger variant
		$$ \| v \|_{\underline{W}^\ell_\lambda} = \lambda^\ell \| v \|_{L^\infty_t L^2_x(\RR^{1+4})} + \| (i\p_t + |\nabla|) v \|_{R^\ell_\lambda} $$
where
    $$ \| G \|_{R^\ell_\lambda} =  \lambda^{\ell} \| P^{(t)}_{\les 2^{-16} \lambda^2 + 2^{16}} G \|_{L^1_t L^2_x(\RR^{1+4})} + \lambda^{\ell -1 } \| G \|_{L^2_{t,x}(\RR^{1+4})} $$
and $\lambda\in 2^\ZZ$ and $\ell\in \RR$. The iteration spaces are then formed by summing up over frequencies $\lambda \in 2^\NN$ in $\ell^2(\NN)$, thus
        $$ \| u \|_{S^s} = \Big( \sum_{\lambda \in 2^\NN} \| u_\lambda \|_{S^s_\lambda}^2\Big)^\frac{1}{2}, \qquad \| u \|_{\underline{S}^s} =\Big( \sum_{\lambda \in 2^\NN} \| u_\lambda \|_{\underline{S}^s_\lambda}^2\Big)^\frac{1}{2}$$
and
       $$ \| v \|_{W^\ell} = \Big( \sum_{\lambda \in 2^\NN} \| v_\lambda \|_{W^\ell_\lambda}^2\Big)^\frac{1}{2}, \qquad \| v \|_{\underline{W}^\ell} =\Big( \sum_{\lambda \in 2^\NN} \| v_\lambda \|_{\underline{W}^\ell_\lambda}^2\Big)^\frac{1}{2}$$
and
        $$ \| F \|_{N^s} = \Big( \sum_{\lambda \in 2^\NN} \| F_\lambda \|_{N^s_\lambda}^2\Big)^\frac{1}{2}, \qquad \| G \|_{R^\ell} =\Big( \sum_{\lambda \in 2^\NN} \| G_\lambda \|_{R^\ell_\lambda}^2\Big)^\frac{1}{2}.$$
The spaces $S^s$ and $\underline{S}^s$ are nested, contain all $H^s$ free solutions to the Schr\"odinger equation, and moreover also control the Strichartz spaces $L^q_t L^r_x$. More precisely, an application of \cite[Lemma 2.1]{Candy2021a} and the endpoint Strichartz estimate \cite{Keel1998} shows that for any Schr\"odinger admissible exponents $(q,r)$ on $\RR\times \RR^4$, thus $2\les q, r \les  \infty$ and $\frac{1}{q} +\frac{2}{r} = 1$, and any $\lambda \in 2^\NN$ we have the bounds
        \begin{equation}\label{eqn:schro stri}
          \lambda^s  \| u_\lambda \|_{L^q_t L^r_x(\RR^{1+4})}  \lesa \| u_\lambda\|_{S^s_\lambda} \lesa \| u_\lambda \|_{\underline{S}^s_{\lambda}}, \qquad \| e^{it\Delta} f \|_{\underline{S}^s} \lesa \| f \|_{H^s}.
        \end{equation}
A similar bound applies in the wave case, although strictly speaking only the stronger space $\underline{W}^\ell$ controls the (wave admissible) spaces $L^a_t L^b_x$. In fact this was the key reason to introduce the stronger variant $\underline{W}^\ell$. In more detail, if $(a,b)$ are wave admissible on $\RR\times \RR^4$, thus $2\les a, b \les \infty$ and $\frac{1}{a} + \frac{3}{2r} = \frac{3}{2}$, then we have bounds
        \begin{equation}\label{eqn:wave stri}
            \lambda^{\ell - \frac{5}{3a}} \|  V_\lambda \|_{L^a_t L^b_x(\RR^{1+4})} \lesa \| V_\lambda \|_{\underline{W}^\ell_\lambda}, \qquad \| V_\lambda \|_{W^\ell_\lambda} \lesa \|V_\lambda \|_{\underline{W}^\ell_\lambda}, \qquad \| e^{it|\nabla|} g \|_{\underline{W}^\ell} \lesa \| f \|_{H^\ell}.
        \end{equation}
Strictly speaking the bounds in \eqref{eqn:wave stri} are not contained in \cite{Candy2021a}, but follow from an identical argument. For instance, after decomposing $V_\lambda = P^{(t)}_{<2^{-16}\lambda + 2^{16}} V_\lambda + P^{(t)}_{\g 2^{-16}\lambda + 2^{16}} V_\lambda$ an application of Bernstein's inequality and the endpoint free Strichartz estimate for the wave equation \cite{Keel1998} gives for any $\lambda \in 2^\NN$
        \begin{align*}
             \| V_\lambda \|_{L^2_t L^6_x}&\les \| P^{(t)}_{<2^{-16}\lambda + 2^{16}} V_\lambda\|_{L^2_t L^6_x}+ \lambda^{\frac{4}{3}} \|P^{(t)}_{\g 2^{-16}\lambda + 2^{16}} V_\lambda\|_{L^2_{t,x}}\\
              &\lesa \lambda^{\frac{5}{6}}\Big( \| P^{(t)}_{<2^{-16}\lambda + 2^{16}} V_\lambda(0)\|_{L^2_x} + \| P^{(t)}_{<2^{-16}\lambda + 2^{16}} V_\lambda\|_{L^1_t L^2_x}\Big) + \lambda^{\frac{1}{3} - 1} \| (i\p_t + |\nabla|) V_\lambda \|_{L^2_{t,x}} \\
              &\lesa \lambda^{\frac{5}{6} - \ell} \|V_\lambda \|_{\underline{W}^\ell}.
        \end{align*}
The convexity of $L^p$ norms, together with the trivial bound $\| V_\lambda \|_{L^\infty_t L^2_x} \lesa \| V_\lambda \|_{\underline{W}^\ell}$, then gives the first inequality in \eqref{eqn:wave stri}. The remaining bounds in \eqref{eqn:wave stri} are immediate from the definition of the norms.

We also require some way to measure how dispersed a solution is, this is accomplished via the ``dispersive'' norms
       $$ \| u \|_{D} = \| u \|_{L^2_t W^{\frac{1}{2}, 4}_x(\RR^{1+4})}, \qquad \| u \|_{Y} =  \sup_{\lambda \in 2^\NN} \lambda^{-4} \| u_\lambda \|_{L^\infty_{t,x}(\RR^{1+4})}.$$
These norms form a crucial component of the analysis, and are used to extract some smallness out of the error terms in the profile decomposition. Note that the Strichartz control \eqref{eqn:schro stri} implies that
    \begin{equation}\label{eqn:D bdded by S}
          \|u\|_{D} \lesa \Big( \sum_{\lambda \in 2^\NN} \lambda \| u_\lambda \|_{L^2_t L^4_x}^2\Big)^\frac{1}{2} \lesa \| u \|_{S^\frac{1}{2}}
    \end{equation}
and thus the dispersive norm $\|\cdot\|_D$ is controlled by the iteration norm $\| \cdot \|_{S^\frac{1}{2}}$. On the other hand, the norm $\| \cdot\|_Y$ can be controlled by the \emph{homogeneous} Sobolev spaces. More precisely, a short computation using Bernstein's inequality gives for any $-2\les s < 2$
        \begin{align}
        \| u \|_{Y} &\les \sum_{\mu\in 2^\ZZ, \mu \les 1} \|\dot{P}_\mu u \|_{L^\infty_{t,x}} + \sup_{\lambda\in 2^\NN, \lambda>1} \lambda^{-4} \| u_\lambda \|_{L^\infty_{t,x}} \notag \\
                    &\lesa \sum_{\mu\in 2^\ZZ, \mu \les 1} \mu^2 \|\dot{P}_\mu u \|_{L^\infty_t L^2_x} + \sup_{\lambda\in 2^\NN, \lambda>1} \lambda^{-2} \| u_\lambda \|_{L^\infty_t L^2_x} \notag \\
                    &\lesa \sup_{\mu\in 2^\ZZ} \mu^s \| \dot{P}_\mu  u \|_{L^\infty_t L^2_x} \lesa \| u \|_{L^\infty_t \dot{H}^s_x}. \label{eqn:Y norm bound by sobolev}
        \end{align}
Moreover, at least for free Schr\"odinger waves, the two dispersive norms can be compared by noting that another application of Bernstein's inequality (in both $t$ and $x$) together with the endpoint Strichartz estimate gives
    $$ \| e^{it\Delta} f\|_{Y} \lesa \sup_{\lambda\in 2^\NN} \lambda^{-2} \| e^{it\Delta} f_\lambda \|_{L^2_tL^4_x} \lesa \|e^{it\Delta} f \|_{D}.$$
Note that there is a  derivative loss in this bound. Roughly the $Y$ norm scales like $\dot{H}^{-2}_x$, while $D$ scales like $\dot{H}^\frac{1}{2}$ (at least at high frequencies). The choice of scaling for the $Y$ norm is slightly arbitrary however. In fact, due to the fact that the energy is subcritical (at least for the Schr\"odinger component of the Zakharov evolution), we could equally well have worked with a norm that scales at high frequencies like $\dot{H}^{-N}$ for any $N$.

Finally, a key step in proof of the main bilinear estimate requires the bilinear restriction theory for the paraboloid. As in \cite{Candy2021}, the precise estimates we need are phrased in terms of the inhomogeneous Strichartz type norm
		$$ \| u \|_Z = \| u \|_{L^\infty_t L^2_x} + \| (i\p_t+ \Delta) u \|_{L^1_t L^2_x + L^2_t L^4_x}. $$

Given a Banach space of functions $X$ defined on $\RR\times \RR^d$ together with an interval $I\subset \RR$, we define the temporally restricted spaces $X(I)$ via the usual construction
        $$ \| u \|_{X(I)} = \inf_{\tilde{u}\in X \text{ and } \tilde{u}|I = u} \| \tilde{u} \|_X $$
with the convention that $\| u \|_{X(I)}=\infty$ if no such extension exists. As we are using the standard convention $L^q_t L^r_x = L^q_t L^r_x(\RR\times \RR^4)$, the above definition together with a simple argument gives the identity
        $$ \| u \|_{L^q_t L^r_x(I)} = \| u \|_{L^q_t L^r_x(I\times \RR^4)}. $$
This short hand is used frequently in the following. \\

\subsection{Solution Operators}
Define the inhomogeneous solution operators for the Schr\"odinger equation and wave equation with zero data at $t=t_0$ as
	$$\mc{I}_{0, t_0}[F] = -i \int_{t_0}^t e^{i(t-s)\Delta} F(s) ds, \qquad \mc{J}_{0, t_0}[G] = -i \int_{t_0}^t e^{i(t-s)|\nabla|} G(s) ds. $$
In the special case $t_0=0$ we simply write $\mc{I}_{0, t_0} = \mc{I}_0$ and $\mc{J}_{0, t_0} = \mc{J}_0$. The arguments in \cite[Lemma 2.4 and Lemma 2.6]{Candy2021a} give for any $s, \ell \in \RR$ the energy type bounds
    \begin{equation}\label{eqn:energy ineq}
        \|\mc{I}_{0, t_0}[F] \|_{\underline{S}^s} \lesa \| F \|_{N^s}, \qquad \|\mc{J}_{0, t_0}[G]\|_{\underline{W}^\ell} \lesa \| G \|_{G^\ell}.
    \end{equation}
In the following, we frequently have to work with the Duhamel expressions $\mc{I}_{0, t_0}[F]$ in the local in time spaces $S^s(I)$. To make this slightly easier, we have the following useful relationship on the local spaces $S^s(I)$ which roughly gives a distinguished extension of $\mc{I}_{0, t_0}[F]$ from the interval $I$ to $\RR$.

\begin{lemma}[Local in time characterisation]\label{lem:duh loc time}
Let $s, \ell \in \RR$. There exists a constant $C>0$ such that for any interval $I\subset \RR$ and any initial time $t_0\in I$ we have
    $$  C^{-1} \| \mc{I}_{0, t_0}[F]\|_{S^s(I)} \les \| \mc{I}_0[\ind_I F] \|_{S^s}\les C  \| \mc{I}_{0, t_0}[F]\|_{S^s(I)}$$
and
    $$C^{-1} \| \mc{J}_{0, t_0}[G]\|_{W^\ell(I)} \les \| \mc{J}_0[\ind_I G] \|_{W^\ell}\les C  \| \mc{J}_{0, t_0}[G]\|_{W^\ell(I)}. $$
\end{lemma}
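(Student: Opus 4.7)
Set $w := \mc{I}_0[\ind_I F]$. The starting point is that $(i\p_t + \Delta) w = \ind_I F$ on all of $\RR$ with $w(0) = 0$, so $w$ satisfies the free Schr\"odinger equation on $\RR \setminus I$ and, on $I$, differs from $\mc{I}_{0,t_0}[F]$ by a free Schr\"odinger wave. Concretely, setting $h_0 := w(t_0)$,
$$ w(t) = \mc{I}_{0,t_0}[F](t) + e^{i(t-t_0)\Delta} h_0 \quad (t \in I), \qquad w(t) = e^{i(t-t_\pm)\Delta} w(t_\pm) \quad (t \in \RR \setminus I), $$
where $t_+ := \sup I$ and $t_- := \inf I$. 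Both identities are immediate from the Duhamel formula together with the Cauchy problem characterisation of the free evolution.

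For the upper bound $\|\mc{I}_{0,t_0}[F]\|_{S^s(I)} \lesa \|w\|_{S^s}$, the function $\tilde{u} := w - e^{i(t-t_0)\Delta} h_0$ is a global extension of $\mc{I}_{0,t_0}[F]|_I$ to $\RR$, and by the triangle inequality combined with the free Strichartz bound $\|e^{it\Delta} h_0\|_{S^s} \lesa \|h_0\|_{H^s}$ from \eqref{eqn:schro stri} and the trivial $\|h_0\|_{H^s} = \|w(t_0)\|_{H^s} \lesa \|w\|_{L^\infty_t H^s} \lesa \|w\|_{S^s}$, we get $\|\mc{I}_{0,t_0}[F]\|_{S^s(I)} \les \|\tilde{u}\|_{S^s} \lesa \|w\|_{S^s}$. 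For the reverse bound, fix any extension $\tilde{u}$ of $\mc{I}_{0,t_0}[F]|_I$ with $\|\tilde{u}\|_{S^s} \les 2\|\mc{I}_{0,t_0}[F]\|_{S^s(I)}$. The crux is to bound $h_0$ using only $\tilde{u}$: let $t_\star$ denote the point of $\overline{I}$ closest to $0$ (so $t_\star = 0$ if $0 \in I$, otherwise $t_\star \in \{t_-, t_+\}$). From the explicit formula $w(t) = -i\int_0^t e^{i(t-s)\Delta} \ind_I(s) F(s)\,ds$ one has $w(t_\star) = 0$, since $\ind_I$ vanishes on the segment between $0$ and $t_\star$; evaluating the identity at $t = t_\star$ then yields $h_0 = -e^{i(t_0 - t_\star)\Delta} \tilde{u}(t_\star)$ and hence $\|h_0\|_{H^s} \lesa \|\tilde{u}\|_{L^\infty_t H^s} \lesa \|\tilde{u}\|_{S^s}$. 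With $h_0$ under control, each frequency-localised piece of $\|w\|_{S^s_\lambda}$ is bounded by combining the identity on $I$ with the free-wave representation on $\RR \setminus I$ (invoking \eqref{eqn:schro stri} for the Strichartz components), while the term $\lambda^{s-1}\|(i\p_t + \Delta) w_\lambda\|_{L^2_{t,x}}$ is controlled directly via $(i\p_t + \Delta) w = \ind_I F = \ind_I (i\p_t + \Delta)\tilde{u}$ by $\lambda^{s-1}\|\ind_I (i\p_t + \Delta)\tilde{u}_\lambda\|_{L^2_{t,x}} \les \lambda^{s-1}\|(i\p_t + \Delta)\tilde{u}_\lambda\|_{L^2_{t,x}}$. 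Square-summing in $\lambda \in 2^\NN$ yields $\|w\|_{S^s} \lesa \|\tilde{u}\|_{S^s}$, as required.

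The wave statement is proved by the same argument with $\mc{I}_0, e^{it\Delta}, S^s$ replaced by $\mc{J}_0, e^{it|\nabla|}, W^\ell$ and \eqref{eqn:wave stri} used in place of \eqref{eqn:schro stri}. The only genuinely non-routine step is the boundary identification of $h_0$ described above; once this is in place, everything else reduces to the triangle inequality and the linear Strichartz estimates already at our disposal.
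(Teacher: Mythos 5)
Your proof is correct and follows essentially the same argument as the paper: both exploit the fact that $\mc{I}_0[\ind_I F]$ agrees with $\mc{I}_{0,t_0}[F]$ on $I$ up to a free Schr\"odinger wave (whose data is controlled in $H^s$ by any extension via the $L^\infty_t H^s\subset S^s$ embedding), is a free wave outside $I$, and has $(i\p_t+\Delta)$ supported in $I$. The only cosmetic difference is bookkeeping: the paper first proves the reusable initial-time invariance $\|\mc{I}_0[F]\|_{S^s}\approx\|\mc{I}_{0,\tau}[F]\|_{S^s}$ and then works with $\mc{I}_{0,t_0}[\ind_I F]$ (whose data vanishes at $t_0\in I$, so the boundary matching is automatic), whereas you work directly with $\mc{I}_0[\ind_I F]$ and identify $h_0$ by evaluating at the point $t_\star\in\overline I$ nearest the origin, where the Duhamel integral vanishes — mathematically equivalent, just a slightly more roundabout way to the same identification.
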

\begin{proof}
We only prove the Schr\"odinger case as the wave case is similar. We begin by observing that for any $\tau, \tau'\in \RR$ we have the identity
		\begin{equation}\label{eqn:duh loc time:ident}
			 \mc{I}_{0, \tau}[F](t) = \mc{I}_{0, \tau'}[F](t) - e^{i(t-\tau)\Delta} \mc{I}_{0, \tau'}[F](\tau).
		\end{equation}
Hence, after noting that
		$$ \| e^{i(t-\tau)\Delta} \mc{I}_{0, \tau'}[F](\tau) \|_{S^s} \lesa \| \mc{I}_{0, \tau'}[F](\tau) \|_{H^s} \lesa \| \mc{I}_{0, \tau'}[F] \|_{L^\infty_t H^s_x} \lesa \| \mc{I}_{0,\tau'}[F] \|_{S^s}, $$
we conclude that for any $\tau \in \RR$ we have the useful bound
		\begin{equation}\label{eqn:duh loc time:init trans}
			\| \mc{I}_{0}[F] \|_{S^s} \approx \|  \mc{I}_{0, \tau}[F] \|_{S^s}.
		\end{equation}
Let $t_0\in I \subset \RR$ and suppose that $\psi \in S^s$ is an extension of $\mc{I}_{0, t_0}[F]$ from $I$ to $\RR$. Then letting $t_-< t_+$ denote the endpoints of the interval $I$ we have
        $$ \mc{I}_{0, t_0}[ \ind_I F](t) = \begin{cases}  e^{i(t-t_+)\Delta} \psi(t_+), &t_+<t, \\
 											\psi(t), &t_-\les t \les t_+, \\
 											e^{i(t-t_-)\Delta}\psi(t_-), &t<t_-.
 											\end{cases} $$
and therefore the definition of the norm $\|\cdot \|_{S^s_\lambda}$ gives for any $\lambda \in 2^\NN$
	\begin{align*}
		\|\mc{I}_{0,t_0}[\ind_I F_\lambda]\|_{S^s_\lambda} &\lesa \lambda^s \Big( \| \psi_\lambda \|_{L^\infty_t L^2_x \cap L^2_t L^4_x(I)} + \| e^{i(t-t_+)\Delta} \psi_\lambda(t_+) \|_{L^\infty_t L^2_x \cap L^2_t L^4_x(t>t_+)}\\
		&\qquad \qquad + \|e^{i(t-t_-)\Delta}\psi_\lambda(t_-) \|_{L^\infty_t L^2_x \cap L^2_t L^4_x(t<t_-)} + \lambda^{-1} \| (i\p_t + \Delta) \psi_\lambda \|_{L^2_{t,x}(I)} \Big)\\
					&\lesa 	\lambda^s \Big( \| \psi_\lambda \|_{L^\infty_t L^2_x \cap L^2_t L^4_x} + \lambda^{-1}\| (i\p_t + \Delta)\psi_\lambda \|_{L^2_{t,x}}\Big) = \|\psi_\lambda \|_{S^s_\lambda}.
	\end{align*}
Consequently, after summing up over frequencies and applying \eqref{eqn:duh loc time:init trans}, we obtain
    $$ \| \mc{I}_0[\ind_I F] \|_{S^s} \approx \| \mc{I}_{0, t_0}[\ind_I F ]\|_{S^s} \lesa \| \psi \|_{S^s}. $$
Taking the infimum over all possible extensions, the definition of the time restricted spaces $S^s(I)$ gives
    $$ \| \mc{I}_0[\ind_I F ] \|_{S^s} \lesa \| \mc{I}_{0, t_0}[F]\|_{S^s(I)}. $$
To prove the converse inequality, we simply note that since $t_0\in I$ we have the identity
        $$ \ind_I \mc{I}_{0, t_0}[F] = \ind_I \mc{I}_{0, t_0}[\ind_I F]$$
and hence again applying \eqref{eqn:duh loc time:init trans} we see that
        $$ \| \mc{I}_{0, t_0}[F] \|_{S^s(I)} \les \| \mc{I}_{0, t_0}[\ind_I F ] \|_{S^s} \lesa \| \mc{I}_0[\ind_I F] \|_{S^s}. $$
\end{proof}

Due to the presence of the derivative in time, bounding $\| u \|_{S^s(I\cup J)}$ by the sum $\| u \|_{S^s(I)} + \| u \|_{S^s(J)}$ involves losing a factor which depends on the size of the intervals \cite[Lemma 2.8]{Candy2021a}. This is a slight technical inconvenience. However the Duhamel operator $\mc{I}_{0, t_0}$ \emph{does} satisfy a useful decomposability bound which is independent of the intervals.

\begin{lemma}[Duhamel operator is decomposable]\label{lem:duh decomposable}
Let $s\in \RR$ and $I, I_j \subset \RR$, $1\les j \les N$, be intervals with $I= \cup_{j=1}^N I_j$. Then for any $t_0\in I$ and $t_j\in I_j$ we have
	$$ \|\mc{I}_{0, t_0}[F]\|_{S^s(I)} \lesa \sum_{j=1}^N \| \mc{I}_{0, t_j}[F]\|_{S^s(I_j)} $$
\end{lemma}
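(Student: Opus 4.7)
My plan is to reduce the claim to an application of Lemma~\ref{lem:duh loc time} (together with the base-point identity \eqref{eqn:duh loc time:ident} used in its proof), which converts the local-in-time Duhamel norm into the global $S^s$ norm of $\mc{I}_0[\ind_I F]$. The decomposability property is then nothing more than the triangle inequality on the global side, combined with the monotonicity of $\| \mc{I}_0[\ind_J F] \|_{S^s(J)}$ in the interval $J$.

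More precisely, I would first invoke Lemma~\ref{lem:duh loc time} to obtain
$$\| \mc{I}_{0,t_0}[F] \|_{S^s(I)} \lesa \| \mc{I}_0[\ind_I F] \|_{S^s}.$$
Next, I would partition $I$ into essentially disjoint subintervals $J_j \subset I_j$ (e.g.\ set $J_j = I_j \setminus \bigcup_{k<j} I_k$) so that $\ind_I = \sum_{j=1}^N \ind_{J_j}$ pointwise. Linearity of $\mc{I}_0$ and the triangle inequality in the Banach space $S^s$ then yield
$$\| \mc{I}_0[\ind_I F] \|_{S^s} \les \sum_{j=1}^N \| \mc{I}_0[\ind_{J_j} F] \|_{S^s}.$$
Applying Lemma~\ref{lem:duh loc time} in the opposite direction gives $\| \mc{I}_0[\ind_{J_j} F] \|_{S^s} \lesa \| \mc{I}_{0,t_j'}[F] \|_{S^s(J_j)}$ for any choice of base point $t_j'\in J_j$.

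The one mildly delicate step, which I expect to be the main (though still minor) obstacle, is reconciling the base point $t_j'\in J_j$ supplied by Lemma~\ref{lem:duh loc time} with the prescribed base point $t_j\in I_j$, since $t_j$ need not lie in the sub-interval $J_j$. I would handle this exactly as in the proof of Lemma~\ref{lem:duh loc time} via the identity
$$\mc{I}_{0,t_j'}[F](t) = \mc{I}_{0,t_j}[F](t) - e^{i(t-t_j')\Delta}\, \mc{I}_{0,t_j}[F](t_j'),$$
which, upon taking the $S^s(J_j)$ norm and using $\| e^{i(t-t_j')\Delta} h \|_{S^s} \lesa \|h\|_{H^s}$ together with $\|\mc{I}_{0,t_j}[F](t_j')\|_{H^s} \les \|\mc{I}_{0,t_j}[F]\|_{L^\infty_t H^s_x(I_j)} \les \|\mc{I}_{0,t_j}[F]\|_{S^s(I_j)}$ (valid since $t_j'\in J_j \subset I_j$), produces the bound
$$\| \mc{I}_{0,t_j'}[F] \|_{S^s(J_j)} \lesa \| \mc{I}_{0,t_j}[F] \|_{S^s(J_j)} + \| \mc{I}_{0,t_j}[F] \|_{S^s(I_j)} \lesa \| \mc{I}_{0,t_j}[F] \|_{S^s(I_j)},$$
where in the last step the inclusion $J_j \subset I_j$ gives $\|\cdot\|_{S^s(J_j)} \les \|\cdot\|_{S^s(I_j)}$ directly from the definition of the time-restricted norm as an infimum over extensions. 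Summing over $j$ closes the estimate and delivers the claim. The wave-evolution case follows identically using the corresponding statement in Lemma~\ref{lem:duh loc time} for $\mc{J}_{0,t_0}$ and the analogous base-point identity.
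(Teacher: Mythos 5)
Your proof follows the same strategy as the paper's: both pass to the global norm via Lemma~\ref{lem:duh loc time}, split $\ind_I F$ into disjoint pieces, apply the triangle inequality in $S^s$, and then convert each piece back to a local Duhamel norm via Lemma~\ref{lem:duh loc time}. The only genuine difference is how the base point $t_j$ is handled. The paper sidesteps the issue entirely by choosing the disjoint intervals $I_j^*\subset I_j$ so that $t_j\in I_j^*$ from the start, whereupon the reverse direction of Lemma~\ref{lem:duh loc time} applies directly with base point $t_j$. You instead pick a disjointification $J_j$ that need not contain $t_j$, and you compensate with the base-point transfer identity $\mc{I}_{0,t_j'}[F](t) = \mc{I}_{0,t_j}[F](t) - e^{i(t-t_j')\Delta}\mc{I}_{0,t_j}[F](t_j')$ together with the $H^s$ embedding; that step is correct and is exactly the mechanism used in the proof of Lemma~\ref{lem:duh loc time} itself. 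You also use the monotonicity $\|\cdot\|_{S^s(J_j)}\les\|\cdot\|_{S^s(I_j)}$, which is immediate from the infimum-over-extensions definition. One thing to be careful about: the specific disjointification $J_j = I_j\setminus\bigcup_{k<j}I_k$ may fail to be an interval (e.g.\ if an earlier $I_k$ sits strictly inside $I_j$), and Lemma~\ref{lem:duh loc time} is stated for interval restrictions. You should either reorder the intervals so the set-differences stay connected, or further subdivide any $J_j$ that splits into several components and apply the same bound to each component (which is harmless since $J_j$ still sits inside $I_j$ and the norm on the right-hand side is $\|\cdot\|_{S^s(I_j)}$). It is worth noting that the paper's construction also implicitly assumes that a system of disjoint $I_j^*\subset I_j$ containing $t_j$ and covering $I$ exists --- which is true in the consecutive-overlap setting used later (Theorem~\ref{thm:persistence} and Theorem~\ref{thm:stab}) but is not automatic for an arbitrary finite cover. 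So both arguments rely on mild tidying of the cover; yours makes the required fix more explicit via the base-point transfer, at the cost of the interval-connectedness caveat above.
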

\begin{proof}
The proof is a direct consequence of Lemma \ref{lem:duh loc time}. Choose disjoint intervals $I_j^*\subset I_j$ such that $t_j\in I_j^*$ and $I=\cup I_j^*$. Then as $t_0\in I$, two applications of Lemma \ref{lem:duh decomposable} give
	\begin{align*}
		\| \mc{I}_{0, t_0}[F] \|_{S^s(I)} \approx  \| \mc{I}_{0}[\ind_I F] \|_{S^s} \les \sum_{j=1}^N \| \mc{I}_{0}[\ind_{I_j^*} F]\|_{S^s} \approx \sum_{j=1}^N \| \mc{I}_{0, t_j}[F]\|_{S^s(I_j^*)} \lesa \sum_{j=1}^N \| \mc{I}_{0, t_j}[F]\|_{S^s(I_j)}. 	
    \end{align*}
\end{proof}

\subsection{Solution Operators with Potentials}
Our eventual goal is to understand solutions to the Zakharov equation with data close to the ground state. In particular, this means we need to consider the Schr\"odinger flow with a free wave potential which is not small. To this end, following \cite{Candy2021} we introduce notation for the solution operator for the linear flow perturbed by a potential. More precisely, given a time dependent potential $v(t,x):\RR^{1+4} \to \RR$, we let $\mc{U}_{v, t_0}(t) f$ denote the solution to the homogeneous problem
            $$  i\p_t u + \Delta u - v u = 0, \qquad u(t_0) = f. $$
Similarly, we let $\mc{I}_{v, t_0}[F]$ denote the solution to the inhomogeneous problem
    $$ (i\p_t + \Delta - v) u = F, \qquad u(t_0) = 0. $$
As previously, in the special case where the initial time $t_0=0$, we write $\mc{U}_{v,t_0} = \mc{U}_v$, $\mc{I}_{v, t_0} = \mc{I}_v$. Under reasonable assumptions on the potential $v$, data $f$, and forcing term $F$, the operators $\mc{U}_{v, t_0}f, \mc{I}_{v, t_0}[F] \in C(I; H^s)$ are always well-defined \cite{Candy2021}. For instance if the potential $v = \Re( e^{it|\nabla|} g) \in L^\infty_t L^2_x$ is a free wave, then for any $0\les s < 1$ we have the energy estimate
        $$ \| \mc{I}_{v, t_0}[F] \|_{\underline{S}^s(I)} \lesa_g \|F \|_{N^s(I)}, $$
see \cite{Candy2021a}. This can be improved to a \emph{uniform} in $g$ estimate provided that $g$ has $L^2$ norm below the ground state. In fact provided $0<B<\|Q\|_{\dot{H}^1}$ we have for any $\|g\|_{L^2}\les B$ the uniform bound
        $$ \| \mc{I}_{v, t_0}[F] \|_{\underline{S}^s(I)} \lesa_B \| F \|_{N^s(I)}$$
where the implied constant depends only on $B$ (and $0\les s < 1$) \cite{Candy2021}. More formally, we have the following improvement to \eqref{eqn:energy ineq}.

\begin{theorem}[Strichartz with free wave potential {\cite[Theorem 6.1]{Candy2021}}]\label{thm:schro energy}
Let $0 \les  s <1$ and $0<B<\|Q\|_{\dot{H}^1}$. There is a constant $C = C(B)>0$ such that if $I\subset \RR$ is an interval, then for any $g\in L^2$ with $\| g \|_{L^2}\les B$ we have
    $$ \| u \|_{\underline{S}^s(I)} \les C\Big[ \inf_{t\in I} \| u(t) \|_{H^s} + \big\| \big( i\p_t + \Delta - \Re(e^{it|\nabla|}g) \big) u \big\|_{N^s(I)}\Big]. $$
\end{theorem}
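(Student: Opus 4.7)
The plan is to treat the potential $v=\Re(e^{it|\nabla|}g)$ as a perturbation of the free Schr\"odinger flow and absorb the term $vu$ into the baseline Strichartz estimate, exploiting the strict inequality $\|g\|_{L^2}\les B<\|Q\|_{\dot H^1}$ to close the bootstrap. Setting $F=(i\p_t+\Delta-v)u$ and applying the inhomogeneous Strichartz bound \eqref{eqn:energy ineq} (transferred to the interval $I$ via Lemma \ref{lem:duh loc time}) produces the baseline estimate
$$ \|u\|_{\underline S^s(I)} \les C_0\Bigl[\inf_{t\in I}\|u(t)\|_{H^s} + \|F\|_{N^s(I)} + \|vu\|_{N^s(I)}\Bigr]. $$
The desired conclusion would follow from a uniform trilinear bound $\|vu\|_{N^s(I)}\les K(B)\|u\|_{\underline S^s(I)}$ valid whenever $\|g\|_{L^2}\les B$, together with the smallness condition $C_0K(B)<1$.

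The heart of the proof is the trilinear bound. I would perform a Littlewood-Paley decomposition on $v$ and $u$ and separate resonant from non-resonant contributions using the modulation cutoffs $C_{\les \mu}$ built into $N^s$. Non-resonant pieces are controlled via the $L^2_{t,x}$ component of the $N^s$ norm and produce off-diagonal decay factors that sum geometrically with an effectively small constant. The genuinely difficult case is the fully resonant high-high-to-comparable interaction of a frequency-$\lambda$ wave with a frequency-$\lambda$ Schr\"odinger input; there one reduces to the model bound $\|vu\|_{L^2_tL^{4/3}_x}\lesa\|v\|_{L^\infty_tL^2_x}\|u\|_{L^2_tL^4_x}$ handled by H\"older's inequality and the endpoint Strichartz \eqref{eqn:schro stri}. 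The ground state $Q$ enters because the sharp constant in the Sobolev embedding $\dot H^1(\RR^4)\hookrightarrow L^4(\RR^4)$ is expressed through the identity $\|Q\|_{L^4}^2=\|Q\|_{\dot H^1}$, and this is the mechanism by which the admissible size of $\|g\|_{L^2}$ becomes tied to $\|Q\|_{\dot H^1}$ in the final absorption.

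The main obstacle is that the absolute constants in $\underline S^s$, $N^s$, and the dyadic square-summations are not automatically matched to the sharp Sobolev constant, so a one-shot absorption over all of $I$ is likely to fall short by a bounded multiplicative factor. To remedy this I would employ a subdivision argument. Split the wave datum as $g=g_{\mathrm{sm}}+g_{\mathrm{loc}}$ with $g_{\mathrm{sm}}$ small in $L^2$ and $g_{\mathrm{loc}}$ compactly supported in frequency and physical space; the potential generated by $g_{\mathrm{sm}}$ is then perturbative everywhere, while the free wave $e^{it|\nabla|}g_{\mathrm{loc}}$ disperses and enjoys Strichartz decay outside a neighbourhood of its concentration time. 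Partition $I$ into finitely many sub-intervals $\{I_j\}_{j=1}^{N(B)}$ on each of which the perturbative bootstrap closes with a Lipschitz constant strictly less than one, and stitch the bounds back together using the decomposability of the Duhamel operator (Lemma \ref{lem:duh decomposable}) together with the trivial bound $\|u\|_{L^\infty_tH^s_x}\les \|u\|_{\underline S^s}$ to transport the pointwise-in-time $H^s$ norm of $u$ across sub-interval endpoints. Since $N(B)$ depends only on $B$, the final constant $C=C(B)$ depends only on $B$ as required.
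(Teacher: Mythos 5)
This theorem is quoted directly from \cite[Theorem 6.1]{Candy2021}; there is no proof of it in the present paper, so strictly speaking there is no ``paper's own proof'' to compare against. Nonetheless the proposal should be evaluated on its own merits, and it contains a genuine gap.

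The fatal step is the claim that the interval $I$ can be partitioned into $N(B)$ sub-intervals, with $N$ depending only on $B$, on each of which the perturbative bootstrap closes. This is exactly the strategy that the present paper explicitly warns \emph{does not} produce a uniform estimate (see the discussion of obstruction (i) in the introduction). The reason is that the dispersive decay of $e^{it|\nabla|}g_{\mathrm{loc}}$ — whether measured in $L^\infty_x$, or $L^2+L^\infty$, or via Strichartz norms going to zero on late time intervals — occurs on a timescale that depends on the \emph{profile} of $g_{\mathrm{loc}}$ (its frequency and spatial concentration scales), not merely on its $L^2$ size $B$. A decomposition $g=g_{\mathrm{sm}}+g_{\mathrm{loc}}$ with $\|g_{\mathrm{sm}}\|_{L^2}$ small and $g_{\mathrm{loc}}$ ``compactly supported in frequency and physical space'' either requires truncation scales that must be chosen $g$-dependently (so $N$ depends on $g$), or fails entirely since an arbitrary $g$ with $\|g\|_{L^2}\les B$ need not concentrate at any fixed scale. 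The resulting estimate would read $\|u\|_{\underline S^s(I)}\les C(g)[\cdots]$ rather than $C(B)[\cdots]$, which is exactly the weaker non-uniform statement available from \cite{Bejenaru2015, Candy2021a} and is insufficient for the large-data theory.

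The one-shot absorption sketched in the first half of the proposal — using a trilinear bound $\|vu\|_{N^s(I)}\les K(B)\|u\|_{\underline S^s(I)}$ and hoping $C_0K(B)<1$ — would be clean if it worked, but the proposal offers no reason why the operator-norm constants appearing in \eqref{eqn:energy ineq} and Theorem~\ref{thm:bi prev} should match the sharp Sobolev constant so as to make the bootstrap close exactly at the threshold $\|g\|_{L^2}<\|Q\|_{\dot H^1}$. The fact that the threshold is precisely $\|Q\|_{\dot H^1}$ (rather than some unspecified small constant) is a strong hint that the proof of \cite[Theorem 6.1]{Candy2021} is not a constant-chasing absorption but instead a contradiction argument: one assumes the uniform bound fails along a sequence $g_n$, extracts a limiting time-independent potential by concentration compactness, and rules out the resulting bound state using the variational characterisation of $Q$. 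The proposal does not engage with this mechanism at all, and the subdivision it substitutes in its place cannot recover uniformity in $g$.
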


\subsection{Standard Bilinear Estimates}

We frequently rely on the bilinear estimates contained in \cite{Candy2021a}. Although the estimates in \cite{Candy2021a} are valid in the full region of admissible regularities for the Zakharov equation (including the energy space $H^1\times L^2$), the general statement requires more complicated iteration spaces than those we require here. For our purposes, it suffices to work in the energy subcritical setting $H^s\times L^2$ with $\frac{1}{2}\les s < 1$. In this regime, the estimates in \cite{Candy2021a} can be stated as follows.

\begin{theorem}[Standard bilinear estimates \cite{Candy2021a}]\label{thm:bi prev}
Let $0\les s < 1$ and $0\les \ell < \frac{1}{2}$. Then for any interval $0\in I\subset \RR$ we have
        \begin{align*}
                 \| v u \|_{N^s(I)} &\lesa \| v\|_{W^0(I)} \| u \|_{S^s(I)}, \\
                 \big\| \mc{J}_0\big[ |\nabla|(\overline{u} w) \big] \big\|_{\underline{W}^\ell(I)} &\lesa \|u\|_{S^{\ell + \frac{1}{2}}(I)} \|w\|_{S^{\ell + \frac{1}{2}}(I)},
        \end{align*}
and the corresponding endpoint counterparts
        \begin{align*}
                 \| v u \|_{N^\frac{1}{2}(I)} &\lesa \| v \|_{W^0(I)} \Big( \| u \|_{D(I)} \| u \|_{S^\frac{1}{2}(I)}\Big)^\frac{1}{2},  \\
                 \big\| \mc{J}_0\big[ |\nabla|(\overline{u} w) \big] \big\|_{\underline{W}^0(I)} &\lesa \Big(\| u \|_{D(I)} \| w\|_{D(I)} \| u \|_{S^\frac{1}{2}(I)} \|w \|_{S^\frac{1}{2}(I)} \Big)^\frac{1}{2}.
        \end{align*}
\end{theorem}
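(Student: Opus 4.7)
My plan is to reduce the four bounds to the corresponding global (i.e. $I=\RR$) estimates, and then identify them with specializations of the more general bilinear estimates proved in \cite{Candy2021a}. The reduction to global estimates uses the time-restriction definition of $X(I)$: for the first estimate one takes extensions of $v$ and $u$ to $\RR$ that nearly realize the $W^0(I)$ and $S^s(I)$ norms, and then multiplies by $\ind_I$ on the left, which is controlled since $\|\ind_I F\|_{N^s}\lesa \|F\|_{N^s}$ up to harmless modulation truncations (the $C_{\les 2^{-16}\lambda^2}$ cutoff in $N^s$ respects the time cutoff after a standard exchange argument). For the wave estimates one instead invokes Lemma \ref{lem:duh loc time} to pass from $\mc{J}_{0,0}[\ind_I |\nabla|(\overline{u}w)]$ on the line to $\mc{J}_0[|\nabla|(\overline{u}w)]$ on $I$.

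Once everything is global, the first two inequalities are frequency-by-frequency applications of H\"older and the Strichartz control \eqref{eqn:schro stri}. After a Littlewood--Paley decomposition $u=\sum_\lambda u_\lambda$, $v=\sum_\mu v_\mu$, one splits into high$\times$low, low$\times$high, and high$\times$high$\to$low cases. The baseline estimate is
\[
  \| v_\mu u_\lambda \|_{L^2_tL^{4/3}_x}\lesa \|v_\mu\|_{L^\infty_tL^2_x}\|u_\lambda\|_{L^2_tL^4_x},
\]
which combined with the subcritical exponent $s<1$ gives room for a summable Cauchy--Schwarz over frequencies; for the wave bound one uses the inhomogeneous bilinear restriction estimate \eqref{eqn:bilinear restriction intro} from \cite{Candy2021} to place $\dot P_\kappa(\overline u_\lambda w_\lambda)$ in $L^1_t L^2_x$ with a logarithmic loss that is absorbed by the subcritical exponent $\ell<\tfrac{1}{2}$. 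The derivative weights in $N^s$ vs.\ $L^2_{t,x}$ and $R^\ell$ vs.\ $L^1_tL^2_x$ are set up precisely so that the high-modulation piece is handled by the $L^2_{t,x}$ component of the norm, and the low-modulation piece by the bilinear restriction output.

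For the two endpoint bounds, the plan is to interpolate the preceding estimates with the trivial bound that places the Schr\"odinger factor in the dispersive norm. More precisely, the key observation is
\[
  \|u_\lambda\|_{L^2_tL^4_x}\lesa \lambda^{-1/2}\|u_\lambda\|_{S^{1/2}_\lambda}
  \qquad\text{and}\qquad
  \lambda^{1/2}\|u_\lambda\|_{L^2_tL^4_x}\lesa \|u_\lambda\|_{D},
\]
so that after a Littlewood--Paley decomposition one can replace one copy of the $L^2_tL^4_x$ norm appearing in the baseline H\"older inequality by the geometric mean $\|u\|_D^{1/2}\|u\|_{S^{1/2}}^{1/2}$, matching the claimed endpoint form. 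For the bilinear wave estimate \eqref{eqn:bilinear restriction intro} is applied in its bilinear rather than trilinear form: after isolating the high-high$\to$low interaction, the $L^1_tL^2_x$ component of $R^0_\lambda$ accommodates the bilinear restriction output, while for the remaining regimes one again applies the geometric-mean trick on both factors symmetrically.

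The step I expect to be the main obstacle is the endpoint wave bound, where the low-modulation piece of $\mc{J}_0[|\nabla|(\overline u w)]$ must land in $L^1_tL^2_x$ at scale $\lambda^0$ rather than the scale $\lambda^{\ell}$ with $\ell>0$ used in the standard case. Without the loss $\ell$, there is no room in the frequency summation unless one exploits the full strength of Tao's bilinear $L^2$ restriction together with its inhomogeneous extension from \cite{Candy2021}, and it is exactly here that the dispersive factors $\|u\|_D^{1/2}\|w\|_D^{1/2}$ are needed to recover the $\ell^2$-summability over output frequencies. Once this case is handled, the remaining pieces (and in particular the comparison with the more elaborate iteration spaces of \cite{Candy2021a}) follow from routine modulation and frequency bookkeeping in the subcritical regime $s<1$, $\ell<\tfrac{1}{2}$.
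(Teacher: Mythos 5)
The paper offers no proof of this theorem: it is recalled from \cite{Candy2021a}, restated in specialized form for the subcritical regime $0\les s<1$, $0\les\ell<\tfrac12$. So there is no in-paper argument to compare your sketch against; what follows evaluates the sketch on its own terms. The high-level shape — reduce to global estimates, Littlewood--Paley decompose, apply H\"older and endpoint Strichartz away from resonance, handle the low-modulation resonant output via bilinear restriction for the paraboloid, and trade one Strichartz factor for the dispersive norm $\|\cdot\|_D$ at the endpoint $\ell=0$ — is a believable outline of the route \cite{Candy2021a} would take.

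Several of the details are off, though. First, the claim that $\|\ind_I F\|_{N^s}\lesa\|F\|_{N^s}$ holds ``up to harmless modulation truncations'' is wrong as stated: the modulation cutoff $C_{\les 2^{-16}\lambda^2}$ in $N^s_\lambda$ does not commute with a sharp time cutoff $\ind_I$, and there is no need for any cutoff in the product estimate — extend $v$ and $u$ near-optimally to $\RR$; the product of these extensions extends $vu$ on $I$, so the restricted norm is bounded by the global one by definition of $N^s(I)$. Second, Lemma~\ref{lem:duh loc time} is proved for $S^s$ and $W^\ell$, not for $\underline{W}^\ell$, so it cannot be invoked for the wave inequalities; instead observe directly that for $0\in I$ the Duhamel operator $\mc{J}_0[G]|_I$ depends only on $G|_I$, so $\mc{J}_0[|\nabla|(\overline{\tilde u}\tilde w)]$ is a global extension whenever $\tilde u,\tilde w$ extend $u,w$, and again the energy inequality \eqref{eqn:energy ineq} applied to this extension does the reduction. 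Third, the estimate \eqref{eqn:bilinear restriction intro} at $r=2$ is just H\"older plus endpoint Strichartz — there is no ``logarithmic loss'' to absorb; the genuine bilinear-restriction gain appears only for $r<2$, and in the subcritical case the summation closes because $\ell>0$ gives geometric decay, not because $\ell<\tfrac12$ absorbs a log. Finally, the endpoint $\ell=0$, which you correctly single out as the crux, is not settled by the geometric-mean interpolation $\|u_\lambda\|_{L^2_tL^4_x}\les \|u\|_D^{1/2}\|u\|_{S^{1/2}}^{1/2}$ alone: making those dispersive factors compatible with the $\ell^2$ sum over output frequencies and the $L^1_tL^2_x$ component of $R^0_\lambda$ is exactly where the modulation/null-structure bookkeeping of \cite{Candy2021a} is concentrated, and the sketch does not engage with that step.
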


The bilinear estimates in Theorem \ref{thm:bi prev} suffice for the small data global theory. However the proof of Theorem \ref{thm:min coun} requires a substantial improvement over these estimates. In particular, partially due to the fact that we don't have a good profile decomposition in the endpoint Strichartz spaces $L^2_t L^4_x$, we require a version of Theorem \ref{thm:bi prev} with a factor of the norm $\|\cdot\|_Y$ on the righthand side, see Theorem \ref{thm:bi est improved} for a precise statement.

\section{The Small Data Theory}\label{sec:small data}

In this section we collect together a number of consequences of the bilinear estimates contained in Theorem \ref{thm:bi prev}. In particular all results proved in this section can be seen as consequences of the estimates contained in \cite{Candy2021, Candy2021a}.

\subsection{Persistence of Regularity}

We start with a persistence of regularity type result which follows from the estimates in \cite{Candy2021, Candy2021a}. It closely related to  a more general persistence of regularity result \cite[Theorem 8.1]{Candy2021a} (which is valid for general regularities). The advantage here is that, via the uniform Strichartz estimate in Theorem \ref{thm:schro energy} we obtain a uniform bound in the iteration spaces $\underline{S}^s\times \underline{W}^0$ which is independent of the profile of the solution $(u, V)$. This uniformness is particularly useful later, as we frequently have good control over the size of $(u, V)$ in various norms, but not its profile.

Note that the energy space $H^1\times L^2$ is excluded from the result below. However this is not an issue, as for our purposes it suffices to work in $H^s\times L^2$ with $\frac{1}{2}\les s <1$. As explained in the introduction, once we have a global solution with $\|u\|_{D} < \infty$, scattering in the energy space $H^1\times L^2$ follows from Theorem \ref{thm:scat}.

\begin{theorem}[Persistence of regularity]\label{thm:persistence}
Let $\frac{1}{2}\les s<1$, $A>0$, and $0<B<\| Q \|_{\dot{H}^1}$. There exists a constant $C_{A,B}>0$ such that for any interval $I\subset \RR$, if $(u, V) \in C(I; H^\frac{1}{2}\times L^2)$ solves \eqref{eqn:Zak} on $I\times \RR^4$ with
        \begin{equation}\label{eqn:persistence:assump}
            \| u \|_{L^\infty_t H^\frac{1}{2}_x(I\times \RR^4)} + \| u \|_{D(I)} \les A, \qquad \| V \|_{L^\infty_t L^2_x(I\times \RR^4)} \les B,
        \end{equation}
then
        $$ \| u \|_{\underline{S}^s(I)} \les C_{A, B}  \inf_{t\in I} \| u(t) \|_{H^s}, \qquad \|V\|_{\underline{W}^0(I)} \les \inf_{t\in I} \| V(t)\|_{L^2} + C_{A,B}\inf_{t\in I} \|u(t)\|_{H^\frac{1}{2}}^2. $$
\end{theorem}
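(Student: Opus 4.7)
The plan is to subdivide $I$ into $N=N(A,B)$ consecutive intervals $I_j=[t_{j-1},t_j]$ on each of which $\|u\|_{D(I_j)}\le\eta$ for a threshold $\eta=\eta(A,B)$ to be fixed, which is possible because $\|u\|_{D(I)}\le A$. On each piece I would split the wave evolution into its free and nonlinear parts based at $t_{j-1}$,
\[
V \;=\; \underbrace{e^{i(t-t_{j-1})|\nabla|}V(t_{j-1})}_{V^{\mathrm{free}}_j} \;+\; \underbrace{\mc{J}_{0,t_{j-1}}\big[-|\nabla||u|^2\big]}_{V^{\mathrm{nl}}_j},
\]
and invoke Theorem~\ref{thm:schro energy} with $\Re(V^{\mathrm{free}}_j)$ as the (non-small) potential, admissible since $\|V(t_{j-1})\|_{L^2}\le B<\|Q\|_{\dot{H}^1}$. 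This yields, for any $s\in[\tfrac12,1)$,
\[
\|u\|_{\underline{S}^s(I_j)}\;\les\;C(B)\Big(\|u(t_{j-1})\|_{H^s} + \big\|\Re(V^{\mathrm{nl}}_j)\,u\big\|_{N^s(I_j)}\Big),
\]
with a constant uniform in the profile of $V^{\mathrm{free}}_j$; this uniformity is the entire reason for using Theorem~\ref{thm:schro energy} rather than treating $V$ as a perturbation.

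The heart of the argument is the case $s=\tfrac12$, closed by a bootstrap using the endpoint bilinear estimates of Theorem~\ref{thm:bi prev}:
\[
\big\|\Re(V^{\mathrm{nl}}_j)u\big\|_{N^{1/2}(I_j)}\;\lesa\;\|V^{\mathrm{nl}}_j\|_{W^0(I_j)}\big(\|u\|_{D(I_j)}\|u\|_{S^{1/2}(I_j)}\big)^{1/2},\qquad \|V^{\mathrm{nl}}_j\|_{\underline{W}^0(I_j)}\;\lesa\;\|u\|_{D(I_j)}\|u\|_{S^{1/2}(I_j)}.
\]
Setting $y=\|u\|_{\underline{S}^{1/2}(I_j)}\ge\|u\|_{S^{1/2}(I_j)}$ and using $\|u(t_{j-1})\|_{H^{1/2}}\le A$ reduces the estimate to $y\le C(B)A+C(B)\eta^{3/2}y^{3/2}$. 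Choosing $\eta=\eta(A,B)$ small enough that $C(B)\eta^{3/2}(2C(B)A)^{1/2}\le\tfrac12$, and running a standard continuity argument (using local well-posedness to seed the bootstrap on a small initial subinterval where $y$ is tiny, together with continuity of $J\mapsto\|u\|_{\underline{S}^{1/2}(J)}$ in the endpoints of $J$) closes the inequality to $y\le 2C(B)\|u(t_{j-1})\|_{H^{1/2}}$. Picking $t_\star\in I$ nearly attaining $\inf_{t\in I}\|u(t)\|_{H^{1/2}}$, iterating outward across the $N$ subintervals in both time directions, and reassembling in $\ell^2$ over dyadic Littlewood--Paley pieces produces $\|u\|_{\underline{S}^{1/2}(I)}\le C_{A,B}\inf_{t\in I}\|u(t)\|_{H^{1/2}}$.

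For $s\in(\tfrac12,1)$, I would reuse the same partition and apply the non-endpoint Schr\"odinger bilinear together with the endpoint wave bilinear to obtain
\[
\|\Re(V^{\mathrm{nl}}_j)u\|_{N^s(I_j)}\;\lesa\;\|u\|_{D(I_j)}\|u\|_{S^{1/2}(I_j)}\|u\|_{S^s(I_j)}\;\le\;\eta\cdot 2C(B)A\cdot\|u\|_{\underline{S}^s(I_j)}.
\]
Shrinking $\eta$ further (still depending only on $A,B$) absorbs the right-hand side into the left, giving $\|u\|_{\underline{S}^s(I_j)}\le 2C(B)\|u(t_{j-1})\|_{H^s}$, and iteration over the $N$ pieces yields the first conclusion. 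The wave bound then follows by writing $V=e^{i(t-t_\sharp)|\nabla|}V(t_\sharp)+\mc{J}_{0,t_\sharp}[-|\nabla||u|^2]$ for $t_\sharp\in I$ nearly realising $\inf_{t\in I}\|V(t)\|_{L^2}$, controlling the free part by $\|V(t_\sharp)\|_{L^2}$ and the Duhamel part by the non-endpoint wave bilinear $\|\mc{J}_0[|\nabla|(\bar uu)]\|_{\underline{W}^0(I)}\lesa\|u\|_{S^{1/2}(I)}^2\le C_{A,B}^2\inf_{t\in I}\|u(t)\|_{H^{1/2}}^2$.

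The main obstacle is the $s=\tfrac12$ bootstrap: the inequality $y\le C(B)A+C(B)\eta^{3/2}y^{3/2}$ is superlinear and supplies no inherent smallness, so $\eta$ must be calibrated against the a priori bound $A$ rather than any small parameter, and finiteness of $y$ on each subinterval has to be obtained (via local well-posedness plus continuity) \emph{before} the inequality can be invoked. Once uniform $\underline{S}^{1/2}$ control is secured, the upgrade to $s>\tfrac12$ and the wave estimate become essentially bookkeeping with the bilinear machinery of Theorem~\ref{thm:bi prev}, and the overall constant $C_{A,B}$ emerges as $(2C(B))^{N(A,B)}$.
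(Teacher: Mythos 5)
Your overall strategy coincides with the paper's: subdivide $I$ into $O_{A,B}(1)$ intervals where $\|u\|_D$ is small, split $V$ into a free wave plus a Duhamel part, invoke the uniform Strichartz estimate of Theorem~\ref{thm:schro energy} to absorb the free wave potential, control the Duhamel potential with the bilinear estimates of Theorem~\ref{thm:bi prev}, close a continuity/bootstrap argument at $s=\tfrac12$, then propagate to $s>\tfrac12$ and to the wave norm. That is the right shape, and your small variant at $s=\tfrac12$ (using the endpoint Schr\"odinger bilinear with the fractional $\|u\|_D^{1/2}\|u\|_{S^{1/2}}^{1/2}$ gain, rather than the paper's combination of the non-endpoint Schr\"odinger bilinear with the endpoint gain on $\|V^{\mathrm{nl}}\|_{\underline W^0}$) is just a cosmetic rearrangement leading to $y\le C A + C\eta^{3/2}y^{3/2}$ instead of $y\le C A + C\eta\,y^2$; both close the same way.

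There is, however, a genuine gap at the assembly step. After you obtain $\|u\|_{\underline{S}^{1/2}(I_j)}\le (2C(B))^{|j|}\inf_{t\in I}\|u(t)\|_{H^{1/2}}$ on each $I_j$, you cannot pass to the full-interval bound by ``reassembling in $\ell^2$ over dyadic Littlewood--Paley pieces''; the restricted spaces $S^s(I)$ are \emph{not} subadditive over interval unions with absolute constants. As the paper points out explicitly, bounding $\|u\|_{S^s(I\cup J)}$ by $\|u\|_{S^s(I)}+\|u\|_{S^s(J)}$ loses a factor depending on the interval sizes, because of the infimum-over-extensions definition and the modulation cutoffs in $N^s_\lambda$, so this naive gluing does not produce a constant $C_{A,B}$. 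The fix the paper uses is Lemma~\ref{lem:duh decomposable}: write $u = e^{i(t-t_0)\Delta}u(t_0) + \mc{I}_{0,t_0}[\Re(V)u]$ on all of $I$, note that the Duhamel operator \emph{is} decomposable, $\|\mc{I}_{0,t_0}[F]\|_{S^s(I)}\lesa\sum_j\|\mc{I}_{0,t_j}[F]\|_{S^s(I_j)}$ uniformly in the $I_j$, and then bound each piece via the energy inequality and the bilinear estimate, which is exactly where the subinterval bounds you have proved enter. Your argument needs this ingredient (or an equivalent) before ``iterating over the $N$ pieces'' actually yields the stated global bound in $\underline{S}^s(I)$; once inserted, the rest of your steps, including the final $\underline W^0(I)$ estimate for $V$, go through.
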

\begin{proof}
We start by noting that an application of Theorem \ref{thm:bi prev} gives for any $0\les s < 1$ and $ 0 \in I' \subset \RR$  the bounds
    \begin{equation}\label{eqn:persistence:bi schro}
         \| V u \|_{N^s(I')} \lesa \| V \|_{W^0(I')} \| u \|_{S^s(I')}
    \end{equation}
and
    \begin{equation}\label{eqn:persistence:bi wave}
            \big\| \mc{J}_0\big[ |\nabla| |u|^2 \big] \big\|_{\underline{W}^{0}(I')} \lesa \| u \|_{D(I')} \| u \|_{S^\frac{1}{2}(I')}.
    \end{equation}
Together with the uniform Strichartz estimate, Theorem \ref{thm:schro energy}, we conclude that provided
    $$\| V(0) \|_{L^2}\les B < \| Q \|_{\dot{H}^1}$$
we have for any $\frac{1}{2} \les s < 1$
    \begin{align*}
	\| u \|_{S^s(I')} &\lesa_B \| u(0) \|_{H^s} + \big\| [i\p_t + \Delta - \Re(e^{it|\nabla|}V(0))] u \big\|_{N^s(I)} \\
				&\lesa_B \|u(0)\|_{H^s} + \big\| \mc{J}_0\big[ |\nabla| |u|^2 \big] \big\|_{\underline{W}^{0}(I')} \| u \|_{S^s(I')}
				\lesa_B \| u(0) \|_{H^s} + \| u \|_{D(I')} \| u \|_{S^\frac{1}{2}(I')} \|u \|_{S^s(I')}.
	 \end{align*}
Time translation invariance and the assumption \eqref{eqn:persistence:assump}, together with the local well-posedness in $S^s\times W^0$ from \cite{Candy2021a} and a continuity argument (starting with the case $s=\frac{1}{2}$ and then $s>\frac{1}{2}$)  implies that there exists $\delta \ll_{A, B} 1$ such that for any interval $I'\subset I$ we have the implication
        \begin{equation}\label{eqn:persistence:assum small disp norm}
            \| u \|_{D(I')} \les \delta \qquad \Longrightarrow \qquad \| u \|_{S^s(I')} \lesa_B \inf_{t\in I'} \| u(t)\|_{H^s}.
        \end{equation}
To extend this bound to the whole interval $I\subset \RR$, we again exploit the assumption \eqref{eqn:persistence:assump}. More precisely, the upper bound on $\| u \|_{D(I)}$ implies that we can find a collection of $J\in \NN$ intervals $I_j \subset I$ and times $t_j \in I_j \cap I_{j+1}$ such that
    $$I=\cup_{j=1}^J I_j, \qquad  \| u \|_{D(I_j)} \les \delta, \qquad J\lesa \delta^{-2} A^2\approx_{A,B} 1, \qquad t_j \in I_j \cap I_{j+1}. $$
A short argument using the implication \eqref{eqn:persistence:assum small disp norm} then implies that
			\begin{equation}\label{eqn:persistence:sum}
				\sum_{j=1}^J \| u \|_{S^s(I_j)}\lesa_{A,B}\inf_{t\in I} \|u(t)\|_{H^s}.
            \end{equation}
Hence applying this bound with $s=\frac{1}{2}$ together with the energy inequality \eqref{eqn:energy ineq} and the bilinear estimate \eqref{eqn:persistence:bi wave} gives
            $$ \sup_j \| V \|_{W^0(I_j)} \lesa B + \sup_j \big\| \mc{J}_0\big[ |\nabla| |u|^2 \big] \big\|_{\underline{W}^{0}(I_j)} \lesa B + \sup_j \| u_j \|_{S^\frac{1}{2}(I_j)}^2\lesa_{A, B} 1. $$
Therefore, applying Lemma \ref{lem:duh decomposable}, the energy inequality \eqref{eqn:energy ineq}, and \eqref{eqn:persistence:bi schro} we conclude that
        \begin{align*}
          \| u \|_{S^s(I)} \lesa \inf_{t\in I} \|u(t)\|_{H^s} + \sum_{j=1}^J \| \Re(V) u \|_{N^s(I_j)} \lesa_{A, B} \Big( 1 + \sup_j \| V \|_{W^0(I_j)}\Big) \inf_{t\in I} \|u(t)\|_{H^s} \lesa_{A, B} \inf_{t\in I} \|u(t)\|_{H^s}.
        \end{align*}
To upgrade this bound to the space $\underline{S}^s(I)$, we simply plug in the equation once more and observe that
    $$ \| V \|_{\underline{W}^0(I)} \les \inf_{t\in I} \| V(t) \|_{L^2} + C \| u \|_{S^\frac{1}{2}(I)}^2 \les  \inf_{t\in I} \| V(t) \|_{L^2} + C_{A, B} \inf_{t\in I}  \| u(t) \|_{H^\frac{1}{2}}^2\lesa_{A, B} 1$$
and
    $$ \| u \|_{\underline{S}^s(I)} \lesa \inf_{t\in I} \| u(t) \|_{H^s} + \| \Re(V) u \|_{N^s(I)} \les C_{A, B} \inf_{t\in I} \|u(t) \|_{H^s}. $$
\end{proof}

\begin{remark}
By arguing as in Section 7 of \cite{Candy2021a}, at a cost of allowing the constant $C_{A, B}$ to depend on the profile of $V$ and not just its size, the assumption $\|V \|_{L^\infty_t L^2_x}\les B < \| Q \|_{\dot{H}^1}$ in Theorem \ref{thm:persistence} can be removed. This is a consequence of the dispersive properties of the free wave evolution. Moreover, although the energy scaling $s=1$ is excluded, a version of the above persistence of regularity result is true for all (admissible) regularities $H^s\times H^\ell$, see \cite{Candy2021a} for details.
\end{remark}

Theorem \ref{thm:persistence} implies that for the purpose of proving global well-posedness and scattering, the condition $\| u \|_{D}<\infty$ is equivalent to $\|u \|_{L^2_t B^s_{4,2}}<\infty$. In particular we could equally well have taken our dispersive norm to be the non-endpoint Besov variant $\| \cdot \|_{L^2_t B^s_{4,2}}$.

\subsection{Motivation for Controlling Quantity} \label{subsec:controlling quantity}
Suppose (for instance) we were trying to construct a fixed point to the (cubic) nonlinear problem
            $$ u = u_0 + \mc{I}[u^3]$$
where as usual $u_0$ is the free evolution of the data, and $\mc{I}$ denotes the corresponding Duhamel term. Assume we have a Banach space $X$ together with the nonlinear bound
        $$ \| \mc{I}[u v w]\|_X \lesa \|u\|_X \|v \|_X \| w \|_X. $$
One approach to construct the fixed point $u$ would be to consider the sequence $(u_k)_{k\in \NN} \subset X$ defined iteratively as usual via
    $$ u_k = u_0 + \mc{I}[ u_{k-1}^3]. $$
If we are working in a small data setting, we expect that the fixed point $u$ should be close to the free evolution $u_0$. This suggests that we should expand the nonlinearity into factors $u-u_0$. Together with our assumed nonlinear bound, we would then obtain
    \begin{align*}
         \| \mc{I}[ u^3]\|_X &= \| \mc{I}[ (u-u_0)^3 + 3 (u-u_0)^2 u_0 + 3 (u-u_0) u_0^2 + u_0^3] \|_X \\
                &\lesa \| u - u_0\|_X^3 + \| u - u_0\|_X^2 \|u_0\|_X + \| \mc{I}[ (u-u_0) u_0^2]\|_X + \| \mc{I}[u_0^3]\|_X.
    \end{align*}
and
       \begin{align*}
        \| \mc{I}[u^3 - v^3]\|_X &= \|\mc{I}[(u-v)^3 + 3(u-v)^2 v + 3 (u-v) v^2 ] \|_X \\
        &\lesa \| u - v\|_X\Big(\| u - v\|_X^2 + \| u - v\|_X \| v\|_X + \| v - u_0\|_X \|v\|_X +  \| v - u_0\|_X^2 \Big) + \| \mc{I}[ (u-v)u_0^2] \|_X.
       \end{align*}
In other words, if we define the controlling quantity
            $$\rho(v) = \sup_{\|\phi\|_X \les 1} \| \mc{I}[ \phi v^2] \|_X,$$
then provided $ \|u_0\|_X\les A$ we conclude that
    $$ \| u_{k+1} - u_0\|_X \lesa  \| u_{k} - u_0\|_X^3 +  A \| u_{k} - u_0\|_X^2    + \big( \| u_k - u_0\|_X + A\big) \rho(u_0)$$
and
    $$ \|u_{k+1} - u_{k}\|_X \lesa  \|u_k - u_{k-1}\|_X \Big(\|u_k - u_{k-1}\|_X^2  + (\|u_k - u_{k-1}\|_X + \|u_{k-1} - u_0\|_X) ( \| u_{k-1}-u_0\|_X + A ) + \rho(u_0)\Big). $$
Therefore, assuming that $\epsilon = \epsilon(A)>0$ is sufficiently small, under the smallness assumption
            $$ \|u_0\|_X \les A, \qquad \rho(u_0) \les \epsilon$$
we conclude the existence of a fixed point $u\in X$ to the problem $u = u_0 + \mc{I}[u^3]$. In other words, we do not require the data to be small in $X$, but only the nonlinear interaction of the free solution with a generic element of our iteration space $X$ must be small. Note that the nonlinear estimate immediately implies
        $$ \rho(u_0) \lesa \| u_0 \|_X^2$$
and so clearly it suffices to have smallness in $X$. On the other hand, typically in applications we try to prove an improved estimate of the form
        $$ \rho(u_0) \les  \| u_0 \|_Y^{\theta} \|u_0\|_X^{2-\theta}$$
where $Y$ is some intermediate norm, thus improving the assumption of smallness in $X$ to requiring just smallness in $Y$. The advantage of considering the quantity $\rho$ is that we only have to prove this estimate for the free solution $u_0$, as opposed to all elements of $X$ (which is what we would be forced to do if we added it into the iteration argument). Moreover the quantity $\rho$ is very flexible, as we can potentially formulate the improved estimate with a variety of choices of norms $Y$.

\subsection{A Stability Theorem}
Motivated by the discussion in the previous subsection, given an interval $I\subset \RR$ and $u\in C(I;H^\frac{1}{2})$, we define the controlling quantity
		\begin{equation}\label{eqn:rho defn}
	\rho_{I}(u) = \sup_{ \|v\|_{\underline{W}^0} \les 1} \big\| \mc{I}_{0}\big[ \ind_I \Re(v) u \big] \big\|_{S^\frac{1}{2}}  + \sup_{\|w\|_{\underline{S}^\frac{1}{2}}\les 1} \big\| \mc{J}_{0}\big[ \ind_I |\nabla| \Re( \overline{w} u) \big] \big\|_{W^0}.
		\end{equation}
In view of \eqref{eqn:duh loc time:init trans}, the initial time $t=0$ in the definition of $\rho_I$ could be replaced with any $t_0\in \RR$ at a cost of multiplying by some absolute constant. A quick application of Lemma \ref{lem:duh loc time} gives for any $t_0\in I$ the bounds
    $$ \| \mc{I}_{0, t_0}[ \Re(v) u ] \|_{S^\frac{1}{2}(I)} \lesa \rho_I(u) \| v \|_{\underline{W}^0(I)}, \qquad \| \mc{J}_{0, t_0}[ |\nabla|\Re(\overline{w}u)]\|_{W^0(I)} \lesa \rho_I(u) \| w \|_{\underline{S}^\frac{1}{2}(I)}. $$
Moreover, a short computation via Theorem \ref{thm:bi prev} and \eqref{eqn:energy ineq} gives
        $$ \rho_{I}(u) \lesa \|u\|_{S^\frac{1}{2}(I)}, \qquad \rho_{I}(u+u') \les \rho_{I}(u)  + \rho_I(u'). $$
Finally, we make the straightforward observation that if $I\subset J$, then by Lemma \ref{lem:duh loc time} we always have
        $$ \rho_I(u) \lesa \rho_J(u). $$

The small data scattering theory in \cite{Candy2021, Candy2021a} relied on knowing that the data was small in either $H^\frac{1}{2}\times L^2$ or the dispersive norm $\| \cdot \|_D$. Our goal is to extend this to only requiring that the free evolution of the data is small with respect to the controlling quantity $\rho_I$. Note that smallness of the controlling quantity is much more general than smallness in norm, for instance an application of Theorem \ref{thm:bi prev} gives
		\begin{equation}\label{eqn:rho bound I}
				\rho_I(e^{it\Delta} f) \lesa \Big(\|f \|_{H^\frac{1}{2}} \| e^{it\Delta} f \|_{D(I)}\Big)^\frac{1}{2} \lesa \| f\|_{H^\frac{1}{2}}.
		\end{equation}
On the other hand, as we shall see later, it is also possible to bound $\rho_I$ by the a factor of the  ``energy dispersion'' or $L^\infty_{t,x}$ type norm $\| \cdot\|_Y$. This additional flexibility provided by $\rho_I$ is extremely useful when we eventually come to studying compactness properties of a hypothetical counterexample to Conjecture \ref{conj:scattering}. In particular, we can avoid the technically challenging issue to trying to obtain a profile decomposition in the endpoint Strichartz space $L^2_t L^4_x$. \\

The construction of a minimal counter example relies on a crucial stability property of solutions to the Zakharov equation. Roughly speaking, we need to that if we have a solution $(\psi, \phi)\in C(I; H^\frac{1}{2}\times L^2)$ is a solution to the perturbed equation
   \begin{equation}\label{eqn:stab:approx sys}
        \begin{split}
      (i\p_t + \Delta)\psi &= \Re(\phi) \psi + F, \\
      (i\p_t + |\nabla|)\phi&= |\nabla| |\psi|^2 + G,
        \end{split}
    \end{equation}
which is dispersive, thus $\psi\in S^\frac{1}{2}(I)$, then under a suitable smallness assumption on $F$ and $G$, there exists a genuine solution $(u, V)$ to the Zakharov equation \eqref{eqn:Zak} which is also dispersive and `close' to $(\psi, \phi)$. A precisely formulation of this stability property is as follows.

\begin{theorem}[Stability]\label{thm:stab}
Let $A>0$, $0<B<\|Q\|_{\dot{H}^1}$, and $\frac{1}{2}\les s < 1$. There exists $\epsilon_0(A,B)>0$ such that if $0<\epsilon \les \epsilon_0(A, B)$,  $t_0\in I\subset \RR$ is an interval, and
    $$(f, g) \in H^s\times L^2, \qquad (F, G)\in N^\frac{1}{2}(I)\times M^0(I), \qquad (\psi, \phi) \in C(I, H^\frac{1}{2} \times L^2)$$
satisfies the boundedness condition
    \begin{equation}\label{eqn:stab:bound}
       \|f\|_{H^\frac{1}{2}} + \| \psi \|_{S^\frac{1}{2}(I)}\les A, \qquad \| \phi \|_{L^\infty_t L^2_x (I\times \RR^4)} \les B,
    \end{equation}
the smallness condition
      \begin{equation}\label{eqn:stab:error}
           \rho_{I}\big(e^{i(t-t_0)\Delta} [f-\psi(t_0)]\big) + \| g - \phi(t_0) \|_{L^2}  + \| \mc{I}_{0, t_0}[F] \|_{S^\frac{1}{2}(I)} + \| \mc{J}_{0, t_0}[G] \|_{W^0(I)} < \epsilon,
      \end{equation}
and $(\psi, \phi)$ solve the perturbed Zakharov equation \eqref{eqn:stab:approx sys} on $I\times \RR^4$, then there exists a (unique) solution $(u, V)\in C(I, H^s\times L^2)$ to \eqref{eqn:Zak} with data $(u, V)(t_0) = (f,g)$. Moreover, we have the bounds
    $$\rho_I(u-\psi) + \big\| u - \psi - e^{i(t-t_0)\Delta}[f-\psi(t_0)] \big\|_{S^\frac{1}{2}(I)} + \| V - \phi \|_{W^0(I)} \lesa_{A, B} \epsilon $$
and
    $$ \| u \|_{\underline{S}^s(I)} \lesa_{A, B, s}  \| f\|_{H^s}, \qquad \| V\|_{L^\infty_t L^2_x(I)} \les \tfrac{1}{2} \big( B + \|Q \|_{\dot{H}^1}\big) $$
where the implied constants depend only on $A$, $B$, and $s$.
\end{theorem}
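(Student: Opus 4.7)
The plan is to construct $(u,V)$ by a contraction mapping argument for the difference $(w, \Phi) := (u - \psi, V - \phi)$. Subtracting the perturbed system satisfied by $(\psi, \phi)$ from the Zakharov system \eqref{eqn:Zak} yields
\begin{align*}
(i\p_t + \Delta) w &= \Re(\phi)\, w + \Re(\Phi)(\psi + w) - F, \\
(i\p_t + |\nabla|) \Phi &= -|\nabla|\bigl(2\Re(\bar\psi w) + |w|^2\bigr) - G,
\end{align*}
with data $w(t_0) = f - \psi(t_0)$ and $\Phi(t_0) = g - \phi(t_0)$. Writing $w_0 := e^{i(t-t_0)\Delta}[f - \psi(t_0)]$ and $\Phi_L := e^{i(t-t_0)|\nabla|}[g - \phi(t_0)]$ for the free evolutions of the data defects, I would seek a fixed point satisfying $\|w - w_0\|_{S^{1/2}(I)} + \|\Phi - \Phi_L\|_{W^0(I)} \les K\epsilon$ for a constant $K = K(A,B)$ to be chosen.

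The main obstruction is that the term $\Re(\phi)\,w$ is not directly perturbative, since $\|\phi\|_{L^\infty_t L^2_x} \les B$ carries no smallness. To handle this, decompose $\phi = \phi_L + \phi_{NL}$ with $\phi_L := e^{i(t-t_0)|\nabla|}\phi(t_0)$ a free wave satisfying $\|\phi(t_0)\|_{L^2} \les B < \|Q\|_{\dot H^1}$, and move $\Re(\phi_L)\,w$ onto the linear side. Theorem \ref{thm:schro energy} then gives uniform $\underline S^{1/2}(I)$ bounds for the resulting perturbed Schr\"odinger evolution, with constants depending only on $B$. The remainder $\phi_{NL} = \mc{J}_{0,t_0}[-|\nabla||\psi|^2 + G]$ is controlled in $\underline W^0(I)$ by the endpoint wave bilinear estimate of Theorem \ref{thm:bi prev} together with the hypothesis on $G$, yielding $\|\phi_{NL}\|_{\underline W^0(I)} \lesa_{A,B} 1$.

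To close the contraction I would expand $w = w_0 + \tilde w$ and separate the Duhamel terms according to how many factors of $w_0$ appear. The key interaction $\Re(\phi_{NL})\,w_0$ is estimated directly by the controlling quantity:
$$\bigl\|\mc{I}_{0, t_0}\bigl[\Re(\phi_{NL})\,w_0\bigr]\bigr\|_{S^{1/2}(I)} \les \rho_I(w_0)\,\|\phi_{NL}\|_{\underline W^0(I)} \lesa_{A,B} \epsilon,$$
following the philosophy of Subsection \ref{subsec:controlling quantity}. The non-small linear interaction $\Re(\phi_{NL})\,\tilde w$ cannot be closed uniformly on $I$; instead I would subdivide $I$ into $N = N(A,B)$ subintervals $I_j$ on which $\|\psi\|_{D(I_j)}$ is sufficiently small, so that the endpoint wave bilinear estimate forces $\|\phi_{NL}^{(j)}\|_{\underline W^0(I_j)}$ to be small on each $I_j$, rendering the iteration contractive locally. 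Lemma \ref{lem:duh decomposable} then assembles the local contractions into a global fixed point. The wave equation for $\Phi$ closes via the endpoint bilinear estimate on $\bar\psi w$ and $|w|^2$, using $\|\Phi_L\|_{\underline W^0(I)} \lesa \|g - \phi(t_0)\|_{L^2} < \epsilon$ and $\|\mc{J}_{0, t_0}[G]\|_{W^0(I)} < \epsilon$.

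Once $(u, V) \in C(I; H^{1/2} \times L^2)$ is produced, choosing $\epsilon_0$ small gives $\|V\|_{L^\infty_t L^2_x(I)} \les B + C_{A,B}\epsilon \les \tfrac12(B + \|Q\|_{\dot H^1})$ and $\|u\|_{D(I)} + \|u\|_{L^\infty_t H^{1/2}_x(I)} \lesa A$, so the persistence of regularity Theorem \ref{thm:persistence} upgrades to $\|u\|_{\underline S^s(I)} \lesa_{A,B,s} \|f\|_{H^s}$ for $\frac{1}{2} \les s < 1$. The smallness $\rho_I(u - \psi) \lesa_{A,B} \epsilon$ then follows from sub-additivity of $\rho_I$, the bound $\rho_I(\tilde w) \lesa \|\tilde w\|_{S^{1/2}(I)}$, and the hypothesis $\rho_I(w_0) < \epsilon$. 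The hardest part will be the careful interplay between Theorem \ref{thm:schro energy} (which absorbs a non-small free wave potential), the bilinear estimates (which require smallness of the iteration radius), and the controlling quantity $\rho_I$ (which only provides smallness when interacting specifically with the free evolution $w_0$); tracking all constants across the subdivision of $I$ and confirming that Duhamel integrals on $I$ versus $I_j$ are compatible via Lemma \ref{lem:duh decomposable} is the main bookkeeping challenge.
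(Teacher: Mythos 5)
Your overall architecture matches the paper's: difference system for $(w,\Phi)$, absorbing the free wave $\phi_L = e^{i(t-t_0)|\nabla|}\phi(t_0)$ into the linear side via Theorem \ref{thm:schro energy}, invoking $\rho_I(w_0)$ for the dangerous interaction with the free data defect, subdividing $I$ into pieces where $\|\psi\|_{D(I_j)}$ is small, gluing via Lemma \ref{lem:duh decomposable}, and upgrading via Theorem \ref{thm:persistence}. However, there is a genuine gap in the step where you bound $\Re(\phi_{NL})\,w_0$ by $\rho_I(w_0)\,\|\phi_{NL}\|_{\underline W^0(I)}$. The controlling quantity $\rho_I$ is, by definition, a supremum over test functions bounded in $\underline W^0$, so you need $\phi_{NL} \in \underline W^0(I)$. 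But your $\phi_{NL} = \phi - \phi_L = \mc{J}_{0,t_0}\big[|\nabla||\psi|^2 + G\big]$ contains $\mc{J}_{0,t_0}[G]$, and the hypothesis \eqref{eqn:stab:error} only gives $\|\mc{J}_{0, t_0}[G]\|_{W^0(I)} < \epsilon$, not control in the stronger space $\underline W^0(I)$. The bilinear estimate of Theorem \ref{thm:bi prev} handles the $|\psi|^2$ contribution in $\underline W^0$, but it cannot help with $G$: you cannot conclude $\|\phi_{NL}\|_{\underline W^0(I)} \lesa_{A,B} 1$.

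The paper handles this by a more careful decomposition of the Schr\"odinger nonlinearity: instead of pairing $w_0$ with $\phi - \phi_0$ and with $V'$ separately, it groups them as $\Re(V_{k+1}' + \phi - \phi_0)\,u_0'$. Since the forcing term in the equation for $V'$ is $-G$ while that for $\phi$ is $+G$, the Duhamel contributions of $G$ cancel in the sum $V_{k+1}' + \phi - \phi_0 = e^{it|\nabla|}g' + \mc{J}_0\big[|\nabla||u_k' + \psi|^2\big]$, and this quantity \emph{is} bounded in $\underline W^0(J)$ by $A^2$ via the endpoint wave bilinear estimate. The same trick recurs on the Schr\"odinger side, where the paper works with $\psi - \mc{I}_{0,t_0}[F]$ rather than $\psi$ to obtain $\underline S^{1/2}$ control needed for the $\rho_I$ estimate on the wave nonlinearity. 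Your plan needs to be modified to exploit this cancellation; otherwise the central application of $\rho_I$ fails.
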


\begin{proof} We begin by considering the case $s=\frac{1}{2}$ under the additional assumption that the dispersive norm $\| \psi\|_{D(I)}$ is sufficiently small. This additional smallness assumption is then removed by arguing as in the proof of Theorem \ref{thm:persistence}. Upgrading the regularity from $s=\frac{1}{2}$ to $\frac{1}{2}<s<1$ follows from a direct application of Theorem \ref{thm:persistence}. We now turn to the details. Suppose for the moment that the following claim holds:
\begin{quote}
  		\textbf{Claim:} Let $A>0$ and $0<B<\|Q\|_{\dot{H}^1}$. There exists $\epsilon^*(A,B), \delta^*(A, B)>0$ such that if
        $$0<\epsilon\les \epsilon^*(A, B) \qquad \text{ and } \qquad 0<\delta\les \delta^*(A, B),$$
        and $(\psi, \phi) \in C(I;H^\frac{1}{2}\times L^2)$ is a solution to \eqref{eqn:stab:approx sys} satisfying \eqref{eqn:stab:bound}, \eqref{eqn:stab:error}, and
    		\begin{equation}\label{eqn:stab:disp}
                \| \psi \|_{D(J)}<\delta,
    		\end{equation}
    for some $t_0\in J\subset I$, then there exists a solution $(u,V)\in C(J;H^\frac{1}{2}\times L^2)$ to the Zakharov system \eqref{eqn:Zak} such that $(u, V)(t_0) = (f,g)$ and we have
        $$ \big\|u -\psi - e^{i(t-t_0)\Delta}[f-\psi(t_0)]\big\|_{S^\frac{1}{2}(J)} \lesa_B (1+A)^2\epsilon$$
   and
         $$ \big\|V-\phi \big\|_{W^0(J\times \RR^4)}\lesa_B (1+A)^3\epsilon  $$
   where the implied constant depends only on $B$.
\end{quote}
To prove that the claim implies the required result, we argue as in the proof of Theorem \ref{thm:persistence} and decompose the initial interval $I$ into subintervals on which we have the required smallness \eqref{eqn:stab:disp}. Fix $A>0$,  $0<B<\|Q\|_{\dot{H}^1}$, and assume that we have $(\psi, \phi)\in C(I;H^\frac{1}{2}\times L^2)$ such that the conditions \eqref{eqn:stab:bound}, \eqref{eqn:stab:error},   and \eqref{eqn:stab:approx sys} hold where $\epsilon>0$ is to be determined later. Since $\|\psi\|_{D(I)}\les \| \psi \|_{S^\frac{1}{2}(I)} \les A$, we can find a collection of intervals $I_j \subset I$ such that
    $$I=\cup_{j=-N}^N I_j, \qquad  \| \psi \|_{D(I_j)} \les \delta, \qquad N\approx  \delta^{-2} A^2 \approx_{A, B} 1, \qquad I_j \cap I_{j+1} \not= \varnothing, \qquad t_0\in I_0$$
where $\delta = \delta^*(2A, B)>0$ is as in the statement of the above claim. To construct the solution $(u, V)$ forward in time, i.e. on the interval $\cup_{j\g 0} I_j$, we first choose times $t_{j+1}\in I_j \cap I_{j+1}$. Assuming we have a solution on the interval $I_j$, we note that
    \begin{align*}
       \rho_{I}\big(e^{i(t-t_{j+1})\Delta} [u-\psi](t_{j+1})\big) &- \rho_{I}\big(e^{i(t-t_{j})\Delta} [u-\psi](t_j)\big)\\
                &\lesa \rho_I\Big( e^{i(t-t_{j+1})\Delta}\big( [u-\psi](t_{j+1})- e^{i(t_{j+1}-t_j)\Delta}[u-\psi](t_j)\big)\Big)\\
                &\lesa \big\| [u-\psi](t_{j+1})- e^{i(t_{j+1}-t_j)\Delta}[u-\psi](t_j)\big\|_{H^\frac{1}{2}} \\
                &\lesa \big\| u - \psi - e^{i(t-t_j)\Delta}[u-\psi](t_j)\big\|_{S^\frac{1}{2}(I_j)}
    \end{align*}
and similarly
    \begin{align*}
      \| [u - \psi](t_{j+1}) \|_{H^\frac{1}{2}} - \| [u-\psi](t_j)\|_{H^\frac{1}{2}} &\les \big\| [u-\psi](t_{j+1}) - e^{i(t_{j+1} - t_j)\Delta}[u-\psi](t_j) \big\|_{H^\frac{1}{2}} \\
        &\lesa \big\| u - \psi - e^{i(t-t_j)\Delta}[u-\psi](t_j)\big\|_{S^\frac{1}{2}(I_j)}.
    \end{align*}
In view of \eqref{eqn:duh loc time:init trans} (and a similar version for the wave Duhamel term), for any $j$,
    $$ \| \mc{I}_{0, t_{j}}[F]\|_{S^\frac{1}{2}(I)} + \| \mc{J}_{0, t_j}[G]\|_{W^0(I)} \lesa \| \mc{I}_{0, t_{0}}[F]\|_{S^\frac{1}{2}(I)} + \| \mc{J}_{0, t_0}[G]\|_{W^0(I)} \lesa \epsilon.$$
Therefore, provided we have say
        $$ \epsilon \les \frac{1}{N+1} \big[ C_B ( 1 + 2A)^3\big]^{-(N+1)} \epsilon^*(2A, B)$$
where $C_B$ is a suitably chosen constant depending only on $B$ (via the above implied constants), repeatedly applying the above claim, together with the uniqueness of solutions in $D(I)$, gives a solution $(u, V) \in C(I; H^\frac{1}{2}\times L^2)$ such that for any $|j|\les N$ we have
	$$ \rho_I\big( e^{i(t-t_j)\Delta}[u-\psi](t_j) \big) + \big\| u-\psi - e^{i(t-t_j)\Delta}[u-\psi](t_j)\big\|_{S^\frac{1}{2}(I_j) } + \|V-\phi\|_{W^0(I_j)} \lesa_{A, B}\epsilon,$$
and
	$$ \|u\|_{S^{\frac{1}{2}}(I_j)} + \| V \|_{W^0(I_j)}\lesa_{A, B} 1.      $$
To extend these bounds to the whole interval $I$, we observe that an application of Lemma \ref{lem:duh decomposable} gives
	\begin{align*}
		&\big\| \mc{I}_{0, t_0}\big[ \Re(V) u - \Re(\phi)\psi \big]\big\|_{S^\frac{1}{2}(I)}\\
					&\lesa \sum_{j=-N}^N \Big(\big\| \mc{I}_{0, t_0}\big[\Re(V-\phi)\psi \big] \big\|_{S^\frac{1}{2}(I_j)}\\
					&\qquad  + \big\| \mc{I}_{0, t_0}\big[\Re(V)\big( u - \psi - e^{i(t-t_j)\Delta}[u-\psi](t_j) \big) \big] \big\|_{S^\frac{1}{2}(I_j)} + \big\| \mc{I}_{0, t_0}\big[\Re(V) e^{i(t-t_j)\Delta}[u-\psi](t_j) \big] \big\|_{S^\frac{1}{2}(I_j)} \Big)\\
					&\lesa \sum_{j=-N}^N \Big[ \|V-\phi\|_{W^0(I_j)} \|\psi \|_{S^\frac{1}{2}(I)} \\
					&\qquad  + \|V\|_{\underline{W}^0(I_j)}\Big( \| u - \psi - e^{i(t-t_j)\Delta}[u-\psi](t_j)\|_{S^\frac{1}{2}(I_j)} + \rho_I\big( e^{i(t-t_j)\Delta} [u-\psi](t_j)\big) \Big)\Big] \\
					&\lesa_{A,B} \epsilon
	\end{align*}
where we used the bound $N\lesa_{A, B} 1$ together with
		$$ \|V \|_{\underline{W}^0(I_j)} \lesa \| V(t_j)\|_{L^2_x} + \| u \|_{S^\frac{1}{2}(I_j)}^2 \lesa_{A, B} 1.  $$
Therefore we obtain the global smallness bound
	\begin{align*}
		\big\| u - \psi - e^{i(t-t_0)}[f-\psi(t_0)]\big\|_{S^\frac{1}{2}(I)}&= \big\| \mc{I}_{0, t_0}\big[\Re(V) u - \Re(\phi)\psi - F \big] \big\|_{S^\frac{1}{2}(I)}\lesa_{A, B} \epsilon. 	\end{align*}
After observing that
		\begin{align*}
     \rho_I(u-\psi) &\les \rho_I\big( u - \psi - e^{i(t-t_0)\Delta}[u-\psi](t_0)\big) + \rho_I\big( e^{i(t-t_0)\Delta}[u-\psi](t_0) \big) \\
     &\lesa \| u - \psi - e^{i(t-t_0)\Delta}[u-\psi](t_0)\big\|_{S^\frac{1}{2}(I)} + \epsilon
     \lesa_{A, B}\epsilon
        \end{align*}
we have the required smallness bounds on the Schr\"odinger evolution. Moreover, an application of Theorem \ref{thm:persistence} gives for any $\frac{1}{2}\les s < 1$ the higher regularity bounds
	$$  \| u \|_{\underline{S}^s(I)} \lesa_{A, B, s} \| f \|_{H^s}. $$
To bound the wave evolution we simply observe that since
		$$ |u|^2 - |\psi|^2 = \Re\big[ (\overline{u} + \overline{\psi})(u-\psi)\big] =  \Re\big[ (\overline{u} + \overline{\psi} - \mc{I}_{0, t_0}[F])(u-\psi) + \mc{I}_{0, t_0}[F](u-\psi)\big] $$
and
		$$ \| \psi - \mc{I}_{0, t_0}[F]\|_{\underline{S}^\frac{1}{2}(I)} \lesa \| \phi \|_{W^0(I)} \| \psi\|_{S^\frac{1}{2}(I)} \lesa \big(\| \phi(t_0)\|_{L^2} + \| \psi\|_{S^\frac{1}{2}(I)}^2\big) \| \psi\|_{S^\frac{1}{2}(I)} \lesa_{A, B} 1$$
we conclude that
	\begin{align*}
		\| V-\phi \|_{W^0(I)} &\les \|g-\phi(t_0)\|_{L^2} + \big\| \mc{J}_{0, t_0}\big[|u|^2-|\psi|^2 - G\big] \big\|_{W^0(I)} \\
									&\lesa \epsilon + \big\| \mc{J}_{0, t_0}\big[\Re( (u+\psi - \mc{I}_{0, t_0}[F])(u-\psi)\big)\big]\big\|_{W^0(I)} + \big\| \mc{J}_{0, t_0}\big[ \Re\big( \mc{I}_{0, t_0}[F] (u-\psi)\big)\big] \big\|_{W^0(I)} \\
									&\lesa \epsilon + \big( \|u \|_{\underline{S}^\frac{1}{2}(I)} + \| \psi - \mc{I}_{0, t_0}[F]\|_{S^\frac{1}{2}(I)} \big) \rho_I(u-\psi) + \| \mc{I}_{0, t_0}[F] \|_{S^\frac{1}{2}(I)} \big( \| \psi \|_{S^\frac{1}{2}(I)} + \| u \|_{S^\frac{1}{2}(I)}\big) \\
									&\lesa_{A, B} \epsilon.
	\end{align*}
Therefore theorem follows from the claimed local version under the additional constraint \eqref{eqn:stab:disp}.

It remains to verify the claim. By translation invariance, we may fix $t_0 =0$. Let $\phi_0 = e^{it|\nabla|} \phi(0)$. Our goal is to construct a solution to the problem
    \begin{equation}\label{eqn:stab:u' eqn}
        \begin{split}
         \big(i\p_t + \Delta - \Re(\phi_0)\big) u' &= \Re(V'+\phi) (u'+\psi) - \Re(\phi) \psi - \Re(\phi_0) u' - F, \\
         (i\p_t + |\nabla|)V' &= |\nabla|\big( |u' + \psi|^2 - |\psi|^2\big) - G,
         \end{split}
    \end{equation}
with data
    $$ (u', V')(0) = (f', g') := \big(f - \psi(0), g - \phi(0) \big). $$
Provided we obtain appropriate bounds on $(u', V')$, the claim then follows by taking $(u, V)=(\psi + u', \phi+V')$. We begin by observing that an application of Theorem \ref{thm:bi prev}  together with \eqref{eqn:stab:disp} and \eqref{eqn:stab:bound} gives
    \begin{equation}\label{eqn:stab:phi bound}
        \| \phi - \phi_0\|_{W^0(J)} = \big\| \mc{J}_0[ |\nabla| |\psi|^2 + G] \big\|_{W^0(J)} \lesa \| \psi \|_{D(J)} \| \psi \|_{S^\frac{1}{2}(J)} + \| \mc{J}_0[G] \|_{W^0(J)} \lesa \delta A + \epsilon
    \end{equation}
and (assuming $\epsilon \ll 1$ and $\delta A \ll 1$)
    \begin{align}\label{eqn:stab:strong psi bound}
        \| \psi - \mc{I}_0[F] \|_{\underline{S}^\frac{1}{2}(J)}
                    \lesa \| \psi(0) \|_{H^\frac{1}{2}} + \| \mc{I}_0[\Re(\phi) \psi] \|_{\underline{S}^\frac{1}{2}(J)}
                    \lesa A + \| \phi \|_{W^0(J)} \| \psi \|_{S^\frac{1}{2}(J)} \lesa A
    \end{align}
Define $u_0' = e^{it\Delta} f'$ and for any $k\in \NN$ consider the sequence
            \begin{align*}
              u_{k+1}' &= \mc{U}_{\Re(\phi_0)}(t) f' + \mc{I}_{\Re(\phi_0)}\big[ \Re(V'_{k+1} + \phi) ( u_k' + \psi) - \Re(\phi) \psi - \Re(\phi_0) u_k' - F\big] \\
              V_{k+1}' &= e^{it|\nabla|} g' + \mc{J}_0\big[ |\nabla|\big( |u_k' + \psi|^2 - |\psi|^2\big) - G \big].
            \end{align*}
We make the inductive assumption that
    \begin{equation}\label{eqn:stab:induc}
            \| u'_k - u_0' \|_{S^\frac{1}{2}(J)} \les A.
    \end{equation}
This bound clearly holds when $k=0$, and the estimates below substantially improve on this weak upper bound, however it simplifies the notation somewhat. Write
    \begin{align*}
        |u'_k + \psi|^2 - |\psi|^2 = |u_k' - u_0'|^2 + 2 \Re[ \overline{u_0'}(u'_k - u_0' + \mc{I}_0[F])]+2 \Re[ \overline{u_0'}(u_0' + \psi -  \mc{I}_0[F])] + 2 \Re[ \overline{\psi} (u_k' - u_0')].
    \end{align*}
Then the definition of $\rho_I(u_0')$, Theorem \ref{thm:schro energy}, and the bounds \eqref{eqn:stab:bound}, \eqref{eqn:stab:error}, \eqref{eqn:stab:disp}, and \eqref{eqn:stab:strong psi bound} give
    \begin{align*}
      \| V_{k+1}' \|_{W^0(J)} &\lesa \|g'\|_{L^2} + \big\| |\nabla| \mc{J}_0\big[ |u_k' - u_0'|^2\big] \big\|_{W^0(J)} + \big\| |\nabla|\mc{J}_0\big[\overline{u_0'} (u_0' + \psi - \mc{I}_0[F])\big]\big\|_{W^0(J)} \\
      &\qquad  \qquad \qquad \qquad \qquad + \big\| |\nabla| \mc{J}_0\big[ \Re\big( \overline{u_0'}(u_0' + \psi -  \mc{I}_0[F])\big)\big] \big\|_{W^0(J)} + \big\| |\nabla| \mc{J}_0[\overline{\psi} (u_k' - u_0')]\big\|_{W^0(J)}\\
      &\lesa \epsilon + \|u_k' - u_0'\|_{S^\frac{1}{2}(J)}^2 + \|u_0'\|_{S^\frac{1}{2}(J)}\big( \|u_k' - u_0'\|_{S^\frac{1}{2}(J)} + \| \mc{I}_0[F]\|_{S^\frac{1}{2}(J)}\big)\\
      &\qquad \qquad \qquad \qquad \qquad + \rho_I(u_0')\big( \| u_0'\|_{\underline{S}^\frac{1}{2}(J)} + \| \psi - \mc{I}_0[F] \|_{\underline{S}^\frac{1}{2}(J)}\big) + \| \psi \|_{S^\frac{1}{2}(J)} \| u_k' - u_0'\|_{S^\frac{1}{2}(J)}\\
      &\lesa (1+A)\epsilon + A \|u_k' - u_0'\|_{S^\frac{1}{2}(J)}
    \end{align*}
where we also applied the inductive assumption \eqref{eqn:stab:induc}. Similarly, for the differences $V_{k+1}' - V_{j+1}'$, after writing
        $$ V_{k+1}' - V_{j+1}' =  |\nabla| \mc{J}_0\big[ |u_k' + \psi|^2 - |u_j' + \psi|^2\big] = |\nabla| \mc{J}_0\big[ |u_k' - u_j'|^2 + 2\Re\big( (\overline{u}_k' - \overline{u}_j')(u_j' + \psi)\big) \big]$$
we can bound the difference in the stronger space $\underline{W}^0(J)$ via
    \begin{align*}
        \| V_{k+1}' - V_{j+1}' \|_{\underline{W}^0(J)}
            &\lesa \| u_j' - u_k'\|_{S^\frac{1}{2}(J)}^2 + \|u_j' -  u_k' \|_{S^\frac{1}{2}(J)}\big( \|u_j' - u_0'\|_{S^\frac{1}{2}(J)} + \| u_0' \|_{S^\frac{1}{2}(J)} + \| \psi \|_{S^\frac{1}{2}(J)}\big) \\
            &\lesa A \|u_j' -  u_k' \|_{S^\frac{1}{2}(J)}.
    \end{align*}
The fact that we don't assume a good bound for $\mc{J}_0[G]$ in the stronger space $\underline{W}^0(J)$ prevents us from improving the above bound for $V_k'$ from $W^0(J)$ to $\underline{W}^0(J)$. However this is possible for components of the wave evolution. More precisely, we have (again using the inductive assumption \eqref{eqn:stab:induc})
    \begin{align*}
      \| V_{k+1}' + \phi - \phi_0\|_{\underline{W}^0(J)} &= \big\| e^{it|\nabla|}g' + \mc{J}_0\big[|\nabla| |u_k' + \psi|^2\big] \big\|_{\underline{W}^0(J)} \lesa \epsilon + \|u'_k + \psi \|_{S^\frac{1}{2}(J)}^2 \lesa A^2.
    \end{align*}
We now turn to the bounds for the Schr\"odinger component of the evolution. The first step is to observe that the identities
    $$\mc{I}_{\Re(\phi_0)}[\Phi] = \mc{I}_0[\Phi] + \mc{I}_{\Re(\phi_0)}[\Re(\phi_0) \mc{I}_0[\Phi]]$$
and
    $$ \mc{U}_{\Re(\phi_0)}f = e^{it\Delta} f +  \mc{I}_{\Re(\phi_0)}[ \Re(\phi_0) e^{it\Delta} f] = e^{it\Delta} f + \mc{I}_0[\Re(\phi_0) e^{it\Delta} f] + \mc{I}_{\Re(\phi_0)}\big[ \Re(\phi_0) \mc{I}_0[\Re(\phi_0) e^{it\Delta} f] \big]$$
together with the uniform energy estimate in Theorem \ref{thm:schro energy} and the bilinear estimates in Theorem \ref{thm:bi prev} gives (since $\| \phi(0)\|_{L^2} \les B < \|Q\|_{\dot{H}^1}$)
    $$ \| \mc{I}_{\Re(\phi_0)}[\Phi] \|_{S^\frac{1}{2}(J)} \lesa_B \| \mc{I}_0[\Phi]\|_{S^\frac{1}{2}(J)} + \| \phi_0 \mc{I}_0[\Phi] \|_{N^\frac{1}{2}(J)} \lesa_B \| \mc{I}_0[\Phi]\|_{S^\frac{1}{2}(J)}$$
and
    \begin{align*}
    \| \mc{U}_{\Re(\phi_0)}f' - u_0'\|_{S^\frac{1}{2}(J)} &\lesa_B \| \mc{I}_0[\Re(\phi_0) u_0'\|_{S^\frac{1}{2}(J)} + \| \Re(\phi_0) \mc{I}_0[\Re(\phi_0) u_0'] \|_{N^\frac{1}{2}(J)} \\
                &\lesa_B \| \mc{I}_0[\Re(\phi_0) u_0'] \|_{S^\frac{1}{2}(J)} \lesa_B \| \phi_0 \|_{\underline{W}^0(J)} \rho_I(u_0') \lesa_B \epsilon.
    \end{align*}
Hence after decomposing
    $$ \Re(V'_{k+1} + \phi) (u_k' + \psi) - \Re(\phi) \psi - \phi_0 u_k' = \Re( V_{k+1}' + \phi - \phi_0) (u_k' - u_0') +  \Re( V_{k+1}' + \phi - \phi_0) u_0' + \Re(V'_{k+1}) \psi$$
we see that
    \begin{align*}
      \|u'_{k+1} - u_0'\|_{S^\frac{1}{2}(J)}
       &\les \| \mc{U}_{\Re(\phi_0)} f' - u_0' \|_{S^\frac{1}{2}(J)}
            + \big\| \mc{I}_{\Re(\Phi_0)}\big[ \Re(V'_{k+1} + \phi) (u_k' + \psi) - \Re(\phi) \psi - \phi_0 u_k' \big] \big\|_{S^\frac{1}{2}(J)} \\
      &\lesa_B \epsilon + \big\| \mc{I}_0\big[\Re( V_{k+1}' + \phi - \phi_0) (u_k' - u_0') +  \Re( V_{k+1}' + \phi - \phi_0) u_0' + \Re(V'_{k+1}) \psi \big] \big\|_{S^\frac{1}{2}(J)} \\
      &\lesa_B \epsilon + \|V_{k+1}' + \phi - \phi_0\|_{W^0(J)} \|u_k' - u_0'\|_{S^\frac{1}{2}(J)} \\
      &\qquad \qquad\qquad + \|V_{k+1}' + \phi - \phi_0 \|_{\underline{W}^0(J)} \rho_I(u_0')  + \|V'_{k+1} \|_{W^0(J)} \big( \|\psi\|_{D(J)} \| \psi \|_{S^\frac{1}{2}(J)}\big)^\frac{1}{2} \\
      &\lesa_B \epsilon + \big[ (1+A)\epsilon + \delta A +  A \|u_k' - u_0'\|_{S^\frac{1}{2}(J)} \big] \|u_k' - u_0'\|_{S^\frac{1}{2}(J)} \\
      &\qquad \qquad\qquad + A^2 \epsilon + \big[(1+A)\epsilon + A \|u_k' - u_0'\|_{S^\frac{1}{2}(J)}\big] (\delta A)^\frac{1}{2} \\
      &\lesa_B ( 1 + A^2)\epsilon + \big[ (1+A)\epsilon + A^\frac{3}{2} \delta^\frac{1}{2} + A \|u_k'-u_0'\|_{S^\frac{1}{2}(J)}\big] \|u_k' - u_0\|_{S^\frac{1}{2}(J)}.
    \end{align*}
Similarly, for differences, we write
    $$ \Re(V_{k+1}' + \phi)(u_k' + \psi) - \Re(\phi_0) u_k' - \Re(V_{j+1}' + \phi)(u_j' + \psi) + \Re(\phi_0) u_j' = \Re(V_{k+1}' - V_{j+1}')(u_k' + \psi) + \Re(V_{j+1}' + \phi - \phi_0)(u_k' - u_j')$$
and observe that the above bounds give
    \begin{align*}
      \|u_{k+1}' - u_{j+1}'\|_{S^\frac{1}{2}(J)}
        &= \big\| \mc{I}_{\Re(\phi_0)}\big[ \Re(V_{k+1}' - V_{j+1}')(u_k' + \psi) + \Re(V_{j+1}' + \phi - \phi_0)(u_k' - u_j')\big] \big\|_{S^\frac{1}{2}(J)} \\
        &\lesa_B \big\| \mc{I}_0\big[ \Re(V_{k+1}' - V_{j+1}')(u_k' +\psi)\big] \big\|_{S^\frac{1}{2}(J)} + \big\| \mc{I}_0\big[(V'_{j+1} + \phi - \phi_0)(u_k' - u_j') \big] \big\|_{S^\frac{1}{2}(J)}\\
        &\lesa_B \|V'_{k+1} - V_{j+1}'\|_{\underline{W}^0(J)}\Big( \|u_k' - u_0'\|_{S^\frac{1}{2}(J)} + \rho_I(u_0') + \big( \| \psi \|_{S^\frac{1}{2}(J)} \| \psi \|_{D(J)}\big)^\frac{1}{2} \Big) \\
        &\qquad\qquad\qquad\qquad + \big( \| V_{j+1}' \|_{W^0(J)} + \| \phi - \phi_0\|_{W^0(J)}\big) \|u_k' - u_j'\|_{S^\frac{1}{2}(J)}\\
        &\lesa_B A  \| u_k' - u_j'\|_{S^\frac{1}{2}(J)} \big( \| u'_k - u_0'\|_{S^\frac{1}{2}(J)} + \epsilon + (A\delta)^\frac{1}{2}\big) \\
         &\qquad\qquad\qquad\qquad+ \big( (1+A)\epsilon  +  \delta A + A \| u_j'-u_0'\|_{S^\frac{1}{2}(J)} \big) \|u_k' - u_j'\|_{S^\frac{1}{2}(J)}\\
        &\lesa_B (1+A) \big( \epsilon + (A\delta)^\frac{1}{2} + \|u_j' - u_0'\|_{S^\frac{1}{2}(J)} + \| u_k' - u_0'\|_{S^\frac{1}{2}(J)} \big) \|u_j' - u_k' \|_{S^\frac{1}{2}(J)}.
    \end{align*}
Therefore, a standard argument shows that there exists $\epsilon^*(A, B), \delta^*(A, B)>0$ such that if $0<\epsilon < \epsilon^*(A, B)$ and $0< \delta < \delta^*(A, B)$ then there exists a solution $(u', V')$ to \eqref{eqn:stab:u' eqn} such that
        $$ \|u' - u_0'\|_{S^\frac{1}{2}(J)} \lesa_B (1 + A^2) \epsilon, \qquad \|V' - e^{it\Delta} f'\|_{W^0(J)} \lesa_B (1+A^3) \epsilon. $$
Letting $(u, V) = (\psi + u', \phi + V')$, and noting that for any $s\in J$ we have
    \begin{align*}
        \rho_I\big( e^{i(t-s)\Delta}[u(s) - \psi(s)]\big) &\les \rho_I\big( e^{it\Delta}[f-\psi(0)]\big) + \rho_I\Big( e^{i(t-s)\Delta}[u(s)  -\psi(s) - e^{is\Delta}[f-\psi(0)]\big]\Big)\\
                                                &\lesa \epsilon + \big\| u(s)  -\psi(s) - e^{is\Delta}[f-\psi(0)]\big\|_{H^\frac{1}{2}} \lesa_B (1+A^2)\epsilon
    \end{align*}
the claim now follows.
\end{proof}

A quick corollary of the previous theorem is an existence theorem for the Zakharov equation where the time of existence depends only on the size of the controlling quantity $\rho_I$.

\begin{corollary}[Refined small data theory]\label{cor:lwp with rho}
Let $A>0$, $0<B<\|Q\|_{\dot{H}^1}$, and $\frac{1}{2}\les s < 1$. There exists $\epsilon_0(A,B)>0$ such that if $0<\epsilon \les \epsilon_0(A, B)$,  $I\subset \RR$ is an interval, and $(f, g) \in H^s\times L^2$
such that for some $t_0\in I$ we have the boundedness condition
    \begin{equation}\label{eqn:lwp:bound}
       \|f \|_{H^\frac{1}{2}} \les A, \qquad \| g \|_{L^2_x} \les B,
    \end{equation}
and the smallness condition
      \begin{equation}\label{eqn:lwp:error}
           \rho_{I}\big(e^{i(t-t_0)\Delta} f \big) < \epsilon,
      \end{equation}
then there exists a (unique) solution $(u, V)\in C(I, H^s\times L^2)$ to \eqref{eqn:Zak} with data $(u, V)(0) = (f,g)$. Moreover, we have the bounds
    $$\rho_I(u) + \big\| u -  e^{i(t-t_0)\Delta}f \big\|_{S^\frac{1}{2}(I)} + \| V - e^{i(t-t_0)|\nabla| }g\|_{W^0(I)} \lesa_{A, B} \epsilon $$
and
    $$ \| u \|_{\underline{S}^s(I)} \lesa_{A, B, s}  \| f\|_{H^s}, \qquad \| V\|_{L^\infty_t L^2_x(I)} \les \tfrac{1}{2} \big( B + \|Q \|_{\dot{H}^1}\big) $$
where the implied constants depend only on $A$, $B$, and $s$.
\end{corollary}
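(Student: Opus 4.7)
The plan is to derive this corollary directly from the stability theorem (Theorem~\ref{thm:stab}), taking the approximate solution to be the free evolution of the data itself: $(\psi,\phi) := (e^{i(t-t_0)\Delta}f, e^{i(t-t_0)|\nabla|}g)$. With this choice, $(\psi,\phi)$ tautologically solves the perturbed system \eqref{eqn:stab:approx sys} with forcing
$$F = -\Re(\phi)\psi, \qquad G = -|\nabla||\psi|^2,$$
and satisfies $\psi(t_0)=f$, $\phi(t_0)=g$, so the first two summands in \eqref{eqn:stab:error} vanish identically. The boundedness hypothesis \eqref{eqn:stab:bound} is immediate from the Strichartz estimate \eqref{eqn:schro stri}, namely $\|\psi\|_{\underline{S}^\frac{1}{2}(I)}\lesa\|f\|_{H^\frac{1}{2}}\les A$, together with $\|\phi\|_{L^\infty_tL^2_x(I)}\les \|g\|_{L^2}\les B$.

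The one substantive check is that the two Duhamel forcing terms in \eqref{eqn:stab:error} are small. This is exactly what the controlling quantity $\rho_I$ is designed to measure. Applying the definition of $\rho_I$ (with the initial-time adjustment from \eqref{eqn:duh loc time:init trans} absorbed into an absolute constant), combined with the free-wave bound $\|\phi\|_{\underline{W}^0(I)}\lesa B$ and $\|\psi\|_{\underline{S}^\frac{1}{2}(I)}\lesa A$, yields
$$\bigl\|\mc{I}_{0,t_0}[\Re(\phi)\psi]\bigr\|_{S^\frac{1}{2}(I)} \lesa \rho_I(\psi)\,\|\phi\|_{\underline{W}^0(I)} \lesa B\epsilon,$$
$$\bigl\|\mc{J}_{0,t_0}[|\nabla||\psi|^2]\bigr\|_{W^0(I)} \lesa \rho_I(\psi)\,\|\psi\|_{\underline{S}^\frac{1}{2}(I)}\lesa A\epsilon,$$
where the hypothesis $\rho_I(\psi)<\epsilon$ is used in both lines.

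Choosing $\epsilon_0(A,B)>0$ small enough that $C(A+B)\epsilon_0(A,B)$ lies below the stability threshold $\epsilon_0$ appearing in Theorem~\ref{thm:stab} (with the same parameters $A$, $B$), Theorem~\ref{thm:stab} then produces a unique solution $(u,V)\in C(I;H^s\times L^2)$ satisfying
$$\rho_I(u-\psi)+\|u-\psi\|_{S^\frac{1}{2}(I)}+\|V-\phi\|_{W^0(I)}\lesa_{A,B}\epsilon,$$
which is precisely the stated difference bounds after substituting $(\psi,\phi)=(e^{i(t-t_0)\Delta}f, e^{i(t-t_0)|\nabla|}g)$. The full bound on $\rho_I(u)$ follows from subadditivity, $\rho_I(u)\les \rho_I(u-\psi)+\rho_I(\psi)\lesa_{A,B}\epsilon$, while the higher regularity estimate $\|u\|_{\underline{S}^s(I)}\lesa_{A,B,s}\|f\|_{H^s}$ and the wave mass bound $\|V\|_{L^\infty_tL^2_x(I)}\les \tfrac{1}{2}(B+\|Q\|_{\dot{H}^1})$ are delivered verbatim by Theorem~\ref{thm:stab}.

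There is no serious obstacle to overcome; the entire proof amounts to recognizing that the controlling quantity $\rho_I$ was tailor-made so that smallness of $\rho_I(e^{i(t-t_0)\Delta}f)$ exactly guarantees that the free evolution, viewed as an approximate solution, has small forcing in the $S^\frac{1}{2}\times W^0$ norms required by the stability theorem. The only mildly delicate point is the initial-time bookkeeping ($t_0\ne 0$ in the corollary versus $t=0$ in the definition of $\rho_I$), which is handled cleanly by \eqref{eqn:duh loc time:init trans}.
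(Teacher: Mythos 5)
Your proof is correct, and it is a genuinely different (though closely related) instantiation of the same stability machinery. The paper applies Theorem~\ref{thm:stab} with the trivial choice $\psi\equiv 0$, $F=G=0$, $\phi=e^{i(t-t_0)|\nabla|}g$: then $f-\psi(t_0)=f$, $g-\phi(t_0)=0$, and both Duhamel forcing terms vanish, so the smallness condition \eqref{eqn:stab:error} is \emph{literally} the hypothesis \eqref{eqn:lwp:error} and no estimate at all is required — the proof is a one-liner. You instead take $\psi=e^{i(t-t_0)\Delta}f$ (the free Schr\"odinger evolution), which forces the nonzero source terms $F=-\Re(\phi)\psi$, $G=-|\nabla||\psi|^2$; the smallness of their Duhamel integrals then has to be deduced from the definition of $\rho_I$ together with the free-energy bounds $\|\phi\|_{\underline{W}^0(I)}\lesa B$, $\|\psi\|_{\underline{S}^{1/2}(I)}\lesa A$. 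Your computation of these bounds is correct and nicely illustrates what $\rho_I$ was built to do, but it duplicates work that the paper's choice of $\psi$ makes unnecessary: there the three extra summands in \eqref{eqn:stab:error} are identically zero rather than merely small. One cosmetic point: with $\psi=e^{i(t-t_0)\Delta}f$ the boundedness condition \eqref{eqn:stab:bound} reads $\|f\|_{H^{1/2}}+\|\psi\|_{S^{1/2}(I)}\les A$, and the Strichartz bound only gives $\|\psi\|_{S^{1/2}(I)}\lesa A$, so you should formally invoke Theorem~\ref{thm:stab} with $A$ replaced by $(1+C)A$ for the absolute Strichartz constant $C$; the paper's $\psi=0$ sidesteps even this. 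The conclusions (closeness in $S^{1/2}\times W^0$, subadditivity of $\rho_I$, persistence of regularity in $\underline{S}^s$, the wave mass bound) then come out identically in both routes.
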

\begin{proof}
We simply apply Theorem \ref{thm:stab} with $\psi = F = G = 0$ and $\phi = e^{it|\nabla|} g$.
\end{proof}

\subsection{Existence of Wave Operators}

To extract some compactness from a bounded sequence of solutions to the Zakharov equation, we require solutions with prescribed data at infinity (i.e the existence of wave operators). This is a simple consequence of the results in \cite{Candy2021, Candy2021a}.

\begin{theorem}[Existence of wave operators below ground state]\label{thm:wave operators}
Let $(f, g)\in H^1\times L^2$ with
        $$ 2 \| f\|_{\dot{H}^1}^2 + \|g \|_{L^2}^2 < \| Q \|_{\dot{H}^1}^2. $$
Then there exists a unique global solution $(u,V)\in C(\RR, H^1\times L^2)$  to \eqref{eqn:Zak} such that $u\in L^2_{t,loc} W^{\frac{1}{2}, 4}_x$ and
        $$ \lim_{t\to -\infty} \big\| (u,V)(t) - \big( e^{it\Delta} f, e^{it|\nabla|} g\big) \big\|_{H^1\times L^2} = 0. $$
Moreover, the energy $\mc{E}_Z(u,V)$ is conserved, and
        $$ 4\mc{E}_Z(u, V) = \lim_{t\to -\infty}  4 \mc{E}_Z\big(e^{it\Delta} f, e^{it|\nabla|} g\big)= 2 \| f\|_{\dot{H}^1}^2 +  \|g \|_{L^2}^2, \qquad  \| V  \|_{L^\infty_t L^2_x}^2 \les 2 \| f\|_{\dot{H}^1}^2 +  \|g \|_{L^2}^2. $$
\end{theorem}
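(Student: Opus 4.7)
The strategy is to construct the wave operator in four stages: a small-data approximation on $(-\infty,-T]$ using the stability theorem, globalisation via the GWP result, a limiting procedure $T\to\infty$ to obtain the correct energy, and an energy-identity argument to upgrade convergence from $H^{1/2}$ to $H^1$. Set $\psi=e^{it\Delta}f$, $\phi=e^{it|\nabla|}g$; the pair $(\psi,\phi)$ solves the perturbed Zakharov system with forcing $F=-\Re(\phi)\psi$ and $G=-|\nabla||\psi|^2$. Applying Theorem \ref{thm:stab} on $I=(-\infty,-T]$ with initial time $t_0=-T$ and exact data $(\psi(-T),\phi(-T))$, the endpoint bilinear estimates of Theorem \ref{thm:bi prev} bound the perturbation error by $\|\phi\|_{W^0}(\|\psi\|_{D(I)}\|\psi\|_{S^{1/2}})^{1/2}+\|\psi\|_{D(I)}\|\psi\|_{S^{1/2}}$, which tends to $0$ as $T\to\infty$ since $\psi\in L^2_t W^{1/2,4}_x(\RR)$ by Strichartz and $\|g\|_{L^2}<\|Q\|_{\dot H^1}$ by hypothesis. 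This yields a solution $(u_T,V_T)\in C(I,H^s\times L^2)$ for every $s\in[\tfrac12,1)$, converging in $S^{1/2}\times W^0$ to $(\psi,\phi)$ on $I$. Dispersion of linear waves (via $\|e^{it\Delta}f\|_{L^\infty_x}\lesa t^{-2}\|f\|_{L^1}$ for Schwartz $f$, extended by density) yields $\int\Re(\phi(-T))|\psi(-T)|^2\,dx\to 0$, so for $T$ large the data $(\psi(-T),\phi(-T))\in H^1\times L^2$ lies strictly below the ground state and Theorem \ref{thm:gwp below ground state} extends $(u_T,V_T)$ globally to $C(\RR,H^1\times L^2)$.

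Because the energy $\mc{E}_Z(u_T,V_T)=\mc{E}_Z(\psi(-T),\phi(-T))$ is only approximately $E_0:=\tfrac12\|f\|_{\dot H^1}^2+\tfrac14\|g\|_{L^2}^2$, each $(u_T,V_T)$ is merely an approximate wave operator. For $T_1<T_2$ large, both solutions lie close to $(\psi,\phi)$ on $(-\infty,-T_2]$, so a second application of Theorem \ref{thm:stab} (with one as the exact solution and the other as perturbation) propagates this closeness globally using the uniform $H^1\times L^2$ bounds from Theorem \ref{thm:gwp below ground state}. This shows $\{(u_{T_n},V_{T_n})\}$ is Cauchy in $C_{\mathrm{loc}}(\RR,H^{1/2}\times L^2)$; the limit $(u,V)\in C(\RR,H^1\times L^2)$ solves Zakharov, satisfies $(u-\psi,V-\phi)\to 0$ in $H^{1/2}\times L^2$ at $-\infty$, and has energy $E_0$ by continuity.

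The main obstacle is upgrading this convergence to $H^1$, since the stability and persistence results only reach $s<1$. Using conservation of energy,
$$\|u(t)\|_{\dot H^1}^2=2E_0-\tfrac12\|V(t)\|_{L^2}^2-\int\Re(V)|u|^2\,dx,$$
one decomposes the interaction integral into the pieces $\Re(V-\phi)|u|^2$, $\Re(\phi)|u-\psi|^2$, $\Re(\phi)(u-\psi)\bar\psi$, and $\Re(\phi)|\psi|^2$. Each vanishes as $t\to -\infty$: the first via $\|V-\phi\|_{L^2}\to 0$ with bounded $\|u\|_{L^4}\lesa\|u\|_{\dot H^1}$; the second via approximating $\phi$ by an $L^\infty$-bounded Schwartz tail and absorbing the main piece using $\|u-\psi\|_{L^2}\to 0$; the third and fourth using the dispersive decay $\|\psi(t)\|_{L^4}\to 0$ (Schwartz plus density). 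Consequently $\|u(t)\|_{\dot H^1}^2\to\|f\|_{\dot H^1}^2$. Strong $H^{1/2}$-convergence plus uniform $\dot H^1$-boundedness give weak $\dot H^1$-convergence $u-\psi\rightharpoonup 0$; combined with the norm convergence just established and uniform convexity of $\dot H^1$, this yields strong $\dot H^1$-convergence, and mass conservation provides the missing $L^2$-convergence. Uniqueness follows from Theorem \ref{thm:stab} on $(-\infty,-T]$ for $T$ large, and the bound $\|V\|_{L^\infty_t L^2_x}^2\les 2\|f\|_{\dot H^1}^2+\|g\|_{L^2}^2$ is inherited from Theorem \ref{thm:gwp below ground state}.
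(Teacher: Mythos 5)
Your proposal takes a genuinely different and substantially more circuitous route than the paper, and the key step has a real gap.

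The paper constructs the wave operator \emph{directly}: it performs a Duhamel fixed point on $(-\infty,T]$ for the ``final-state'' integral equation (Duhamel from $-\infty$), using the estimates of Theorem \ref{thm:bi prev} and \eqref{eqn:energy ineq} and the smallness of $\|\psi\|_{D((-\infty,T])}$ for $T$ very negative. This yields a solution $(u,V)\in C((-\infty,T],H^1\times L^2)$ that already satisfies $\|(u,V)(t)-(\psi,\phi)(t)\|_{H^1\times L^2}\to 0$ as $t\to-\infty$; energy conservation and $\|e^{it\Delta}f\|_{L^4}\to 0$ then give the stated energy identity, after which the ground-state condition and Theorem \ref{thm:gwp below ground state} extend $(u,V)$ to all of $\RR$. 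Your approach instead solves the IVP at $t=-T$ with data $(\psi(-T),\phi(-T))$ (which, as you correctly note, produces only approximate wave operators since the Duhamel integral starts at $-T$ rather than $-\infty$), and then attempts to pass to the limit $T\to\infty$.

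The gap is in your Cauchy step. You write that a second application of Theorem \ref{thm:stab} ``propagates this closeness globally using the uniform $H^1\times L^2$ bounds from Theorem \ref{thm:gwp below ground state}.'' But Theorem \ref{thm:stab} requires the boundedness hypothesis $\|\psi\|_{S^{1/2}(I)}\les A$ on the reference solution over the full interval $I$, and the implied constants degrade as $A$ grows. Uniform $H^1\times L^2$ bounds do \emph{not} give uniform control of $\|u_{T_1}\|_{S^{1/2}([-T_2,b])}$ — indeed, if $H^1\times L^2$ bounds implied uniform $S^{1/2}$ bounds on growing intervals, one would have already proved Conjecture \ref{conj:scattering}. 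You cannot propagate closeness from $(-\infty,-T_2]$ forward to a fixed compact interval with constants independent of $T_1,T_2$ without precisely the kind of dispersive bound that the conjecture is about. Additionally, your $H^1$-upgrade is more involved than necessary and, as stated, the ``uniform convexity'' step does not close because the comparator $\psi(t)$ also moves in $t$; it can be patched by a density argument in $H^{3/2}$ rather than by convexity, but the entire upgrade is avoided if, as in the paper, the fixed point is performed at regularity $s=1$ using the full estimates of \cite{Candy2021a} (the restriction $s<1$ in Theorem \ref{thm:bi prev} is a simplification made elsewhere in the paper, not an obstruction here). I would recommend replacing the limiting procedure by the direct final-state fixed point and working at $s=1$ from the start.
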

\begin{proof}
The estimates in \cite{Candy2021a} (see Theorem \ref{thm:bi prev} and the energy inequality \eqref{eqn:energy ineq}) together with a standard fixed point argument give a time $T\in \RR$ and a unique solution $(u,V)\in C((-\infty, T], H^1\times L^2)$ to \eqref{eqn:Zak} with $u\in L^2_tW^{\frac{1}{2}, 4}_x((-\infty, T]\times \RR^4)$ and
        $$ \lim_{t\to -\infty} \big\| (u,V)(t) - \big( e^{it\Delta} f, e^{it|\nabla|} g\big) \big\|_{H^1\times L^2} = 0. $$
Moreover this solution conserves energy, and hence as
        \begin{align*}
            \big|\mc{E}_Z(u,v) &- \mc{E}_Z\big(e^{it\Delta}f, e^{it|\nabla|}g\big) \big| \\
                &\lesa \big( \|u(t)\|_{H^1} + \|V(t)\|_{L^2} + \| f \|_{H^1} + \| g\|_{L^2} \big)\big( \|u(t) - e^{it\Delta}f \|_{H^1} + \|V(t) - e^{it|\nabla|}g\|_{L^2}\big)
        \end{align*}
we have
        $$ \mc{E}_Z(u, V) = \lim_{t\to -\infty} \mc{E}_Z\big( e^{it\Delta}f, e^{it|\nabla|}g\big) = \frac{1}{2} \| f \|_{H^1}^2 + \frac{1}{4}\|g\|_{L^2}^2$$
where the last identity follows from the fact that, due to the dispersive properties of the Schr\"odinger evolution, we have $\lim_{t\to -\infty} \| e^{it\Delta} f \|_{L^4_x}=0$. Therefore in view of the assumptions on the asymptotic data $(f,g)$ we have
    $$ 4\mc{E}_Z(u, V) < \| Q\|_{\dot{H}^1}^2, \qquad \lim_{t\to -\infty} \|V(t) \|_{L^2}  = \| g \|_{L^2} < \| Q \|_{\dot{H}^1}. $$
Consequently we can find some $T^*\in (-\infty, T]$ such that
    $$ 4\mc{E}_Z\big(u(T^*), V(T^*)\big) < \| Q\|_{\dot{H}^1}^2, \qquad \|V(T^*) \|_{L^2} < \| Q \|_{\dot{H}^1} $$
and hence taking $(u,V)(T^*)$ as data, Theorem \ref{thm:gwp below ground state} extends the solution $(u,V)$ from $(-\infty, T]$ to $\RR$ with $(u, V)\in C(\RR, H^1\times L^2)$.
\end{proof}

\section{Bilinear Estimates}\label{sec:bil est}

This section contains the key bilinear estimates that form the foundation of the proof of our main results. Our main goal is to prove the nonlinear terms on the righthand side of \eqref{eqn:Zak} are perturbative provided that the $Y$ norm is small. As mentioned in the introduction, the fundamental issue is that in the fully resonant case (so both inputs and outputs have Fourier support close to their relevant characteristic surfaces) it seems very hard to improve on the Strichartz bounds
        \begin{equation}\label{eqn:stri example}
             \| \mc{I}_0[\Re(V)u]\|_{L^2_t L^4_x} \lesa \| \Re(V) u \|_{L^2_t L^\frac{4}{3}_x} \lesa \| V \|_{L^\infty_t L^2_x} \| u \|_{L^2_t L^4_x}.
        \end{equation}
In particular, it does not seem possible to extract a factor $\|u\|_{L^\infty_{t,x}}$. Instead, following the approach in \cite{Candy2021}, we require a version of the bilinear restriction estimates for the paraboloid. More precisely, we recall that provided $r>\frac{5}{3}$ and $\mu\in 2^\ZZ$ we have the (homogeneous) bilinear restriction estimate for the paraboloid
		\begin{equation}\label{eqn:hom bi restric}
				\big\| \dot{P}_\mu( \overline{e^{it\Delta} f} e^{it\Delta} g) \big\|_{L^1_t L^r_x} \lesa  \mu^{2-\frac{4}{r}} \|f\|_{L^2} \|g\|_{L^2}.
		\end{equation}
Note that the Stricharz estimates only give the case $r=2$. The key advantage of \eqref{eqn:hom bi restric} is that we can take $r<2$ and thus gain some additional room over simply applying H\"older's inequality followed by the linear Strichartz estimate. The range $r>\frac{5}{3}$ is sharp, in the sense that \eqref{eqn:hom bi restric} fails if $r<\frac{5}{3}$. The endpoint $r=\frac{5}{3}$ is open. The estimate \eqref{eqn:hom bi restric} was obtained in \cite{Candy2019a}. Bilinear restriction estimates for the paraboloid in $L^p_{t,x}$ up to the sharp region were obtain by Tao \cite{Tao2003a}, the mixed norm case $L^q_t L^r_x$ with $q>1$ was then considered in \cite{Lee2008}.

As it stands, the estimate \eqref{eqn:hom bi restric} is not particularly useful in the nonlinear problem  as the restriction to homogeneous solutions is too strong. However recently stronger estimates have been developed that improve on \eqref{eqn:hom bi restric} by allowing general functions belonging to suitable adapted function spaces \cite{Candy2019a}. Of particular importance for our purposes are the \emph{inhomogeneous} bilinear restriction estimates proved in \cite{Candy2021}, see Theorem \ref{thm:bi schro inhom} below. In fact, by adapting the arguments from \cite{Candy2021} we obtain the following crucial bilinear estimate.

\begin{theorem}[Improved Bilinear Estimates]\label{thm:bi est improved}
Let $s>\frac{1}{2}$. There exists $C>0$ and $0<\theta<1$ such that for any interval $0\in I\subset \RR$ we have
	$$  \big\| \mc{I}_0\big[ \Re(V) u\big] \big\|_{S^\frac{1}{2}(I)} \les C \Big( \|V\|_{Y(I)} \|u\|_{Y(I)} \Big)^\theta \Big(\|V \|_{\underline{W}^0(I)} \|u\|_{\underline{S}^s(I)}\Big)^{1-\theta}$$
and
	$$ \big\| \mc{J}_0\big[  |\nabla|\Re(\overline{w}u)\big] \big\|_{W^0(I)}\les C \Big( \|w\|_{Y(I)} \|u\|_{Y(I)} \Big)^\theta \Big(\|w \|_{\underline{S}^\frac{1}{2}(I)} \|u\|_{\underline{S}^s(I)}\Big)^{1-\theta}. $$
\end{theorem}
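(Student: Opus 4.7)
The plan is to establish the improved bilinear estimates by a Littlewood--Paley decomposition together with a geometric interpolation between two frequency-localized bilinear bounds: a ``frequency-sharp'' bound that follows the strategy of \cite{Candy2021, Candy2021a}, and an auxiliary ``frequency-loose'' bound that extracts a factor of $Y$ via Bernstein. Because the two estimates depend quantitatively differently on the Littlewood--Paley frequencies, taking a small power $\theta$ of the loose bound together with $1-\theta$ of the sharp one keeps the combined frequency weight summable while producing the desired $(\|V\|_Y\|u\|_Y)^\theta$ factor.

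First I would decompose $V = \sum_\mu V_\mu$, $u = \sum_\lambda u_\lambda$, and split the output frequency via $\mc{I}_0[\Re(V)u] = \sum_\nu \mc{I}_0[\dot P_\nu(\Re(V_\mu)u_\lambda)]$. In each of the standard HH, HL, LH frequency regimes I would run the bilinear argument of \cite{Candy2021a}, which relies crucially on the inhomogeneous bilinear restriction estimate \eqref{eqn:bilinear restriction intro} (the $L^1_t L^r_x$ bound for the paraboloid with $r > 5/3$) from \cite{Candy2021}, applied after reducing the wave factor to a modulated Schr\"odinger-type function at frequency $\mu$. This produces a sharp bound of the form
\[ \|\mc{I}_0[\dot P_\nu(\Re(V_\mu)u_\lambda)]\|_{S^{\frac12}_\nu} \lesa \omega(\mu,\lambda,\nu)\,\|V_\mu\|_{\underline{W}^0_\mu}\,\|u_\lambda\|_{\underline{S}^s_\lambda}, \]
where the weight $\omega$ is square-summable in $(\mu,\lambda,\nu)$ thanks to the extra gap $s - \tfrac12 > 0$. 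In parallel, using Bernstein in the form $\|V_\mu\|_{L^\infty_{t,x}} \lesa \mu^4 \|V\|_Y$ and $\|u_\lambda\|_{L^\infty_{t,x}} \lesa \lambda^4 \|u\|_Y$, combined with H\"older and the trivial Duhamel/Strichartz bound for $\mc{I}_0$, I would prove an auxiliary ``trivial-type'' estimate
\[ \|\mc{I}_0[\dot P_\nu(\Re(V_\mu)u_\lambda)]\|_{S^{\frac12}_\nu} \lesa \widetilde\omega(\mu,\lambda,\nu)\,\|V\|_Y \|u\|_Y, \]
where $\widetilde\omega$ grows polynomially in the frequencies.

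Taking a geometric mean with exponent $\theta \in (0,1)$ of the two bounds gives a hybrid estimate with frequency weight $\omega^{1-\theta}\widetilde\omega^{\theta}$. For $\theta$ small enough (its size dictated by $s-\tfrac12$ together with how far above $5/3$ one takes $r$ in \eqref{eqn:bilinear restriction intro}), this hybrid weight remains square-summable over $\mu,\lambda,\nu$, and summing via Cauchy--Schwarz in $\ell^2_\mu, \ell^2_\lambda, \ell^2_\nu$ then yields the claimed bound on $\mc{I}_0[\Re(V)u]$ in $S^{\frac12}(I)$. The wave-Duhamel estimate for $\mc{J}_0[|\nabla|\Re(\overline w u)]$ is obtained in parallel, with $R^0$ replacing $N^{\frac12}$ on the output side and the companion bilinear bound from Theorem~\ref{thm:bi prev} replacing the Schr\"odinger one; here both inputs are Schr\"odinger so the paraboloid restriction estimate \eqref{eqn:bilinear restriction intro} is applied directly. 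The main technical obstacle will be in the fully resonant regime $\mu \sim \lambda \sim \nu$: there Strichartz is saturated at $L^2_t L^{\frac43}_x$ with no slack under bare H\"older, and the only way to obtain a genuine geometric gain is to exploit the strict inequality $r < 2$ in the bilinear restriction estimate of \cite{Candy2021}, which ensures that $\omega$ and $\widetilde\omega$ have genuinely different frequency growth rates and hence that the interpolation yields a nontrivial summable weight.
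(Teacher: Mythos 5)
The plan correctly identifies the two essential ingredients — the inhomogeneous bilinear restriction estimate from \cite{Candy2021} in the resonant regime, and a Bernstein/Hölder argument to extract a factor of $\|\cdot\|_Y$ — and it is also correct that $s>\frac12$ is where the summability of the Littlewood--Paley weight comes from. The paper's own proof (via Theorem~\ref{thm:freq loc bi schro} and Theorem~\ref{thm:freq loc bi wave}) uses exactly these ingredients.

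However, the step in which you propose interpolating between the two frequency-localized estimates does not work as stated. Your ``trivial-type'' bound
\[ \|\mc{I}_0[\dot P_\nu(\Re(V_\mu)u_\lambda)]\|_{S^{\frac12}_\nu} \lesa \widetilde\omega(\mu,\lambda,\nu)\,\|V\|_Y \|u\|_Y \]
cannot hold with $\widetilde\omega$ finite when $I=\RR$. The norm $\|\cdot\|_{S^{\frac12}_\nu}$ contains the Duhamel term $\nu^{-\frac12}\|\Re(V_\mu)u_\lambda\|_{L^2_{t,x}(\RR^{1+4})}$ (and the $L^\infty_tL^2_x\cap L^2_tL^4_x$ components of $\mc{I}_0$ also require $\Re(V_\mu)u_\lambda\in L^2_tL^{4/3}_x$). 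The $Y$ norm is a pure weighted $L^\infty_{t,x}$ norm and carries no spacetime integrability whatsoever; two $L^\infty_{t,x}$ factors give another $L^\infty_{t,x}$ factor, and no amount of Bernstein closes the gap to an $L^q_t$ norm with $q<\infty$ on an infinite time interval. Thus the ``trivial'' side of your geometric mean is genuinely false, and there is no $\theta\in(0,1)$ for which interpolating with the sharp estimate produces the claim.

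The way to repair this is the one the paper uses: rather than interpolating between two estimates \emph{a posteriori}, perform the Hölder split \emph{inside} the bilinear estimate so that each input is estimated as the product of a fractional power of its $L^\infty_{t,x}$ norm and a complementary power of a Strichartz or Duhamel norm of the \emph{same} function. Concretely, one writes, e.g.,
\[
\| v_\mu\, u_\lambda \|_{L^2_{t,x}} \lesa \big(\|v_\mu\|_{L^\infty_{t,x}}\|u_\lambda\|_{L^\infty_{t,x}}\big)^\sigma \big(\|v_\mu\|_{L^q_tL^r_x}\|u_\lambda\|_{L^2_tL^4_x}\big)^{1-\sigma}
\]
for a wave-admissible pair $(q,r)$, and similarly wraps in the bilinear restriction gain. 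The $(1-\sigma)$-fraction of the $\underline W^\ell_\mu$ and $\underline S^{s'}_\lambda$ norms supplies the needed temporal integrability for every $\sigma\in(0,1)$, while the $\sigma$-fraction becomes the $Y$ factor. This is precisely what Theorem~\ref{thm:freq loc bi schro} and Theorem~\ref{thm:freq loc bi wave} record, at which point Theorem~\ref{thm:bi est improved} reduces to summing over frequencies as you describe. So the issue is not conceptual but structural: the $L^\infty_{t,x}$ piece must be carved off at the level of the integrand, not at the level of two separate dyadic bounds.
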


We give the proof of Theorem \ref{thm:bi est improved} at the end of this section, after first recalling the inhomogeneous bilinear restriction estimates from \cite{Candy2021}, and proving frequency localised versions of the bilinear estimates in Theorem \ref{thm:bi est improved}.

It is worth noting that there is a loss of (Schr\"odinger) regularity in Theorem \ref{thm:bi est improved}. In particular, we require $u\in \underline{S}^s$ with $s>\frac{1}{2}$, but can only place the Duhamel term $\mc{I}_0[\Re(V) u ]$ into $S^\frac{1}{2}$. This reflects the fact that obtaining a factor of $\| u \|_{L^\infty_{t,x}}$ is surprisingly challenging, and even after using the bilinear restriction estimates above, we are essentially forced to place $u\in L^a_t L^b_x$ for some $a>2$ (or at least some space which has temporal component of the form $L^a_t$). In turn, this means we have to control $V$ in $L^q_t$ for some $q<\infty$. As we see below, this is possible, but there is a derivative loss. It is unclear if this can be avoided. In any case, as our eventual goal is to study solutions with regularity $H^1\times L^1$, allowing some loss in the Schr\"odinger regularity is not a major obstruction. However, it does force some technical complications, which are mostly alleviated by considering smallness in terms of the controlling quantity $\rho$ (as in Section \ref{sec:small data}), as opposed to trying to propagate smallness in $Y(I)$ via Theorem \ref{thm:bi est improved} directly in an iteration argument.

\subsection{Bilinear Restriction Estimates}

The proof of global existence for the Zakharov equation relied crucially on the following bilinear restriction type estimates for the paraboloid and cone. The first estimate gives a useful bilinear estimate in $L^2_{t,x}$ in the case where $u$ can be an \emph{inhomogeneous} solution to the Schr\"odinger equation. Recall that we have defined
		$$ \| u \|_Z = \| u \|_{L^\infty_t L^2_x} + \| ( i\p_t + \Delta) u \|_{L^1_t L^2_x + L^2_x L^4_x}.$$

\begin{theorem}[Bilinear $L^2_{t,x}$ for inhomogeneous wave-Schr\"odinger interactions {\cite[Theorem 4.1]{Candy2021}}]\label{thm:bi wave schro inhom}
Let $0\les \alpha < \frac{1}{2}$. For all $\lambda \in 2^\NN$ and $ \mu \in 2^\ZZ$ with $\mu \lesa \lambda$, we have
    \begin{equation}\label{eqn:thm bi wave-schro:low-high}
        \| u_{\lambda}  e^{it|\nabla|} \dot{P}_{\mu} g \|_{L^2_{t,x}(\RR^{1+4})} \lesa \Big( \frac{\mu}{\lambda}\Big)^{\alpha} \mu \| u_{\lambda} \|_{Z} \| \dot{P}_\mu g \|_{L^2_x}.
    \end{equation}
\end{theorem}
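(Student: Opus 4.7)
My plan is to prove the estimate in two steps: first establish the homogeneous analogue for free Schr\"odinger waves using the bilinear restriction theory for the paraboloid--cone pair, and then upgrade this to arbitrary Schr\"odinger evolutions controlled in $Z$ via Duhamel's principle and a Christ--Kiselev argument.

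First I would show the homogeneous estimate
$$
\big\| e^{it\Delta} f_\lambda \cdot e^{it|\nabla|} \dot{P}_\mu g \big\|_{L^2_{t,x}} \lesa \Big(\frac{\mu}{\lambda}\Big)^\alpha \mu \|f_\lambda\|_{L^2} \|\dot{P}_\mu g\|_{L^2}
$$
for $0 \les \alpha < \tfrac{1}{2}$. The endpoint $\alpha = 0$ follows from H\"older's inequality together with the endpoint Strichartz bound $\|e^{it\Delta}f_\lambda\|_{L^2_t L^4_x} \lesa \|f_\lambda\|_{L^2}$ and Bernstein's inequality $\|e^{it|\nabla|}\dot{P}_\mu g\|_{L^\infty_t L^4_x} \lesa \mu \|\dot{P}_\mu g\|_{L^2}$. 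The gain $(\mu/\lambda)^\alpha$ for $\alpha$ close to $\tfrac{1}{2}$ comes from the transversality between the paraboloid $\{\tau = -|\xi|^2,\, |\xi| \sim \lambda\}$ and the cone $\{\tau = |\eta|,\, |\eta| \sim \mu\}$: in the regime $\mu \lesa \lambda$ the group velocities on the two surfaces differ in magnitude by a factor of order $\lambda$, and standard wave-packet analysis \`a la Tao produces a bilinear restriction estimate arbitrarily close to the natural sharp exponent $\alpha = \tfrac{1}{2}$. Complex interpolation between the Strichartz endpoint and any such bilinear restriction bound fills out the range $\alpha \in [0, \tfrac{1}{2})$.

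Next I would lift the homogeneous estimate to the inhomogeneous setting. Decompose $u_\lambda = e^{it\Delta} u_\lambda(0) + \mc{I}_0[F_1] + \mc{I}_0[F_2]$ where $(i\p_t + \Delta) u_\lambda = F_1 + F_2$ with $F_1 \in L^1_t L^2_x$ and $F_2 \in L^2_t L^{4/3}_x$, realising the decomposition implicit in the definition of $\|\cdot\|_Z$. The free contribution is handled by the homogeneous bound directly. For $F_1$, Minkowski's inequality and time translation invariance of the homogeneous estimate give
$$
\big\| \mc{I}_0[F_1] \cdot e^{it|\nabla|}\dot{P}_\mu g \big\|_{L^2_{t,x}} \les \int \big\| e^{i(t-s)\Delta} F_1(s) \cdot e^{it|\nabla|}\dot{P}_\mu g \big\|_{L^2_{t,x}}\,ds \lesa \Big(\tfrac{\mu}{\lambda}\Big)^\alpha \mu \|F_1\|_{L^1_t L^2_x}\|\dot{P}_\mu g\|_{L^2}.
$$
For $F_2$, bilinear duality recasts the homogeneous estimate as a bound on the non-retarded time integral $F \mapsto \int e^{-is\Delta}\big[F(s)\,\overline{e^{is|\nabla|}\dot{P}_\mu g}\big]\,ds$, and the Christ--Kiselev lemma then transfers this bound to the retarded Duhamel operator $\mc{I}_0$.

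The principal obstacle I anticipate is the Christ--Kiselev step for $F_2$: exactly at $\alpha = \tfrac{1}{2}$ the relevant time integrability exponents on input and output become equal and the lemma genuinely fails, which is the structural reason the statement excludes $\alpha = \tfrac{1}{2}$. One sacrifices a sliver of the gain in $(\mu/\lambda)^\alpha$ to create strict separation of the time exponents, and the bookkeeping must be done carefully so that the final estimate still exhibits a genuine gain over the pure Strichartz bound.
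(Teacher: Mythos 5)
There is no proof of this theorem in the present paper: it is quoted verbatim as \cite[Theorem~4.1]{Candy2021}, so there is nothing internal to compare against. Reviewing your proposal on its own terms, the homogeneous step ($\alpha=0$ by endpoint Strichartz plus Bernstein, the gain $(\mu/\lambda)^\alpha$ for $\alpha$ near $\tfrac12$ by transversality between the cone and the high-frequency paraboloid cap) is reasonable in outline. The gap is in the Christ--Kiselev step for the $F_2\in L^2_t L^{4/3}_x$ component of the Duhamel term. The operator you want to bound maps $L^2_t L^{4/3}_x \to L^2_{t,x}$: both the input and output Lebesgue exponent in time are exactly $2$, and the Christ--Kiselev lemma genuinely fails at equal time exponents (this is the same structural obstruction that forces Keel--Tao to argue separately at the Strichartz endpoint). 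Your proposed remedy --- ``sacrifice a sliver of the gain in $(\mu/\lambda)^\alpha$'' --- does not produce a strict inequality of time exponents, because $\alpha$ is a frequency-weight parameter; however you interpolate in $\alpha$, the $F_2$ piece still pairs $L^2_t$ with $L^2_t$. So as written the step fails, and it is not a technicality that can be patched by tweaking $\alpha$.

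The claim also does not follow from the $F_1 \in L^1_t L^2_x$ computation alone, since the $Z$-norm decomposition allows both pieces simultaneously and the estimate is stated with the full sum norm. The mechanism that actually handles an $L^2_t L^{4/3}_x$ inhomogeneity here is a direct bilinear restriction estimate on an atomic/adapted function space that is stable under the $Z$ norm (this is the content of the ``inhomogeneous bilinear restriction'' theory from \cite{Candy2019a} and \cite{Candy2021}, to which the later Corollary~\ref{cor:bi restric} reduces via modulation decomposition), not a retarded-integral transference. If you want to avoid the adapted-space machinery, a Whitney decomposition of the region $\{s<t\}$ in time, exploiting almost-orthogonality in modulation, is a more promising substitute for Christ--Kiselev at the $L^2_t$ endpoint; but some mechanism beyond the vanilla Christ--Kiselev lemma is indispensable.
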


The second bilinear estimate we require is an inhomogeneous version of a bilinear restriction estimate for the paraboloid.

\begin{theorem}[Bilinear restriction for inhomogeneous Schr\"odinger {\cite[Theorem 4.2]{Candy2021}}]\label{thm:bi schro inhom}
Let $r>\frac{5}{3}$. For any $\mu \in 2^\ZZ$ we have
    \begin{equation}\label{eq:thm bi-schro:high-high}
    		\| \dot{P}_\mu (\overline{w} u) \|_{L^1_t L^r_x(\RR^{1+4})} \lesa \mu^{2-\frac{4}{r}}  \| w \|_{Z} \|u \|_{Z}.
    	\end{equation}
\end{theorem}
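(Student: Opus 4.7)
The plan is to reduce this inhomogeneous bilinear restriction estimate to its homogeneous counterpart \eqref{eqn:hom bi restric} by exploiting the Duhamel structure of elements of $Z$. Writing $w = e^{it\Delta}w(0) - i\mc{I}_0[F_w]$ with $F_w = (i\p_t + \Delta)w$, and similarly for $u$, expands $\overline{w}u$ into four pieces. The homogeneous-homogeneous piece is controlled immediately by \eqref{eqn:hom bi restric} since $\|w(0)\|_{L^2}, \|u(0)\|_{L^2} \les \|w\|_Z, \|u\|_Z$.

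For the mixed and fully inhomogeneous pieces, I would split $F_w = F_w^{(1)} + F_w^{(2)}$ into its $L^1_t L^2_x$ and Strichartz-dual $L^2_t L^{4/3}_x$ components. The $L^1_t L^2_x$ part is straightforward: the representation
\[ \mc{I}_0[F_w^{(1)}](t) = -i\int_\RR \mathbf{1}_{[0,t]}(s)\, e^{i(t-s)\Delta}F_w^{(1)}(s)\,ds \]
expresses the Duhamel term as an $L^1_s$-integral of time-translated homogeneous Schr\"odinger waves. Applying Minkowski's inequality in the outer $L^1_t$ norm (which freely commutes with $\int\! ds$) and then \eqref{eqn:hom bi restric} to each integrand via time translation, one bounds it by $\mu^{2-4/r}\|F_w^{(1)}(s)\|_{L^2}$ times the other factor's $L^2$-data norm; integrating in $s$ absorbs $\|F_w^{(1)}\|_{L^1_tL^2_x} \les \|w\|_Z$. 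The same reasoning treats the symmetric mixed term, and the inhomogeneous-inhomogeneous term when both factors have $L^1_t L^2_x$ source.

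The main obstacle is the $L^2_t L^{4/3}_x$ source component, where the parameter integral over $s$ now lies in $L^2_s$ and Minkowski alone cannot interchange it with the outer $L^1_t$. Here I would pass from the truncated Duhamel $\int_0^t$ to the full-line convolution $\int_\RR$ via a Christ--Kiselev argument, and then use the adjoint/bilinear-dual form of \eqref{eqn:hom bi restric}: by $TT^*$ and duality against test functions in $L^\infty_t L^{r'}_x$, the homogeneous estimate upgrades to a trilinear form estimate that absorbs one $L^2_t L^{4/3}_x$ source factor at the cost of only the linear endpoint Strichartz constant. Iterating once more handles the term in which both inputs have Strichartz-dual source. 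This Christ--Kiselev/duality step is the delicate heart of the argument, both because the threshold $r > \tfrac{5}{3}$ in \eqref{eqn:hom bi restric} is sharp (leaving no room for loss in any interpolation) and because one must verify that the dual formulation is compatible with the truncated convolution structure of $\mc{I}_0$; an alternative route would be to embed $Z$ into a $U^2_\Delta$-type atomic space and extend \eqref{eqn:hom bi restric} atom by atom, at the cost of introducing the heavier $U^p/V^p$ framework.
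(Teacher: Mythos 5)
First, a caveat: this paper does not prove Theorem \ref{thm:bi schro inhom} at all; it is imported verbatim from \cite[Theorem 4.2]{Candy2021}, so there is no proof here to compare against. I will therefore assess your argument on its own terms.

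Your treatment of the homogeneous--homogeneous term and of the $L^1_t L^2_x$ source component is correct: Minkowski's inequality in the outer $L^1_t$ commutes with the $\int ds$ of the truncated Duhamel integral, time-translation invariance of the $L^1_t L^r_x$ norm reduces each slice to a product of two free waves, and \eqref{eqn:hom bi restric} closes the estimate after integrating $\|F_w^{(1)}(s)\|_{L^2_x}$ in $s$. The difficulty, as you correctly anticipate, is the $L^2_t L^{4/3}_x$ source, but the route you propose there has a genuine gap. The Christ--Kiselev lemma, in the form required to replace the retarded Duhamel $\int_0^t$ by the full-line $\int_\RR$, needs the source time exponent to be \emph{strictly smaller} than the target time exponent; here the source lives in $L^2_t L^{4/3}_x$ and the target norm is $L^1_t L^r_x$, i.e.\ $q_1 = 2 > q_2 = 1$, which is precisely the regime where Christ--Kiselev fails (and is known to fail for endpoint Strichartz-type estimates). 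The follow-up $TT^*$/duality manoeuvre against $L^\infty_t L^{r'}_x$ test functions is stated too vaguely to be checked and does not obviously repair this exponent problem, since dualising against an $L^\infty_t$ test function does not absorb any temporal regularity. The route you flag as an "alternative" --- embedding $Z$ into a $V^2_\Delta$-adapted atomic space and transferring \eqref{eqn:hom bi restric} atom by atom (as developed in \cite{Candy2019a} and used in \cite{Candy2021}) --- is in fact the standard mechanism for precisely this issue, because $V^2$ membership is stable under $L^2_t L^{4/3}_x$ forcing by duality with $U^2$ even though $U^2$ membership is not. That route should be your main argument, not a footnote, and the Christ--Kiselev branch should be dropped.
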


For technical reasons, and to slightly simplify the arguments below, we require the following extension of the above bilinear restriction estimates.

\begin{corollary}[Bilinear restriction in iteration norms]\label{cor:bi restric}
Let $r>\frac{5}{3}$. If $\tilde{\mu}\in 2^\ZZ$ and $\mu, \lambda \in 2^\NN$ with $\mu \lesa \lambda$ then
	$$
	 \| u_\lambda \dot{P}_{\tilde{\mu}} v_\mu \|_{L^2_{t,x}}
            \lesa \Big( \frac{\tilde{\mu}}{\lambda}\Big)^\frac{1}{4} \tilde{\mu} \|u_\lambda \|_{\underline{S}^0_\lambda}  \|v_\mu\|_{\underline{W}^0_\mu}.
    $$
Moreover, for any $\tilde{\mu} \in 2^\ZZ$ and $\lambda_1, \lambda_2\in 2^\NN$ we have
	$$
		\| \dot{P}_{\tilde{\mu}}(\overline{w} u)\|_{L^1_t L^r_x}\lesa \tilde{\mu}^{2-\frac{4}{r}}\min\big\{ \|w\|_Z, \|w\|_{\underline{S}^0_{\lambda_1}}\big\} \| u \|_{\underline{S}^0_{\lambda_2}}.
	$$
\end{corollary}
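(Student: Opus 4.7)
Proof plan. The corollary extends Theorems~\ref{thm:bi wave schro inhom} and~\ref{thm:bi schro inhom}, which control bilinear interactions of free Schr\"odinger and wave evolutions (with $u_\lambda$ in the Strichartz-type $Z$-norm), to bilinear interactions of general functions in the iteration spaces $\underline{S}^0_\lambda$ and $\underline{W}^0_\mu$. Thus the strategy is to decompose each input into pieces on which the original theorems apply.

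For the first inequality, I would split $v_\mu = e^{it|\nabla|} v_\mu(0) + \mc{J}_0[G]$ with $G = (i\p_t + |\nabla|)v_\mu$. The free piece is immediate: Theorem~\ref{thm:bi wave schro inhom} with $\alpha = \tfrac{1}{4}$ gives the claimed bound, with $\|\dot{P}_{\tilde\mu} v_\mu(0)\|_{L^2_x} \les \|v_\mu\|_{\underline{W}^0_\mu}$. For the Duhamel piece, I further decompose the forcing using the structure of the $R^0_\mu$-norm: $G = G^\ell + G^h$ where $G^\ell = P^{(t)}_{\les 2^{-16}\mu^2+2^{16}} G \in L^1_t L^2_x$ and $G^h$ has temporal Fourier support in $|\tau| \gtrsim \mu^2$ with $\mu^{-1} \|G^h\|_{L^2_{t,x}} \les \|v_\mu\|_{\underline{W}^0_\mu}$. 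The low temporal frequency piece is written as $\mc{J}_0[G^\ell](t) = -i\int K(t,s)\,e^{i(t-s)|\nabla|} G^\ell(s)\,ds$ with $|K|\les 1$, and Minkowski's inequality in $s$ reduces matters to frozen free waves $e^{i(t-s)|\nabla|} G^\ell(s)$ to which Theorem~\ref{thm:bi wave schro inhom} applies; integrating in $s$ pays $\|G^\ell\|_{L^1_t L^2_x} \les \|v_\mu\|_{\underline{W}^0_\mu}$. The high-temporal-frequency piece is controlled using the algebraic bound $|-\tau + |\xi|| \gtrsim \mu^2$ on the Fourier support: the multiplier $(i\p_t + |\nabla|)^{-1}$ has norm $\lesa \mu^{-2}$ there, so this contribution to $\mc{J}_0[G^h]$ has $L^2_{t,x}$ norm $\lesa \mu^{-1}\|v_\mu\|_{\underline W^0_\mu}$, and Bernstein combined with the energy bound on $u_\lambda$ closes the estimate with the required geometric factor. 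An analogous free-plus-Duhamel decomposition applied to $u_\lambda$, together with the endpoint Strichartz estimate for the low-modulation forcing piece $C_{\les 2^{-16}\lambda^2}(i\p_t+\Delta) u_\lambda \in L^2_t L^{4/3}_x$ and a direct $L^2_{t,x}$ bound of the high-modulation piece $C_{>} u_\lambda$ (again using $|\tau + |\xi|^2| \gtrsim \lambda^2$), reduces the hypothesis from $\|u_\lambda\|_Z$ to $\|u_\lambda\|_{\underline S^0_\lambda}$.

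For the second inequality, the same free-plus-Duhamel reduction on the second factor $u$ allows Theorem~\ref{thm:bi schro inhom} to be applied with $w$ kept in the $Z$-norm, yielding the case $\min = \|w\|_Z$. For the case $\min = \|w\|_{\underline{S}^0_{\lambda_1}}$ the reduction is repeated symmetrically on the first factor. The key point is that the target space $L^1_t L^r_x$ with $r > \tfrac{5}{3}$ provides substantially more room than the $L^2_{t,x}$ norm appearing in the first inequality, so the high-modulation pieces are absorbed by Bernstein estimates without loss.

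The main technical obstacle is keeping the geometric gain $(\tilde\mu/\lambda)^{1/4}$ in the first inequality for the high-modulation Duhamel contribution, where transversality between the Schr\"odinger paraboloid and the wave cone (the mechanism behind Theorem~\ref{thm:bi wave schro inhom}) is no longer directly available; this has to be replaced by a combination of the $\mu^{-2}$ decay of $(i\p_t + |\nabla|)^{-1}$ on the high-frequency support and careful Bernstein bounds using the frequency gap $\tilde\mu \les \mu \les \lambda$. This issue is absent in the second inequality due to the weaker target space.
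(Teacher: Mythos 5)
Your overall plan — decompose each factor into a piece where the bilinear restriction theorems apply directly plus a high-modulation remainder — is the right idea, and your treatment of the free and low-temporal-frequency Duhamel pieces (Minkowski over frozen free waves) is sound. The gap is in the claim that the high-temporal-frequency wave piece can be closed by the $\mu^{-2}$ symbol decay together with Bernstein and the energy bound on $u_\lambda$. Concretely, after decomposing and bounding $\|\dot P_{\tilde\mu}\mc{J}_0[G^h]\|_{L^2_{t,x}} \lesa \mu^{-1}\|v_\mu\|_{\underline W^0_\mu}$, the best elementary pairing is
$$\|C_{\ll\lambda^2}u_\lambda\cdot \dot P_{\tilde\mu}\mc{J}_0[G^h]\|_{L^2_{t,x}} \lesa \|u_\lambda\|_{L^\infty_t L^2_x}\,\|\dot P_{\tilde\mu}\mc{J}_0[G^h]\|_{L^2_t L^\infty_x} \lesa \tilde\mu^{2}\mu^{-1}\|u_\lambda\|_{\underline S^0_\lambda}\|v_\mu\|_{\underline W^0_\mu},$$
using Bernstein $L^2_x\to L^\infty_x$ at frequency $\tilde\mu$ (gain $\tilde\mu^2$ in $\RR^4$). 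The target is $(\tilde\mu/\lambda)^{1/4}\tilde\mu = \tilde\mu^{5/4}\lambda^{-1/4}$, so the ratio is $\tilde\mu^{3/4}\mu^{-1}\lambda^{1/4}$, which for $\tilde\mu\approx\mu$ equals $(\lambda/\mu)^{1/4}$ — this is $\gg 1$ whenever $\mu \ll \lambda$, so the estimate does not close. No other Hölder/Bernstein pairing does better (for instance $u_\lambda\in L^2_tL^4_x$ with $v^h\in L^\infty_tL^4_x$ gives a loss of $(\lambda/\tilde\mu)^{1/4}$). The factor $(\tilde\mu/\lambda)^{1/4}$ encodes the transversality of the cone and the paraboloid, and the $\mu^{-2}$ symbol decay cannot substitute for it: that decay only encodes distance from the cone in the radial/temporal direction, not the angular separation that drives the bilinear gain. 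Note, in contrast, that your elementary treatment of the high-modulation Schrödinger piece \emph{does} happen to close (it gives $\tilde\mu^2\lambda^{-1}$ against a target $\tilde\mu^{5/4}\lambda^{-1/4}$, a ratio of $(\tilde\mu/\lambda)^{3/4}\les 1$), so the problem is specific to the wave factor.

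The paper's proof avoids this by keeping the bilinear restriction theorem in play for \emph{every} modulation piece, via the standard $X^{s,b}$ transference. For the wave factor one writes $P^{(t)}_d v_\mu = \int_{|\tau|\approx d} e^{it\tau}\,e^{it|\nabla|}g^{(\tau)}_{\mu,d}\,d\tau$ with $\widehat{g}^{(\tau)}_{\mu,d}(\xi) = \mc F_{t,x}[P^{(t)}_d v_\mu](\tau+|\xi|,\xi)$. The unimodular factor $e^{it\tau}$ commutes out of the $L^2_{t,x}$ norm, so Theorem~\ref{thm:bi wave schro inhom} applies to each slice $e^{it|\nabla|}g^{(\tau)}_{\mu,d}$ with the full gain $(\tilde\mu/\lambda)^{1/4}\tilde\mu$. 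Integrating in $\tau$ via Cauchy–Schwarz on $|\tau|\approx d$ and using $\|P^{(t)}_d v_\mu\|_{L^2_{t,x}} \approx d^{-1}\|(i\p_t+|\nabla|)P^{(t)}_d v_\mu\|_{L^2_{t,x}}$ pays
$$\int_{|\tau|\approx d}\|g^{(\tau)}_{\mu,d}\|_{L^2_x}\,d\tau \lesa d^{-\frac12}\|(i\p_t+|\nabla|)P^{(t)}_d v_\mu\|_{L^2_{t,x}},$$
and summing over $d\gtrsim\mu^2$ gives exactly $\mu^{-1}\|(i\p_t+|\nabla|)v_\mu\|_{L^2_{t,x}}\lesa\|v_\mu\|_{\underline W^0_\mu}$. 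The same foliation is applied to the high-modulation piece of $u_\lambda$ (and to $u,w$ in the second inequality). This is the mechanism you should use in place of the $L^2_{t,x}$-plus-Bernstein step; otherwise your argument for the wave factor does not recover the required geometric factor.
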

\begin{proof}
The proof follows from Theorem \ref{thm:bi wave schro inhom} and Theorem \ref{thm:bi schro inhom} by adapting the standard $X^{s,b}$ transference argument. We start by replacing the norm $\|u \|_Z$ with $\| u \|_{\underline{S}^0_\lambda}$. To this end, we write
	$$ (C_d u)(t,x) = \int_\RR e^{it\tau} (e^{it\Delta} f^{(\tau)}_{d})(x) d\tau $$
with $\widehat{f}^{(\tau)}_{d}(\xi) = \mc{F}_{t,x}[C_d u](\tau + |\xi|^2, \xi)$ and note that
	\begin{align*}
\int_\RR \| f^{(\tau)}_{d} \|_{L^2_x} d\tau \approx \int_{|\tau|\approx d} \| f^{(\tau)}_{d} \|_{L^2_x} d\tau \lesa d^\frac{1}{2} \| C_d u \|_{L^2_{t,x}} \approx d^{-\frac{1}{2}} \| (i\p_t +\Delta)u \|_{L^2_{t,x}}
	\end{align*}
In particular, by definition of the norm $\| \cdot \|_{\underline{S}^0_\lambda}$, we have the bound
	$$ \| C_{\ll \lambda^2} u \|_Z + \sum_{d\gtrsim \lambda^2} d^{-\frac{1}{2}} \int_\RR \|f^{(\tau)}_{d}\|_{L^2_x} d\tau \lesa \| u \|_{\underline{S}^0_\lambda} $$
and hence, applying Theorem \ref{thm:bi wave schro inhom}, we see that
	\begin{align}
		 \|u_\lambda e^{it|\nabla|}\dot{P}_{\tilde{\mu}} g \|_{L^2_{t,x}} &\lesa \| C_{\ll \lambda^2} u_\lambda   e^{it|\nabla|}\dot{P}_{\tilde{\mu}} g \|_{L^2_{t,x}}+ \sum_{d\gtrsim \lambda^2} \| C_d u_\lambda e^{it|\nabla|}\dot{P}_{\tilde{\mu}} g \|_{L^2_{t,x}} \notag\\
		&\lesa  \Big(\frac{\tilde{\mu}}{\lambda}\Big)^\frac{1}{4} \tilde{\mu} \| C_{\ll \lambda^2}u_\lambda \|_Z\|\dot{P}_{\tilde{\mu}} g\|_{L^2_x} + \sum_{d\gtrsim \lambda^2} \int_{\RR} \| e^{it\Delta} f^{(\tau)}_{\lambda, d}e^{it|\nabla|} \dot{P}_{\tilde{\mu}} g \|_{L^2_{t,x}} d\tau \notag \\
		&\lesa  \Big(\frac{\tilde{\mu}}{\lambda}\Big)^\frac{1}{4} \tilde{\mu} \Big( \| C_{\ll \lambda^2}u_\lambda \|_Z + \sum_{d\gtrsim \lambda^2} \int_{\RR} \| f^{(\tau)}_{\lambda, d}\|_{L^2_{x}} d\tau \Big) \|\dot{P}_{\tilde{\mu}} g\|_{L^2_x} \notag \\
		&\lesa \Big(\frac{\tilde{\mu}}{\lambda}\Big)^\frac{1}{4} \tilde{\mu} \|u_\lambda \|_{\underline{S}^0_\lambda}\|\dot{P}_{\tilde{\mu}} g\|_{L^2_x} . \label{eqn:bi restric:free wave L2}
	\end{align}
Similarly, applying Theorem \ref{thm:bi schro inhom}, we have
	\begin{align*}
		\| \dot{P}_{\tilde{\mu}}( \overline{w} u ) \|_{L^1_t L^r_x}&\lesa \| \dot{P}_{\tilde{\mu}}(\overline{w} C_{\ll \lambda_2^2}u \|_{L^1_t L^r_x} + \sum_{d\gtrsim \lambda_2^2} \|\dot{P}_{\tilde{\mu}}( \overline{w} C_d u) \|_{L^1_t L^r_x} \\
		&\lesa \tilde{\mu}^{2-\frac{4}{r}} \| w\|_Z \| C_{\ll \lambda_2^2}u \|_{Z} + \sum_{d\gtrsim \lambda_2^2} \int_\RR \|\dot{P}_{\tilde{\mu}}( \overline{w} e^{it\Delta} f^{(\tau)}_{d} ) \|_{L^1_t L^r_x} d\tau \\
		&\lesa \tilde{\mu}^{2-\frac{4}{r}} \| w \|_Z \Big( \| C_{\ll \lambda^2_2}u \|_Z + \sum_{d\gtrsim \lambda_2^2}  \int_\RR \| f^{(\tau)}_{ d} \|_{L^2_x} d\tau \Big) \\
		&\lesa \tilde{\mu}^{2-\frac{4}{r}} \| w \|_Z \|u \|_{\underline{S}^0_{\lambda_2}}.
    \end{align*}
Repeating this argument for $w$ shows that we can replace $\|w\|_Z$ with $\|w\|_{\underline{S}^0_{\lambda_1}}$. Thus it only remains to replace the free wave $e^{it|\nabla|}g$ in \eqref{eqn:bi restric:free wave L2} with a general function $v\in \underline{W}^0_{\mu}$, but this follows from a similar argument. Namely we write
	\begin{align*}
			v_\mu &= P^{(t)}_{\ll \mu^2 } v_\mu + \sum_{d\gtrsim \mu^2} P^{(t)}_d v_\mu
	\end{align*}
and apply the Duhamel formula together with \eqref{eqn:bi restric:free wave L2} for the first term. On the other hand, for the second term we argue via the standard $X^{s,b}$ transference argument, and observe that we can write
		$$ P^{(t)}_d v_\mu = \int_{|\tau| \approx d} e^{it\tau} e^{it|\nabla|} g^{(\tau)}_{\mu, d} d\tau  $$
with $\widehat{g}^{(\tau)}_{\mu, d}(\xi) = \mc{F}_{t,x}[P^{(t)}_d v_\mu](\tau + |\xi|, \xi)$. Hence the required bound  follows by arguing as in the proof of \eqref{eqn:bi restric:free wave L2}.
\end{proof}

\subsection{Schr\"odinger Nonlinearity}

The next step is to combine the above bilinear restriction type estimates to give a gain when bounding the Schr\"odinger nonlinearity. The key observation is that we have additional regularity to exploit, namely in the applications to follow,
the Schr\"odinger evolution lies in the energy space $H^1$. In particular, if we only wish to obtain an improved bound at lower regularities, we have some room to exploit. On way to do this is via the following.

\begin{theorem}[Frequency localised bound for Schr\"{o}dinger nonlinearity]\label{thm:freq loc bi schro}
Let $0< s \les s'< 1$ and $\ell \g 0$. There exists $0<\theta<1$ such that for any interval $I\subset \RR$ and any $\mu, \lambda_0, \lambda_1 \in 2^\NN$ we have
        $$\big\| \mc{I}_0\big[ \ind_I P_{\lambda_0}(v_\mu u_{\lambda_1}) \big] \big\|_{S^s_{\lambda_0}} \lesa  \Big( \frac{\lambda_{min}}{\lambda_{max}}\Big)^{\theta} \Big(\| v_{\mu} \|_{Y(I)} \| u_{\lambda_1} \|_{Y(I)} \Big)^{\theta \sigma}  \Big( \| v_\mu \|_{\underline{W}^\ell_\mu} \| u_{\lambda_1} \|_{\underline{S}^{s'}_{\lambda_1}}\Big)^{1-\theta \sigma } $$
where $9\sigma = \ell + s' - s$,  $\lambda_{min} = \min\{\mu, \lambda_0, \lambda_1\}$,  and $\lambda_{max}=\max\{\mu, \lambda_0, \lambda_1\}$.
\end{theorem}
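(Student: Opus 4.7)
My plan is to first use the energy inequality \eqref{eqn:energy ineq} together with Lemma \ref{lem:duh loc time} to reduce the Duhamel estimate on $S^s_{\lambda_0}$ to a purely multilinear estimate on the nonlinear norm, namely to bound $\|P_{\lambda_0}(\ind_I v_\mu u_{\lambda_1})\|_{N^s_{\lambda_0}}$. The heart of the argument will then be proving two complementary bounds on this nonlinear norm and combining them by geometric interpolation, with interpolation weight driven by the excess regularity $\ell + s' - s$.

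The first bound refines the standard bilinear estimate from Theorem \ref{thm:bi prev} by extracting a power of the frequency ratio $(\lambda_{\min}/\lambda_{\max})^{\alpha_0}$. For the $L^2_{t,x}$ component of the $N^s_{\lambda_0}$-norm, I would directly invoke the first inequality in Corollary \ref{cor:bi restric}, which supplies an $(\lambda_{\min}/\lambda_{\max})^{1/4}$ gain for Schr\"odinger--wave interactions. For the modulation-restricted $L^2_t L^{4/3}_x$ piece, I split into the frequency subcases $\mu \les \lambda_0 \approx \lambda_1$, $\lambda_1 \les \mu \approx \lambda_0$, and the high-high-to-low case $\mu \approx \lambda_1 \gg \lambda_0$. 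The first two subcases can be bounded by transferring back to $L^2_{t,x}$ via the low-frequency modulation cutoff and a Bernstein trade, while the resonant high-high case requires expanding $v_\mu$ through the wave Duhamel formula and then invoking the $L^1_t L^r_x$ restriction bound (second inequality in Corollary \ref{cor:bi restric}) with $\tfrac{5}{3}<r<2$, which provides precisely the missing room to gain over the endpoint Strichartz bound. Collecting cases and distributing the available regularity $\ell$ on the wave side and $s'$ on the Schr\"odinger side should yield an estimate of the form
\[ \|P_{\lambda_0}(v_\mu u_{\lambda_1})\|_{N^s_{\lambda_0}} \lesa \Big(\tfrac{\lambda_{\min}}{\lambda_{\max}}\Big)^{\alpha_0} \|v_\mu\|_{\underline{W}^\ell_\mu} \|u_{\lambda_1}\|_{\underline{S}^{s'}_{\lambda_1}}. \]

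The second bound exchanges summability for the $Y$ norm via Bernstein: from $\|v_\mu\|_{L^\infty_{t,x}} \lesa \mu^4 \|v_\mu\|_{Y}$ and $\|u_{\lambda_1}\|_{L^\infty_{t,x}} \lesa \lambda_1^4 \|u_{\lambda_1}\|_Y$, combined with H\"older estimates into Strichartz-admissible pairs where one factor carries the $L^\infty_{t,x}$ norm, I expect to produce a bound of the shape
\[ \|P_{\lambda_0}(v_\mu u_{\lambda_1})\|_{N^s_{\lambda_0}} \lesa C(\mu,\lambda_0,\lambda_1)\big(\|v_\mu\|_Y \|u_{\lambda_1}\|_Y\big)^{1/2} \big(\|v_\mu\|_{\underline{W}^\ell_\mu} \|u_{\lambda_1}\|_{\underline{S}^{s'}_{\lambda_1}}\big)^{1/2}, \]
where $C(\mu,\lambda_0,\lambda_1)$ is polynomial in the frequencies and records the cost of a half-step Bernstein trade. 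Taking the geometric mean of this bound and the one from the previous paragraph with weights $\theta\sigma$ and $1-\theta\sigma$, the polynomial cost $C$ is absorbed into the available regularity surplus $\ell + s' - s$. The explicit coefficient $9$ in the relation $9\sigma = \ell + s' - s$ is a safety margin large enough so that the polynomial losses in the second bound are dominated, while simultaneously preserving a strictly positive residual frequency ratio exponent $\theta$ in the first bound after interpolation; any sufficiently generous denominator would suffice, and $9$ is convenient given that $Y$ scales essentially like $\dot{H}^{-2}$ and the base frequency-ratio gain from Corollary \ref{cor:bi restric} is $\tfrac{1}{4}$.

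The main obstacle I anticipate is the resonant high-high-to-low configuration $\mu \approx \lambda_1 \gg \lambda_0$ combined with all factors at low modulation, where the endpoint Strichartz estimate offers no slack at all and the bilinear restriction estimate with $r<2$ is indispensable; getting the modulation cutoffs to line up correctly with the spatial frequency projections so that Corollary \ref{cor:bi restric} applies cleanly to both the $L^2_t L^{4/3}_x$ and $L^2_{t,x}$ components of $\|\cdot\|_{N^s_{\lambda_0}}$ will require careful bookkeeping. A secondary difficulty is arranging the Bernstein interpolation so that the $Y$ norm factor is genuinely symmetric in $v_\mu$ and $u_{\lambda_1}$ (as demanded by the statement), which constrains how the two inputs share the $L^\infty_{t,x}$ trade across the different Strichartz configurations.
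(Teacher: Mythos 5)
Your plan shares the two essential ingredients with the paper's proof --- the inhomogeneous bilinear restriction estimate (Corollary \ref{cor:bi restric}) drives the frequency-ratio gain, and a Bernstein trade into $L^\infty_{t,x}$ produces the $Y$-norm factor --- and you correctly isolate the resonant high-high case and the duality step as the delicate parts. However, the proposed decomposition into two \emph{global} intermediate bounds, one with a $(\|v_\mu\|_Y\|u_{\lambda_1}\|_Y)^{1/2}$ weight, has a genuine gap: that second bound is not provable by a Bernstein--H\"older trade into Strichartz-admissible spaces. The obstruction is that in $\RR^{1+4}$ the only wave--Schr\"odinger pairing that lands in the dual endpoint $L^2_t L^{4/3}_x$ inside $N^s_{\lambda_0}$ is $\|v_\mu\|_{L^\infty_t L^2_x}\|u_{\lambda_1}\|_{L^2_t L^4_x}$, which sits exactly on the admissibility boundary for both equations and offers no room to move. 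As soon as one asks for a 50\% weight on $L^\infty_{t,x}$, the H\"older exponents on the remaining halves of the two factors exit the admissible ranges (the required temporal exponents collide, and the Schr\"odinger factor would need to be placed in a sub-$L^4_x$ space that Bernstein cannot reach from $L^2_t L^4_x$). So the ``intermediate bound with $1/2$-power $Y$ and polynomial cost $C$'' you propose does not exist, and geometric interpolation against a ratio-gain bound cannot repair this because there is nothing to interpolate with.

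The paper instead opens only an arbitrarily small fraction $\sigma$ of each exponent toward $L^\infty_{t,x}$, and does so inside each frequency regime rather than in a single global interpolation. Concretely: the $L^2_{t,x}$ component of $N^s_{\lambda_0}$ is estimated in one line with an explicit $(\|v_\mu\|_Y\|u_{\lambda_1}\|_Y)^\sigma$ factor (no separate ratio-only bound); the Strichartz component in the low-wave-frequency case $\mu \lesssim \lambda_0 \approx \lambda_1$ is handled by duality with a \emph{nested} interpolation between $L^1_t L^{4/3}_x$ (which inherits a $Y$ factor from the small-$\sigma$ $L^2_{t,x}$ bound after a convolution-kernel split) and $L^1_t L^r_x$ with $r<2$ (which carries the bilinear restriction gain, but no $Y$ factor); and the high-wave-frequency case $\mu\approx\lambda_{\max}\gg\lambda_{\min}$ is treated directly with a small-$\sigma$ H\"older trade using a near-endpoint wave Strichartz pair. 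The normalisation $9\sigma=\ell+s'-s$ is then dictated by the bookkeeping of these small trades, not by a safety-margin heuristic. Your instinct about the interpolation shape and about where the hard cases lie is sound, but to make the argument close you would have to abandon the $1/2$-power intermediate bound and work throughout with the small parameter $\sigma$, at which point the scheme becomes essentially the paper's.
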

\begin{proof}
Fix $0<s<1$. It suffices to consider the case where $9\sigma = \ell + s'-s$ is sufficiently small, and $0\les \ell < \frac{1}{4}$. We begin by observing that if  $\frac{1}{q}= \frac{\sigma}{2(1-\sigma)}$ and $\frac{1}{r} = \frac{1}{2} - \frac{2}{3q}$, then provided $\sigma\g 0$ is sufficiently small, the pair $(q,r)$ is wave Strichartz admissible. Hence the Strichartz estimates \eqref{eqn:schro stri} and \eqref{eqn:wave stri} together with an application of Bernstein's inequality gives for any $ \mu, \lambda_0, \lambda_1 \in 2^\NN$, and any $\tilde{\mu} \in 2^\ZZ $
    \begin{align*}
       \| &P_{\lambda_0}( \dot{P}_{\tilde{\mu}} v_\mu u_{\lambda_1}) \|_{L^2_{t,x}(I)}  \\
        &\lesa (\min\{\lambda_0, \lambda_1, \tilde{\mu}\})^{1-\frac{13\sigma}{3}} \Big( \| v_\mu \|_{L^\infty_{t,x}(I)} \| u_{\lambda_1} \|_{L^\infty_{t,x}(I)}\Big)^\sigma \Big( \|  v_\mu \|_{L^q_t L^r_x} \| u_{\lambda_1} \|_{L^2_t L^4_x} \Big)^{1-\sigma}  \\
      &\lesa  (\min\{\lambda_0, \lambda_1, \tilde{\mu}\})^{1-\frac{13\sigma}{3}} \mu^{\frac{29\sigma}{6} - \ell(1-\sigma)}\lambda_1^{4\sigma - s'(1-\sigma)}  \Big( \| v_{\mu} \|_{Y(I)} \| u_{\lambda_1} \|_{Y(I)} \Big)^\sigma \Big( \| v_\mu \|_{\underline{W}^\ell_\mu} \| u_{\lambda_1} \|_{S^{s'}_{\lambda_1}} \Big)^{1-\sigma}.
    \end{align*}
Note that this bound is only nontrivial when $\lr{\tilde{\mu}} \approx \mu$, and two largest frequencies are comparable. Thus the additional homogeneous $\dot{P}_{\tilde{\mu}}$ multiplier is only interesting when $\mu =1$, as then $\dot{P}_{\tilde{\mu}}$ forces an additional localisation to the frequency region $|\xi| \approx \tilde{\mu}\lesa 1$ (compared to the inhomogeneous localisation $|\xi|\les 2$ provided by $v_\mu$ when $\mu=1$). In particular, a short computation shows that provided $\sigma\g0$ is sufficiently small, if $\mu, \lambda \in 2^\NN$ with $\mu \lesa \lambda$, and $\tilde{\mu}\in 2^\ZZ$, then
     \begin{equation}\label{eqn:bi res app:main L2}
      \lambda^{s-1} \|  \dot{P}_{\tilde{\mu}} v_\mu u_{\lambda} \|_{L^2_{t,x}(I)} \lesa \Big(\frac{\tilde{\mu}}{\lambda}\Big)^{\frac{3}{4}} \Big(\| v_{\mu} \|_{Y(I)} \| u_{\lambda_1} \|_{Y(I)} \Big)^{ \sigma} \Big( \| v_\mu \|_{\underline{W}^\ell_\mu} \| u_{\lambda} \|_{S^{s'}_{\lambda}} \Big)^{1-\sigma}
    \end{equation}
while if $\mu \approx \max\{\lambda_0, \lambda_1\} \gg \min\{\lambda_0, \lambda_1\}$ (and $\lambda_0, \lambda_1\in 2^\NN$) we instead have
    \begin{equation}\label{eqn:bi res app:main L2 gen freq}
    \begin{split}
      \lambda^{s-1}_0 \|  P_{\lambda_0}(v_\mu u_{\lambda_1}) \|_{L^2_{t,x}(I)}
      &\lesa \Big(\frac{\min\{\lambda_0, \lambda_1\}}{\mu}\Big)^{\min\{\frac{s}{2}, \frac{1-s}{2}\}} \Big(\| v_{\mu} \|_{Y(I)} \| u_{\lambda_1} \|_{Y(I)} \Big)^{ \sigma} \Big( \| v_\mu \|_{\underline{W}^\ell_\mu} \| u_{\lambda_1} \|_{S^{s'}_{\lambda_1}} \Big)^{1-\sigma}.
    \end{split}
    \end{equation}
This clearly suffices to bound the $L^2_{t,x}$ component of the $S^s_{\lambda_0}$ norm after potentially decomposing into small frequencies via  $v_\mu = \sum_{\tilde{\mu}\in 2^\ZZ, \lr{\tilde{\mu}} \approx \mu} \dot{P}_{\tilde{\mu}} v_\mu$.

To bound the Stricharz component of the $S^s_\lambda$ norm, we consider separately the cases $\mu \lesa \lambda_0 \approx \lambda_1$ and $\mu \gg \min\{\lambda_0, \lambda_1\}$, and adapt the argument used to prove \cite[Theorem 5.1]{Candy2021} together with Corollary \ref{cor:bi restric}. More precisely, in view of the duality (see \cite[Lemma 2.2]{Candy2021}), to  deal with the case $\mu \lesa \lambda_0 \approx \lambda_1$ it suffices to prove that if $1\lesa \mu\lesa \lambda$ then we have $\theta>0$ such that
    \begin{equation}\label{eqn:bi res app:low wave main}
        \Big| \int_{I\times \RR^4} v_\mu \overline{w} u_\lambda \,dx\,dt\Big| \lesa \lambda^{-s} \Big(\frac{\mu}{\lambda}\Big)^{\theta} \|w\|_Z  \Big( \| v_{\mu} \|_{Y(I)} \| u_{\lambda_1} \|_{Y(I)} \Big)^{\theta \sigma} \Big( \| v_\mu  \|_{\underline{W}^\ell_\mu} \| u_\lambda \|_{S^{s'}_\lambda} \Big)^{1-\theta \sigma}.
    \end{equation}
Let $K_{\tilde{\mu}} \in L^1$ denote the convolution kernel of the Fourier multiplier $\dot{P}_{\approx {\tilde{\mu}}}$, and take $\frac{1}{r} = \frac{1}{2} + \frac{\theta}{4(1-\theta)}$ where $0<\theta<1$. Then provided $\theta>0$ is sufficiently small, for any $w\in Z$ we have via H\"older's inequality and Corollary \ref{cor:bi restric}
	\begin{align*}
		\Big| \int_{I\times \RR^{4}}  v_\mu \overline{w} u_\lambda\,dx\,dt \Big|
        &\lesa \sum_{\substack{\tilde{\mu} \in 2^\ZZ \\ \lr{\tilde{\mu}}\approx \mu}} \Big| \int_{I\times \RR^{4}} \dot{P}_{\tilde{\mu}} v_\mu \overline{w} u_\lambda\,dx\,dt \Big| \\
        &\les  \sum_{\substack{\tilde{\mu} \in 2^\ZZ \\ \lr{\tilde{\mu}}\approx \mu}} \| \dot{P}_{\tilde{\mu}} v_\mu   \dot{P}_{\approx \tilde{\mu}}(\overline{w}u_\lambda) \|_{L^1_t L^\frac{4}{3}_x(I)}^\theta \| \dot{P}_{\tilde{\mu}} v_\mu \dot{P}_{\approx \tilde{\mu} }(\overline{w}u_\lambda)\|_{L^1_t L^{\frac{2r}{2+r}}_x}^{1-\theta}\\
        &\lesa \sum_{\substack{\tilde{\mu} \in 2^\ZZ \\ \lr{\tilde{\mu}}\approx \mu}} \Big( \int_{\RR^4} |K_{\tilde{\mu}}(y)| \| (\dot{P}_{\tilde{\mu}} v_\mu)(t,x) \overline{w}(t,x-y) u_\lambda(t,x-y) \|_{L^1_t L^{\frac{4}{3}}_x(I)} dy \Big)^{\theta}\\
         &\qquad \qquad \qquad \qquad \times \Big( \| v_\mu \|_{L^\infty_t L^2_x} \| \dot{P}_\mu ( \overline{w} u_\lambda )\|_{L^1_t L^r_x}\Big)^{1-\theta}\\
		&\lesa  \lambda^{-s} \| w \|_{Z}   \sum_{\substack{\tilde{\mu} \in 2^\ZZ \\ \lr{\tilde{\mu}}\approx \mu}}  \Big( {\tilde{\mu}^{-1}} \lambda^s \sup_{y\in \RR^4} \| (\dot{P}_{\tilde{\mu}} v_\mu)(t,x) u_\lambda(t,x-y) \|_{L^2_{t,x}(I)} \Big)^\theta \Big(\|v_\mu \|_{L^\infty_t L^2_x} \| u_\lambda \|_{\underline{S}^s_\lambda} \Big)^{1-\theta}.
	\end{align*}
To bound the $L^2_{t,x}$ norm in the above inequality, we combine Corollary \ref{cor:bi restric} and \eqref{eqn:bi res app:main L2} which gives
    \begin{align*}
       \tilde{\mu}^{-1} \lambda^s \| \dot{P}_{\tilde{\mu}} v_\mu u_\lambda \|_{L^2_{t,x}(I)}
       &\lesa \Big[ \Big(\frac{\tilde{\mu}}{\lambda}\Big)^{-\frac{1}{4}}  \Big( \| v_{\mu} \|_{Y(I)} \| u_{\lambda_1} \|_{Y(I)}  \Big)^{\sigma} \Big( \| v_\mu \|_{\underline{W}^\ell_\mu} \| u_\lambda \|_{S^{s'}_\lambda} \Big)^{1-\sigma} \Big]^\frac{1}{4} \Big[ \Big( \frac{\tilde{\mu}}{\lambda}\Big)^{\frac{1}{4} } \|v_\mu \|_{\underline{W}^\ell_\mu} \|u_\lambda \|_{\underline{S}^{s'}_\lambda} \Big]^\frac{3}{4}\\
        &\approx \Big(\frac{\tilde{\mu}}{\lambda}\Big)^{\frac{1}{4}}  \Big( \| v_{\mu} \|_{Y(I)} \| u_{\lambda_1} \|_{Y(I)} \Big)^{\frac{ \sigma}{4}} \Big( \|v_\mu \|_{\underline{W}^\ell_\mu} \| u_\lambda \|_{\underline{S}^{s'}_\lambda} \Big)^{1-\frac{\sigma}{4}}.
    \end{align*}
Therefore the bound \eqref{eqn:bi res app:low wave main} follows from the translation invariance of the norm $\| \cdot \|_{\underline{S}^s_\lambda}$, and summing up over low frequencies $\tilde{\mu} \lesa \mu$.

It only remains to consider the case $\mu \approx \max\{\lambda_0, \lambda_1\} \gg \min\{\lambda_0, \lambda_1\}$. Let $\frac{1}{q} = \frac{\sigma}{2(1-\sigma)}$, $\frac{1}{r} = \frac{1}{2} - \frac{2}{3q}$, and $\frac{1}{a} = \frac{1}{2} + \frac{7\sigma}{12(1-\sigma)}$. Then provided $\sigma\g 0$ is sufficiently small,  the pair $(q, r)$ is wave Strichartz admissible and hence an application of \eqref{eqn:wave stri} together with Corollary \ref{cor:bi restric} (noting that $\overline{w}_{\lambda_0} u_{\lambda_1} = P_{\approx \mu} (\overline{w}_{\lambda_0} u_{\lambda_1})$ )  and the endpoint Strichartz estimate for the Schr\"odinger equation gives
    \begin{align*}
        \Big| \int_{I\times \RR^4} v_\mu \overline{w}_{\lambda_0} u_{\lambda_1} \,dx\,dt\Big|&\lesa \Big( \| v_{\mu} \|_{L^\infty_{t,x}(I)} \|w_{\lambda_0} \|_{L^2_t L^4_x} \| u_{\lambda_1} \|_{L^\infty_{t,x}(I)} \Big)^{\sigma} \Big( \| v_\mu \|_{L^q_t L^r_x} \| \overline{w}_{\lambda_0} u_{\lambda_1} \|_{L^1_t L^a_x} \Big)^{1-\sigma}\\
        &\lesa \|w\|_Z \Big( \| v_{\mu} \|_{L^\infty_{t,x}(I)} \|u_{\lambda_1}\|_{L^\infty_{t,x}(I)} \Big)^{\sigma}
        \Big(\mu^{\frac{5}{3q} - \ell} \lambda_1^{2-\frac{4}{a} -s'} \| v_\mu\|_{\underline{W}^\ell_{\lambda}} \|u_{\lambda_1}\|_{\underline{S}^{s'}_\lambda}\Big)^{1-\sigma} \\
        &\lesa \mu^{6\sigma-\ell} \lambda_1^{3\sigma - s'} \|w\|_Z \Big(  \|v_\mu\|_{Y(I)} \| u_{\lambda_1} \|_{Y(I)} \Big)^{\sigma}
        \Big( \| v_\mu\|_{\underline{W}^\ell_{\lambda}} \|u_{\lambda_1}\|_{\underline{S}^{s'}_\lambda}\Big)^{1-\sigma}
    \end{align*}
where the last line again used the assumption $0\les \ell < \frac{1}{4}$ and $0<s'< 1$ to slightly simplify the frequency exponents. Therefore another application of duality (see \cite[Lemma 2.2]{Candy2021}) gives for any $\mu \approx \max\{\lambda_0, \lambda_1\} \gg \min\{\lambda_0, \lambda_1\}$ the bound
	$$ \lambda_0^s \| \mc{I}_0[\ind_I P_{\lambda_0}( v_\mu u_{\lambda_1})] \|_{L^\infty_t L^2_x \cap L^2_t L^4_x} \lesa \mu^{9\sigma-\ell+s} \lambda_1^{- s'}  \Big(  \|v_\mu\|_{Y(I)} \| u_{\lambda_1} \|_{Y(I)} \Big)^{\sigma}
        \Big( \| v_\mu\|_{\underline{W}^\ell_{\lambda}} \|u_{\lambda_1}\|_{\underline{S}^{s'}_\lambda}\Big)^{1-\sigma}.  $$
Since $9\sigma = \ell + s' - s$ and $s>0$, this suffices when $\mu \approx \lambda_1 \gg \lambda_0$. On the other hand, when $\mu \approx \lambda_0 \gg \lambda_1$, we instead interpolate with the high-low frequency gain
   $$ \lambda_0^s \| \mc{I}_0[\ind_I P_{\lambda_0}( v_\mu u_{\lambda_1})] \|_{L^\infty_t L^2_x \cap L^2_t L^4_x} \lesa \Big( \frac{\lambda_1}{\mu}\Big)^{1-s} \| v_\mu\|_{\underline{W}^0_{\lambda}} \|u_{\lambda_1}\|_{\underline{S}^{s}_\lambda}  $$
 which is a consequence of (the proof of) \cite[Theorem 3.1]{Candy2021} (see (3.5) and equation below (3.3) in \cite{Candy2021}) together with the energy inequality \eqref{eqn:energy ineq}. Note that the temporal cutoff $\ind_I$ can be removed by applying \eqref{eqn:duh loc time:init trans} and Lemma \ref{lem:duh loc time}.
\end{proof}

Note that in the previous theorem, we have two options. We can either take $\ell=0$ and $s'>s$ (so spend additional Schr\"odinger regularity), or $\ell>0$ and $s'=s$ (so spend additional wave regularity).

\subsection{Wave Nonlinearity}

We have a similar frequency localised improved bilinear estimate for the wave nonlinearity.

\begin{theorem}[Frequency localised control of wave nonlinearity]\label{thm:freq loc bi wave}
Let $s>\frac{1}{2}$. There exists $\theta>0$ such that for any interval $I \subset \RR$ and any $\mu, \lambda_1, \lambda_2 \in 2^\NN$ we have
    $$ \big\| \mc{J}_0\big[ \ind_I |\nabla| \Re\big(P_{\mu}(\overline{w}_{\lambda_1} u_{\lambda_2})\big)\big] \big\|_{W^0_\mu} \lesa \Big( \frac{\lambda_{min}}{\lambda_{max}}\Big)^{\theta} \Big(\| w_{\lambda_1} \|_{Y(I)} \| u_{\lambda_2} \|_{Y(I)} \Big)^{\theta}  \Big( \| w_{\lambda_1} \|_{\underline{S}^{\frac{1}{2}}_{\lambda_1}} \| u_{\lambda_2} \|_{\underline{S}^{s}_{\lambda_2}}\Big)^{1-\theta } $$
where $\lambda_{min} = \min\{\mu, \lambda_1, \lambda_2\}$ and $\lambda_{max}=\max\{\mu, \lambda_1, \lambda_2\}$.
\end{theorem}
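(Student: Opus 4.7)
The plan is to mimic the proof of Theorem \ref{thm:freq loc bi schro}: first reduce the target norm $W^0_\mu$ via the inhomogeneous wave energy inequality to weighted $L^1_t L^2_x$ and $L^2_{t,x}$ estimates on the product $P_\mu(\overline{w}_{\lambda_1}u_{\lambda_2})$, and then extract the $Y$--gain by interpolating Corollary \ref{cor:bi restric} against the elementary Bernstein bound $\|w_{\lambda_1}u_{\lambda_2}\|_{L^\infty_{t,x}}\lesa (\lambda_1\lambda_2)^4\|w_{\lambda_1}\|_{Y(I)}\|u_{\lambda_2}\|_{Y(I)}$. The energy inequality delivers
\[
\big\|\mc{J}_0\big[\ind_I|\nabla|\Re P_\mu(\overline{w}_{\lambda_1}u_{\lambda_2})\big]\big\|_{W^0_\mu} \lesa \mu\big\|P_\mu(\overline{w}_{\lambda_1}u_{\lambda_2})\big\|_{L^1_t L^2_x(I)} + \big\|P_\mu(\overline{w}_{\lambda_1}u_{\lambda_2})\big\|_{L^2_{t,x}(I)},
\]
so it suffices to estimate these two product norms.

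For the $L^2_{t,x}$ piece I would decompose $P_\mu=\sum_{\langle\tilde\mu\rangle\approx\mu}\dot P_{\tilde\mu}$, apply the bilinear $L^2_{t,x}$ bound in Corollary \ref{cor:bi restric} (treating both factors as inhomogeneous Schr\"odinger waves via the transference step already built into that corollary), and interpolate with the $L^\infty_{t,x}$ Bernstein bound. This reproduces the derivation of \eqref{eqn:bi res app:main L2}--\eqref{eqn:bi res app:main L2 gen freq} inside the proof of Theorem \ref{thm:freq loc bi schro} essentially verbatim, and yields the geometric factor $(\lambda_{\min}/\lambda_{\max})^\theta$ in every unbalanced configuration together with the desired small power of $\|w_{\lambda_1}\|_Y\|u_{\lambda_2}\|_Y$. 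For the $L^1_t L^2_x$ piece I would instead invoke the second statement in Corollary \ref{cor:bi restric} at an exponent $r=\tfrac{5}{3}+\delta$ slightly above the sharp threshold, obtaining for each $\langle\tilde\mu\rangle\approx\mu$ a bound $\|\dot P_{\tilde\mu}(\overline{w}_{\lambda_1}u_{\lambda_2})\|_{L^1_t L^r_x}\lesa \tilde\mu^{2-4/r}\|w_{\lambda_1}\|_{\underline{S}^0_{\lambda_1}}\|u_{\lambda_2}\|_{\underline{S}^0_{\lambda_2}}$, and then H\"older interpolating spatially against the $L^\infty_x$ control furnished by the $Y$ norm. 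Since $r<2$, the factor $\tilde\mu^{2-4/r}$ carries the fractional room that lets the interpolation close without breaking scaling; summing the geometric series in $\tilde\mu$ and dyadically organising the case analysis finishes the bound.

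The case split $\mu\approx\lambda_1\approx\lambda_2$, $\mu\ll\lambda_1\approx\lambda_2$, and $\mu\approx\max(\lambda_1,\lambda_2)\gg\min(\lambda_1,\lambda_2)$ is handled exactly as in Theorem \ref{thm:freq loc bi schro}: in the two unbalanced cases the geometric factor $(\lambda_{\min}/\lambda_{\max})^\theta$ falls out automatically from the dyadic frequency mismatches inside Corollary \ref{cor:bi restric}. The main obstacle is the fully resonant balanced regime $\mu\approx\lambda_1\approx\lambda_2$, where both the bilinear restriction estimates and the Strichartz bounds are simultaneously sharp. The scheme closes only because the hypothesis $s>\tfrac{1}{2}$ opens up an interpolation exponent $\sigma\propto s-\tfrac{1}{2}$ on the input $u_{\lambda_2}$, playing exactly the role that $s'-s+\ell$ played for the Schr\"odinger nonlinearity; without this extra Schr\"odinger regularity one cannot extract a strictly positive power of $\|u_{\lambda_2}\|_Y$ in the resonant regime.
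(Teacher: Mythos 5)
Your reduction of the $L^\infty_t L^2_x$ component of $W^0_\mu$ to a bound on $\mu\big\|P_\mu(\overline{w}_{\lambda_1}u_{\lambda_2})\big\|_{L^1_t L^2_x}$ is where the argument breaks. Interpolating Corollary \ref{cor:bi restric} in $L^1_t L^a_x$ (with $a>\tfrac{5}{3}$, $a<2$) against the $L^\infty_{t,x}$ control from the $Y$ norm can only produce an estimate in $L^{p}_t L^2_x$ for some $p>1$: writing $h = h^{\theta}\cdot h^{1-\theta}$ and placing $h^\theta\in L^\infty_{t,x}$ forces $h^{1-\theta}\in L^{1}_t L^{a}_x$ and the resulting mixed norm on $h$ is $L^{1/(1-\theta)}_t L^{2}_x$, not $L^1_t L^2_x$. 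The ``spatial H\"older against $L^\infty_x$'' you invoke would require a pointwise-in-$t$ inequality $\int\|f(t)\|_{L^a_x}^\alpha\,dt\lesa\big(\int\|f(t)\|_{L^a_x}\,dt\big)^\alpha$ with $\alpha<1$, which is false. The paper avoids this dead end by \emph{not} using the naive energy inequality at this step; instead it applies the inhomogeneous Strichartz estimate for the wave equation (via the decomposition into $\dot P_{\tilde\mu}$ pieces together with \eqref{eqn:wave stri} in dual form), which accepts the nonlinearity in $L^{q'}_t L^{r'}_x$ for a dual wave-admissible pair with $q'=1/(1-\theta)>1$. That extra room in the temporal exponent is exactly what the interpolation lands in, and it is why the factor $\tilde\mu^{1+\frac{5}{3}\theta}$ appears in the paper's summation over $\tilde\mu$. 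Without this Strichartz step there is no mechanism to return to $L^\infty_t L^2_x$ while keeping the $Y$ gain.

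Two smaller inaccuracies: for the $L^2_{t,x}$ component you propose to ``apply the bilinear $L^2_{t,x}$ bound in Corollary \ref{cor:bi restric},'' but the $L^2_{t,x}$ statement in that corollary is for wave--Schr\"odinger interactions $u_\lambda\dot P_{\tilde\mu}v_\mu$, not the Schr\"odinger--Schr\"odinger product $\overline{w}_{\lambda_1}u_{\lambda_2}$. The paper's $L^2_{t,x}$ bound for the wave nonlinearity is obtained more elementarily, by H\"older interpolation against $L^\infty_{t,x}$ followed by Bernstein and the Strichartz control \eqref{eqn:schro stri}; no bilinear restriction is needed there. Relatedly, the displays \eqref{eqn:bi res app:main L2}--\eqref{eqn:bi res app:main L2 gen freq} that you say you ``reproduce verbatim'' are themselves established by Bernstein, H\"older, and linear Strichartz, not from Corollary \ref{cor:bi restric}; the bilinear restriction enters the Schr\"odinger theorem only later, in the duality step for the Strichartz component of $S^s_{\lambda_0}$. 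Finally, in the high-low regime $\mu\approx\lambda_{\max}\gg\lambda_{\min}$ the geometric factor does not ``fall out automatically from the dyadic frequency mismatches'' of the corollary; the paper has to interpolate against a separate derivative gain imported from \cite{Candy2021a}, and an analogous ingredient would be needed in your write-up. Your identification of the resonant balanced regime and the role of $s>\tfrac{1}{2}$ is, however, correct.
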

\begin{proof}
It suffices to consider the case $\frac{1}{2}<s\les 1$. Let $\theta>0$ (sufficiently small), and take $\frac{1}{q} = \frac{1}{4(1-\theta)}$ and $\frac{1}{r} = \frac{1}{2} - \frac{1}{2q}$. An application of \eqref{eqn:schro stri} together with Bernstein's inequality gives
    \begin{align*}
        \mu^{-1} \big\| |\nabla| \Re\big(P_{\mu}(\overline{w}_{\lambda_1} u_{\lambda_2})\big) \big\|_{L^2_{t,x}(I)} &\lesa \Big( \| w_{\lambda_1} \|_{L^\infty_{t,x}(I)} \| u_{\lambda_2} \|_{L^\infty_{t,x}(I)}\Big)^\theta  \big\| P_{\mu}\big( \overline{w}_{\lambda_1} u_{\lambda_2} \big) \big\|_{L^\frac{q}{2}_{t,x}}^{1-\theta} \\
                &\lesa \Big( \| w_{\lambda_1} \|_{L^\infty_{t,x}(I)} \| u_{\lambda_2} \|_{L^\infty_{t,x}(I)}\Big)^\theta  \Big( \lambda_{min}^{4(\frac{2}{r} - \frac{2}{q})}\big\| P_{\mu}\big( \overline{w}_{\lambda_1} u_{\lambda_2} \big) \big\|_{L^\frac{q}{2}_t L^\frac{r}{2}_x}\Big)^{1-\theta} \\
                &\lesa \lambda_{min}^{1-4\theta} \lambda_{max}^{8\theta - (1-\theta)(\frac{1}{2}+s)} \Big(\| w_{\lambda_1} \|_{Y(I)} \| u_{\lambda_2} \|_{Y(I)} \Big)^{\theta}  \Big( \| w_{\lambda_1} \|_{\underline{S}^\frac{1}{2}_{\lambda_1}} \| u_{\lambda_2} \|_{\underline{S}^{s}_{\lambda_2}}\Big)^{1-\theta }.
    \end{align*}
After observing that $\lambda_{min}^{1-4\theta} \lambda_{max}^{8
\theta - (1-\theta)(\frac{1}{2}+s)}\lesa (\frac{\lambda_{min}}{\lambda_{max}})^{1-4\theta} \lambda_{max}^{\frac{1}{2}-s + 6\theta}$, the previous bound suffices to control the $L^2_{t,x}$ component of the norm $W^0_\mu$ provided that $s\g \frac{1}{2} + 6\theta$. To bound the $L^\infty_t L^2_x$ component, we take the wave admissible exponents $\frac{1}{q} = \theta$ and $\frac{1}{r} = \frac{1}{2} - \frac{2\theta}{3}$, and let $\frac{1}{a} = \frac{1}{1-\theta}(\frac{1}{2} + \frac{2\theta}{3})$. The bilinear restriction type estimate in Corollary \ref{cor:bi restric} gives for any $\tilde{\mu}\in 2^\ZZ$
        \begin{align*}
        \| \dot{P}_{\tilde{\mu}}(\overline{w}_{\lambda_1} u_{\lambda_2})\|_{L^{q'}_t L^{r'}_x(I)}
                        &\lesa \Big( \| w_{\lambda_1} \|_{L^\infty_{t,x}(I)} \|u_{\lambda_2} \|_{L^\infty_{t,x}(I)} \Big)^\theta \| \dot{P}_{\tilde{\mu}}(\overline{w}_{\lambda_1} u_{\lambda_2})\|_{L^1_t L^a_x(I)}^{1-\theta}\\
                        &\lesa  (\lambda_1 \lambda_2)^{4\theta} \big(\tilde{\mu}^{2-\frac{4}{a}} \lambda_1^{-\frac{1}{2}} \lambda_2^{-s} \big)^{1-\theta} \Big(\| w_{\lambda_1} \|_{Y(I)} \| u_{\lambda_2} \|_{Y(I)} \Big)^{\theta}  \Big( \| w_{\lambda_1} \|_{\underline{S}^\frac{1}{2}_{\lambda_1}} \| u_{\lambda_2} \|_{\underline{S}^{s}_{\lambda_2}}\Big)^{1-\theta } \\
                        &\lesa \tilde{\mu}^{-(\frac{5}{3}\theta +1)} \Big(\frac{\tilde{\mu}}{\lambda_1}\Big)^{\frac{1}{2}-3\theta}\Big( \frac{\tilde{\mu}}{\lambda_2} \Big)^\frac{1}{2} \lambda_2^{\frac{1}{2}-s+7\theta} \Big(\| w_{\lambda_1} \|_{Y(I)} \| u_{\lambda_2} \|_{Y(I)} \Big)^{\theta}  \Big( \| w_{\lambda_1} \|_{\underline{S}^\frac{1}{2}_{\lambda_1}} \| u_{\lambda_2} \|_{\underline{S}^{s}_{\lambda_2}}\Big)^{1-\theta }.
        \end{align*}
Consequently, as the pair $(q,r)$ is wave admissable, provided $s>\frac{1}{2}+7\theta$ the inhomogeneous Strichartz estimate for the wave equation gives
	\begin{align}
		\big\| \mc{J}_0\big[\ind_I |\nabla| \Re\big( P_\mu(\overline{w}_{\lambda_1}u_{\lambda_2})\big)\big]\big\|_{L^\infty_t L^2_x}
		&\lesa  \sum_{\substack{\tilde{\mu} \in 2^\NN \\ \lr{\tilde{\mu}} \approx \mu}} \tilde{\mu} \big\| \mc{J}_0\big[\ind_I \Re\big( \dot{P}_{\tilde{\mu}}(\overline{w}_{\lambda_1}u_{\lambda_2})\big)\big]\big\|_{L^\infty_t L^2_x} \label{eqn:freq loc bi wave:Linf bound} \\
		&\lesa \sum_{\substack{\tilde{\mu} \in 2^\NN \\ \lr{\tilde{\mu}} \approx \mu}} \tilde{\mu}^{1+\frac{5}{3}\theta} \|\dot{P}_{\tilde{\mu}}( \overline{w}_{\lambda_1} u_{\lambda_2})\|_{L^{q'}_t L^{r'}_x(I)} \notag \\
		&\lesa  \Big(\frac{\mu}{\lambda_1}\Big)^{\frac{1}{2}-3\theta}\Big( \frac{\mu}{\lambda_2} \Big)^\frac{1}{2} \Big(\| w_{\lambda_1} \|_{Y(I)} \| u_{\lambda_2} \|_{Y(I)} \Big)^{\theta}  \Big( \| w_{\lambda_1} \|_{\underline{S}^\frac{1}{2}_{\lambda_1}} \| u_{\lambda_2} \|_{\underline{S}^{s}_{\lambda_2}}\Big)^{1-\theta }. \notag
	\end{align}
This is enough to bound the $L^\infty_t L^2_x$ contribution in the case $\mu \approx \lambda_{\min}$, thus it only remains to consider the case $\mu\approx \lambda_{\max} \gg \lambda_{min}$. But this follows by combining \eqref{eqn:freq loc bi wave:Linf bound} with the  derivative gain
    \begin{align*}
 \big\| \mc{J}_0\big[ \ind_I |\nabla| \Re\big(P_{\mu}(\overline{w}_{\lambda_1} u_{\lambda_2})\big)\big] \big\|_{L^\infty_t L^2_x}  &\lesa \mu^{1-2s}  \| w_{\lambda_1} \|_{\underline{S}^s_{\lambda_1}} \| u_{\lambda_2} \|_{\underline{S}^{s}_{\lambda_2}}\\
  &\approx \mu^{1-2s}\lambda_1^{s-\frac{1}{2}} \| w_{\lambda_1} \|_{\underline{S}^{\frac{1}{2}}_{\lambda_1}} \| u_{\lambda_2} \|_{\underline{S}^{s}_{\lambda_2}} \lesa \mu^{\frac{1}{2}-s} \| w_{\lambda_1} \|_{\underline{S}^{\frac{1}{2}}_{\lambda_1}} \| u_{\lambda_2} \|_{\underline{S}^{s}_{\lambda_2}}
	\end{align*}
which is a consequence of \cite[Proof of (4.1) case 1]{Candy2021a} and \cite[Lemma 2.6]{Candy2021a} in the case $\mu \approx \lambda_{max} \gg \lambda_{min}$ (note that $W^\ell = W^{\ell, 0, 0}$ and $S^s = S^{s, 0, 0}$, and so $a=b=0$ in the notation of \cite{Candy2021a}).
\end{proof}

\subsection{Proof of Theorem \ref{thm:bi est improved}}

The proof of Theorem \ref{thm:bi est improved} is now a direct application of the corresponding frequency localised bounds.

\begin{proof}[Proof of Theorem \ref{thm:bi est improved}]
We only give the proof of the Schr\"odinger bound, as the wave bound is similar. Let $I\subset \RR$ be an interval. In view of \eqref{eqn:Y norm bound by sobolev}, have $\|u_\lambda \|_Y\lesa \|u_\lambda \|_{S^s_\lambda}$ and $\|V_\mu\|_Y\lesa \|V_\mu\|_{W^0_\mu}$, and hence we may take $\theta>0$ as small as we like. An application of Theorem \ref{thm:freq loc bi schro} together with a standard summation argument gives for any $s>\frac{1}{2}$
		$$ \big\| \mc{I}_0\big[ \ind_I \Re(V) u \big] \big\|_{S^\frac{1}{2}} \lesa \Big( \|u\|_{Y(I)} \|V\|_{Y(I)}\Big)^\theta \Big( \sum_{\mu\in 2^\NN} \|V_\mu\|_{W^0_\mu}^{4(1-\theta)} \Big)^\frac{1}{2} \Big( \sum_{\lambda \in 2^\NN} \| u_\lambda \|_{S^s_\lambda}^{4(1-\theta)}\Big)^{\frac{1}{2}}. $$
In particular, choosing $\theta>0$ slightly smaller if necessary, we can ensure that $4(1-\theta) \g 2$ and hence conclude that
		$$ \big\| \mc{I}_0\big[ \ind_I \Re(V) u \big] \big\|_{S^\frac{1}{2}} \lesa \Big( \|u\|_{Y(I)} \|V\|_{Y(I)}\Big)^\theta \Big( \| V\|_{W^0} \| u \|_{S^s}\Big)^{1-\theta}.$$
By definition of the temporally restricted spaces $S^s(I)$ and $W^0(I)$, we can replace the norms on the right hand side by their corresponding localised counterparts. Therefore, provided $0\in I$, the bound for the Schr\"odinger nonlinearity now follows from Lemma \ref{lem:duh loc time}.
\end{proof}

\section{Small Data Theory II}\label{sec:small data II}

In this section give a small data theory based on the dispersive norm $\| \cdot \|_Y$. In particular, we give the proof of Theorem \ref{thm:small data Hs} as well as a version of the stability result from Theorem \ref{thm:stab} adapted to the concentration compactness arguments to follow. The idea is simply to combine the refined bilinear estimates from the previous section together with the small data theory developed in Section \ref{sec:small data}. The first step is to observe that the improved bilinear estimate in Theorem \ref{thm:bi est improved} implies that the controlling quantity $\rho_I$ defined in \eqref{eqn:rho defn} can be bounded in terms of the dispersive norm $\|\cdot\|_{Y(I)}$.

\begin{corollary}[Controlling norm bounded by $Y(I)$]\label{cor:rho bounded by Y}
Let $s>\frac{1}{2}$. There exists $C>0$ and $0<\theta<1$ such that for any interval $0\in I\subset \RR$ we have
        $$ \rho_I(u) \les C \| u \|_{Y(I)}^\theta \| u  \|_{\underline{S}^s}^{1-\theta}. $$
\end{corollary}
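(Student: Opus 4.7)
The plan is to read off the bound directly from the improved bilinear estimates in Theorem \ref{thm:bi est improved}, with the key observation that the $Y(I)$-norms of the test functions $v$ and $w$ appearing in the definition \eqref{eqn:rho defn} of $\rho_I$ are automatically controlled by their respective iteration norms via the Sobolev embedding \eqref{eqn:Y norm bound by sobolev}. Since Theorem \ref{thm:bi est improved} already does the heavy lifting, no genuine obstacle remains; the proof is essentially an assembly of pieces.

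First I would treat the Schr\"odinger contribution. Fix $v$ with $\|v\|_{\underline{W}^0}\les 1$. Since $0\in I$, Lemma \ref{lem:duh loc time} gives
$$ \big\| \mc{I}_0[\ind_I \Re(v) u]\big\|_{S^\frac{1}{2}} \approx \big\| \mc{I}_{0}[\Re(v) u]\big\|_{S^\frac{1}{2}(I)}, $$
so that Theorem \ref{thm:bi est improved} applies to produce some $0<\theta<1$ with
$$ \big\| \mc{I}_0[\ind_I \Re(v) u]\big\|_{S^\frac{1}{2}} \lesa \Big(\|v\|_{Y(I)}\|u\|_{Y(I)}\Big)^\theta \Big(\|v\|_{\underline{W}^0(I)}\|u\|_{\underline{S}^s(I)}\Big)^{1-\theta}. $$
The bound \eqref{eqn:Y norm bound by sobolev} at $s=0$ together with the trivial inclusion $\|v\|_{L^\infty_t L^2_x}\les \|v\|_{\underline{W}^0}$ gives
$$ \|v\|_{Y(I)} \les \|v\|_Y \lesa \|v\|_{L^\infty_t L^2_x} \les \|v\|_{\underline{W}^0}\les 1, $$
and clearly $\|v\|_{\underline{W}^0(I)}\les \|v\|_{\underline{W}^0}\les 1$ and $\|u\|_{\underline{S}^s(I)}\les \|u\|_{\underline{S}^s}$ by taking $u$ itself as the extension. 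Thus the right-hand side collapses to $\lesa \|u\|_{Y(I)}^\theta \|u\|_{\underline{S}^s}^{1-\theta}$, uniformly in such $v$.

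Next I would handle the wave contribution in the exactly analogous way. Given $w$ with $\|w\|_{\underline{S}^\frac{1}{2}}\les 1$, Lemma \ref{lem:duh loc time} (applied to the wave Duhamel operator $\mc{J}_0$) and the wave half of Theorem \ref{thm:bi est improved} yield
$$ \big\| \mc{J}_0\big[\ind_I |\nabla|\Re(\overline{w} u)\big]\big\|_{W^0} \lesa \Big(\|w\|_{Y(I)}\|u\|_{Y(I)}\Big)^\theta \Big(\|w\|_{\underline{S}^\frac{1}{2}(I)}\|u\|_{\underline{S}^s(I)}\Big)^{1-\theta}, $$
and now \eqref{eqn:Y norm bound by sobolev} at $s=\tfrac{1}{2}$ gives
$$ \|w\|_{Y(I)} \les \|w\|_Y \lesa \|w\|_{L^\infty_t \dot H^\frac{1}{2}_x} \lesa \|w\|_{\underline{S}^\frac{1}{2}} \les 1. $$
Combining these two bounds and taking the suprema over $v$ and $w$ in the definition \eqref{eqn:rho defn} of $\rho_I(u)$ produces the claimed inequality with (if necessary, a slightly smaller) $\theta \in (0,1)$.
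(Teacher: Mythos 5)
Your proof is correct and takes essentially the same approach as the paper, which gives only a one-line proof citing the definition of $\rho_I$, Lemma \ref{lem:duh loc time}, and Theorem \ref{thm:bi est improved}; you have simply filled in the details, including the observation via \eqref{eqn:Y norm bound by sobolev} that the $Y(I)$-norms of the test functions $v$ and $w$ are absorbed into the normalisation $\|v\|_{\underline{W}^0}\les 1$, $\|w\|_{\underline{S}^{1/2}}\les 1$.
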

\begin{proof}
This follows directly from the definition of the controlling quantity $\rho_I$, \eqref{eqn:rho defn}, together with Lemma \ref{lem:duh loc time} and Theorem \ref{thm:bi est improved}.
\end{proof}

We can now state the version of the stability result in Theorem \ref{thm:stab} that we exploit in the following.

\begin{theorem}[Stability II]\label{thm:stab II}
Let $A>0$, $0<B<\|Q\|_{\dot{H}^1}$, and $\frac{1}{2}<s<2$. There exists $\epsilon_0(A,B, s)>0$, $\theta=\theta(A, B, s)>0$, and $C=C(A, B, s)>0$ such that if $0<\epsilon<\epsilon_0(A, B, s)$, $t_0 \in I\subset \RR$ is an interval, and
    $$(f, g) \in H^s\times L^2, \qquad (F, G)\in N^\frac{1}{2}(I)\times M^0(I), \qquad (\psi, \phi) \in C(I, H^\frac{1}{2} \times L^2)$$
satisfies for some $t_0\in I$ the boundedness condition
    \begin{equation}\label{eqn:stab II:bound}
       \|f\|_{H^s} + \| \psi \|_{S^\frac{1}{2}(I)}\les A, \qquad \| \phi \|_{L^\infty_t L^2_x (I\times \RR^4)} \les B,
    \end{equation}
the smallness condition
      \begin{equation}\label{eqn:stab II:error}
           \big\| e^{i(t-t_0)\Delta}\big(f-\psi(t_0)\big)\big\|_{Y(I)} + \| g - \phi(t_0) \|_{L^2}  + \| \mc{I}_{0, t_0}[F] \|_{S^\frac{1}{2}(I)} + \| \mc{J}_{0, t_0}[G] \|_{W^0(I)} < \epsilon,
      \end{equation}
and $(\psi, \phi)$ solve the perturbed Zakharov equation \eqref{eqn:stab:approx sys} on $I\times \RR^4$, then there exists a (unique) solution $(u, V)\in C(I, H^s\times L^2)$ to the Zakharov equation \eqref{eqn:Zak} with data $(u, V)(t_0) = (f,g)$ and the bounds
    $$\big\| u - \psi - e^{i(t-t_0)\Delta}\big(f - \psi(t_0)\big)\big\|_{S^\frac{1}{2}(I)}\les C \epsilon^\theta, \qquad \|u\|_{D(I)}\les C \| f\|_{H^\frac{1}{2}}, \qquad \| V\|_{L^\infty_t L^2_x(I)} \les \tfrac{1}{2} \big( B + \|Q \|_{\dot{H}^1}\big).$$
\end{theorem}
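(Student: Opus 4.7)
The strategy is to reduce Theorem \ref{thm:stab II} to Theorem \ref{thm:stab} by translating the $Y$-smallness hypothesis on $e^{i(t-t_0)\Delta}(f - \psi(t_0))$ into smallness of the controlling quantity $\rho_I\big(e^{i(t-t_0)\Delta}(f - \psi(t_0))\big)$. The essential tool is Corollary \ref{cor:rho bounded by Y}, which performs this conversion provided we have $\underline{S}^{s_1}$-control for some $s_1 > \tfrac{1}{2}$. The main technical obstacle is that $\psi(t_0) \in H^{1/2}$ but not a priori in $H^{s_1}$, so the $\underline{S}^{s_1}$ norm of the free evolution of $f - \psi(t_0)$ need not be finite.

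To handle this, I would decompose $u_0 := e^{i(t-t_0)\Delta}(f - \psi(t_0)) = u_0^L + u_0^H$ via a Littlewood-Paley cutoff at frequency $N = \epsilon^{-\tau}$ for a small $\tau > 0$ to be chosen. Bernstein's inequality yields $\|u_0^L\|_{\underline{S}^{s_1}} \lesa N^{s_1 - 1/2}\, A$, and Corollary \ref{cor:rho bounded by Y} then gives
\[
\rho_I(u_0^L) \lesa A\,\epsilon^{\theta_0 - \tau(s_1 - 1/2)(1-\theta_0)},
\]
which is a positive power of $\epsilon$ as soon as $\tau$ is taken sufficiently small. For the high-frequency remainder $u_0^H$, I would estimate $\rho_I(u_0^H)$ by running the frequency-localised bilinear estimates Theorem \ref{thm:freq loc bi schro} and Theorem \ref{thm:freq loc bi wave} directly against arbitrary $v \in \underline{W}^0$ and $w \in \underline{S}^{1/2}$ with $\|v\|_{\underline{W}^0},\|w\|_{\underline{S}^{1/2}} \les 1$: the restriction $\lambda_1 > N$ for one of the inputs combines with the high-low frequency gain $(\lambda_{\min}/\lambda_{\max})^{\theta}$, a small $Y$ factor on $u_0^H$ (which is bounded by $\epsilon$ at every frequency), and the $H^{1/2}$ boundedness to give, after summation, a bound of the form $\epsilon^{\theta_4} N^{-\theta_5}$ for some $\theta_4,\theta_5 > 0$. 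Balancing these contributions by a suitable choice of $\tau > 0$ yields $\rho_I(u_0) \les \epsilon^{\theta_1}$ for some $\theta_1 = \theta_1(A,B,s) > 0$. All remaining terms in the smallness condition \eqref{eqn:stab:error} of Theorem \ref{thm:stab} appear verbatim in the hypothesis \eqref{eqn:stab II:error}, so Theorem \ref{thm:stab} (applied at regularity $s = \tfrac{1}{2}$) produces a solution $(u,V) \in C(I, H^{1/2}\times L^2)$ satisfying $\|u - \psi - e^{i(t-t_0)\Delta}(f - \psi(t_0))\|_{S^{1/2}(I)} \lesa \epsilon^{\theta_1}$ together with the wave bound $\|V\|_{L^\infty_t L^2_x(I)} \les \tfrac{1}{2}(B + \|Q\|_{\dot{H}^1})$.

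It remains to upgrade to $H^s$ regularity and to establish $\|u\|_{D(I)} \les C\|f\|_{H^{1/2}}$. The $S^{1/2}(I)$ bound on $u$ coming from the previous step, combined with the wave control $\|V\|_{L^\infty_t L^2_x(I)} < \|Q\|_{\dot{H}^1}$, provides exactly the hypotheses needed to invoke the persistence of regularity result, Theorem \ref{thm:persistence}, which then yields $\|u\|_{\underline{S}^s(I)} \lesa_{A,B,s} \|f\|_{H^s}$ and in particular the dispersive bound. The principal difficulty in the whole argument is the high-frequency analysis for $u_0^H$: although $\psi(t_0)$ is only $H^{1/2}$, we must extract a quantitative smallness from the $Y$ hypothesis by exploiting the frequency-localised gains of Section \ref{sec:bil est} carefully enough that the derivative losses incurred (necessarily present since $v \in \underline{W}^0$ rather than $\underline{W}^\ell$ with $\ell > 0$) are overcome by the $N^{-\theta_5}$ gain from the frequency separation.
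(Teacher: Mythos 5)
Your high-level strategy matches the paper: reduce $\frac{1}{2}<s<2$ to $\frac{1}{2}<s<1$ by persistence of regularity, reduce to Theorem \ref{thm:stab} by converting $Y$-smallness into smallness of the controlling quantity $\rho_I$ via Corollary \ref{cor:rho bounded by Y}, and invoke Theorem \ref{thm:persistence} for the dispersive bound. You also put your finger on a genuine subtlety: Corollary \ref{cor:rho bounded by Y} needs an $\underline{S}^{s_1}$ bound with $s_1>\frac{1}{2}$, whereas the theorem hypothesis only puts $\psi(t_0)$ in $H^{1/2}$, so $\big\|e^{i(t-t_0)\Delta}(f-\psi(t_0))\big\|_{\underline{S}^{s_1}}$ is not a priori finite. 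The paper's terse proof simply writes the chain $\rho_I(e^{it\Delta}f)\lesa \|e^{it\Delta}f\|_{Y(I)}^\theta\|e^{it\Delta}f\|_{\underline{S}^s(I)}^{1-\theta}\lesa \epsilon^\theta\|f\|_{H^s}^{1-\theta}$ without engaging with the fact that the function to which Corollary \ref{cor:rho bounded by Y} must actually be applied is $e^{i(t-t_0)\Delta}(f-\psi(t_0))$, not $e^{it\Delta}f$. (In the paper's applications in Theorem \ref{thm:Palais-Smale} one always has $\psi(t_0)\in H^1$, so there the issue evaporates; what is implicitly being used is that $\|f-\psi(t_0)\|_{H^s}$ is under control.)

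However, the frequency-truncation argument you propose does not actually close the gap you identified. Your low-frequency estimate for $u_0^L$ is fine: Bernstein, Corollary \ref{cor:rho bounded by Y} and $\|u_0^L\|_{Y(I)}\leqslant\epsilon$ give $\rho_I(u_0^L)\lesa \epsilon^{\theta_0-\tau(s_1-\frac{1}{2})(1-\theta_0)}A^{1-\theta_0}$ as you claim. But the high-frequency claim $\rho_I(u_0^H)\lesa\epsilon^{\theta_4}N^{-\theta_5}$ is not justified, and I do not think the tools of Section \ref{sec:bil est} can produce it. The critical regime is the balanced interaction $\mu\approx\lambda_0\approx\lambda_1>N$, with $v_\mu\in\underline{W}^0$ an arbitrary unit-norm test function concentrated at high wave frequency $\mu\approx\lambda_1$. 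There $\lambda_{\min}/\lambda_{\max}\approx 1$, so no off-diagonal gain is available. To invoke the $Y$-factor in Theorem \ref{thm:freq loc bi schro} one must take either $\ell>0$ or $s'>\frac{1}{2}$; since $v\in\underline W^0$ only you are forced into $s'>\frac{1}{2}$, which replaces $\|u_{0,\lambda_1}^H\|_{\underline{S}^{1/2}_{\lambda_1}}$ by $\lambda_1^{s'-\frac{1}{2}}\|u_{0,\lambda_1}^H\|_{\underline{S}^{1/2}_{\lambda_1}}$, a loss that grows without bound in $\lambda_1>N$ rather than giving any negative power of $N$. (The bonus decay $\|v_\mu\|_{Y}\lesa\mu^{-2}$ does not help: a direct computation of the $\mu$-exponent after combining the $Y$ factor with the weight $\|v_\mu\|_{\underline W^\ell_\mu}=\mu^\ell\|v_\mu\|_{\underline W^0_\mu}$, or equivalently after combining with the $\lambda_1^{s'-1/2}$ loss when $\mu\approx\lambda_1$, still leaves a net positive power.) So there is no uniform $N^{-\theta_5}$ decay for $\rho_I(u_0^H)$, and indeed the tail $\|P_{>N}(f-\psi(t_0))\|_{H^{1/2}}$ has no quantitative rate either. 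The shortest way to close the argument is to assume, as all the applications in Section \ref{sec:P-S} in fact supply, that $\psi(t_0)\in H^s$ with $\|\psi(t_0)\|_{H^s}\lesa A$, so that $\|f-\psi(t_0)\|_{H^s}\lesa A$ and Corollary \ref{cor:rho bounded by Y} applies directly without any decomposition.
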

\begin{proof}
In view of the persistence of regularity from \cite[Theorem 8.1]{Candy2021a}, it suffices to consider the case $\frac{1}{2}<s<1$. An application of Corollary \ref{cor:rho bounded by Y} together with \eqref{eqn:schro stri} implies that
        $$ \rho_I(e^{it\Delta} f) \lesa \| e^{it\Delta} f\|_{Y(I)}^\theta \| e^{it\Delta} f \|_{\underline{S}^s(I)}^{1-\theta} \lesa \epsilon^\theta \|f \|_{H^s}^{1-\theta} \lesa \epsilon^\theta A^{1-\theta}  $$
where $\theta$ and the implied constant depend only on $s>\frac{1}{2}$. In particular, choosing $\epsilon>0$ sufficiently small to ensure that $\epsilon^\theta A^{1-\theta} \ll \epsilon_0(A, B)$ where $\epsilon_0$ is as in Theorem \ref{thm:stab}, the conditions \eqref{eqn:stab II:bound} and \eqref{eqn:stab II:error} imply the corresponding conditions \eqref{eqn:stab:bound} and \eqref{eqn:stab II:error} in Theorem \ref{thm:stab}. Consequently an application of Theorem \ref{thm:stab} implies that there exists a unique solution $(u, V) \in C(I;H^s\times L^2)$ with data $(u, V)(t_0)=(f,g)$ satisfying
    $$ \big\| u - \psi - e^{i(t-t_0)\Delta}\big(f - \psi(t_0)\big)\big\|_{S^\frac{1}{2}(I)}\lesa_{A, B, s} \epsilon^\theta,  \qquad \| V\|_{L^\infty_t L^2_x(I)} \les \tfrac{1}{2} \big( B + \|Q \|_{\dot{H}^1}\big) $$
where (as in the remainder of the proof) the implied constant depends only  on $A$, $B$, and $S$. To obtain the claimed bound in the Strichartz norm, we note that via \eqref{eqn:D bdded by S} and the homogeneous Strichartz estimate, we have
    \begin{align*}
        \| u \|_{D(I)} &\lesa \| u - \psi - e^{i(t-t_0)}(f-\psi(t_0)\big) \big\|_{D(I)} + \| e^{i(t-t_0)\Delta}\big(f-\psi(t_0)\big)\big\|_{D(I)} \\
                       &\lesa \| u - \psi - e^{i(t-t_0)}(f-\psi(t_0)\big) \big\|_{S^\frac{1}{2}(I)} + \|f - \psi(t_0) \|_{H^\frac{1}{2}} \lesa \epsilon^\theta + A.
    \end{align*}
In particular, $\| u \|_{D(I)}\lesa_{A, B, S} 1$, and hence the claimed Strichartz bound follows from Theorem \ref{thm:persistence}.
\end{proof}

Although it is not needed in the following, strictly speaking the bounds implied by Theorem \ref{thm:stab} are slightly stronger than those stated in the conclusion in of Theorem \ref{thm:stab II}. For later use, we observe that in view of the bound \eqref{eqn:Y norm bound by sobolev}, we note that the smallness condition \eqref{eqn:stab II:error} can be replaced with
    \begin{equation}\label{eqn:mod stab error cond}
         \min\big\{ \| f-\psi(t_0)\|_{L^2}, \|f - \psi(t_0)\|_{\dot{H}^1}\big\} + \| g - \phi(t_0) \|_{L^2}  + \| \mc{I}_{0, t_0}[F] \|_{S^\frac{1}{2}(I)} + \| \mc{J}_{0, t_0}[G] \|_{W^0(I)} < \epsilon.
    \end{equation}
Thus we only need the Schr\"odinger data $f$ and $\psi(t_0)$ to be close in the \emph{homogeneous} spaces $L^2$ or $\dot{H}^1$. This smallness in the homogeneous spaces is crucial, as we want to conclude that solutions which are close to a dispersive solution in either mass $\mc{M}$ (so $L^2$) or energy $\mc{E}_Z$ (so roughly $\dot{H}^1$), are also dispersive. It is worth noting that, since we are working in the subcritical setting $s>\frac{1}{2}$,  proving that smallness in $L^2$ suffices is straightforward via the convexity type bound
        $$ \| f\|_{H^\frac{1}{2}} \lesa \|f \|_{L^2}^\theta \| f \|_{H^s}^{1-\theta}. $$
On the other hand, proving that smallness in $\dot{H}^1$ suffices is significantly more challenging, as the low frequencies can be badly behaved. In particular there does not seem to be an easy direct argument to conclude that smallness in energy suffices for stability. \\

We now come to the proof of Theorem \ref{thm:small data Hs}.

\begin{proof}[Proof of Theorem \ref{thm:small data Hs}]
Similar to the proof of Corollary \ref{cor:lwp with rho}, this follows by letting $F=G=0$ and $\phi = e^{it|\nabla|} g$ in Theorem \ref{thm:stab II}.
\end{proof}

\section{Profile Decomposition}\label{sec:prof decomp}

The proof of Theorem \ref{thm:min coun} requires a precise understanding of how a bounded sequence in $H^1\times L^2$ can fail to be compact. As usual, this is accomplished via the profile decomposition of Bahouri-G\'erard \cite{Bahouri1999}. To describe the profile decomposition we require, we first introduce our notation for the group elements responsible for the lack of compactness. Given $(s,y)\in \RR\times \RR^4$ we define the group elements $\mf{g}[s,y]: H^1\times L^2 \to H^1 \times L^2$ as
        $$ \mf{g}[s,y](f,g) =  \big( e^{-is\Delta} \tau_y f, e^{-is|\nabla|} \tau_y g\big)$$
where $(\tau_y \phi)(x) = \phi(x-y)$ is the translation operator on $\RR^4$. Note that there is no frequency parameter in the group elements. This is due to the fact that sequences in $H^1\times L^2$ with frequency parameters going to either $0$ or $\infty$, are small in the $Y$ norm. For instance, if we consider the sequence
        $$ (f_n, g_n)(x) = \big( \lambda_n^2 \lr{\lambda_n}^{-1} f(\lambda_n x), \lambda_n^2 g(\lambda_n x)\big)$$
with say $\supp (\widehat{f}, \widehat{g}) \subset \{|\xi|\approx 1\}$. Then $\| (f_n, g_n)\|_{H^1\times L^2}\approx \|(f,g)\|_{H^1\times L^2}$ is a bounded sequence with no limit in $H^1\times L^2$ if $\lambda_n + \lambda_n^{-1} \to \infty$. On the other hand, a short computation gives $\|(e^{it\Delta} f_n, e^{it|\nabla|} g_n)\|_{Y} \to 0$ if $\lambda_n + \lambda_n^{-1} \to \infty$, and hence we can absorb sequences with frequencies going to zero or infinity, into an error term which is small in $Y$ (which suffices for our purposes as we have now developed a robust small data theory in $Y$!).

A precise formulation of the profile decomposition we require is as follows.

\begin{theorem}[Profile decomposition]\label{thm:profile decomp}
Let $(f_n, g_n) \in H^1\times L^2$ be a bounded sequence. Then after passing to a subsequence if necessary, there exists $J^*\in \NN \cup \{\infty\}$, non-zero profiles $(f^{(j)}, g^{(j)})\in H^1\times L^2$, and group elements $\mf{g}_n^{(j)} = \mf{g}[t_n^{(j)}, x_n^{(j)}]$ such that if we define $(w_n^{(J)}, e_n^{(J)}) \in H^1\times L^2$ as
        $$ (f_n, g_n) = \sum_{j=1}^J \mf{g}_n^{(j)}( f^{(j)}, g^{(j)}) + (w_n^{(J)}, e_n^{(J)})$$
then we have the properties:
    \begin{enumerate}
      \item The energy of the profiles decouples, thus for any $J\les J^*$
        $$ \lim_{n\to \infty} \Big( \|f_n\|_{H^1}^2 - \sum_{j=1}^J \| f^{(j)}\|_{H^1}^2 - \| w^{(J)}_n\|_{H^1}^2\Big) = 0 = \lim_{n\to \infty} \Big( \| g_n\|_{L^2}^2 - \sum_{j=1}^J \|g^{(j)}\|_{L^2}^2 - \| e^{(J)}_n\|_{L^2}^2 \Big)$$
      and
            $$ \lim_{n\to \infty} \Big( \mc{E}_Z(f_n, g_n) - \sum_{j=1}^J \mc{E}_Z\big(\mf{g}^{(j)}_n(f^{(j)}, g^{(j)})\big) - \mc{E}_Z(w^{(J)}_n, e^{(J)}_n) \Big) = 0. $$

      \item The free evolution of the error $(w^{(J)}_n, e^{(J)}_n)$ goes to zero in $Y\times Y$, thus
                $$ \lim_{J\to J^*} \limsup_{n\to \infty} \big( \| e^{it\Delta} w^{(J)}_n\|_{Y} + \| e^{it|\nabla|} e^{(J)}_n \|_{Y} \big) = 0. $$

      \item For any $j\not = k$, the group elements $\mf{g}_n^{(j)} = \mf{g}[t^{(j)}_n, x^{(j)}_n]$ satisfy the asymptotic orthogonality property
                    $$ \lim_{n\to \infty} \big( |t_n^{(j)} - t_n^{(k)}| + |x^{(j)}_n - x^{(k)}_n|\big) = 0.$$
            Moreover, we have the normalisation condition that either $t_n^{(j)}=0$ for all $n\in \NN$, or $|t_n^{(j)}|\to \infty$ as $n\to \infty$.
    \end{enumerate}
\end{theorem}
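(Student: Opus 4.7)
My plan is to follow the classical Bahouri--Gérard iteration, adapted to the two-component sequence and, crucially, to the dispersive norm $\|\cdot\|_Y$ in place of a Strichartz norm. The heart of the matter is an inverse-Strichartz lemma of the following form: if $(f_n)$ is bounded in $H^1$ and $\|e^{it\Delta} f_n\|_{Y}\g \delta$ along a subsequence, then after possibly refining the subsequence there exist $(t_n,x_n)\in \RR\times\RR^4$ and a nonzero $f^{*}\in H^1$ such that $e^{it_n\Delta}\tau_{-x_n} f_n \rightharpoonup f^{*}$ weakly in $H^1$, with $\|f^{*}\|_{H^1}\gtrsim_{A}\delta^{\alpha}$ for some $\alpha>0$ depending only on the bound $A=\sup_n\|f_n\|_{H^1}$; an analogous statement holds for $(g_n)$ bounded in $L^2$. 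The key point, responsible for the absence of any frequency/scaling parameter in $\mf{g}[t,y]$, is that the $\lambda^{-4}$ weight in the definition of $Y$ combined with Bernstein forces the dyadic block $\lambda_n$ witnessing the lower bound $\|e^{it\Delta}f_{n\,\lambda_n}\|_{L^\infty_{t,x}}\g \delta \lambda_n^{4}/2$ to lie in a compact subset of $[1,\infty)$ determined only by $A$ and $\delta$. After passing to a subsequence along which $\lambda_n\to \lambda_*$, choosing $(t_n,x_n)$ a near-maximiser of $|P_{\lambda_n} e^{it\Delta}f_n(t,x)|$, and testing against $P_{\lambda_*}$-localised $L^2$ functions, the weak limit $f^{*}$ is extracted by standard local compactness.

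With this inverse lemma in hand, the profile decomposition is built by a greedy iteration. Starting from the pair $(f_n,g_n)$, if either $\limsup \|e^{it\Delta}f_n\|_Y$ or $\limsup \|e^{it|\nabla|}g_n\|_Y$ is positive, take whichever component is larger, apply the corresponding inverse lemma to produce group parameters $(t_n^{(1)},x_n^{(1)})$ and a nontrivial weak limit in the dominant component; on the \emph{other} component take the (possibly zero) weak $L^2$ or $H^1$ limit of $e^{it_n^{(1)}\Delta}\tau_{-x_n^{(1)}}g_n$ respectively $e^{it_n^{(1)}\Delta}\tau_{-x_n^{(1)}}f_n$, which exists after a further subsequence by boundedness. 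This produces a single pair $(f^{(1)},g^{(1)})$ coupled to one group element $\mf{g}_n^{(1)}$. Set
\[
(w_n^{(1)},e_n^{(1)}) = (f_n,g_n) - \mf{g}_n^{(1)}(f^{(1)},g^{(1)})
\]
and iterate. By unitarity of $\mf{g}_n^{(j)}$ on $H^1\times L^2$ and weak convergence, the Hilbert-space Pythagoras identity yields the quadratic decoupling in (i); the nontrivial trilinear part of $\mc{E}_Z$ decouples by the asymptotic orthogonality of the group elements together with the dispersive decay $\|e^{it\Delta}f^{(j)}\|_{L^4_x}\to 0$ as $|t|\to\infty$ in the case $|t_n^{(j)}|\to\infty$ (so that the potential-energy contribution of such a profile vanishes) and by direct $L^4$-orthogonality when $t_n^{(j)}=0$ but the spatial translations diverge. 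Property (iii) is enforced in the usual way: were $|t_n^{(j)}-t_n^{(k)}|+|x_n^{(j)}-x_n^{(k)}|$ to remain bounded for some $j\ne k$, passing to a subsequence would make the extracted profile at step $k$ an artifact of the $j$th extraction, contradicting the greedy choice that produces a nontrivial weak limit; the $\{0,\infty\}$-normalisation of $t_n^{(j)}$ is a standard subsequence selection.

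Finally I would prove property (ii) by the standard energy-budget argument. At each iteration $J\mapsto J+1$ the extracted pair satisfies
\[
\|f^{(J+1)}\|_{H^1}^2 + \|g^{(J+1)}\|_{L^2}^2 \;\gtrsim_{A_0}\; \delta_{J}^{2/\alpha},
\qquad \delta_{J}\;:=\;\limsup_{n}\bigl(\|e^{it\Delta}w_n^{(J)}\|_Y+\|e^{it|\nabla|}e_n^{(J)}\|_Y\bigr),
\]
where $A_0$ is the (preserved) bound on the $H^1\times L^2$ norm of $(w_n^{(J)},e_n^{(J)})$. Summing the quadratic decoupling gives $\sum_{j}(\|f^{(j)}\|_{H^1}^2+\|g^{(j)}\|_{L^2}^2)\les A_0^2$, hence $\delta_J\to 0$ as $J\to J^*$, which is exactly property (ii). A Cantor diagonal argument extracts a final subsequence on which all the above limits hold simultaneously.

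The main obstacle is the inverse-Strichartz step tailored to $Y$ rather than to a critical Strichartz space. In NLS-type problems one uses scalings to extract bubbles, and the analog of our $Y$ is scaling-invariant; here, because $\|\cdot\|_Y$ weights frequencies with $\lambda^{-4}$, the \emph{would-be} scalings are automatically absorbed into the error term (as illustrated by the example discussed before the theorem statement), and the remaining nontrivial concentrations live at \emph{bounded} frequencies. Verifying this last assertion quantitatively, via the Bernstein dichotomy $\|P_\lambda e^{it\Delta}f\|_{L^\infty_{t,x}}\lesa \min(\lambda^{2}\|f\|_{L^2},\lambda\|f\|_{\dot H^1})$ and the weight $\lambda^{-4}$, is what makes the two-parameter group $\mf{g}[t,y]$ suffice and is the technical crux of the theorem.
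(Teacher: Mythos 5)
Your proposal follows the same Bahouri--G\'erard scheme as the paper: the crux, which you correctly identify, is that the $\lambda^{-4}$ weight in $Y$ combined with Bernstein forces the concentrating dyadic block to be bounded, hence (since $\lambda\in 2^{\NN}$) constant along a subsequence, so only translations and time shifts are needed in the group. The one cosmetic difference is that you run the inverse lemma on whichever scalar component has the larger $Y$ norm and carry the other component passively, whereas the paper reads off $(s_n,x_n,\lambda_n)$ directly from the joint $Y\times Y$ lower bound on the pair; the budget argument, Pythagorean decoupling of the quadratic energies, and the dispersive-decay/Rellich treatment of the cross terms in the trilinear energy are otherwise the same.
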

\begin{proof}
The proof is a minor modification of the profile decomposition of Bahouri-G\'erard \cite{Bahouri1999}, and thus we only give a sketch of the proof. A standard argument, see for instance \cite{Killip2013}, shows that it suffices to prove that if
	\begin{equation}\label{eqn:prof decomp:assump}
		\lim_{n\to \infty} \| (f_n, g_n)\|_{H^1 \times L^2} = A, \qquad \lim_{n\to \infty} \|(e^{it\Delta}f_n, e^{it|\nabla|}g_n)\|_{Y\times Y} = \epsilon>0,
	\end{equation}
then there exits a profile $(f^{(1)}, g^{(1)})\in H^1\times L^2$ and a sequence of group elements $\mf{g}_n= \mf{g}[t_n, x_n]$  such that,  after potentially taking a subsequence, the sequence $(\mf{g}_n)^{-1} (f_n, g_n)$  converges weakly to $(f^{(1)}, g^{(1)})$ in $H^1\times L^2$, we have the lower bound
		\begin{equation}\label{eqn:prof decomp:lb}
			\|(f^{(1)}, g^{(1)})\|_{H^1\times L^2} \gtrsim \epsilon,
		\end{equation}
either $t_n=0$ for all $n$ or $|t_n|\to \infty$, and the energy (and $L^2$ and $H^1$ norms) decouples
	\begin{equation}\label{eqn:prof decomp:energy}
		\begin{split}
		\lim_{n\to \infty}\Big( \mc{E}_Z[(f_n, g_n)] - \mc{E}_Z\big[ \mf{g}_n (f^{(1)}, g^{(1)})\big] - \mc{E}_Z\big[ (f_n, g_n) - \mf{g}_n(f^{(1)}, g^{(1)})\big] \Big) &= 0, \\
			\lim_{n\to \infty}\Big(\|f_n\|_{H^1}^2 - \| f^{(1)}\|_{H^1}^2 - \|f_n - e^{-it_n\Delta} \tau_{x_n}f^{(1)} \|_{H^1}^2\Big) &= 0, \\
			 \lim_{n\to \infty} \Big( \| g_n\|_{L^2}^2  - \|g^{(1)}\|_{L^2}^2 - \| g_n - e^{-it_n|\nabla|} \tau_{x_n} g^{(1)}\|_{L^2}^2 \Big) &= 0.
			\end{split}
	\end{equation}
To construct the profile $(f^{(1)}, g^{(1)})$ we observe that by definition there exists $(s_n, x_n)\in \RR\times \RR^4$ and $\lambda_n\in 2^\NN$ such that
					$$ \lambda_n^{-4} \big| \big( e^{is_n\Delta} P_{\lambda_n} f_n, e^{is_n|\nabla|}P_{\lambda_n}g_n\big)(x_n)\big| \g \frac{1}{2} \epsilon. $$
An application of Bernstein's inequality gives  $\lambda_n^{-2} \| (f_n, g_n) \|_{H^1\times L^2} \gtrsim \epsilon$,  and hence $\limsup_{n\to \infty} \lambda_n^2 \lesa \epsilon^{-1} A$. In particular, as $\lambda_n\in 2^\NN$, there exists $\lambda_*\in 2^\NN$ and a subsequence such that $\lambda_n = \lambda_*$ for all $n\in \NN$. Define the group element $\mf{g}_n = \mf{g}_n[t_n, x_n]$, where $t_n=s_n$ if $|s_n|\to \infty$, and otherwise $t_n=0$ for all $n\in \NN$. As $(\mf{g}_n)^{-1}(f_n, g_n)$ is a bounded sequence in $H^1\times L^2$, after potentially taking a further subsequence, there exists $(f^{(1)}, g^{(1)}) \in H^1\times L^2$ such that $(\mf{g}_n)^{-1}(f_n, g_n)$ converges weakly to $(f^{(1)}, g^{(1)})$ in $H^1\times L^2$. Letting $K_{\lambda_*}$ denote the kernel of the Fourier multiplier $P_{\lambda_*}$, we have by the continuity of the flow $t\mapsto (e^{it\Delta}, e^{it|\nabla|})$ on $H^1\times L^2$,
	\begin{align*}
 |(P_{\lambda_*}f^{(1)}, P_{\lambda_*} g^{(1)})(0)| &= \limsup_{n\to \infty} \Big| \int_{\RR^4} K_{\lambda_*}(-y) (\mf{g}_n)^{-1}(f_n, g_n)(y) dy \\
 	&= \lambda_*^4 \limsup_{n\to \infty}  \lambda_n^{-4} \big|\big( e^{is_n\Delta} P_{\lambda_n} f_n, e^{is_n |\nabla|} P_{\lambda_n} g_n\big)(x_n)\big| \g   \frac{1}{2} \lambda_*^4 \epsilon
 	\end{align*}
and hence Bernstein's inequality and fact that $\lambda_*\in 2^\NN$ gives the lower bound \eqref{eqn:prof decomp:lb}. The $L^2$ and $H^1$ decoupling in \eqref{eqn:prof decomp:energy} follows immediately from the fact that $e^{it_n|\nabla|} \tau_{-x_n} f_n$ converges weakly in $H^1$ to $f^{(1)}$, while $e^{it_n|\nabla|} \tau_{-x_n} g_n$ converges weakly to $g^{(1)}$, and noting that both the $H^1$ and $L^2$ norms are invariant under the action of $\mf{g}_n$. To verify the energy decoupling in \eqref{eqn:prof decomp:energy} we have to work slightly harder. In view of the $H^1$ and $L^2$ decoupling, it suffices to prove that
			$$ \lim_{n\to \infty} \Big|\int_{\RR^4} g_n |f_n|^2 -  e^{-it_n|\nabla|}\tau_{x_n} g^{(1)}\big|e^{-it_n\Delta}\tau_{x_n}f^{(1)}\big|^2 - \big(g_n - e^{-it_n|\nabla|}\tau_{x_n}g^{(1)}\big)\big|f_n - e^{-it_n\Delta}\tau_{x_n}f^{(1)} \big|^2 dx \Big| = 0.$$
If $|t_n|\to \infty$, then after approximating by smooth functions the limit follows from the dispersive decay of the wave and Schr\"odinger propagators. Thus we may assume $t_n=0$ and after a short computation via translation invariance and weak convergence, our goal is to now prove that
	\begin{equation}\label{eqn:prof decomp:goal}
		 \lim_{n\to \infty}  \int_{\RR^4} 2|\tau_{-x_n}g_n| |f^{(1)}| |\tau_{-x_n} f_n - f^{(1)}| + |g^{(1)}| |\tau_{-x_n}f_n + f^{(1)}| |\tau_{-x_n} f_n - f^{(1)}| dx = 0.
	\end{equation}
But this follows by noting that since $\tau_{-x_n} f_n$ bounded in $H^1$, by the Rellich-Kondrachov Theorem, we have $\|\tau_{-x_n}f_n - f^{(1)}\|_{L^2(\Omega)} \to 0$ for any compact $\Omega \subset \RR^4$. Hence limit follows by localising in space.
\end{proof}

Later we exploit the fact that asymptotically orthogonal nonlinear profiles only interact weakly.

\begin{lemma}[Orthogonal profiles interact weakly]\label{lem:nonlin asymp decoup}
Let $(t_n, x_n), (t_n', x_n')\in \RR^{1+4}$ be sequences such that
	$$\lim_{n\to \infty} \big( |t_n - t_n'| + |x_n - x_n'| \big) = \infty.$$
Then for any $V\in \underline{W}^0$ and $u, w \in S^\frac{1}{2}$ we have
		\begin{equation}\label{eqn:nonlin asymp decoup:lim}
            \lim_{n\to \infty} \big\| \mc{I}_0\big[ \Re(V_n) u_n \big]\big\|_{S^\frac{1}{2}} = \lim_{n\to \infty} \big\| \mc{J}_0\big[ |\nabla| ( \overline{w}_n  u_n )\big] \big\|_{W^0} = \lim_{n\to\infty} \| \overline{w}_n u_n \|_{L^1_t L^2_x} = 0
        \end{equation}
where we take
	$$ u_n(t,x) = u(t-t_n, x-x_n), \qquad V_n(t,x) = V(t-t_n', x-x_n'), \qquad w_n(t,x) = w(t-t_n', x -x_n'). $$
\end{lemma}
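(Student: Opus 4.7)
The plan is a density-and-separation-of-supports argument. By translation invariance of $S^\frac{1}{2}$, $\underline{W}^0$, $L^1_tL^2_x$ and of the Duhamel operators (up to a bounded shift, cf.\ \eqref{eqn:duh loc time:init trans}), we may replace $(t_n, x_n)$ by $(0,0)$, reducing the claim to the following: for fixed $u,w\in S^\frac{1}{2}$, $V\in\underline{W}^0$ and sequences $(s_n, y_n):=(t'_n, x'_n)$ with $|s_n|+|y_n|\to\infty$, the three quantities
$$ \big\|\mc{I}_0[\Re(V_{s_n,y_n})u]\big\|_{S^\frac{1}{2}},\quad \big\|\mc{J}_0[|\nabla|(\overline{w}_{s_n,y_n} u)]\big\|_{W^0},\quad \|\overline{w}_{s_n,y_n} u\|_{L^1_t L^2_x} $$
tend to zero, where $f_{s,y}(t,x):=f(t-s,x-y)$.

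Given $\epsilon>0$, choose smooth, space-time compactly supported $\tilde u, \tilde w, \tilde V$ within $\epsilon$ of $u, w, V$ in the respective iteration norms. The bilinear estimates of Theorem \ref{thm:bi prev} together with the energy estimate \eqref{eqn:energy ineq} control the error from this replacement: expanding $V_{s_n,y_n}u-\tilde V_{s_n,y_n}\tilde u = (V-\tilde V)_{s_n,y_n}\,u + \tilde V_{s_n,y_n}(u-\tilde u)$ and using translation invariance of the norms, one obtains
$$ \big\|\mc{I}_0[\Re(V_{s_n,y_n})u]-\mc{I}_0[\Re(\tilde V_{s_n,y_n})\tilde u]\big\|_{S^\frac{1}{2}}\lesa \epsilon\bigl(\|u\|_{S^\frac{1}{2}}+\|V\|_{\underline{W}^0}+\epsilon\bigr), $$
with analogous bounds for the wave Duhamel term, while $\|\overline{w}_{s_n,y_n}u\|_{L^1_tL^2_x}$ is handled by H\"older together with the Strichartz bound $\|\phi\|_{L^2_tL^4_x}\lesa \|\phi\|_{S^\frac{1}{2}}$ coming from \eqref{eqn:schro stri}. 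It therefore suffices to verify the vanishing for the smoothed inputs. Pick $T, R>0$ so that $\supp\tilde u\cup\supp\tilde w\cup\supp\tilde V\subset[-T,T]\times B_R$. Once $n$ is large enough that $|s_n|>2T$ or $|y_n|>2R$ (which, since $|s_n|+|y_n|\to\infty$, holds for all sufficiently large $n$), the translates $\tilde V_{s_n,y_n}$ and $\tilde w_{s_n,y_n}$ have supports disjoint from $\supp\tilde u$. Hence $\Re(\tilde V_{s_n,y_n})\tilde u\equiv 0$ and $\overline{\tilde w}_{s_n,y_n}\tilde u\equiv 0$ identically for such $n$, so the three approximating norms all vanish; letting $\epsilon\to 0$ finishes the argument.

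The main obstacle is the density claim used in the second paragraph, namely that smooth space-time compactly supported functions are dense in $S^\frac{1}{2}$ (and in $\underline{W}^0$). This is delicate because a non-trivial free Schr\"odinger evolution $e^{it\Delta}u(0)$ has $\|e^{it\Delta}u(0)\|_{L^\infty_tL^2_x}=\|u(0)\|_{L^2}$ constant in time and so cannot be made small in $S^\frac{1}{2}$ by time-truncation alone. The natural workaround is to decompose $u=e^{it\Delta}u(0)+\mc{I}_0[(i\p_t+\Delta)u]$, approximate $u(0)\in H^\frac{1}{2}$ and $(i\p_t+\Delta)u\in N^\frac{1}{2}$ by Schwartz and $C^\infty_c$ functions respectively, and then handle the free-evolution portion separately in the bilinear integrals using the dispersive decay $\|e^{it\Delta}\|_{L^1\to L^\infty}\lesa |t|^{-2}$ (and its wave analogue), which forces the product of two translated free evolutions with diverging $(s_n, y_n)$ to decay to zero directly, e.g.\ in the $N^\frac{1}{2}$ (resp.\ $R^0$) side of the bilinear estimates.
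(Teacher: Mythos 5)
Your high-level strategy (translate, approximate by compactly supported functions, use separation of supports) is in the same spirit as the paper's proof, and you have correctly identified the central obstruction: smooth compactly supported functions are \emph{not} dense in $S^{\frac{1}{2}}$ or $\underline{W}^0$, since a free evolution keeps constant $L^\infty_t L^2_x$ mass. However, the workaround you sketch does not close this gap, for two reasons.

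First, decomposing $u = e^{it\Delta}u(0) + \mc{I}_0[(i\p_t+\Delta)u]$ and approximating the force $(i\p_t+\Delta)u$ by a $C^\infty_c$ function $\tilde F$ does \emph{not} make the Duhamel piece compactly supported: if $\supp \tilde F \subset [-T,T]\times\RR^4$, then for $t>T$ one has $\mc{I}_0[\tilde F](t) = e^{i(t-T)\Delta}\mc{I}_0[\tilde F](T)$, which is again a full-strength free evolution, and similarly for $t<-T$. So the problematic ``free-wave portion'' reappears inside the Duhamel term, and the decomposition has not actually reduced to the compactly supported situation you need.

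Second, the ingredient you invoke to handle ``products of two translated free evolutions'' --- the pointwise dispersive estimate $\|e^{it\Delta}\|_{L^1\to L^\infty}\lesa |t|^{-2}$ --- addresses only the case $|t_n - t_n'|\to\infty$. The hypothesis also allows $|t_n - t_n'|$ bounded with $|x_n - x_n'|\to\infty$, where the dispersive time-decay gives nothing; one would instead need uniform spatial decay of $e^{it\Delta}f$ on compact time intervals (e.g.\ via finite group velocity after a further frequency truncation), which is a different estimate than the one you cite and is not spelled out.

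The paper sidesteps both issues by never trying to approximate $u$ or $V$ by compactly supported functions in the iteration norms. Instead it (a) truncates in frequency, using the $\ell^2$-dyadic tails $\|P_{\g R}u\|_{S^{1/2}},\,\|P_{\g R}V\|_{\underline{W}^0}\to 0$ together with Theorem~\ref{thm:bi prev}; (b) after frequency truncation, discards derivatives via Bernstein and bounds the Duhamel term crudely by $R^{1/2}\|V_{<R}(P_{<R}u)_n\|_{L^2_t L^{4/3}_x}$; (c) then works in the honest Lebesgue space $L^2_t L^4_x \supset S^{1/2}$, where compactly supported functions \emph{are} dense (this is where $\lim_{R'}\|\ind_{\{|t|+|x|\g R'\}}P_{<R}u\|_{L^2_tL^4_x}=0$ comes in); and (d) concludes by dominated convergence using the Strichartz control $\|V_{\les R}\|_{L^2_tL^6_x}\lesa R^{5/6}\|V\|_{\underline{W}^0}$. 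The key conceptual difference is that the paper only needs compact-truncation density in a mixed Lebesgue norm after first throwing away derivatives, rather than in the full iteration norm where it is false.
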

\begin{proof}
The proof is essentially the standard approximation argument to reduce to the $C^\infty_0$ case. We begin by observing that after translating in space-time, for all limits in \eqref{eqn:nonlin asymp decoup:lim} it is enough to consider the case $t_n'=x_n' = 0$. For the first limit, as
		$$ \lim_{R\to \infty}  \| P_{\g R} V\|_{\underline{W}^0} = \lim_{R\to \infty} \| P_{\g R} u \|_{S^\frac{1}{2}} = 0 $$
an application of Theorem \ref{thm:bi prev} shows that we only have to consider bounded frequencies. In particular, since
		$$\big\| \mc{I}_0\big[ \Re(V_{<R}) P_{<R} u_n \big]\big\|_{S^\frac{1}{2}} \lesa R^\frac{1}{2} \| V_{<R} (P_{<R}u)_n \|_{L^2_t L^{\frac{4}{3}}_x}  $$
it suffices to prove that for the bounded frequency contribution we have
		$$ \lim_{n\to \infty} \| V_{<R} P_{<R}u_n \|_{L^2_t L^{\frac{4}{3}}_x} = 0. $$
This can be reduced further after noting that as $u\in L^2_t L^4_x \subset S^\frac{1}{2}$ we have
		$$ \lim_{R' \to \infty} \big\| \ind_{\{|t|+|x|\g R'\}} P_{<R} u \big\|_{L^2_t L^4_x} = 0.  $$
Letting $B_{R'} = \{|t|+|x|<R'\}$ we have
	\begin{align*}
 \big\| V_{\les R}  \big(\ind_{B_{R'}} P_{\les R} u\big)_n \big\|_{L^2_t L^{\frac{4}{3}}_x}
 		&\lesa \big\| V_{\les R}(t,x) \ind_{B_{R'}}(t-t_n, x-x_n) \big\|_{L^2_t L^\frac{4}{3}_x} \| P_{\les R} u \|_{L^\infty_{t,x}} \\
 		&\lesa R^\frac{3}{2} (R')^{\frac{7}{3}} \| V_{\les R}(t,x) \ind_{B_{R'}}(t-t_n, x-x_n)\|_{L^2_t L^6_x}  \| u \|_{L^\infty_t H^\frac{1}{2}}
	\end{align*}
and hence it is enough to prove that
		$$ \lim_{n\to \infty} \| V_{\les R}(t,x) \ind_{B_{R'}}(t-t_n, x-x_n)\|_{L^2_t L^6_x} = 0.$$
But this is immediate via the dominated convergence theorem since $\| V_{\les R} \|_{L^2_t L^6_x} \lesa R^{\frac{5}{6}} \| V \|_{\underline{W}^0}$ which is a consequence of \eqref{eqn:wave stri}. Hence the first limit in \eqref{eqn:nonlin asymp decoup:lim} follows.

To prove the second limit in \eqref{eqn:nonlin asymp decoup:lim}, an analogous argument  via Theorem \ref{thm:bi prev} shows that again is suffices to consider bounded frequencies. After observing that an application of Bernstein's inequality gives
        $$ \| \big\| \mc{J}_0\big[ |\nabla| ( \overline{w_{\les R}}  P_{\les R} u_n )\big] \big\|_{W^0} \lesa R \| \overline{w_{\les R}} P_{\les R} u_n \|_{L^1_t L^2_x \cap L^2_t L^\frac{4}{3}_x},$$
and noting that $u, w \in L^2_t L^4_x \subset S^\frac{1}{2}$, it only remains to prove that
        $$ \lim_{n\to \infty} \| \overline{w} u_n \|_{L^1_t L^2_x \cap L^2_t L^\frac{4}{3}_x} = 0. $$
However this is a consequence of the fact that $w, u \in L^2_t L^4_x$ together with the argument used to prove the first limit in \eqref{eqn:nonlin asymp decoup:lim}. This also completes the proof of the final limit in \eqref{eqn:nonlin asymp decoup:lim}. 		
\end{proof}

\section{The Ground State Constraint}\label{sec:ground state constraint}

Define the functional
        $$\mc{K}(f) = \| f\|_{\dot{H}^1}^2 - \| f \|_{L^4}^4  = \frac{d}{d\lambda} \mc{E}_{NLS}(\lambda f) \big|_{\lambda = 1}.$$
This functional can be seen as the scaling derivative of the NLS energy $\mc{E}_{NLS}(f)$ and appears in the Virial identity for both the NLS and Zakharov equations, see for instance the discussion in \cite{Guo2021, Guo2013}. The functional $\mc{K}$ is also closely related to the Zakharov energy, for instance we have the identity
        \begin{equation}\label{eqn:scaling der of zak energy}
            \frac{d}{d\lambda} \mc{E}_Z(\lambda f, \lambda^2 g) = \lambda^{-1} \mc{K}(\lambda f) + \lambda^3 \big\| g + |f|^2 \big\|_{L^2_x}^2.
        \end{equation}
This identity is particularly useful as it shows that provided $\mc{K}(\lambda f) >0$, the energy increases along the curve $\lambda \mapsto (\lambda f, \lambda^2 g)$ in $\dot{H}^1 \times L^2$.

An application of H\"older and Sobolev embedding gives
        $$ 4 \mc{E}_Z(f,g) \les 2 \| f \|_{\dot{H}^1}^2 + \| g\|_{L^2}^2 + 2 \|Q\|_{\dot{H}^1}^{-1} \| g\|_{L^2} \| f \|_{\dot{H}^1}^2 \lesa \| f \|_{\dot{H}^1}^2 ( 1  + \| g \|_{L^2}) + \|g\|_{L^2}^2 $$
and thus the energy is always finite provided $(f,g)\in \dot{H}^1\times L^2$. The reverse inequality is false in general, in particular the energy is not necessarily positive.
However, if we impose a size constraint on the functions $(f,g) \in \dot{H}^1 \times L^1$, then the energy is always positive. A natural condition to ensure that the energy is coercive can be phrased in terms of the ground state (or Aubin-Talenti function) $Q(x) = ( 1 + \frac{|x|^2}{8})^{-1}$. Recall that the ground state $Q\in \dot{H}^1$ satisfies the properties
        $$ \Delta Q = -Q^3, \qquad  \| Q \|_{L^4}^2 = \|Q \|_{\dot{H}^1}, \qquad \| Q \|_{\dot{H}^1}^{-\frac{1}{2}} = \sup_{\| f \|_{\dot{H}^1}=1} \| f \|_{L^4}, \qquad \mc{E}_{Z}(Q, -Q^2) = \mc{E}_{NLS}(Q) = \frac{1}{4}\|Q\|_{\dot{H}^1}^2.$$
The properties of the ground state quickly give the implications
            \begin{equation}\label{eqn:imp energy pos wave}
                      \| g \|_{L^2}  \les \| Q \|_{\dot{H}^1} \qquad \Longrightarrow \qquad \| g \|_{L^2}^2 \les 4 \mc{E}_{Z}(f,g)
            \end{equation}
and
            \begin{equation}\label{eqn:imp energy pos schro}
                     \|f\|_{\dot{H}^1}\les \| Q \|_{\dot{H}^1} \qquad \Longrightarrow \qquad \|f\|_{\dot{H}^1}^2 \les 4 \mc{E}_{Z}(f,g).
            \end{equation}
More precisely, we simply note that the (sharp) Sobolev embedding gives
    \begin{align*}
      4\mc{E}_Z(f,g) &\g 2 \| f \|_{\dot{H}^1}^2 + \| g \|_{L^2}^2 - 2 \| g \|_{L^2} \| f\|_{L^4}^2 \g   2 \| f \|_{\dot{H}^1}^2 + \| g \|_{L^2}^2 - 2 \| Q \|_{\dot{H}^1}^{-1} \| g \|_{L^2} \| f\|_{\dot{H}^1}^2
    \end{align*}
and hence rearranging we have the lower bound
    \begin{equation}\label{eqn:energy lower bound}
    \begin{split}
        4\mc{E}_{Z}(f,g) &\g \| g \|_{L^2}^2 + 2 \| Q \|_{\dot{H}^1}^{-1} \| f \|_{\dot{H}^1}^2 \big( \| Q \|_{\dot{H}^1} - \| g \|_{L^2}\big) \\
                        &= \| f \|_{\dot{H}^1}^2 + \big( \| g \|_{L^2} - \| Q \|_{\dot{H}^1}^{-1} \| f \|_{\dot{H}^1}^2\big)^2 + \| Q \|_{\dot{H}^1}^{-2} \| f \|_{\dot{H}^1}^2 \big( \| Q \|_{\dot{H}^1}^2 - \| f\|_{\dot{H}^1}^2\big).
    \end{split}
    \end{equation}
from which the implications \eqref{eqn:imp energy pos wave} and \eqref{eqn:imp energy pos schro} easily follow. These implications can be improved if we assume that the energy  is at most the energy $\mc{E}_{Z}(Q, -Q^2) = \frac{1}{4} \| Q \|_{\dot{H}^1}^2$ of the ground state solution $(Q, -Q^2)$.

\begin{lemma}[Energy coercive below ground state \cite{Guo2021}]\label{lem:ground state}
Let $(f, g)\in \dot{H}^1\times L^2$ and suppose that
    $$\min\{\|f\|_{\dot{H}^1}, \| g \|_{L^2} \} \les \| Q \|_{\dot{H}^1}\qquad \text{ and } \qquad  4\mc{E}_z(f,g)\les \|Q\|_{\dot{H}^1}^2.$$
Then we have the improved bounds
        $$ \max\{\|f\|_{\dot{H}^1}^2, \| g \|_{L^2}^2 \}\les 4 \mc{E}_{Z}(f,g) \qquad \text{ and } \qquad \mc{K}(f) + \big\| g + |f|^2 \big\|_{L^2_x} \g  4\mc{E}_{Z}(f, g) \frac{\|Q\|_{\dot{H}^1}^2 - 4 \mc{E}_Z(f,g)}{2\|Q\|_{\dot{H}^1}^2}.  $$
\end{lemma}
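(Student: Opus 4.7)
The plan is to reduce both conclusions to sharp upper bounds on $\|f\|_{\dot{H}^1}^2$ and $\|g\|_{L^2}^2$ obtained by combining the decomposition of the Zakharov energy with the sharp Sobolev inequality $\|f\|_{L^4}^4 \les \|Q\|_{\dot{H}^1}^{-2}\|f\|_{\dot{H}^1}^4$. The starting point is the algebraic identity
\[
\mc{K}(f) + \|g+|f|^2\|_{L^2_x}^2 = 4\mc{E}_Z(f,g) - \|f\|_{\dot{H}^1}^2,
\]
which follows immediately from the decomposition $\mc{E}_Z(f,g) = \mc{E}_{NLS}(f) + \frac{1}{4}\|g+|f|^2\|_{L^2}^2$ recorded in the introduction. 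In particular, the quantitative lower bound reduces to a sharp upper bound on $\|f\|_{\dot{H}^1}^2$.

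First I would discard the nonnegative term $\|g+|f|^2\|_{L^2}^2$ from the decomposition and apply sharp Sobolev to obtain $4\mc{E}_Z \g 2\|f\|_{\dot{H}^1}^2 - \|Q\|_{\dot{H}^1}^{-2}\|f\|_{\dot{H}^1}^4$. In the normalised variables $X = \|f\|_{\dot{H}^1}^2/\|Q\|_{\dot{H}^1}^2$ and $\eta = 4\mc{E}_Z(f,g)/\|Q\|_{\dot{H}^1}^2$, this is the quadratic inequality $X^2 - 2X + \eta \g 0$, whose solution set under $\eta \les 1$ decomposes into the two disjoint branches $X \les 1 - \sqrt{1-\eta}$ and $X \g 1 + \sqrt{1-\eta}$. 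The side hypothesis $\min\{\|f\|_{\dot{H}^1},\|g\|_{L^2}\} \les \|Q\|_{\dot{H}^1}$ is then used to rule out the upper branch. The case $\|f\|_{\dot{H}^1} \les \|Q\|_{\dot{H}^1}$ is immediate from $X \les 1$; the dual case $\|g\|_{L^2} \les \|Q\|_{\dot{H}^1}$ is handled by Cauchy--Schwarz and sharp Sobolev applied to the cross term $\int \Re(g)|f|^2\,dx$, yielding the refined estimate
\[
4\mc{E}_Z(f,g) \g \|g\|_{L^2}^2 + 2\|f\|_{\dot{H}^1}^2\big(1 - \|Q\|_{\dot{H}^1}^{-1}\|g\|_{L^2}\big),
\]
and a short calculation shows that $\|g\|_{L^2} \les \|Q\|_{\dot{H}^1}$ combined with $X \g 1 + \sqrt{1-\eta}$ forces $\eta \g 1$, saturating the energy hypothesis. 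Once confined to the lower branch, the elementary inequality $1 - \sqrt{1-\eta} \les \eta$ valid for $\eta \in [0,1]$ gives $\|f\|_{\dot{H}^1}^2 \les 4\mc{E}_Z(f,g)$, and the companion bound $\|g\|_{L^2}^2 \les 4\mc{E}_Z$ then follows from the displayed refined estimate above once $\|f\|_{\dot{H}^1}^2 \les \|Q\|_{\dot{H}^1}^2$ is in hand.

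For the quantitative lower bound on $\mc{K}(f) + \|g+|f|^2\|_{L^2}^2$, I would retain the sharper lower-branch estimate $X \les 1 - \sqrt{1-\eta}$ in place of the coarser $X \les \eta$ and substitute into the identity of the first paragraph: the claim then reduces to the purely elementary inequality $1 - \sqrt{1-\eta} \les \eta(1+\eta)/2$ for $\eta \in [0,1]$, which rearranges to $(2-\eta-\eta^2)/2 \les \sqrt{1-\eta}$ and is verified by squaring (the squared inequality simplifies to $\eta^2(\eta+3)(1-\eta) \g 0$, which is manifest on $[0,1]$). The main delicate step is the exclusion of the upper branch under the dual hypothesis: it requires careful use of the cross-term Cauchy--Schwarz estimate together with the energy constraint, in the spirit of the variational arguments in \cite{Guo2021}.
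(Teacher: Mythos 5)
Your proof is correct and reaches the same conclusion as the paper, but the route is organised differently enough to be worth comparing. The paper proceeds by completing the square in its basic lower bound $4\mc{E}_Z(f,g) \g \|g\|_{L^2}^2 + 2\|Q\|_{\dot H^1}^{-1}\|f\|_{\dot H^1}^2(\|Q\|_{\dot H^1}-\|g\|_{L^2})$, rearranges directly with the energy hypothesis to reach $\max\{\|f\|_{\dot H^1},\|g\|_{L^2}\}\les\|Q\|_{\dot H^1}$, and then reads off $\|f\|_{\dot H^1}^2\les\frac{\|Q\|_{\dot H^1}^2}{2\|Q\|_{\dot H^1}^2-4\mc{E}_Z}4\mc{E}_Z$ (equivalently $X\les\eta/(2-\eta)$ in your normalised variables). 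You instead pass to normalised variables, solve the quadratic $X^2-2X+\eta\g 0$ explicitly, and carry the \emph{sharper} bound $X\les 1-\sqrt{1-\eta}$ all the way through; this is slightly stronger than the paper's $\eta/(2-\eta)$, and the final elementary inequality $1-\sqrt{1-\eta}\les\eta(1+\eta)/2$ indeed reduces to $\eta^2(3+\eta)(1-\eta)\g 0$ as you compute. The underlying inputs are identical — sharp Sobolev plus the cross-term Cauchy--Schwarz — so the content is equivalent, but your branch analysis makes the dichotomy explicit in a way the paper leaves implicit.

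One step is stated a little too lightly. You assert that once $\|f\|_{\dot H^1}^2\les\|Q\|_{\dot H^1}^2$ is established, the companion bound $\|g\|_{L^2}^2\les 4\mc{E}_Z$ follows from the refined estimate. To apply that estimate for this conclusion you must first know $\|g\|_{L^2}\les\|Q\|_{\dot H^1}$, since otherwise the cross term is negative. In the case where the minimum hypothesis gives $\|g\|_{L^2}\les\|Q\|_{\dot H^1}$ this is immediate; in the other case you should verify it explicitly: if $\|f\|_{\dot H^1}^2\les\|Q\|_{\dot H^1}^2$ and $\|g\|_{L^2}>\|Q\|_{\dot H^1}$, the refined estimate gives $4\mc{E}_Z\g(\|g\|_{L^2}-\|Q\|_{\dot H^1})^2+\|Q\|_{\dot H^1}^2>\|Q\|_{\dot H^1}^2$, contradicting the energy hypothesis (up to the trivial boundary case). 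This is a short fill-in, but it is a real step that should be written out. Finally, note the paper's statement and proof display $\|g+|f|^2\|_{L^2}$ without a square; you correctly work with $\|g+|f|^2\|_{L^2}^2$, which is what the identity $4\mc{E}_Z(f,g)-\|f\|_{\dot H^1}^2=\mc{K}(f)+\|g+|f|^2\|_{L^2}^2$ actually gives.
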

\begin{proof}
This is essentially contained in \cite[Lemma 6.1]{Guo2021}, but we give a slightly more direct proof by arguing directly from the properties of the ground state function $Q$. More precisely, by rearranging the lower bound  \eqref{eqn:energy lower bound} and applying the assumption  $4\mc{E}_Z(f,g) \les \|Q\|_{\dot{H}^1}^2$ we have
        $$   \big( \| g \|_{L^2} - \| Q \|_{\dot{H}^1}^{-1} \| f \|_{\dot{H}^1}^2\big)^2 \les \| Q \|_{\dot{H}^1}^{-2} \big( \| Q \|_{\dot{H}^1}^2 - \| f \|_{\dot{H}^1}^2\big)^2 $$
and
        $$ 2 \| f \|_{\dot{H}^1}^2 \| Q \|_{\dot{H}^1}^{-1} \big( \| Q \|_{\dot{H}^1} - \| g \|_{L^2}\big) \les \| Q\|_{\dot{H}^1}^2 - \| g \|_{L^2}^2.$$
In  particular, a short computation shows provided $4\mc{E}_Z(f,g) \les \|Q\|_{\dot{H}^1}^2$  we have the implication
        $$ \min\{ \| f \|_{\dot{H}^1}, \| g \|_{L^2} \} \les \| Q \|_{\dot{H}^1} \qquad \Longrightarrow \qquad  \max\{ \| f \|_{\dot{H}^1}, \| g \|_{L^2} \} \les \| Q \|_{\dot{H}^1}. $$
In view of the implications \eqref{eqn:imp energy pos wave} and \eqref{eqn:imp energy pos schro}, this completes the proof of the first bound.

Finally, the second bound in the statement of the lemma follows by observing that \eqref{eqn:energy lower bound} and $\| f\|_{\dot{H}^1}\les 4 \mc{E}_Z(f,g)$ in fact implies the slightly sharper bound
        $$ \| f\|_{\dot{H}^1}^2 \les \frac{\|Q\|_{\dot{H}^1}^2}{2\|Q\|_{\dot{H}^1}^2 - \| f\|_{\dot{H}^1}^2} 4 \mc{E}_Z(f,g) \les \frac{\|Q\|_{\dot{H}^1}^2}{2\|Q\|_{\dot{H}^1}^2 - 4\mc{E}_Z(f,g)} 4 \mc{E}_Z(f,g)  $$
and hence
          \begin{align*}
            \mc{K}(f) + \big\| g + |f|^2 \big\|_{L^2_x} = 4 \mc{E}_Z(f,g)  - \| f\|_{\dot{H}^1}^2
                &\g 4 \mc{E}_{Z}(f,g) \Big( \frac{\|Q\|_{\dot{H}^1}^2  - 4\mc{E}_Z(f,g)}{2\|Q\|_{\dot{H}^1}^2 - 4\mc{E}_Z(f,g)}\Big)
          \end{align*}
which clearly suffices as $\mc{E}_Z(f,g) \g 0$.
\end{proof}

\begin{remark}[Characterising ground state condition]\label{rem:ground state cond}
As observed in \cite{Guo2021}, there are a number of ways to characterise the ground state condition in Lemma \ref{lem:ground state}. For instance, as long as the energy is below the energy of the ground state, the sign of $\mc{K}(f)$ can be used to determine the coercivity of the energy for both $\mc{E}_Z$ and $\mc{E}_{NLS}$. This is well known in the case of the NLS \cite{Kenig2006}. In fact, for any $f\in \dot{H}^1$ with $4\mc{E}_{NLS}(f)<\| Q \|_{\dot{H}^1}$ we have the implications
    \begin{equation}\label{eqn:NLS imp1}
        \mc{K}(f)> 0 \,\,\,\, \Longleftrightarrow \,\,\,\, 0<\|f\|_{L^4}<\|Q\|_{L^4} \,\,\,\, \Longleftrightarrow \,\,\,\, 0<\| f \|_{\dot{H}^1} < \| Q \|_{\dot{H}^1} \,\,\,\, \Longleftrightarrow \,\,\,\, \| f \|_{\dot{H}^1} < 4 \mc{E}_{NLS}(f)
    \end{equation}
and
    \begin{equation}\label{eqn:NLS imp2}
        \mc{K}(f)= 0 \qquad \Longleftrightarrow \qquad f=0.
    \end{equation}
As in the proof of Lemma \ref{lem:ground state}, the implications \eqref{eqn:NLS imp1} and \eqref{eqn:NLS imp2} follow from the properties of the ground state solution, together with the identity
    $$ 4 \mc{E}_{NLS}(f) = 2 \| f \|_{\dot{H}^1}^2 - \| f \|_{L^4}^4 =  \| f \|_{\dot{H}^1}^2 +  \mc{K}(f). $$
A similar characterisation holds in the case of the Zakharov equation. In fact since $\mc{E}_{NLS}(f)\les \mc{E}_Z(f,g)$,  somewhat trivially, the implications \eqref{eqn:NLS imp1} and \eqref{eqn:NLS imp2} also hold for any $(f,g)\in \dot{H}^1\times L^2$ with $4\mc{E}_Z(f,g)< \|Q\|_{\dot{H}^1}^2$.

On the other hand, the ground state condition can also be characterised using the wave data $g\in L^2$ \cite[Lemma 6.1]{Guo2021}. More precisely for any $(f, g) \in H^1\times L^2$ with $4\mc{E}_Z(f,g)<\|Q\|_{\dot{H}^1}^2$ we have
	\begin{equation}\label{eqn:Zak imp1}
			\mc{K}(f)\g 0,  \qquad \Longleftrightarrow \qquad \|g\|_{L^2}< \|Q\|_{\dot{H}^1}\quad \Longleftrightarrow \quad \|g\|_{L^2}\les \mc{E}_Z(f,g)
    \end{equation}
and
    \begin{equation}\label{eqn:Zak imp2}
        \mc{K}(f)= 0 \qquad \Longleftrightarrow \qquad f=0.
    \end{equation}
The implications \eqref{eqn:Zak imp1} and \eqref{eqn:Zak imp2} follow directly from Lemma \ref{lem:ground state} together with the Schr\"odinger counterparts \eqref{eqn:NLS imp1} and \eqref{eqn:NLS imp2}. Note that the lack of strict inequalities in \eqref{eqn:Zak imp1} is a consequence of the fact that
		$$ 4 \mc{E}_Z(0, g) = \|g\|_{L^2}, \qquad 4\mc{E}_Z(f, 0) = 2\|f\|_{\dot{H}^1}^2 $$
and in particular, $f=0$ (or $g=0$) does not necessarily imply that $\mc{E}_Z(f,g)=0$.
\end{remark}

\section{A Palais-Smale Type Condition}\label{sec:P-S}

In this section our goal is to apply the results obtained in the previous section to show that show that any bounded sequence of solutions to the Zakharov equation for which the dispersive norm $\|\cdot\|_D \to \infty$ and lie below the ground state solution, must by precompact modulo translations. This type of result is the key step in the proof of Theorem \ref{thm:min coun}. The arguments used in this section are largely adapted from \cite{Keraani2006, Kenig2006, Killip2013}. The key point is that if a mass/energy threshold exists (see Definition \ref{defn:mass/energy}), then via the profile decomposition in Theorem \ref{thm:profile decomp}, together with the well-posedness theory in Section \ref{sec:small data II}, we can extract some compactness from any sequence of solutions approaching the critical threshold.

\begin{theorem}[Palais-Smale type condition]\label{thm:Palais-Smale}
Let $(M_c, E_c)$ be a mass/energy threshold  with $4E_c<\|Q\|_{\dot{H}^1}^2$. Suppose that $(u_n, V_n) \in C(\RR, H^1\times L^2)$ is a sequence of global solutions to \eqref{eqn:Zak} such that
    \begin{equation}\label{eqn:P-S:assump energy}
       u_n\in L^2_{t,loc}W^{\frac{1}{2}, 4}_x, \qquad  \lim_{n\to \infty} \mc{E}_Z(u_n, V_n) = E_c, \qquad \lim_{n\to \infty} \mc{M}(u_n) =  M_c, \qquad \sup_{n\in \NN} \| V_n(0) \|_{L^2_x} \les \| Q \|_{\dot{H}^1}
    \end{equation}
and
    \begin{equation}\label{eqn:P-S:assump non-disp}
        \lim_{n\to \infty} \| u_n \|_{D([0, \infty))} = \lim_{n\to \infty} \|u_n \|_{D((-\infty, 0])} = \infty.
    \end{equation}
Then there exists $x_n\in \RR$ such that the translated sequence  $(u_n, V_n)(0, x+x_n)$ has a convergent subsequence in $H^1 \times L^2$.
\end{theorem}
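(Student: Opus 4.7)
The proof follows the standard Kenig--Merle / Keraani concentration--compactness strategy: apply the profile decomposition (Theorem \ref{thm:profile decomp}) to the sequence $(u_n(0), V_n(0))$, construct nonlinear profiles, assemble them into an approximate solution, and apply the stability result (Theorem \ref{thm:stab II}) to force the existence of a single nontrivial profile whose associated error converges in $H^1 \times L^2$.

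First I would apply Theorem \ref{thm:profile decomp}, obtaining profiles $(f^{(j)}, g^{(j)})$, group elements $\mf{g}_n^{(j)} = \mf{g}[t_n^{(j)}, x_n^{(j)}]$, and remainders $(w_n^{(J)}, e_n^{(J)})$ whose free evolutions vanish in $Y$ as $J \to J^*$ and $n \to \infty$. The decoupling of $\dot H^1$, $L^2$, and $\mc{E}_Z$ together with \eqref{eqn:P-S:assump energy} forces $\mc{E}_Z(f^{(j)}, g^{(j)}) \les E_c + o(1)$, $\mc{M}(f^{(j)}) \les M_c + o(1)$, and $\|g^{(j)}\|_{L^2} \les \|Q\|_{\dot{H}^1} + o(1)$; Lemma \ref{lem:ground state} then rules out any zero-energy profile and Theorem \ref{thm:gwp below ground state} yields a global $H^1 \times L^2$ Zakharov solution $(U^{(j)}, W^{(j)})$ attached to each profile: for profiles with $t_n^{(j)} \equiv 0$ take data at $t = 0$; for profiles with $|t_n^{(j)}| \to \infty$ use the wave-operator construction of Theorem \ref{thm:wave operators}, so that $(U^{(j)}, W^{(j)})$ approaches $(e^{it\Delta}f^{(j)}, e^{it|\nabla|}g^{(j)})$ in $H^1 \times L^2$ as $t \to \mp \infty$. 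If the decomposition is nontrivial in the sense that either two or more profiles are genuinely nonzero, or the unique nontrivial profile has an error that does not vanish in $H^1 \times L^2$, then the decoupling identities place every $(U^{(j)}, W^{(j)})$ strictly below $(M_c, E_c)$ in either mass or energy. Definition \ref{defn:mass/energy} guarantees $\|U^{(j)}\|_D < \infty$, and Theorem \ref{thm:persistence} upgrades this to $\|U^{(j)}\|_{\underline{S}^s} < \infty$ for any $\frac{1}{2} \les s < 1$.

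Next I would assemble the approximate solution
\[ \psi_n^{(J)}(t,x) = \sum_{j = 1}^J U^{(j)}\big(t - t_n^{(j)}, x - x_n^{(j)}\big) + e^{it\Delta} w_n^{(J)}, \qquad \phi_n^{(J)}(t,x) = \sum_{j = 1}^J W^{(j)}\big(t - t_n^{(j)}, x - x_n^{(j)}\big) + e^{it|\nabla|} e_n^{(J)}. \]
By construction the data $(\psi_n^{(J)}(0), \phi_n^{(J)}(0))$ differs from $(u_n(0), V_n(0))$ only by $o(1)$ corrections coming from the wave-operator matching, which is harmless since the data smallness required in \eqref{eqn:stab II:error} is measured via $\| e^{i(t-t_0) \Delta}(\cdot)\|_Y$ plus $L^2$ on the wave side, both of which are controlled by $H^1 \times L^2$ convergence via \eqref{eqn:Y norm bound by sobolev}. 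The forcing $(F_n^{(J)}, G_n^{(J)})$ generated by substituting $(\psi_n^{(J)}, \phi_n^{(J)})$ into the Zakharov system splits into (a) cross-terms between distinct nonlinear profiles, and (b) terms containing at least one error factor. The cross-terms in (a) decay as $n \to \infty$ via Lemma \ref{lem:nonlin asymp decoup}, using the asymptotic orthogonality $|t_n^{(j)} - t_n^{(k)}| + |x_n^{(j)} - x_n^{(k)}| \to \infty$. The terms in (b) are handled by Theorem \ref{thm:bi est improved}: because $\|e^{it\Delta}w_n^{(J)}\|_Y + \|e^{it|\nabla|}e_n^{(J)}\|_Y \to 0$, the gain of a power of the $Y$-norm renders every interaction involving a remainder arbitrarily small in $S^{1/2}$ and $W^0$. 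An orthogonality argument in the energy decoupling identity lets one truncate to finitely many profiles while keeping the tail $\sum_{j > J}$ perturbative. Feeding $(\psi_n^{(J)}, \phi_n^{(J)})$ into Theorem \ref{thm:stab II} then produces, for all sufficiently large $n$ and $J$, a genuine Zakharov solution close to $\psi_n^{(J)}$ whose $\|\cdot\|_D$ norm is bounded uniformly in $n$. By uniqueness this solution coincides with $(u_n, V_n)$, contradicting \eqref{eqn:P-S:assump non-disp}.

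The contradiction forces $J^* = 1$ and $(w_n^{(1)}, e_n^{(1)}) \to 0$ in $H^1 \times L^2$. The two-sided blowup hypothesis \eqref{eqn:P-S:assump non-disp} then pins $t_n^{(1)} \equiv 0$: if $t_n^{(1)} \to + \infty$ (respectively $-\infty$), scattering of $U^{(1)}$ would make $\|u_n\|_{D([0, \infty))}$ (respectively $\|u_n\|_{D((-\infty, 0])}$) uniformly bounded by another application of Theorem \ref{thm:stab II}, contradicting \eqref{eqn:P-S:assump non-disp}. Setting $x_n := x_n^{(1)}$ and passing to a subsequence yields $(u_n, V_n)(0, x + x_n) \to (f^{(1)}, g^{(1)})$ in $H^1 \times L^2$. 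The principal obstacle in executing this plan is the treatment of the wave remainder $e_n^{(J)}$: in contrast with the NLS setting, it is generically \emph{not} small in $L^2$, so it cannot be discarded as a perturbation of the Zakharov data. This is precisely why it is essential that $e_n^{(J)}$ be built into the approximate solution from the outset, and why the bilinear gain-of-$Y$ estimate of Theorem \ref{thm:bi est improved} is needed: it is the only available tool that lets every nonlinear interaction touching $e_n^{(J)}$ be treated perturbatively using nothing beyond smallness of $\|e^{it|\nabla|}e_n^{(J)}\|_Y$.
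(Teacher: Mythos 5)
Your proposal is correct and follows essentially the same concentration--compactness approach as the paper: profile decomposition, nonlinear profiles constructed via Theorem \ref{thm:gwp below ground state} or the wave operators from Theorem \ref{thm:wave operators}, an approximate solution that explicitly includes the free evolution of the remainder $(w_n^{(J)}, e_n^{(J)})$, asymptotic decoupling of cross-terms via Lemma \ref{lem:nonlin asymp decoup}, perturbative control of every interaction touching the remainder via the $Y$-gain in Theorem \ref{thm:bi est improved}, the stability Theorem \ref{thm:stab II}, and finally the elimination of $|t_n^{(1)}|\to\infty$ by one-sided blowup. The only structural difference is that the paper explicitly splits into the case where a profile has energy exactly $E_c$ (handled directly by the threshold property in the mass slot, without assembling a multi-profile approximate solution) and the case where all profile energies are strictly below $E_c$; your writeup folds both into a single approximate-solution argument, which is logically equivalent since the $Y$-smallness of the remainder handles it either way. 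Your closing observation about why the wave remainder $e_n^{(J)}$ must live in the approximate solution rather than being discarded, and why the $Y$-norm gain of Theorem \ref{thm:bi est improved} is indispensable, is precisely the crux and matches the paper's introduction to Section \ref{sec:P-S}.
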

\begin{proof}
Define $(f_n, g_n)=(u_n, V_n)(0) \in H^1\times L^2$. The first step is to verify that the sequence $(f_n, g_n)$ is bounded in $H^1\times L^2$. Since $\lim_{n\to \infty} 4\mc{E}_Z(f_n, g_n) = 4E_c<\|Q\|_{\dot{H}^1}$, the assumption \eqref{eqn:P-S:assump energy} implies that for all sufficiently large $n$ we have
        $$ 4 \mc{E}_Z(f_n, g_n) < \|Q\|_{\dot{H}^1}^2, \qquad \|g_n\|_{L^2}\les \|Q\|_{\dot{H}^1}. $$
Consequently, the variational properties of the ground state (see Lemma \ref{lem:ground state}) give the upper bounds
    \begin{equation}\label{eqn:P-S:seq bdd}
			\limsup_{n\to \infty} \|f_n\|_{\dot{H}^1}^2 \les 4E_c, \qquad \limsup_{n \to \infty} \|g_n\|_{L^2}^2 \les 4 E_c.
	  \end{equation}
Together with the assumed boundedness of the mass, we conclude that $\sup_{n\in \NN} \|(f_n, g_n)\|_{H^1\times L^2}<\infty$. In other words the sequence $(f_n, g_n)$ is a bounded sequence in $H^1\times L^2$.

We now apply the profile decomposition in Theorem \ref{thm:profile decomp} to the sequence $(f_n, g_n)$, and obtain $J^* \in \NN \cup \{\infty\}$ and for each $1\les j \les J^*$ group elements $\mf{g}^{(j)}_n=\mf{g}[t_n^{(j)}, x_n^{(j)}]$ and profiles $(f^{(j)}, g^{(j)})\not = (0, 0)$ such that (after replacing $(f_n, g_n)$ with a suitable subsequence) for any $0\les J \les J^*$ we can write
        $$ (f_n, g_n) = \sum_{j=1}^J \mf{g}^{(j)}_n(f^{(j)}, g^{(j)}) + (w^{(J)}_n, e^{(J)}_n) $$
where the profiles and errors $(w^{(J)}_n, e^{(J)}_n)$ satisfy the conditions (i), (ii), and (iii) in the statement of Theorem \ref{thm:profile decomp}. In particular, we have the $L^2$ decoupling of the wave profiles
        \begin{equation}\label{eqn:P-S:wave decoup}
\sum_{j=1}^J \| g^{(j)}\|_{L^2}^2 + \limsup_{n\to \infty} \| e^{(J)}_n\|_{L^2}^2 \les \limsup_{n\to \infty} \|g_n(0)\|_{L^2}^2 \les 4E_c
			\end{equation}
the decoupling of the energy
			\begin{equation}\label{eqn:P-S:energy decoup}
				\limsup_{n\to \infty} \Big[ \sum_{j=1}^J \mc{E}_Z\big(\mf{g}^{(j)}_n(f^{(j)}, g^{(j)})\big) + \mc{E}_Z(w^{(J)}_n, e^{(J)}_n)\Big] \les \lim_{n\to \infty} \mc{E}_Z(f_n, g_n)(0) = E_c
			\end{equation}
and the decoupling of the Schr\"odinger mass
        \begin{equation}\label{eqn:P-S:mass decoup}
               \sum_{j=1}^J \mc{M}(f^{(j)}) + \limsup_{n\to\infty} \mc{M}(w^{(J)}_n) \les \lim_{n\to \infty} \mc{M}(f_n) = M_c.
        \end{equation}
The above limits quickly imply that each profile has energy below the ground state. More precisely, as $4E_c< \|Q\|_{\dot{H}^1}^2$, the $L^2$ decoupling \eqref{eqn:P-S:wave decoup} implies that for every $1\les j \les J$ we have
$\| g^{(j)}\|_{L^2} <\|Q\|_{\dot{H}^1}$, and for all sufficiently large $n$, the error satisfies $\|e^{(J)}_n\|_{L^2}<\|Q\|_{\dot{H}^1}$. Hence the implication \eqref{eqn:imp energy pos wave} implies that
    $$ \limsup_{n\to \infty} 4 \mc{E}_Z\big(\mf{g}^{(j)}_n(f^{(j)}, g^{(j)})\big) \g \| g^{(j)}\|_{L^2}^2 \qquad \text{ and } \qquad \limsup_{n\to\infty} 4\mc{E}_Z(w^{(J)}_n, e^{(J)}_n) \g \limsup_{n\to \infty} \|e^{(J)}_n\|_{L^2}^2. $$
Consequently, as each profile is non-zero, energies of each of the (translated) profiles $\mf{g}^{(j)}_n(f^{(j)}, g^{(j)})$ must be strictly positive. Together with \eqref{eqn:P-S:mass decoup}, we conclude that both the profiles and the error term have energy below the threshold. Namely  we have the bounds
    \begin{equation}\label{eqn:P-S:energy profile}
       0\les \mc{M}(f^{(j)})\les M_c, \qquad 0<\limsup_{n\to \infty} \mc{E}_Z\big(\mf{g}^{(j)}_n(f^{(j)}, g^{(j)})\big) \les E_c, \qquad \|g^{(j)}\|_{L^2}\les 4E_c,
    \end{equation}
and
    \begin{equation}\label{eqn:P-S:energy error}
        \limsup_{n\to \infty} \mc{E}_Z\big(w^{(J)}_n, e^{(J)}_n\big) \les E_c, \qquad \limsup_{n\to\infty} \|e^{(J)}_n\|_{L^2}\les 4E_c.
    \end{equation}
Moreover, the $H^1$ decoupling of the profiles together with \eqref{eqn:P-S:seq bdd} gives the upper bound
	\begin{equation}\label{eqn:P-S:H1 decoup}
		\sum_{j=1}^J \| f^{(j)} \|_{H^1}^2 + \limsup_{n\to\infty} \| w^{(J)}_n \|_{H^1}^2
						\les \limsup_{n\to \infty} \| f_n \|_{H^1}^2 \les M_c + 4E_c.
	\end{equation}

The next step is to evolve the profiles $(f^{(j)}, g^{(j)})$ via the Zakharov equation. More precisely, in view of the energy constraint \eqref{eqn:P-S:energy profile} and the assumption $4E_c < \|Q\|_{\dot{H}^1}^2$, we can apply Theorem \ref{thm:gwp below ground state} (if $t_n=0$) or Theorem \ref{thm:wave operators} (if $t_n\to \pm\infty$, potentially after reflecting in time) and obtain global solutions $(u^{(j)}, V^{(j)}) \in C(\RR; H^1\times L^2)$  to  \eqref{eqn:Zak} satisfying
        $$ \lim_{n\to \infty} \big\| (u^{(j)},V^{(j)})(-t_n) - \big( e^{-it_n\Delta} f^{(j)}, e^{-it_n|\nabla|}g^{(j)}\big) \big\|_{H^1\times L^2} = 0 $$
with mass $\mc{M}(u^{(j)}) = \mc{M}(f^{(j)}) \les M_c$ and energy
        \begin{equation}\label{eqn:P-S:energy nonlinear profiles}
        \begin{split}
                 \mc{E}_Z(u^{(j)}, V^{(j)}) = \lim_{n\to \infty} \mc{E}_Z\big( e^{-it_n\Delta} f^{(j)}, e^{-it_n|\nabla|}g^{(j)}\big) &=  \lim_{n\to \infty} \mc{E}_Z\big( \mf{g}^{(j)}_n(f^{(j)}, g^{(j)})\big)\les E_c, \\
                \max\big\{ \|u^{(j)}\|_{L^\infty_t \dot{H}^1_x}, \| V^{(j)}\|_{L^\infty_t L^2_x} \big\} &\les 4 \mc{E}_Z(u^{(j)}, V^{(j)}).
        \end{split}
        \end{equation}
We now translate in space-time, and define the translated solutions
        $$ u_n^{(j)}(t,x)=u^{(j)}(t-t_n, x-x_n), \qquad V^{(j)}_n(t,x) = V^{(j)}(t-t_n, x-x_n)$$
the key point being that we now have
       \begin{equation}\label{eqn:P-S:data nonlinear prof}
         \lim_{n\to \infty} \big\|\big(u_n^{(j)}, V_n^{(j)}\big)(0) - \mf{g}^{(j)}_n(f^{(j)}, g^{(j)})\|_{H^1\times L^2} = 0.
       \end{equation}
We now consider two cases, either there exists a profile with energy precisely $E_c$, or all profiles have energy strictly below $E_c$. In the former case, the decoupling inequalities \eqref{eqn:P-S:wave decoup}, \eqref{eqn:P-S:energy decoup}, and \eqref{eqn:P-S:mass decoup}, imply that there is only one profile and moreover the error goes to zero in $\dot{H}^1\times L^2$. Upgrading the convergence to $H^1\times L^2$ relies on the fact that $(M_c, E_c)$ is a mass/energy threshold together with a short argument via Theorem \ref{thm:stab II}. In the later case where the initial profile has energy strictly less than $E_c$, we show that each profile is uniformly bounded in the dispersive norm $\|\cdot\|_D$, and moreover only interacts weakly. Applying the stability result in Theorem \ref{thm:stab II} we eventually conclude that $\limsup_{n\to \infty} \|u_n\|_D<\infty$ which contradicts our initial assumption \eqref{eqn:P-S:assump non-disp}.  \\

\textbf{Case 1:} $\mc{E}_Z( u^{(1)}, V^{(1)}) = E_c$. \\

Our goal is to show that if $\mc{E}_Z(u^{(1)}, V^{(1)})=E_c$, then we have
		\begin{equation}\label{eqn:P-S:case 1 conclusion}
				\lim_{n\to \infty} \big\| (f_n, g_n)(x) - (f^{(1)}, g^{(1)})(x-x_n)\big\|_{H^1\times L^2}=0.		\end{equation}
We start by observing that the decoupling of the energy \eqref{eqn:P-S:energy decoup} together with \eqref{eqn:P-S:energy nonlinear profiles} implies that for all $1<j\les J$
        $$ \mc{E}_Z(u^{(j)}, V^{(j)}) =  0 = \limsup_{n\to \infty} \mc{E}_Z(w^{(J)}_n, e^{(J)}_n). $$
Hence, as energies of all profiles lies below the ground state solution (namely \eqref{eqn:P-S:energy profile} holds), an application of Lemma \ref{lem:ground state} gives
        $$ \big\| \big( u^{(j)}, V^{(j)}\big)\big\|_{\dot{H}^1\times L^2}^2 \les 4 \mc{E}_Z(u^{(j)}, V^{(j)}) = 0 $$
and
        $$ \limsup_{n\to \infty} \|(w^{(J)}_n, e^{(J)}_n)\|_{\dot{H}^1\times L^2}^2 \les 4 \limsup_{n\to \infty} \mc{E}_Z(w^{(J)}_n, e^{(J)}_n) = 0. $$
In other words there is only one profile, and moreover the error goes to zero in $\dot{H}^1\times L^2$.

This is almost what we want, but to obtain \eqref{eqn:P-S:case 1 conclusion} we need to upgrade the convergence to $H^1\times L^2$. To this end, suppose for the moment we have $\mc{M}(u^{(1)})\les M_c - \delta$ for some $\delta>0$. Since $(M_c, E_c)$ is a mass/energy threshold, we then see that
        $$ \| u^{(1)}\|_{D} \les L(M_c - \delta, E_c) < \infty. $$
In particular, applying translation invariance, Theorem \ref{thm:persistence}, and collecting the above bounds/limits, we have
        $$ \limsup_{n\to \infty} \Big(\|f_n\|_{H^1} + \| u^{(1)}_n \|_{S^\frac{1}{2}} \Big) \lesa_{L, M_c, E_c} 1, \qquad \limsup_{n\to\infty} \|V^{(1)}_n\|_{L^\infty_t L^2_x} \les 4E_c < \|Q\|_{\dot{H}^1}^2$$
and
        $$ \lim_{n\to \infty} \Big( \big\|f_n - u_n^{(1)}(0) \big\|_{\dot{H}^1} + \big\| g_n - V^{(1)}_n(0) \big\|_{L^2} \Big) = 0. $$
Thus in view of \eqref{eqn:mod stab error cond}, an application of Theorem \ref{thm:stab II} with $F=G=0$ implies the uniform dispersive bound $\limsup_{n\to \infty} \| u_n \|_D < \infty$ which clearly contradicts the assumption \eqref{eqn:P-S:assump non-disp}. Therefore it is not possible for both the dispersive norm to blow-up and the mass of the profile $(u^{(1)}, V^{(1)})$ to remain strictly less than $M_c$. In other words, we must have the mass constraint $\mc{M}(u^{(1)})=M_c$. In view of \eqref{eqn:P-S:mass decoup}, we then conclude that $\limsup_{n\to \infty} \mc{M}(w^{(J)}_n) = 0$. Consequently, unpacking the definition of $u^{(1)}_n$, we have the $H^1\times L^2$ limit
        $$\lim_{n\to \infty} \big\| (u_n, V_n)(0, x) - \big( e^{-it_n^{(1)}\Delta} f^{(1)}(x-x_n^{(1)}), e^{-it_n^{(1)}|\nabla|} g^{(1)}(x-x_n^{(1)})\big) \big\|_{H^1\times L^2} = 0. $$

To complete the proof of \eqref{eqn:P-S:case 1 conclusion}, it only remains to rule out the case $t_n^{(1)} \to - \infty$ (the case $t_n^{(1)}\to \infty$ would then also be excluded by time reversibility).  We start by noting that since $t_n^{(1)} \to -\infty$, the dispersive decay of the free Schr\"odinger and wave evolutions implies that
    $$ \lim_{n\to \infty} \big\| \mf{g}^{(1)}_n\big(e^{it\Delta} f, e^{it|\nabla|}g\big)\big\|_{Y\times Y([0, \infty))} = \lim_{n\to \infty}  \big\| \big(e^{it\Delta} f, e^{it|\nabla|}g\big)\big\|_{Y \times Y([-t_n, \infty))} = 0.$$
Therefore, applying the bound \eqref{eqn:Y norm bound by sobolev}, we have
    \begin{align*}
    \limsup_{n\to \infty}  \big\| \big( e^{it\Delta} f_n, e^{it|\nabla|}g_n\big)\big\|_{Y \times Y([0, \infty))}
            &\les  \limsup_{n\to \infty} \big\| \big( e^{it\Delta} f_n, e^{it|\nabla|}g_n\big) - \mf{g}^{(1)}_n( e^{it\Delta} f^{(1)}, e^{it|\nabla|} g^{(1)})\big\|_{Y \times Y([0, \infty))} \\
             &\qquad + \limsup_{n\to \infty}\| \mf{g}^{(1)}_n( e^{it\Delta} f^{(1)}, e^{it|\nabla|} g^{(1)})\|_{Y\times Y([0, \infty))} \\
            &\lesa \limsup_{n\to \infty} \| (f_n, g_n) - \mf{g}^{(1)}_n(f^{(1)}, g^{(1)})\|_{H^1\times L^2} = 0.
    \end{align*}
Consequently, we can apply the small data theory in Theorem \ref{thm:small data Hs} to the interval $[0, \infty)$, and conclude that $\limsup_n \| u_n \|_{D([0, \infty))}< \infty$. But this contradicts the assumption \eqref{eqn:P-S:assump non-disp} and hence we cannot have $t_n^{(1)} \to \pm \infty$. \\

\textbf{Case 2:} $\mc{E}_Z( u^{(1)}, V^{(1)}) <  E_c $. \\

As all profiles are non-zero and have positive energy, we conclude from \eqref{eqn:P-S:energy decoup}, \eqref{eqn:P-S:energy profile}, and \eqref{eqn:P-S:energy error} that we have the mass/energy bounds
            $$ \sup_{1\les j \les J^*}\mc{M}(u^{(j)})\les M_c, \qquad \sup_{1\les j \les J^*} \mc{E}_Z\big( u^{(j)}, V^{(j)}\big) \les E_c - \delta$$
for some $\delta>0$. In particular, in view of the definition of $E_c$, we have the global dispersive bound
            $$
                     \sup_{1\les j \les J^*} \| u^{(j)} \|_D \les L =  L(M_c, E_c - \delta)<\infty.
            $$
To upgrade this bound to control over the $\underline{S}^s$ norm, we first note that the bound \eqref{eqn:P-S:energy nonlinear profiles} together with the conservation of mass gives
        $$ \|u^{(j)}\|_{L^\infty_t H^1_x} \les M_c + 4E_c, \qquad \| V^{(j)}\|_{L^\infty_t L^2_x} \les 4E_c < \| Q \|_{\dot{H}^1}. $$
Consequently an application of Theorem \ref{thm:persistence} implies that for any $\frac{1}{2}\les s < 1$ we have
        \begin{equation}\label{eqn:P-S:uniform S bound}
                 \| u^{(j)} \|_{\underline{S}^s} \lesa_{s, L, E_c, M_c} \|f^{(j)}\|_{H^s}, \qquad \|V^{(j)}\|_{\underline{W}^0} \lesa_{s, L, E_c, M_c} \|g^{(j)}\|_{L^2} + \| f^{(j)} \|_{H^\frac{1}{2}}^2
        \end{equation}
where the implied constants depend only on $s$, $L$, $E_c$, and $M_c$. We now define
    $$ \Psi^{(J)}_n = \sum_{j=1}^J u_n^{(j)} + e^{it\Delta} w^{(J)}_n, \qquad \Phi^{(J)}_n = \sum_{j=1}^J V^{(j)}_n + e^{it|\nabla|} e^{(J)}_n. $$
We claim that we have the properties:
    \begin{enumerate}
        \item (Data agrees asymptotically) 
        For every $0\les J \les J^*$ we have
                $$ \lim_{n\to \infty} \big\| (f_n, g_n) - (\Psi^{(J)}_n, \Phi_n^{(J)})(0) \big\|_{H^1\times L^2} = 0, \qquad \limsup_{n\to \infty} \| \Phi^{(J)}_n\|_{L^\infty_t L^2_x}\les 4E_c < \|Q\|_{\dot{H}^1}.  $$

        \item (Uniformly bounded in $\underline{S}^s\times \underline{W}^0$) For every $\frac{1}{2}\les s<1$ we have
                            $$ \sup_{0\les J \les J^*} \limsup_{n\to \infty} \big\|(\Psi^{(J)}_n, \Phi^{(J)}_n)\big\|_{\underline{S}^s\times \underline{W}^0}  \lesa_{ s, L, E_c,  M} 1. $$

        \item (Approximate solution) We have
                $$ \lim_{J\to J^*} \limsup_{n\to \infty}\Big\| \mc{I}_0\Big[ \Re(\Phi^{(J)}_n) \Psi^{(J)}_n  - \sum_{j=1}^J \Re(V^{(j)}_n) u^{(j)}_n\Big] \Big\|_{S^\frac{1}{2}} = 0$$
        and
                $$ \lim_{J\to J^*} \limsup_{n\to \infty} \Big\| |\nabla| \mc{J}_0\Big[|\Psi^{(J)}_n|^2 - \sum_{j=1}^J |u_n^{(j)}|^2 \Big]\Big\|_{W^0} = 0. $$
    \end{enumerate}
Assuming these properties hold for the moment, an application of the stability result in Theorem \ref{thm:stab II} together with \eqref{eqn:mod stab error cond} implies that we have the global bound $\limsup_{n\to \infty} \|u_n \|_D <\infty$. But this contradicts the assumption that $\|u_n\|_{D([0, \infty))} \to \infty$ as $n\to \infty$. Hence Case 2 cannot occur.

It only remains to verify the properties (i), (ii), and (iii). The property (i) follows immediately from the construction of the profiles $(f^{(j)}, g^{(j)})$. To prove (ii), we start by observing that \eqref{eqn:P-S:uniform S bound} together with \eqref{eqn:P-S:wave decoup} and \eqref{eqn:P-S:H1 decoup} implies that
    $$\sup_{0\les J \les J^*} \Big( \sum_{j=1}^J \|u^{(j)}\|_{S^s}^2\Big)^\frac{1}{2} \lesa_{s, L, E_c, M_c} \sup_{0\les J \les J^*}\Big( \sum_{j=1}^J \|f^{(j)}\|_{H^s}^2\Big)^\frac{1}{2} \lesa_{s, L, E_c, M_c} 1$$
and
    $$\sup_{0\les J \les J^*} \Big( \sum_{j=1}^J \|V^{(j)}\|_{W^0}^2\Big)^\frac{1}{2} \lesa_{s, L, E_c, M_c} \sup_{0\les J \les J^*}\Big[ \Big( \sum_{j=1}^J \|g^{(j)}\|_{L^2}^2\Big)^\frac{1}{2} +  \sum_{j=1}^J \|f^{(j)}\|_{H^s}^2 \Big]\lesa_{s, L, E_c, M_c} 1. $$
Therefore, (i) and the bound \eqref{eqn:P-S:seq bdd}, together with an application of the energy inequality \eqref{eqn:energy ineq} and the bilinear estimate in Theorem \ref{thm:bi prev} gives
    \begin{align*}
     \sup_{0\les J \les J^*} \limsup_{n\to \infty} \big\| \Psi^{(J)}_n \big\|_{\underline{S}^s} &\lesa \sup_{0\les J \les J^*} \Big[ \limsup_{n\to \infty} \big\| \Psi^{(J)}_n(0) \big\|_{H^s} + \limsup_{n\to \infty} \Big\| \sum_{j=1}^J \Re( V^{(j)}_n) u^{(j)}_n \Big\|_{N^s} \Big]\\
                                                  &\lesa \sup_{0\les J \les J^*}\Big[  \limsup_{n\to \infty}  \big\| \Psi^{(J)}_n(0) \big\|_{H^s} +  \Big( \sum_{j=1}^J \|V^{(j)}\|_{W^0}^2\Big)^\frac{1}{2} \Big( \sum_{j=1}^J \|u^{(j)}\|_{S^s}^2\Big)^\frac{1}{2}\Big] \\
                                                  &\lesa_{s, L, E_c, M_c} 1
    \end{align*}
where we used the fact that the norms $\| \cdot \|_{W^0}$ and $\| \cdot \|_{\underline{S}^s}$ are translation invariant. Similarly, to bound the wave contribution we have
    \begin{align*}
     \sup_{0\les J \les J^*} \limsup_{n\to \infty} \big\| \Phi^{(J)}_n \big\|_{\underline{W}^0} &\lesa \sup_{0\les J \les J^*} \Big[ \limsup_{n\to \infty} \big\| \Phi^{(J)}_n(0) \big\|_{L^2} + \sum_{j=1}^J \Big\| \mc{J}_0\big[ |\nabla| |u^{(j)}|^2 \big] \Big\|_{\underline{W}^0}\Big] \\
                                                  &\lesa \sup_{0\les J \les J^*}\Big[  \limsup_{n\to \infty}  \big\| \Phi^{(J)}_n(0) \big\|_{L^2_x} +   \sum_{j=1}^J \|u^{(j)}\|_{S^s}^2\Big] \\
                                                  &\lesa_{s, L, E_c, M_c} 1
    \end{align*}
and hence (ii) follows.

Finally to prove (iii), we note that provided $s>\frac{1}{2}$, (ii) together with \eqref{eqn:P-S:wave decoup},  \eqref{eqn:P-S:H1 decoup}, and an application of Theorem \ref{thm:bi est improved} gives $\theta>0$ such that
\begin{align*}
      \big\| \mc{I}_0\big[ \Re(\Phi^{(J)}_n & - e^{it|\nabla|} e^{(J)}_n) e^{it\Delta} w^{(J)}_n \big] \big\|_{S^\frac{1}{2}} + \big\| \mc{I}_0\big[ \Re(e^{it|\nabla|} e^{(J)}_n) \Psi^{(J)}_n\big] \big\|_{S^\frac{1}{2}} \\
                &\lesa  \Big(\|\Phi^{(J)}_n\|_{\underline{W}^0} + \| e^{(J)}_n \|_{L^2} \Big)\| e^{it\Delta} w^{(J)}_n\|_{Y}^\theta \|w^{(J)}_n\|_{H^1}^{1-\theta} + \| e^{it|\nabla|}e^{(J)}_n\|_Y^\theta \| e^{(J)}_n \|_{L^2}^{1-\theta} \| \Psi^{(J)}_n \|_{\underline{S}^s} \\
                &\lesa_{s, L, E_c, M_c} \| e^{it\Delta} w^{(J)}_n\|_{Y}^\theta + \| e^{it|\nabla|}e^{(J)}_n\|_Y^\theta.
    \end{align*}
Therefore, the asymptotic decoupling provided by Lemma \ref{lem:nonlin asymp decoup} and the fact that the error vanishes in the dispersive norm $Y$ implies that
    \begin{align*}
      \limsup_{J\to J^*} &\limsup_{n\to \infty} \Big\| \mc{I}_0\Big[ \Re(\Phi^{(J)}_n) \Psi^{(J)}_n  - \sum_{j=1}^J \Re(V^{(j)}_n) u^{(j)}_n\Big] \Big\|_{S^\frac{1}{2}}\\
                &\lesa \limsup_{J\to J^*}\limsup_{n\to \infty} \Big( \sum_{\substack{ 1\les j, k \les J \\ j\not = k} } \big\| \mc{I}_0\big[ \Re(V^{(j)}_n) u^{(k)}_n\big] \big\|_{S^\frac{1}{2}} + \big\| \mc{I}_0\big[ \Re(\Phi^{(J)}_n - e^{it|\nabla|} e^{(J)}_n) e^{it\Delta} w^{(J)}_n \big] \big\|_{S^\frac{1}{2}} \\
                &\qquad \qquad \qquad \qquad+ \big\| \mc{I}_0\big[ \Re(e^{it|\nabla|} e^{(J)}_n) \Psi^{(J)}_n\big] \big\|_{S^\frac{1}{2}} \Big) \\
                &\lesa_{s, L, E_c, M_c} \limsup_{J\to J^*}\limsup_{n\to \infty} \Big( \sum_{\substack{ 1\les j, k \les J \\ j\not = k} } \big\| \mc{I}_0\big[ \Re(V^{(j)}_n) u^{(k)}_n\big] \big\|_{S^\frac{1}{2}} + \| e^{it\Delta} w^{(J)}_n\|_{Y}^\theta + \| e^{it|\nabla|}e^{(J)}_n\|_Y^\theta \Big)  = 0.   \end{align*}
Similarly, by another application of Theorem \ref{thm:bi est improved} we see that
    \begin{align*}
      \big\| |\nabla| \mc{J}_0\big[ \Re\big( \overline{\Psi}^{(J)}_n e^{it\Delta} w^{(J)}_n \big)\big]\big\|_{W^0} + &\big\| |\nabla| \mc{J}_0\big[ | e^{it\Delta} w^{(J)}_n|^2 \big]\big\|_{W^0}\\
                &\lesa \| e^{it\nabla} w^{(J)}_n \|_Y^\theta \| w^{(J)}_n \|_{H^1}^{1-\theta} \Big( \|\Psi^{(J)}_n\|_{\underline{S}^s} + \| w^{(J)}_n\|_{H^1}\Big) \lesa_{s, L, E_c, M_c}  \| e^{it\nabla} w^{(J)}_n \|_Y^\theta
    \end{align*}
and hence
    \begin{align*}
    \limsup_{J\to J^*}& \limsup_{n\to \infty} \Big\| |\nabla| \mc{J}_0\Big[|\Psi^{(J)}_n|^2 - \sum_{j=1}^J |u_n^{(j)}|^2 \Big]\Big\|_{W^0}\\
                &\lesa \limsup_{J\to J^*} \limsup_{n\to \infty}\Big( \sum_{\substack{1\les j, k \les J \\ j \not = k}} \big\| |\nabla| \mc{J}_0\big[ \Re\big( \overline{u}^{(j)}_n u^{(k)}_n \big)\big]\big\|_{W^0} +  \big\| |\nabla| \mc{J}_0\big[ \Re\big( \overline{\Psi}^{(J)}_n e^{it\Delta} w^{(J)}_n \big)\big]\big\|_{W^0} \\
                &\qquad \qquad \qquad \qquad \big\| |\nabla| \mc{J}_0\big[ | e^{it\Delta} w^{(J)}_n|^2 \big]\big\|_{W^0}\Big)\\
                &\lesa_{s, L, E_c, M_c}  \limsup_{J\to J^*} \limsup_{n\to \infty}\Big( \sum_{\substack{1\les j, k \les J \\ j \not = k}} \big\| |\nabla| \mc{J}_0\big[ \Re\big( \overline{u}^{(j)}_n u^{(k)}_n \big)\big]\big\|_{W^0} +  \| e^{it\nabla} w^{(J)}_n \|_Y^\theta\Big) = 0.
    \end{align*}
Consequently (iii) also holds.
\end{proof}

\section{Almost Periodic Solutions}\label{sec:almost per}

In this section we give the construction of the critical elements (or almost periodic solutions) in Theorem \ref{thm:min coun}. The first step is to show that if Conjecture \ref{conj:scattering} failed, then there must exist a mass/energy threshold with energy below the ground state.

\begin{lemma}[Existence of a mass/energy threshold]\label{lem:existence of threshold}
Suppose Conjecture \ref{conj:scattering} failed. Then there exists a mass/energy threshold $(M_c, E_c)$ with $4E_c<\|Q\|_{\dot{H}^1}^2$.
\end{lemma}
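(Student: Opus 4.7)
The plan is to construct $(M_c, E_c)$ as a minimising ``corner'' of the failure set
\[ \mathcal{B} := \big\{ (M, E) \in (0, \infty) \times [0, \tfrac{1}{4}\|Q\|_{\dot{H}^1}^2) \;:\; L(M, E) = \infty \big\}, \]
which is non-empty by the failure of Conjecture \ref{conj:scattering}. Since $L$ is monotone non-decreasing in each variable, $\mathcal{B}$ is an up-set in $(0,\infty)^2$, and a threshold in the sense of Definition \ref{defn:mass/energy} is precisely an element of $\mathcal{B}$ from which strictly decreasing either coordinate exits $\mathcal{B}$. To pick out such a pair I would minimise the coercive functional
\[ h(M, E) := M + \bigl( \tfrac{1}{4}\|Q\|_{\dot{H}^1}^2 - E \bigr)^{-1} \]
over $\mathcal{B}$. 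Because $h$ is finite on $\mathcal{B}$ and diverges as $M \to \infty$ or $E \to \tfrac{1}{4}\|Q\|_{\dot{H}^1}^2$, any minimising sequence $(M_n, E_n)$ is automatically bounded in $M$ and bounded away from the energy ceiling, so after passing to a subsequence $(M_n, E_n) \to (M_c, E_c)$ with $h(M_c, E_c) = h_0 := \inf_{\mathcal{B}} h$ and $4 E_c < \|Q\|_{\dot{H}^1}^2$ strictly.

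For each $n$, extract a Zakharov solution $(u_n, V_n) \in \Omega(E_n)$ with $\mc{M}(u_n) \le M_n$ and $\|u_n\|_D \ge n$. The degenerate limits $M_c = 0$ and $E_c = 0$ are ruled out directly by Theorem \ref{thm:small data Hs} combined with the Sobolev-type bound \eqref{eqn:Y norm bound by sobolev}: in the first case $\|e^{it\Delta}u_n(0)\|_Y \lesssim \|u_n(0)\|_{L^2} \to 0$, and in the second $\|e^{it\Delta}u_n(0)\|_Y \lesssim \|u_n(0)\|_{\dot{H}^1} \lesssim E_n^{1/2} \to 0$, while $\|u_n(0)\|_{H^1}$ stays bounded and $\|V_n(0)\|_{L^2}$ is eventually bounded strictly below $\|Q\|_{\dot{H}^1}$; either way the conclusion $\|u_n\|_D \lesssim \|u_n(0)\|_{H^{1/2}}$ of Theorem \ref{thm:small data Hs} bounds $\|u_n\|_D$, contradicting $\|u_n\|_D \to \infty$. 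Hence $(M_c, E_c) \in (0, \infty) \times (0, \tfrac{1}{4}\|Q\|_{\dot{H}^1}^2)$.

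Now apply the Bahouri--G\'erard profile decomposition (Theorem \ref{thm:profile decomp}) to $(u_n(0), V_n(0))$ and evolve each profile through Theorem \ref{thm:gwp below ground state} or Theorem \ref{thm:wave operators} to nonlinear profiles $(u^{(j)}, V^{(j)})$ with masses $\mc{M}^{(j)}$ and energies $\mc{E}^{(j)}$. The mass and energy decouplings $\sum_j \mc{M}^{(j)} \le M_c$ and $\sum_j \mc{E}^{(j)} \le E_c$, combined with the monotonicity of $h$, give $h(\mc{M}^{(j)}, \mc{E}^{(j)}) \le h_0$ for every $j$, with equality forcing $\mc{M}^{(j)} = M_c$, $\mc{E}^{(j)} = E_c$ and all other profiles trivial. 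If every profile had $h < h_0$, then by minimality of $h_0$ each $L(\mc{M}^{(j)}, \mc{E}^{(j)})$ would be finite, and the orthogonality and stability steps in the proof of Theorem \ref{thm:Palais-Smale} would bound $\|u_n\|_D$, contradicting $\|u_n\|_D \to \infty$. Hence exactly one profile $(u^{(1)}, V^{(1)})$ realises $(\mc{M}^{(1)}, \mc{E}^{(1)}) = (M_c, E_c)$, the remaining profiles vanish, and the error $(w_n^{(J)}, e_n^{(J)}) \to 0$ in $H^1 \times L^2$. A final application of Theorem \ref{thm:stab II} rules out $\|u^{(1)}\|_D < \infty$ (otherwise $\|u_n\|_D$ would again be bounded), so $L(M_c, E_c) \ge \|u^{(1)}\|_D = \infty$. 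The threshold conditions $L(M_c - \varepsilon, E_c) < \infty$ and $L(M_c, E_c - \varepsilon) < \infty$ now follow immediately from minimality of $h_0$, because $h(M_c - \varepsilon, E_c)$ and $h(M_c, E_c - \varepsilon)$ are both strictly below $h_0$.

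The main obstacle is that Theorem \ref{thm:Palais-Smale} cannot be cited as a black box here, since its statement already presupposes the threshold whose existence is being proved. What makes its proof transfer essentially verbatim is the observation that any proper sub-pair $(M', E') \ne (M_c, E_c)$ with $M' \le M_c$ and $E' \le E_c$ automatically satisfies $h(M', E') < h_0$, and therefore by minimality of $h_0$ lies in the scattering region $L(M', E') < \infty$. This is precisely the role played by the threshold property $L(M_c - \delta, E_c) < \infty$ inside the proof of Theorem \ref{thm:Palais-Smale}, so the concentration and stability arguments go through without modification.
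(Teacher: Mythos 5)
Your route is genuinely different from the paper's and considerably heavier. The paper works with the one-parameter function $E^*(M) = \sup\{E : L(M,E)<\infty\}$, sets $E_c = E^*(M_0)$ for an $M_0$ with $4E^*(M_0)<\|Q\|_{\dot{H}^1}^2$ and $M_c = \inf\{M : E^*(M)=E_c\}$, and obtains $L(M_c,E_c)=\infty$ from a quantitative right-continuity estimate $L(M,E)\le L(M+\epsilon,E+\epsilon)\le L(M,E)+C\epsilon^\theta$. That estimate is proved by rescaling the data along the curve $\lambda\mapsto(\lambda f,\lambda^2 g)$, using the monotonicity identity \eqref{eqn:scaling der of zak energy} and Lemma \ref{lem:ground state} to show both mass and energy decrease, and then invoking Theorem \ref{thm:stab II}. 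No profile decomposition appears anywhere in the paper's proof of this lemma. Your scheme --- minimise a coercive functional $h$ over the failure set, dispose of degenerate limits by small-data theory, then re-run a Palais--Smale-type concentration argument with $h$-minimality standing in for the threshold property --- would in effect force you to re-derive most of Theorem \ref{thm:Palais-Smale} before the lemma is available; it is a valid idea, but a much longer road.

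Beyond efficiency, there is a concrete gap at the step where all nonlinear profiles are assumed to satisfy $h\big(\mc{M}(u^{(j)}),\mc{E}_Z(u^{(j)},V^{(j)})\big)<h_0$: to close the stability argument you need a \emph{uniform} bound $\sup_j \|u^{(j)}\|_{D}<\infty$, and the pointwise observation that proper sub-pairs scatter does not supply one, because $L$ is not a function of $h$ and the sub-level set $\{h\le h_0-\delta\}$ is not a product region. You would additionally need to argue (i) that $\sup_j h<h_0$ is in fact bounded away from $h_0$ by a uniform $\delta>0$ (using that the profile masses and energies are summable and hence only finitely many profiles have $h$ close to $h_0$), and (ii) that $h\le h_0-\delta$ together with $\mc{M}(u^{(j)})\le M_c$, $\mc{E}_Z(u^{(j)},V^{(j)})\le E_c$, and strict monotonicity of $h$ in each variable forces either $\mc{M}(u^{(j)})\le M_c-\delta'$ or $\mc{E}_Z(u^{(j)},V^{(j)})\le E_c-\delta'$, so that
$L\big(\mc{M}(u^{(j)}),\mc{E}_Z(u^{(j)},V^{(j)})\big)\le L(M_c-\delta',E_c)+L(M_c,E_c-\delta')<\infty$
by the threshold inequalities you deduced from minimality. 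Without (i)--(ii) the ``all profiles sub-critical yields a contradiction'' step does not close; this is precisely the role $L(M_c,E_c-\delta)<\infty$ plays in Case 2 of the paper's Palais--Smale proof, and it is the piece your sketch silently assumes transfers verbatim.
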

\begin{proof}
Given $M>0$ we let
        $$ E^*(M) = \sup\{ E>0 \mid L(M, E)<\infty\} $$
where we recall that $L(M, E)$ is defined in \eqref{eqn:L(M, E) defn}. Note that if $4E<\|Q\|_{\dot{H}^1}^2$ and $(u, V)\in \Omega(E)$ (see \eqref{eqn:defn Omega}) then Lemma \ref{lem:ground state} gives the $\dot{H}^1$ bound
    $$\|u\|_{L^\infty_t \dot{H}^1_x}^2 \les 4\mc{E}_Z(u, V)\les 4E.$$
In particular, Theorem \ref{thm:small data Hs} together with \eqref{eqn:Y bounded by sobolev intro} implies that for fixed $M>0$ and all sufficiently small $E>0$, we have $L(M, E)<\infty$ and hence $E^*(M)>0$. Moreover,  by construction we have the implication
        \begin{equation}\label{eqn:threshold:imp}
            E<E^*(M) \qquad \Longrightarrow \qquad L\big(M,   E\big) < \infty.
        \end{equation}
If Conjecture \eqref{conj:scattering} failed, then there must exist some $M_0>0$ such that $4E^*(M_0)<\|Q\|_{\dot{H}^1}$. We now define
        $$E_c = E^*(M_0), \qquad M_c = \inf\{ M>0 \mid E^*(M) = E^*(M_0)\}. $$
In view of \eqref{eqn:Y bounded by sobolev intro} and Theorem \ref{thm:small data Hs}, we have global well-posedness and scattering whenever the initial data satisfies
    $$\min\{\|f\|_{L^2}, \| f \|_{\dot{H}^1}\}\ll_{\|f\|_{H^1}} 1$$
and hence $E_c, M_c>0$. Moreover, as $L(M, E)$ is increasing in both $E$ and $M$, the definition of $E^*$ together with \eqref{eqn:threshold:imp} implies that if $E<E_c = E^*(M_0)$ then $L(M_c, E) \les L(M_0, E) < \infty$. On the other hand, if $M<M_c$, then by construction $E^*(M)$ is decreasing in $M$ and hence $E^*(M)>E^*(M_0)=E_c$ and so another application of the implication \eqref{eqn:threshold:imp} gives $L(M, E_c)<\infty$.

Therefore, to show that $(M_c, E_c)$ is a mass/energy threshold, it only remains to prove that $L(M_c, E_c)=\infty$. As usual, this is consequence of the (right) continuity of $L(M, E)$. More precisely, we claim that for any $0<4E<\|Q\|_{\dot{H}^1}^2$ and $M>0$, there exists $C = C(M, E)>0$, $\theta = \theta(M, E)>0$, and $\epsilon_0 = \epsilon_0(M, E)>0$ such that for any $0<\epsilon<\epsilon_0$ we have
        \begin{equation}\label{eqn:threshold:continuity}
             L(M, E) \les L(M+\epsilon, E+\epsilon) \les L(M, E) + C \epsilon^\theta.
        \end{equation}
Clearly \eqref{eqn:threshold:continuity} implies that $L(M_c, E_c) = \infty$. The right continuity of $L$ is a consequence of the stability result in Theorem \ref{thm:stab II} together with the observation in \cite{Guo2021} that $\lambda \mapsto \mc{E}_Z(\lambda f, \lambda^2 g)$ is an increasing function under the assumption that $(f,g)$ lie below the ground state $Q$. Roughly the point is that if we are at energy $E+\epsilon$, then flowing back in $\lambda$ reduces the energy at which point we can apply the definition of $L(M, E)$. Provided $\epsilon>0$ is sufficiently small, applying Theorem \ref{thm:stab II} then bounds $L(M+\epsilon, E+\epsilon)$ in terms of $L(M, E)$. Making this argument precise relies on the variational properties of the ground state contained in Lemma \ref{lem:ground state}.

We now turn to the details. The first inequality in \eqref{eqn:threshold:continuity} is immediate from the definition. To prove the second, it is enough to consider the case $L(M, E)<\infty$. Let $(u,V)\in \Omega(E+\epsilon)$ with $\mc{M}(u)\les M +\epsilon$. Our goal is to prove that $\|u\|_D - L(M, E) \lesa_{E, M} \epsilon^\theta$ with the implied constant only depending on $E$ and $M$. Define
        $$(f,g) = (\lambda u, \lambda^2 V)(0)\in H^1\times L^2$$
where $0<\lambda<1$ is to be chosen later. Note that
    $$ \mc{M}(u) - \mc{M}(f) = (1-\lambda^2) \mc{M}(u) \gtrsim_M (1-\lambda). $$
On the other hand, for any $0<\epsilon \les \frac{1}{8}( \|Q\|^2_{\dot{H}^1} - E)$, as $4E < \|Q\|_{\dot{H}^1}^2$ Lemma \ref{lem:ground state} gives for any $0< \lambda \les 1$ the lower bound
 \begin{align*}
    \mc{K}\big(\lambda u(0) \big) + \lambda^4 \big\| V(0) + |u(0)|^2\big\|_{L^2_x}^2 &\g \lambda^4 \Big( \mc{K}\big( u(0) \big) +  \big\| V(0) + |u(0)|^2\big\|_{L^2_x}^2  \Big) \\
        &\g \lambda^4 4\mc{E}_Z(u, V) \frac{\|Q\|_{\dot{H}^1}^2  - 4\mc{E}_Z(u, V)}{2\|Q\|_{\dot{H}^1}^2} \gtrsim_E\lambda^4 \mc{E}_Z(u, V).
 \end{align*}
Consequently the identity \eqref{eqn:scaling der of zak energy} implies that
    \begin{align*}
      \mc{E}_Z(u,V) - \mc{E}_Z(f,g) =   \int_\lambda^1 a^{-1} \mc{K}\big(a u(0) \big) + a^3 \big\| V(0) + |u(0)|^2\big\|_{L^2_x}^2 da  \gtrsim_E (1-\lambda) \mc{E}_Z(u, V).
    \end{align*}
In particular, the energy and mass decrease as $\lambda \to 0$, and hence choosing $(1-\lambda) \approx_E \epsilon$ we have $\mc{E}_Z(f,g) \les E$ and $\mc{M}(f)\les M$. Therefore, letting $(\psi, \phi) \in \Omega(E)$ denote the corresponding global solution to \eqref{eqn:Zak} with data $(\psi, \phi)(0) = (f, g)$ given by Theorem \ref{thm:gwp below ground state}, we see that $\| \psi\|_D \les L(M, E)$. To conclude the corresponding dispersive bound for $u$ we apply the stability result from Theorem \ref{thm:stab II}. More precisely, Theorem \ref{thm:persistence} and Lemma \ref{lem:ground state} implies that
            $$ \|u(0)\|_{H^1} +  \|\psi \|_{S^\frac{1}{2}} \lesa_{E,M} 1, \qquad \|\phi\|_{L^\infty_t L^2_x} \les 2E^\frac{1}{2} < \|Q\|_{\dot{H}^1}$$
(strictly speaking the implied constant here also depends on $L(M, E)$). On the other hand, the choice of $(f,g)$ gives
        $$ \| u(0) - \psi(0) \|_{\dot{H}^1} + \| V(0) - \phi(0) \|_{L^2} \lesa_{E,M} 1-\lambda \lesa_{E,M} \epsilon. $$
Hence provided $\epsilon>0$ is sufficiently small (depending only on $M$ and $E$, albeit via $L(M, E)$), \eqref{eqn:mod stab error cond} together with Theorem \ref{thm:stab II} implies that $u \in S^\frac{1}{2}$ with the bound
        $$  \| u - \psi\|_D \lesa  \| u - \psi - e^{it\Delta}(u-\psi)(0)\|_{S^\frac{1}{2}} + \|(u-\psi)(0)\|_{H^1}  \lesa_{E,M} \epsilon^\theta. $$
In other words, we have a constant $C=C(M, E)>0$ depending only on $E$ and $M$ (and $L(M, E)$) such that $\|u\|_D \les L(M, E) + C \epsilon^\theta$. Taking the sup over all $(u, V)\in \Omega(E+\epsilon)$ with $\mc{M}(u)=M$ we conclude the required bound.
\end{proof}

The proof of Theorem \ref{thm:min coun} is now an application of the Palais-Smale type property together with the stability result obtained earlier.

\begin{proof}[Proof of Theorem \ref{thm:min coun}]
Suppose Conjecture \ref{conj:scattering} failed. Applying Lemma \ref{lem:existence of threshold} we would then conclude that there exists a  mass/energy threshold $(M_c, E_c)$ with $4E_c<\|Q\|_{\dot{H}^1}^2$. In particular, as $L(M_c, E_c) = \infty$, there exists a sequence $(u_n, V_n) \in \Omega(E_c)$ such that
	$$ \lim_{n \to \infty} \mc{E}_Z(u_n, V_n) = E_c, \qquad \lim_{n\to \infty} \mc{M}(u_n) = M_c, \qquad \lim_{n\to \infty} \|u_n\|_D = \infty. $$
Choose $t_n\in \RR$ such that
		$$ \lim_{n\to \infty} \|u_n\|_{D((-\infty, t_n])} = \lim_{n\to \infty} \| u_n \|_{D([t_n, \infty))} = \infty. $$
After replacing $u_n(t)$ with $u_n(t+t_n)$, we may assume that $t_n=0$ for all $n\in \NN$. Theorem \ref{thm:Palais-Smale} then implies that, up to a subsequence, we have $(f_c, g_c)\in H^1\times L^2$ and $x_n\in \RR^4$ such that
		\begin{equation}\label{eqn:min coun:data conv}
			\lim_{n\to \infty} \big\| (u_n, V_n)(0, x+x_n) - (f_c, g_c)(x) \big\|_{H^1\times L^2} = 0.
		\end{equation}
Note that $\mc{E}_Z(f_c, g_c) = E_c$,  $\mc{M}(f_c)= M_c$, and $\|g_c\|_{L^2}\les \|Q\|_{\dot{H}^1}$. Hence applying Theorem \ref{thm:gwp below ground state} with data $(f_c, g_c)\in H^1 \times L^2$ we obtain a global solution $(\psi, \phi) \in C(\RR; H^1\times L^2)$ to \eqref{eqn:Zak} with
		\begin{equation}\label{eqn:min coun:min prop}
 \psi \in L^2_{t, loc} W^{\frac{1}{2}, 4}_x, \qquad \mc{E}_Z(\psi, \phi) = E_c, \qquad \mc{M}(\psi) = M_c, \qquad \|\phi\|_{L^\infty_t L^2_x} \les \|Q\|_{\dot{H}^1}.
		\end{equation}
Moreover, the limit \eqref{eqn:min coun:data conv} together with the stability result in Theorem \ref{thm:stab II} gives
		\begin{equation}\label{eqn:min coun:min nondisp}
			\| \psi \|_{D((-\infty, 0])}=\|\psi \|_{D([0, \infty))} = \infty.
		\end{equation}
It only remains to verify that there exists $x(t): \RR \to \RR^4$ such that the orbit
	\begin{equation}\label{eqn:min coun:orbit}
		\big\{ (\psi, \phi)\big(t, x + x(t)\big)  \,\, \big| \,\,  t\in \RR \big\}
	\end{equation}
 is precompact in $H^1\times L^2$. To prove the existence of the translations $x(t)$, one option is to argue abstractly as in \cite{Tao2008c}. Alternatively, and this is the approach we take here, we can give a concrete definition\footnote{This observation was kindly communicated to us by Kenji Nakanishi.} of the translations by noting that $x(t)$ should essentially be the ``centre of mass'' of the solution $(\psi, \phi)(t)$. To this end, we choose the components $x_j(t) \in \RR$ of $x(t)\in \RR^4$ as
	$$ \int_{x_j(t)}^\infty \int_{\RR^3} \big( |\nabla \psi|^2 + |\psi|^2 + |\phi|^2\big)(t,y) dy' dy_j  = \frac{1}{2} \| (\psi, \phi)(t) \|_{H^1\times L^2}^2  $$
where $y'\in \RR^3$ denotes the remaining spatial variables. In other words, $x(t)$ is roughly the centre of the $H^1\times L^2$ mass of $(\psi, \phi)$. Suppose \eqref{eqn:min coun:orbit} is not precompact. Then there exists sequence $t_n \in \RR$ and $C>0$ such that for all $n\not = m$ we have
		\begin{equation}\label{eqn:min coun:contra}
			\big\| (\psi, \phi)\big(t_n, x+x_n\big) - (\psi, \phi)\big(t_m, x+x_m\big) \big\|_{H^1\times L^2} \g C
		\end{equation}
where for ease of notation we let $x_n=x(t_n)$. Applying Theorem \ref{thm:Palais-Smale} to the sequence $(\psi, \phi)(t+t_n)$, the properties \eqref{eqn:min coun:min prop} and \eqref{eqn:min coun:min nondisp} imply that there exists $\tilde{x}_n\in \RR^4$ and $(\tilde{f},\tilde{g})\in H^1\times L^2$ such that up to a subsequence,  the translated sequence $(\psi, \phi)(t_n, x+ \tilde{x}_n)$ converges to $(\tilde{f}, \tilde{g})\in H^1\times L^2$. In particular, after translating once more, we have
		\begin{equation}\label{eqn:min coun:conv}
			\lim_{n\to \infty}\big\| (\psi, \phi)(t_n, x+x_n) - (\tilde{f}, \tilde{g})(x + x_n - \tilde{x}_n) \big\|_{H^1\times L^2} = 0.
		\end{equation}
If $\sup_n |x_n - \tilde{x}_n|< \infty$, then after taking a further subsequence we can assume $x_n - \tilde{x}_n$ converges. But then \eqref{eqn:min coun:conv} is clearly a contradiction to \eqref{eqn:min coun:contra}. Similarly, if $(\tilde{f}, \tilde{g})=0$, then again \eqref{eqn:min coun:conv} contradicts \eqref{eqn:min coun:contra}. Thus, we may assume that $(\tilde{f}, \tilde{g})\not = 0$, and writing
the components of the vectors $x_n, \tilde{x}_n\in \RR^4$ as $x_{n, j}$ and $\tilde{x}_{n, j}$, there must exist some $1\les j \les 4$ such that $|x_{n, j} - \tilde{x}_{n, j}| \to \infty$. Observe that, by our choice of $x(t)$ and \eqref{eqn:min coun:conv}, we have
	\begin{align*}
		\lim_{n\to\infty} \int_{x_{n,j} - \tilde{x}_{n,j}}^\infty \int_{\RR^3}\big(|\nabla \tilde{f}|^2 + |\tilde{f}|^2 + |\tilde{g}|^2\big)(y) \,dy' dy_j &= \lim_{n\to \infty} \int_{x_{n,j}}^\infty \int_{\RR^3} \big( |\nabla\psi|^2 + |\psi|^2 + |\phi|^2\big)(t_n, y) dy' dy_j \\
		&= \frac{1}{2} \lim_{n\to \infty} \big\| (\psi, \phi)(t_n)\big\|_{H^1\times L^2}^2 = \frac{1}{2}\big\|(\tilde{f}, \tilde{g})\big\|_{H^1\times L^2}^2.	\end{align*}
But this is again a contradiction, as the left hand side converges to either $0$ or $\|(\tilde{f}, \tilde{g})\|_{H^1\times L^2}^2 \not = 0$. Therefore \eqref{eqn:min coun:contra} cannot hold, and hence the orbit \eqref{eqn:min coun:orbit} is precompact as claimed.
\end{proof}

\section*{Acknowledgements}
The author would like to thank Kenji Nakanishi for many helpful conversations on the concentration compactness argument for dispersive PDE. Financial support from the Marsden Fund Council grant 19-UOO-142, managed by Royal Society Te Ap\={a}rangi is gratefully acknowledged.

\bibliographystyle{amsplain}
\bibliography{non-scat_Zak}

\providecommand{\bysame}{\leavevmode\hbox to3em{\hrulefill}\thinspace}
\providecommand{\MR}{\relax\ifhmode\unskip\space\fi MR }
\providecommand{\MRhref}[2]{%
  \href{http://www.ams.org/mathscinet-getitem?mr=#1}{#2}
}
\providecommand{\href}[2]{#2}
\begin{thebibliography}{10}

\bibitem{Aubin1976}
Thierry Aubin, \emph{{Probl\`{e}mes isop\'{e}rim\'{e}triques et espaces de
  Sobolev}}, Journal of Differential Geometry \textbf{11} (1976), no.~4, 573 --
  598.

\bibitem{Bahouri1999}
Hajer Bahouri and Patrick G{\'e}rard, \emph{High frequency approximation of
  solutions to critical nonlinear wave equations}, American Journal of
  Mathematics \textbf{121} (1999), no.~1, 131–175.

\bibitem{Bejenaru2015}
Ioan Bejenaru, Zihua Guo, Sebastian Herr, and Kenji Nakanishi,
  \emph{Well-posedness and scattering for the {Z}akharov system in four
  dimensions}, Anal. PDE \textbf{8} (2015), no.~8, 2029--2055. \MR{3441212}

\bibitem{Bejenaru2010}
Ioan Bejenaru and Sebastian Herr, \emph{Convolutions of singular measures and
  applications to the {Z}akharov system},  (2010).

\bibitem{Bejenaru2008}
Ioan Bejenaru, Sebastian Herr, Justin Holmer, and Daniel Tataru, \emph{On the
  2d {Z}akharov system with $l^2$ {S}chr\"odinger data}, Nonlinearity
  \textbf{22} (2008), 1063--1089.

\bibitem{Bourgain1999}
J.~Bourgain, \emph{Global wellposedness of defocusing critical nonlinear
  {S}chr\"{o}dinger equation in the radial case}, J. Amer. Math. Soc.
  \textbf{12} (1999), no.~1, 145--171. \MR{1626257}

\bibitem{Bourgain1996}
J.~Bourgain and J.~Colliander, \emph{On wellposedness of the {Z}akharov
  system}, International Mathematics Research Notices \textbf{1996} (1996),
  no.~11, 515--546.

\bibitem{Candy2019a}
Timothy Candy, \emph{Multi-scale bilinear restriction estimates for general
  phases}, Math. Ann. \textbf{375} (2019), no.~1, 777--843.

\bibitem{Candy2021}
Timothy Candy, Sebastian Herr, and Kenji Nakanishi, \emph{Global wellposedness
  for the energy-critical {Z}akharov system below the ground state}, Advances
  in Mathematics \textbf{384} (2021), 107746.

\bibitem{Candy2021a}
Timothy Candy, Sebastian Herr, and Kenji Nakanishi, \emph{The {Z}akharov system
  in dimension $ d>= 4$}, to appear in Journal of the European Mathematical
  Society (2021).

\bibitem{Colliander2008b}
J.~Colliander, M.~Keel, G.~Staffilani, H.~Takaoka, and T.~Tao, \emph{Global
  well-posedness and scattering for the energy-critical nonlinear
  {S}chr\"odinger equation in {$\Bbb R^3$}}, Ann. of Math. (2) \textbf{167}
  (2008), no.~3, 767--865. \MR{2415387 (2009f:35315)}

\bibitem{Colliander2008c}
James Colliander, Justin Holmer, and Nikolaos Tzirakis, \emph{Low regularity
  global well-posedness for the {Z}akharov and {K}lein-{G}ordon-{S}chr\"odinger
  systems}, Trans. Amer. Math. Soc. \textbf{360} (2008), no.~9, 4619--4638.
  \MR{2403699 (2009c:35434)}

\bibitem{Dodson2019}
Benjamin Dodson, \emph{Global well-posedness and scattering for the focusing,
  cubic {S}chr\"{o}dinger equation in dimension {$d=4$}}, Ann. Sci. \'{E}c.
  Norm. Sup\'{e}r. (4) \textbf{52} (2019), no.~1, 139--180. \MR{3940908}

\bibitem{Ginibre2006}
J.~Ginibre and G.~Velo, \emph{Scattering theory for the {Z}akharov system},
  Hokkaido Math. J. \textbf{35} (2006), no.~4, 865--892. \MR{2289364}

\bibitem{Grube2022}
Florian Grube, \emph{Norm inflation for the zakharov system},
  \texttt{arXiv:2203.09823} (2022).

\bibitem{Guo2021}
Zihua Guo and Kenji Nakanishi, \emph{The {Z}akharov system in 4d radial energy
  space below the ground state}, American Journal of Mathematics \textbf{143}
  (2021), no.~5.

\bibitem{Guo2013}
Zihua Guo, Kenji Nakanishi, and Shuxia Wang, \emph{Global dynamics below the
  ground state energy for the {Z}akharov system in the 3{D} radial case}, Adv.
  Math. \textbf{238} (2013), 412--441. \MR{3033638}

\bibitem{Kato2017}
Isao Kato and Kotaro Tsugawa, \emph{Scattering and well-posedness for the
  {Z}akharov system at a critical space in four and more spatial dimensions},
  Differential Integral Equations \textbf{30} (2017), no.~9-10, 763--794.
  \MR{3656486}

\bibitem{Keel1998}
Markus Keel and Terence Tao, \emph{Endpoint {S}trichartz estimates}, Amer. J.
  Math. \textbf{120} (1998), no.~5, 955--980.

\bibitem{Kenig2006}
Carlos~E. Kenig and Frank Merle, \emph{Global well-posedness, scattering and
  blow-up for the energy-critical, focusing, non-linear {S}chr\"{o}dinger
  equation in the radial case}, Invent. Math. \textbf{166} (2006), no.~3,
  645--675. \MR{2257393}

\bibitem{Kenig1995}
Carlos~E Kenig, Gustavo Ponce, and Luis Vega, \emph{On the {Z}akharov and
  {Z}akharov-schulman systems}, Journal of Functional Analysis \textbf{127}
  (1995), no.~1, 204--234.

\bibitem{Keraani2006}
Sahbi Keraani, \emph{On the blow up phenomenon of the critical nonlinear
  {S}chr\"{o}dinger equation}, J. Funct. Anal. \textbf{235} (2006), no.~1,
  171--192. \MR{2216444}

\bibitem{Killip2013}
Rowan Killip and Monica Visan, \emph{Nonlinear {S}chr\"odinger equations at
  critical regularity}, Evolution equations, Clay Math. Proc., vol.~17, Amer.
  Math. Soc., Providence, RI, 2013, p.~325–437.

\bibitem{Lee2008}
Sanghyuk Lee and Ana Vargas, \emph{Sharp null form estimates for the wave
  equation}, Amer. J. Math. \textbf{130} (2008), no.~5, 1279--1326.

\bibitem{Masmoudi2008}
Nader Masmoudi and Kenji Nakanishi, \emph{Energy convergence for singular
  limits of {Z}akharov type systems}, Inventiones mathematicae \textbf{172}
  (2008), no.~3, 535--583.

\bibitem{Merle1996}
Frank Merle, \emph{Blow-up results of virial type for {Z}akharov equations},
  Comm. Math. Phys. \textbf{175} (1996), no.~2, 433--455. \MR{1370102}

\bibitem{Schochet1986}
Steven~H. Schochet and Michael~I. Weinstein, \emph{{The nonlinear
  Schr{\"o}dinger limit of the Zakharov equations governing Langmuir
  turbulence}}, Communications in Mathematical Physics \textbf{106} (1986),
  no.~4, 569 -- 580.

\bibitem{Sulem2007}
Catherine Sulem and Pierre-Louis Sulem, \emph{The nonlinear {S}chr{\"o}dinger
  equation: self-focusing and wave collapse}, vol. 139, Springer Science \&
  Business Media, 2007.

\bibitem{Talenti1976}
Giorgio Talenti, \emph{Best constant in {S}obolev inequality}, Annali di
  Matematica pura ed Applicata \textbf{110} (1976), no.~1, 353--372.

\bibitem{Tao2003a}
T.~Tao, \emph{A sharp bilinear restriction estimate for paraboloids}, Geom.
  Funct. Anal. \textbf{13} (2003), no.~6, 1359--1384. \MR{2033842
  (2004m:47111)}

\bibitem{Tao2009b}
Terence Tao, \emph{An inverse theorem for the bilinear {$L^2$} {S}trichartz
  estimate for the wave equation}, \texttt{arXiv:0904.2880} (2009).

\bibitem{Tao2008c}
Terence Tao, Monica Visan, and Xiaoyi Zhang, \emph{Minimal-mass blowup
  solutions of the mass-critical {NLS}}, Forum Math. \textbf{20} (2008), no.~5,
  881--919. \MR{2445122}

\bibitem{Texier2007}
Benjamin Texier, \emph{Derivation of the {Z}akharov equations}, Archive for
  Rational Mechanics and Analysis \textbf{184} (2007), no.~1, 121--183.

\bibitem{Zakharov1972}
Vladimir~E. Zakharov, \emph{{C}ollapse of {L}angmuir waves}, Sov. Phys. JETP
  \textbf{35} (1972), no.~5, 908--914.

\end{thebibliography}
\end{document}